\documentclass[11pt,letterpaper]{article}
\usepackage[margin=1in]{geometry}
\usepackage[english]{babel}

\usepackage{amsmath}
\usepackage{amsthm}
\usepackage{amssymb}
\usepackage{enumitem}
\usepackage{pgfplots}
\usepackage{graphicx}
\usepackage{multirow}
\usepackage{physics}
\usepackage{tabularx,booktabs}
\newcolumntype{Y}{>{\raggedright\arraybackslash}X}
\usepackage{subcaption}
\usepackage{tikz,pgfplots}
\pgfplotsset{compat=1.18}

\usepackage[hidelinks]{hyperref}
\usepackage{algorithm}
\usepackage{algpseudocode}
\usepgfplotslibrary{groupplots}
\usetikzlibrary{
  arrows.meta,
  calc,
  positioning,
  fit,
  backgrounds,
  decorations.pathreplacing,
  matrix
}
\usepackage{tikz}
\usetikzlibrary{positioning}
\newtheorem{theorem}{Theorem}[section]
\newtheorem{lemma}[theorem]{Lemma}
\newtheorem{proposition}[theorem]{Proposition}
\newtheorem{corollary}[theorem]{Corollary}

\theoremstyle{definition}
\newtheorem{definition}[theorem]{Definition}
\newtheorem{example}[theorem]{Example}

\theoremstyle{remark}
\newtheorem{remark}[theorem]{Remark}

\newcommand{\Id}{\mathrm{Id}}
\newcommand{\INT}{\mathrm{INT}}
\newcommand{\LP}{\mathrm{LP}}
\newcommand{\SH}{\mathrm{SH}}

\newcommand{\TV}{\mathrm{TV}}
\newcommand{\Vit}{\mathrm{Vit}}

\newcommand{\fit}{\mathrm{fit}}

\newcommand{\Def}{\mathrm{def}}

\newcommand{\aud}{\mathrm{aud}}
\title{Residual Centering and Error Bounds for Convex Approximation of Mixed-Integer Recourse}
\author{
Alban Kryeziu\\
\small Faculty of Economics and Business, University of Groningen\\
\small \texttt{a.kryeziu@rug.nl}\\
\small \href{https://orcid.org/0009-0009-5913-2415}
{ORCID: 0009-0009-5913-2415}
}
\date{}

\begin{document}
\maketitle

\begin{abstract}
We study convex approximations of mixed-integer recourse functions in
two-stage stochastic programming. For first-stage decisions, the relevant approximation error is the signed expected residual between the convex
approximation and the integer-recourse value function. On bounded uncertainty boxes, we identify residual conditions that yield deterministic bounds for this expectation error. The central condition is coordinate slice-centering, under which integration by parts gives an anisotropic total-variation bound and a mixed-derivative bound whose all-coordinate case is expressed through Vitali variation of the density. Exact centering, however, may be incompatible with convexity, even for totally unimodular ceiling recourse. We therefore use commuting coordinate projectors to decompose an arbitrary residual into a centered component and an explicit slice-mean defect. The resulting defect-adjusted bounds provide selectable error certificates and uniform first-stage decision-quality guarantees. We further establish sharp constants, extend the mixed-derivative estimate to nonsmooth densities, and develop tensor-product and geometric extensions of the framework, alongside a max-affine convex fitting and audit methodology.
\end{abstract}

\noindent\textbf{Keywords:} stochastic mixed-integer programming, mixed-integer recourse, convex approximation, residual error bounds, bounded variation, Vitali variation, slice-centering, max-affine fitting

\clearpage
\tableofcontents
\clearpage

\section{Introduction}
\label{sec:introduction}

Consider the two-stage stochastic mixed-integer program
\begin{align}
\min_{x\geq 0}\quad & c^\top x+\mathbb E_\omega[v^{\INT}(x,\omega)]
\label{eq:intro_smip}
\\
\text{s.t.}\quad & Ax=h,
\nonumber
\end{align}
where the second-stage value function is
\begin{equation}
\label{eq:intro_second_stage}
v^{\INT}(x,\omega)
=
\min_y
\left\{
q^\top y:
Wy\geq \omega-Tx,\;
y\in\mathbb Z^{n_2}_+\times\mathbb R^{n_3}_+
\right\}.
\end{equation}
The recourse variables represent recourse second-stage decisions after the
random vector $\omega$ has been observed. Throughout the paper, the first-stage
feasible set is assumed nonempty, and the recourse value is assumed finite and
integrable for every feasible first-stage decision.

Mixed-integer recourse models are useful because they represent genuinely
discrete recourse actions. They are difficult for the same reason. Integer variables destroy the convexity and regularity properties used by
classical decomposition methods. Even simple integer recourse leads to
nonsmooth and nonconvex value functions \cite{vlerk1}, and general
mixed-integer recourse is harder still \cite{schultz2,shapiro}. A common solution approach is to replace
$v^{\INT}$ by a tractable convex approximation and solve the resulting
first-stage problem
\cite{VanderVlerk2004,vanDerLaanEtAl2021}.

This replacement raises a basic question: what error should such an
approximation control? Pointwise error is natural, but it is not the quantity
that enters the first-stage objective. If $\widetilde v$ is a convex
approximation of $v^{\INT}$ and
$$
\varphi_x(\omega):=\widetilde v(x,\omega)-v^{\INT}(x,\omega),
$$
then the error in the expected recourse term is
$$
\widetilde Q(x)-Q(x)=\mathbb E_f[\varphi_x].
$$
Thus the decision-relevant object is the signed expected residual. Positive
and negative residuals may cancel under the density, while a smaller residual
with persistent bias can still perturb the first-stage objective. The question
studied in this paper is therefore not only how large the residual is, but
which structural properties of the residual make its expectation controllable. This expectation-error viewpoint is present in the shifted-LP and convex
approximation literature, where one typically starts from a particular
approximation and then proves a bound for the residual generated by that
construction. The departure in this paper is to reverse that order. We start
from an arbitrary signed residual and ask which residual structures imply
expectation control, independently of how the approximation was produced.

\subsection*{A residual-first view}

The existing convex-approximation literature is largely
approximation-first. One constructs a particular approximation, often through
shifted distributions, shifted LP relaxations, or related perturbations, and
then proves an expectation bound for the residual produced by that
construction. Convex approximations for complete integer recourse are developed
in \cite{vlerkthesis,VanderVlerk2004}. Uniform error bounds for totally
unimodular integer recourse are derived in \cite{ward2015}, with extensions
beyond the totally unimodular case in \cite{ward2015bb}. The periodic
mechanism behind these bounds is clarified in \cite{total1} through
total-variation estimates for periodic residual functions. Further extensions
treat performance assessment, mean-risk functionals, and random second-stage costs
\cite{ward_assessing,vanBeestenRomeijnders2020,vanBeestenRomeijnders2022}. Higher-order total-variation refinements based on higher-order
derivatives of the density, with applications to simple integer recourse,
are developed in \cite{vanDerLaanEtAl2018HigherOrder}. The mixed-derivative formulation developed here differs in being box-based,
multivariate, indexed by arbitrary selected coordinate sets, and coupled
to an explicit projector defect for noncentered residuals.

We take the reverse route. The primitive object is the signed residual
$$
\varphi=\widetilde v-v^{\INT},
$$
not the approximation architecture that generated it. This residual-first view
separates three issues that are usually intertwined: the approximation family,
the cancellation mechanism, and the density-variation certificate. It leads to
a structural question: which residuals have small signed expectation on a
bounded uncertainty box, and which of those residual structures can actually be
induced by a convex approximation of a mixed-integer recourse value function?

The answer developed here is coordinate slice-centering and its
defect-adjusted relaxation. Exact slice-centering identifies the cancellation
mechanism: it removes the boundary terms in integration by parts and gives
expectation bounds in terms of anisotropic total variation or
mixed-derivative/Vitali-type variation of the density. Exact centering,
however, is not a free normalization. It is a convex-realizability condition on
slice-average profiles of $v^{\INT}$. When this condition fails, the relevant
object is the projector decomposition
$$
\varphi=\mathcal P_I\varphi+\mathcal R_I\varphi,
$$
where the centered component is controlled by density variation and the
slice-mean defect is paid for explicitly.

This perspective differs from the prevailing decomposition literature in stochastic mixed-integer programming. Integer L-shaped cuts, disjunctive cuts, Fenchel cuts, Gomory-based cuts, split cuts, strengthened Benders cuts, and Lagrangian cuts are all designed primarily to solve stochastic integer programs or improve their computational performance (see \cite{ChenLuedtke2022LagrangianCuts}). Our goal is different. We do not propose a new class of decomposition cuts. Instead, we identify residual structures—whether from shifted relaxations, max-affine approximations, or other convex surrogates—that certify when a signed expectation is small.

Convex regression, max-affine fitting, input-convex neural networks, and
difference-of-convex methods provide flexible ways to represent or learn convex
functions
\cite{MagnaniBoyd2009ConvexPWL,seijo_sen,input_convex_neural_networks,KAZDA2024493}.
In this paper these tools are secondary. The main question is not how to
parameterize a convex approximation, but which residual structures make such an
approximation reliable in expectation.

The article \cite{alban1} establishes Vitali-variation error bounds for two-dimensional expected value functions under compact-support conditions. Its main object is the expected value function and its control through variation of the probability density. The present paper studies a different structural question: how properties of the signed residual itself determine the expectation error of a convex approximation of mixed-integer recourse. Accordingly, slice-centering, residual projectors, explicit centering defects, and their compatibility with convex approximation are treated here as the primary objects of the analysis.

\subsection*{Cancellation, defects, and convexity.}

The bounds rely on a simple but useful construction rather than on a prescribed
approximation formula. For the anisotropic total-variation bound, we build
one-dimensional primitives of the residual along selected coordinate
directions. When the residual has zero coordinate-slice averages, these
primitives have zero normal trace on the boundary of the box. The divergence
theorem, followed by the $BV$ integration-by-parts formula, then transfers the
problem from the residual to the directional variation of the density. For the
mixed-derivative bound, the analogous construction uses iterated orthant
primitives; the same slice-centering condition removes the boundary terms in
the repeated one-dimensional integrations. Thus slice-centering is the
cancellation mechanism behind both bounds. The coordinate averages,
projectors, residual envelopes, and density-variation quantities used to make
this precise are introduced in Section~\ref{sec:problem_setup}.

This centering condition should not be viewed as a harmless normalization. A
convex approximation is constrained through $\widetilde v$, whereas centering
is a condition on the difference $\widetilde v-v^{\INT}$. Since
$v^{\INT}$ is typically nonconvex, exact centering requires the slice averages
of $v^{\INT}$ to be compatible with a convex function of the remaining
coordinates. We show that this compatibility can already fail for a separable
totally unimodular ceiling-recourse value. Exact centering is therefore best
understood as the ideal cancellation case, not as a condition that can be
imposed generically on convex approximations.

This also clarifies the relation with shifted-LP theory. In one-dimensional
simple integer recourse, the half-shifted LP residual is a centered
unit-periodic sawtooth. On full unit cells, periodic zero-mean cancellation and
one-dimensional slice-centering coincide. That coincidence is special to the one-dimensional case. The
shifted-LP theory obtains bounds from the residual structure of a particular
approximation. Here the residual structure is abstracted first. On bounded
boxes and in dimension $d\geq2$, periodic zero mean over a lattice cell,
coordinate slice-centering on the box, and convex representability of a
centered profile are different requirements. The present theory is therefore
related to shifted-LP bounds through signed residual cancellation, but its
cancellation condition is box-based and approximation-independent rather than
periodic and construction-specific.

The resulting bounds are therefore defect-adjusted: the centered component is
controlled by density variation, while the slice-mean defect is priced
explicitly. Since the bounds are available for every nonempty direction set
$I$ and through two different variation estimates, the direction set and the
certificate can be selected after the residual and density have been specified.
This yields a best-certificate principle rather than a single prescribed bound.

The extended theory also identifies the scope and the limitations of the box
geometry. Tensor-product residuals and densities admit exact projector and
variation formulas, and a single smooth tensorized construction proves that
both variation constants are optimal. The mixed-derivative formulation is stable
under mollification once the classical derivative is replaced by a finite
distributional mixed-derivative measure of an admissible whole-space extension.
This gives a nonsmooth extension seminorm, localization bounds for full-support
laws, and an exact affine transport to parallelotopes. On a general Lipschitz
domain, the same argument yields only a domain-dependent first-order $BV$
substitute; the explicit box constants and the classical Vitali interpretation
are no longer preserved.

\subsection*{Contributions } 

The contributions of this paper are as follows.

\begin{itemize}
\item We develop a residual-first framework for signed expectation error on
bounded boxes. For residuals centered on an arbitrary nonempty direction set
$I$, we derive an anisotropic first-order $BV$ bound and a mixed-derivative
bound whose all-coordinate case is a smooth Vitali-variation bound.

\item We use commuting coordinate-average projectors to decompose every
residual as
$$
\varphi=\mathcal P_I\varphi+\mathcal R_I\varphi.
$$
The centered component is controlled by density variation and the
slice-average component enters as an explicit defect. Optimization over the
direction set and variation formulation gives a best-certificate principle and a
uniform first-stage decision-quality guarantee.

\item We derive exact tensor-product formulas for the projectors, residual
envelopes, coordinate variation, and mixed derivatives. A tensorized
construction proves that the coefficients $1/(2|I|)$ and $2^{-|I|}$ are sharp
for every nonempty $I$, and that the unit coefficient multiplying the defect
is also optimal.

\item We extend the mixed-derivative formulation to nonsmooth densities through a
whole-space extension seminorm. The same framework yields localization bounds
for full-support laws, exact affine transport to parallelotopes, and a
domain-dependent first-order $BV$ substitute on general Lipschitz domains.

\item We show that exact residual centering by a convex approximation is a
convex-realizability condition that may fail even for totally unimodular
ceiling recourse. We then translate the defect decomposition into a max-affine
fitting and audit framework and illustrate the resulting diagnostics
numerically.
\end{itemize}

The paper is organized as follows. Section~\ref{sec:problem_setup} develops the residual projectors and the exact,
defect-adjusted, sharp, and nonsmooth variation bounds.
Section~\ref{sec:exact_centering_obstruction} establishes the
convex-realizability obstruction. Sections~\ref{sec:framework_design_revised}
and~\ref{sec:numerics} present the fitting framework and numerical
illustration. The appendices contain the tensorization, sharpness,
mollification, geometric-extension, cut-generation, obstruction, and periodic
comparison details.

\section{Problem setup and residual-based variation bounds}
\label{sec:problem_setup}
 
This section develops the residual-side machinery used throughout the paper.
All estimates are stated first on the fixed box $S$, because this is the
uncertainty domain on which the residual is integrated and the natural domain
for the integration-by-parts arguments below. We introduce the coordinate
projectors and the residual envelopes, derive the exact and defect-adjusted
variation bounds, and then state the best-certificate, tensorization,
sharpness, and nonsmooth-extension consequences. The final part of the section
explains how the box estimates localize full-support laws and records the
precise geometric scope of the theory.

Let $S=\prod_{i=1}^d[a_i,b_i]\subset\mathbb R^d$ be the uncertainty box, and let
$u:=|S|^{-1}\mathbf 1_S$ denote the uniform density on $S$. 

We write
$[d]:=\{1,\ldots,d\}$. For $I\subseteq[d]$, write
$I=\{i_1<\cdots<i_k\}$ with $k=|I|$, and set
$\omega_I:=(\omega_i)_{i\in I}$, $\omega_{-I}:=(\omega_j)_{j\notin I}$,
$S_I:=\prod_{i\in I}[a_i,b_i]$, and
$S_{-I}:=\prod_{j\notin I}[a_j,b_j]$. If $I=\{i\}$, we write
$S_{-i}$ and $\omega_{-i}$.

Let $X:=\{x\in\mathbb R^{n_1}_+:Ax=h\}$ be the feasible first-stage set. For an
integrable function $\varphi:S\to\mathbb R$ and a probability density $f$ on
$S$, set
$$\mathbb E_f[\varphi]:=\int_S\varphi(\omega)f(\omega)\,d\omega, \text{ and } \mathbb E_u[\varphi]:=|S|^{-1}\int_S\varphi(\omega)\,d\omega$$ 
For $x\in X$,
define
$Q(x):=\mathbb E_f[v^{\INT}(x,\omega)]$ and
$\widetilde Q(x):=\mathbb E_f[\widetilde v(x,\omega)]$, with first-stage
objectives $F(x):=c^\top x+Q(x)$ and
$\widetilde F(x):=c^\top x+\widetilde Q(x)$.

The approximation residual is
\begin{equation}
\label{eq:residual_definition_section}
\varphi_x(\omega):=\widetilde v(x,\omega)-v^{\INT}(x,\omega),
\end{equation}
and therefore
\begin{equation}
\label{eq:expected_residual_identity_section}
\widetilde Q(x)-Q(x)=\mathbb E_f[\varphi_x].
\end{equation}
Thus the quantity that enters the first-stage objective is the signed expected
residual, not only the pointwise approximation error. In particular, if
$\sup_{x\in X}|\mathbb E_f[\varphi_x]|\le\varepsilon$, then the standard
decision-quality estimate gives
$$
0\le F(\widetilde x)-F(x^\star)\le 2\varepsilon,
\qquad
|F(x^\star)-\widetilde F(\widetilde x)|\le \varepsilon,
$$
for minimizers $x^\star\in\arg\min_X F$ and
$\widetilde x\in\arg\min_X\widetilde F$; see, for example,
\cite{ward2015,wardthesis}. We therefore seek bounds for
$\mathbb E_f[\varphi_x]$ in terms of variation properties of the density and
structural properties of the residual.

The two variation objects used below measure different aspects of the density.
The anisotropic total variation measures first-order coordinate variation,
whereas the Vitali variation, in the smooth case, measures the top-order mixed
derivative. The estimates are driven by the same residual-side cancellation
principle: coordinate slice-centering.

\subsection{Coordinate averages, residual projectors, and envelopes } 
\label{subsec:coordinate_projectors}

The cancellation mechanism used below is coordinate slice-centering. For
$i\in[d]$ and $\varphi\in L^1(S)$, define the coordinate-$i$ slice-average
profile by
$$
(\bar\Pi_i\varphi)(\omega_{-i})
:=
\frac{1}{b_i-a_i}
\int_{a_i}^{b_i}\varphi(\omega_{-i},t)\,dt,
\qquad
\text{for a.e. }\omega_{-i}\in S_{-i}.
$$
Here $\varphi(\omega_{-i},t)$ means that $t$ is inserted in the $i$th
coordinate. The lifted coordinate-average operator is
$$
(\Pi_i\varphi)(\omega):=(\bar\Pi_i\varphi)(\omega_{-i}).
$$
Thus $\Pi_i\varphi$ is constant in coordinate $i$ and equals the average of
$\varphi$ on the corresponding coordinate-$i$ slice.

We say that $\varphi$ is centered in direction $i$ if $\Pi_i\varphi=0$ a.e. on
$S$, or equivalently if
$$
\int_{a_i}^{b_i}\varphi(\omega_{-i},t)\,dt=0
\qquad
\text{for a.e. }\omega_{-i}\in S_{-i}.
$$
For a nonempty $I\subseteq[d]$, $\varphi$ is centered on $I$ if it is centered
in every direction $i\in I$.

For $I\subseteq[d]$, define
\begin{equation}
\label{eq:projectors_section}
\mathcal P_I:=\prod_{i\in I}(\Id-\Pi_i),
\qquad
\mathcal P_\varnothing:=\Id,
\qquad
\mathcal R_I:=\Id-\mathcal P_I.
\end{equation}
The product is independent of the ordering because the coordinate averages
commute. If $\Pi_J:=\prod_{j\in J}\Pi_j$ and $\Pi_\varnothing:=\Id$, then
$$
\mathcal P_I
=
\sum_{J\subset I}(-1)^{|J|}\Pi_J,
\qquad
\mathcal R_I
=
\sum_{\emptyset\neq J\subset I}(-1)^{|J|+1}\Pi_J .
$$

Thus $\mathcal P_I$ extracts the component that is annihilated by every
selected coordinate-average operator:
\[
\Pi_i\mathcal P_I\varphi=0,
\qquad i\in I.
\]
Equivalently, $\mathcal P_I\varphi$ is centered on every coordinate slice
indexed by $I$. The complementary projector
$\mathcal R_I=\Id-\mathcal P_I$ collects all residual components detected by
at least one of the selected coordinate averages. Every residual therefore
admits the exact decomposition
\[
\varphi
=
\mathcal P_I\varphi+\mathcal R_I\varphi.
\]

Figure~\ref{fig:projector_decomposition_compact} illustrates this identity
using an exact two-dimensional example. Let $S=[0,1]^2$, let
$I=\{1,2\}$, and define
\[
c(x,y)
:=
\sin(2\pi x)\sin(2\pi y),
\]
\[
d(x,y)
:=
0.35\cos(2\pi x)
+
0.25\sin(2\pi y)
+
0.10,
\]
and
\[
\varphi(x,y):=c(x,y)+d(x,y).
\]
Because both one-dimensional sine factors have zero mean,
\[
\Pi_1c=\Pi_2c=0.
\]
Moreover, $d$ consists only of a coordinate-$1$ component, a coordinate-$2$
component, and a constant component. Hence
\[
\mathcal P_{\{1,2\}}d=0.
\]
Consequently,
\[
\mathcal P_{\{1,2\}}\varphi=c,
\qquad
\mathcal R_{\{1,2\}}\varphi=d.
\]

\begin{figure}[t]
\centering

\definecolor{projectorblue}{RGB}{49,54,149}
\definecolor{projectorred}{RGB}{165,0,38}

\pgfplotsset{
  colormap={projectorsignedmap}{
    color=(projectorblue)
    color=(white)
    color=(projectorred)
  }
}

\pgfdeclarehorizontalshading{projectorsignedbar}{100bp}{
  color(0bp)=(projectorblue);
  color(50bp)=(white);
  color(100bp)=(projectorred)
}

\begin{tikzpicture}

\begin{groupplot}[
  group style={
    group size=3 by 1,
    horizontal sep=0.052\textwidth
  },
  width=0.272\textwidth,
  height=0.272\textwidth,
  scale only axis,
  view={0}{90},
  axis equal image,
  xmin=0,
  xmax=1,
  ymin=0,
  ymax=1,
  xtick={0,0.5,1},
  ytick={0,0.5,1},
  xticklabels={$0$,$\tfrac12$,$1$},
  yticklabels={$0$,$\tfrac12$,$1$},
  xlabel={$x$},
  tick label style={font=\scriptsize},
  label style={font=\scriptsize},
  title style={
    font=\small,
    align=center
  },
  point meta min=-1.5,
  point meta max=1.5,
  colormap name=projectorsignedmap,
  enlargelimits=false
]

\nextgroupplot[
  title={Original function},
  ylabel={$y$}
]
\addplot3[
  surf,
  shader=interp,
  domain=0:1,
  y domain=0:1,
  samples=51,
  samples y=51
]
{
  sin(360*x)*sin(360*y)
  +0.35*cos(360*x)
  +0.25*sin(360*y)
  +0.10
};

\nextgroupplot[
  title={Jointly centered component},
  yticklabels={}
]
\addplot3[
  surf,
  shader=interp,
  domain=0:1,
  y domain=0:1,
  samples=51,
  samples y=51
]
{
  sin(360*x)*sin(360*y)
};

\draw[black,semithick]
  (axis cs:0.5,0) -- (axis cs:0.5,1);
\draw[black,semithick]
  (axis cs:0,0.5) -- (axis cs:1,0.5);

\node[
  font=\scriptsize,
  fill=white,
  fill opacity=.82,
  text opacity=1,
  inner sep=1pt
] at (axis cs:0.25,0.25) {$+$};

\node[
  font=\scriptsize,
  fill=white,
  fill opacity=.82,
  text opacity=1,
  inner sep=1pt
] at (axis cs:0.75,0.25) {$-$};

\node[
  font=\scriptsize,
  fill=white,
  fill opacity=.82,
  text opacity=1,
  inner sep=1pt
] at (axis cs:0.25,0.75) {$-$};

\node[
  font=\scriptsize,
  fill=white,
  fill opacity=.82,
  text opacity=1,
  inner sep=1pt
] at (axis cs:0.75,0.75) {$+$};

\nextgroupplot[
  title={Lower-order remainder},
  yticklabels={}
]
\addplot3[
  surf,
  shader=interp,
  domain=0:1,
  y domain=0:1,
  samples=51,
  samples y=51
]
{
  0.35*cos(360*x)
  +0.25*sin(360*y)
  +0.10
};

\end{groupplot}

\begin{scope}[
  shift={($(group c2r1.south)+(0,-0.88cm)$)}
]
  \shade[shading=projectorsignedbar]
    (-0.34\textwidth,0)
    rectangle
    (0.34\textwidth,0.13cm);

  \draw[black!65,line width=0.25pt]
    (-0.34\textwidth,0)
    rectangle
    (0.34\textwidth,0.13cm);

  \draw[black!65,line width=0.25pt]
    (-0.34\textwidth,0)
    --
    (-0.34\textwidth,-0.06cm);

  \draw[black!65,line width=0.25pt]
    (0,0)
    --
    (0,-0.06cm);

  \draw[black!65,line width=0.25pt]
    (0.34\textwidth,0)
    --
    (0.34\textwidth,-0.06cm);

  \node[anchor=north,font=\scriptsize]
    at (-0.34\textwidth,-0.08cm)
    {$-1.5$};

  \node[anchor=north,font=\scriptsize]
    at (0,-0.08cm)
    {$0$};

  \node[anchor=north,font=\scriptsize]
    at (0.34\textwidth,-0.08cm)
    {$1.5$};
\end{scope}

\end{tikzpicture}

\caption{
Exact coordinate-projector decomposition on $S=[0,1]^2$ with
$I=\{1,2\}$. The original function in the left panel satisfies
$\varphi=c+d$. The middle panel is the jointly centered component
$c=\mathcal P_{\{1,2\}}\varphi$, and the right panel is the lower-order
remainder $d=\mathcal R_{\{1,2\}}\varphi$. The black lines mark the interior
zero sets $x=\tfrac12$ and $y=\tfrac12$ of $c$. All three panels use the same
signed scale $[-1.5,1.5]$.
}
\label{fig:projector_decomposition_compact}
\end{figure}
The example also clarifies the inclusion--exclusion role of the projector. The
centered term $c$ is a genuine two-coordinate interaction: averaging it in
either coordinate removes it completely. By contrast, the defect $d$ is built
from lower-order components that survive at least one coordinate average.
Indeed,
\[
\Pi_1d
=
0.25\sin(2\pi y)+0.10,
\qquad
\Pi_2d
=
0.35\cos(2\pi x)+0.10,
\]
and
\[
\Pi_1\Pi_2d=0.10.
\]
Therefore
\[
\mathcal P_{\{1,2\}}d
=
d-\Pi_1d-\Pi_2d+\Pi_1\Pi_2d
=
0.
\]
This exact example replaces a purely schematic interpretation: the middle
panel genuinely has zero coordinate-slice averages, while the right panel
genuinely belongs to the projector-defect space.

\begin{proposition}[Coordinate-average projector algebra]
\label{prop:coordinate_projector_algebra}
Let $I\subseteq[d]$. The coordinate-average operators $\Pi_i$ are bounded linear
contractions on $L^p(S)$ for every $1\le p\le\infty$. They are idempotent and
commute:
$$
\Pi_i^2=\Pi_i,
\qquad
\Pi_i\Pi_j=\Pi_j\Pi_i .
$$
Consequently, $\mathcal P_I$ and $\mathcal R_I$ are bounded linear projections
on $L^p(S)$ for every $1\le p\le\infty$, and
$$
\mathcal P_I\mathcal R_I=\mathcal R_I\mathcal P_I=0.
$$
Moreover, if $i\in I$, then $\Pi_i\mathcal P_I=0$, so
$\mathcal P_I\varphi$ is centered on $I$ for every $\varphi\in L^1(S)$.
For nonempty $I$,
$$
\mathcal R_I\varphi=0
\quad\Longleftrightarrow\quad
\varphi \text{ is centered on } I.
$$
Finally, if $\varphi\in W^{1,\infty}(S)$, then
$\mathcal P_I\varphi,\mathcal R_I\varphi\in W^{1,\infty}(S)$.
\end{proposition}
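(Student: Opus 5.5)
I would prove the statement by first establishing all properties at the level of the single operators $\Pi_i$, and then deducing the statements about $\mathcal P_I$ and $\mathcal R_I$ by pure algebra in the commutative algebra generated by $\{\Pi_i\}_{i\in I}$.

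\textbf{Properties of a single $\Pi_i$.} Boundedness and contractivity on $L^p(S)$ follow from Fubini--Tonelli and Jensen's inequality, applied on each slice. For $1\le p<\infty$,
\[
|(\bar\Pi_i\varphi)(\omega_{-i})|^p\le \frac{1}{b_i-a_i}\int_{a_i}^{b_i}|\varphi(\omega_{-i},t)|^p\,dt .
\]
Integrating first over $\omega_i\in[a_i,b_i]$, where the left side is constant and contributes the factor $b_i-a_i$, and then over $\omega_{-i}$, gives $\|\Pi_i\varphi\|_p\le\|\varphi\|_p$. The case $p=\infty$ is immediate. Measurability of $\bar\Pi_i\varphi$, and its a.e. definedness, come from Fubini, since $L^p(S)\subset L^1(S)$ on a bounded box.

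Idempotence holds because $\Pi_i\varphi$ is constant in $\omega_i$, and averaging a constant returns it.

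Commutation $\Pi_i\Pi_j=\Pi_j\Pi_i$ for $i\neq j$ holds because both sides equal
\[
\frac{1}{(b_i-a_i)(b_j-a_j)}\iint\varphi\,d\omega_i\,d\omega_j ,
\]
by Fubini on the integrable function $\varphi$ restricted to a.e. two-dimensional slice. This also justifies that $\mathcal P_I$ does not depend on the ordering of the factors.

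\textbf{Algebra for $\mathcal P_I$ and $\mathcal R_I$.} Each factor $\Id-\Pi_i$ is a bounded idempotent, since $(\Id-\Pi_i)^2=\Id-2\Pi_i+\Pi_i^2=\Id-\Pi_i$. The factors commute. Hence their product $\mathcal P_I$ is idempotent and bounded, with norm at most $2^{|I|}$. Consequently $\mathcal R_I=\Id-\mathcal P_I$ is also a bounded idempotent, and
\[
\mathcal P_I\mathcal R_I=\mathcal P_I-\mathcal P_I^2=0,
\]
and likewise $\mathcal R_I\mathcal P_I=0$.

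For $i\in I$, commute the factor $\Id-\Pi_i$ to the front. Then $\Pi_i\mathcal P_I=\Pi_i(\Id-\Pi_i)\prod_{j\ne i}(\Id-\Pi_j)=0$. This shows $\mathcal P_I\varphi$ is centered on $I$.

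\textbf{Characterization of $\mathcal R_I\varphi=0$.} If $\varphi$ is centered on $I$, then $\Pi_i\varphi=0$ for all $i\in I$. Expanding $\mathcal P_I\varphi$ via the inclusion--exclusion sum, every term with $J\neq\varnothing$ contains some factor $\Pi_j$, $j\in I$. Commuting that factor to act first kills the term, so $\mathcal P_I\varphi=\varphi$, i.e.\ $\mathcal R_I\varphi=0$. Conversely, if $\mathcal R_I\varphi=0$, then $\varphi=\mathcal P_I\varphi$, which is centered on $I$ by the previous step. Nonemptiness of $I$ is only needed so that "centered on $I$" is a nonvacuous notion consistent with $\mathcal R_\varnothing=0$.

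\textbf{Regularity and the expected obstacle.} The main step needing care is the $W^{1,\infty}$ claim, and I would handle it via Lipschitz functions. On the convex box $S$, $W^{1,\infty}(S)$ coincides with the space of (a.e. equal to) Lipschitz functions. If $\varphi$ is $L$-Lipschitz, then for any $\omega,\omega'$,
\[
|\Pi_i\varphi(\omega)-\Pi_i\varphi(\omega')|\le\frac{1}{b_i-a_i}\int_{a_i}^{b_i}|\varphi(\omega_{-i},t)-\varphi(\omega'_{-i},t)|\,dt\le L|\omega-\omega'| .
\]
Indeed, the difference depends only on $\omega_{-i}$ and $\omega'_{-i}$, and $|\omega_{-i}-\omega'_{-i}|\le|\omega-\omega'|$. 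Boundedness of $\Pi_i\varphi$ follows from contractivity on $L^\infty$. Thus $\Pi_i$ maps $W^{1,\infty}(S)$ to itself. Since $\mathcal P_I$ and $\mathcal R_I$ are finite linear combinations of compositions of the $\Pi_J$, both preserve $W^{1,\infty}(S)$. One could alternatively differentiate under the integral sign, but that requires justifying it for weak derivatives, so the Lipschitz route avoids that technicality.
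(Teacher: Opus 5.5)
Your proof is correct and follows the paper's argument step for step: Jensen and Fubini give contraction, idempotence and commutation of the $\Pi_i$, the commuting-projection algebra handles $\mathcal P_I$ and $\mathcal R_I$, and $\Pi_i(\Id-\Pi_i)=0$ gives centering. The only divergence is the $W^{1,\infty}$ claim, where you use the Lipschitz characterization on the convex box instead of the paper's weak identities $\partial_i(\Pi_i\varphi)=0$ and $\partial_j(\Pi_i\varphi)=\Pi_i(\partial_j\varphi)$; your route avoids justifying weak differentiation under the slice integral (which needs a cutoff in $\omega_i$, since $\Pi_i$ of a test function is not compactly supported in the interior), whereas the paper's route yields the explicit derivative formula.
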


\begin{proof}
The proof checks the standard properties of the coordinate-average operators in
sequence: contraction, idempotence, commutativity, and the centering identity
for $\mathcal P_I$.

Linearity is immediate. Jensen's inequality gives the $L^p$ contraction for
$1\le p<\infty$, and the $L^\infty$ contraction follows directly from the
definition. Since $\Pi_i\varphi$ is already independent of $\omega_i$, one has
$\Pi_i^2=\Pi_i$. The commutation relation follows from Fubini's theorem:
averaging in coordinates $i$ and $j$ can be performed in either order.

The factors $\Id-\Pi_i$ are commuting projections, hence their product
$\mathcal P_I$ is a projection. Therefore
$\mathcal R_I=\Id-\mathcal P_I$ is also a projection, and
$\mathcal P_I\mathcal R_I=\mathcal R_I\mathcal P_I=0$. Boundedness follows
from boundedness of the factors.

If $i\in I$, then
$$
\Pi_i\mathcal P_I
=
\Pi_i(\Id-\Pi_i)
\prod_{j\in I\setminus\{i\}}(\Id-\Pi_j)
=0,
$$
which proves that $\mathcal P_I\varphi$ is centered on $I$.

If $\mathcal R_I\varphi=0$, then $\varphi=\mathcal P_I\varphi$, and hence
$\varphi$ is centered on $I$. Conversely, if $\Pi_i\varphi=0$ for every
$i\in I$, then each factor $\Id-\Pi_i$ leaves $\varphi$ unchanged. Thus
$\mathcal P_I\varphi=\varphi$ and $\mathcal R_I\varphi=0$.

It remains only to record Sobolev stability. If
$\varphi\in W^{1,\infty}(S)$, then $\Pi_i\varphi$ is independent of
$\omega_i$, so $\partial_i(\Pi_i\varphi)=0$ weakly. For $j\neq i$, weak
differentiation under the integral gives
$$
\partial_j(\Pi_i\varphi)=\Pi_i(\partial_j\varphi).
$$
Since $\Pi_i$ is an $L^\infty$ contraction, $\Pi_i$ maps
$W^{1,\infty}(S)$ into itself. The same holds for finite sums and products of
the $\Pi_i$, and hence for $\mathcal P_I\varphi$ and
$\mathcal R_I\varphi$.
\end{proof}

The estimates below use three different measures of residual size, each adapted
to a different integration-by-parts argument. The quantity
$\Lambda_{\mathrm{pr}}^i$ measures the size of a one-dimensional primitive and
is used in the one-direction bound. The quantity $\Lambda_{\mathrm{sl}}^I$ is
the slice $L^1$ envelope used in the anisotropic $\TV_\infty$ estimate. The
quantity $\Lambda_{\mathrm{mix}}^I$ is the full $I$-slice integral used in the
mixed-derivative estimate. For $i\in[d]$, define
$$
\Lambda_{\mathrm{pr}}^i(\varphi)
:=
\operatorname*{ess\,sup}_{\omega_{-i}\in S_{-i}}
\sup_{t\in[a_i,b_i]}
\left|
\int_{a_i}^{t}\varphi(\omega_{-i},s)\,ds
\right|.
$$
For nonempty $I\subseteq[d]$, the coordinate-slice $L^1$ envelope is
$$
\Lambda_{\mathrm{sl}}^I(\varphi)
:=
\max_{i\in I}
\operatorname*{ess\,sup}_{\omega_{-i}\in S_{-i}}
\int_{a_i}^{b_i}|\varphi(\omega_{-i},t)|\,dt .
$$
When $I=[d]$, we write
$\Lambda_{\mathrm{sl}}(\varphi):=\Lambda_{\mathrm{sl}}^{[d]}(\varphi)$.

For the mixed-derivative estimates, the relevant envelope is the
$I$-coordinate slice integral
$$
\Lambda_{\mathrm{mix}}^I(\varphi)
:=
\operatorname*{ess\,sup}_{\omega_{-I}\in S_{-I}}
\int_{S_I}|\varphi(\omega_{-I},t_I)|\,dt_I .
$$
If $I=[d]$, then $S_{-I}$ is a singleton and
$$
\Lambda_{\mathrm{mix}}^{[d]}(\varphi)=\|\varphi\|_{L^1(S)}.
$$

\subsection{Density variation } 
\label{subsec:density_variation}

For $f\in BV(S)$, the coordinatewise anisotropic variation is
\begin{equation}
\label{eq:def_anisotropic_tv}
\TV_\infty(f;S):=\sum_{i=1}^d |D_i f|(S),
\end{equation}
where $D_i f$ is the $i$th distributional derivative measure of $f$ on $S$.
For $I\subseteq[d]$, we also write
$$
\TV_I(f;S):=\sum_{i\in I}|D_i f|(S).
$$
Throughout, distributional derivatives on the box are understood relative to
its interior. Thus, for $f\in BV(S)$, the notation $D_i f$ means the
distributional derivative on $S^\circ$, and
$$
|D_i f|(S)
\quad\text{is shorthand for}\quad
|D_i f|(S^\circ).
$$
Similarly, for a one-dimensional interval $J=[a,b]$,
$|Df|(J)$ means the relative variation $|Df|((a,b))$ unless a whole-space
extension is explicitly being considered. Consequently, the quantities
$\TV_I(f;S)$ and $\TV_\infty(f;S)$ do not include boundary jump measures.
Boundary-supported derivative mass appears only after a whole-space extension
has been introduced.

Equivalently,
$$
\TV_\infty(f;S)
=
\sup_{\substack{\psi\in C_c^1(\operatorname{int}S;\mathbb R^d)\\
\|\psi(\omega)\|_\infty\le1}}
\int_S f(\omega)\operatorname{div}\psi(\omega)\,d\omega .
$$
If $f\in C^1(\overline S)$, then
$\TV_\infty(f;S)=\int_S\|\nabla f(\omega)\|_1\,d\omega$. Thus, for smooth densities, the computed quantity is the $\ell^1$-gradient
integral, not the more common Euclidean gradient norm. It is equivalent to
the usual total variation of the gradient measure up to dimension-dependent
constants:
$$
|Df|(S)\le \TV_\infty(f;S)\le \sqrt d\,|Df|(S).
$$
Thus $\TV_\infty$ is not a different regularity assumption; it is the
coordinate-adapted form naturally paired with slice primitives.

The second density quantity measures regularity in a different way. Rather than
summing first-order coordinate derivative masses, Vitali variation captures the
mixed, alternating-vertex content of a function over partitions of $S$ into
axis-parallel boxes. For a pointwise-defined function $f:S\to\mathbb R$, let
$\mathfrak P(S)$ be the collection of finite families
$\mathcal Q=\{S_j\}_{j=1}^N$, where
$$
S_j=[a^j,b^j]
:=
\prod_{\ell=1}^d[a_\ell^j,b_\ell^j],
$$
the interiors of the boxes $S_j$ are pairwise disjoint, and
$$
S=\bigcup_{j=1}^N S_j.
$$
The Vitali variation is
\begin{equation}
\label{eq:def_vitali_variation}
V(f;S)
:=
\sup_{\mathcal Q\in\mathfrak P(S)}
\sum_{S_j\in\mathcal Q}|\sigma(f,S_j)|,
\end{equation}
where the quasivolume is the alternating vertex sum
\begin{equation}
\label{eq:def_quasivolume}
\sigma(f,S_j)
:=
\sum_{\nu\in\{0,1\}^d}
(-1)^{\nu_1+\cdots+\nu_d}
f\bigl(b^j+\nu\odot(a^j-b^j)\bigr).
\end{equation}
If $f\in C^d(\overline S)$ and
$\partial_1\cdots\partial_d f\in L^1(S)$, then
\begin{equation}
\label{eq:vitali_smooth_identity}
V(f;S)=\int_S|\partial_1\cdots\partial_d f(\omega)|\,d\omega .
\end{equation}

\begin{remark}[Scope of the Vitali definition]
\label{rem:vitali_scope}
The corner-sum definition of Vitali variation depends on the chosen pointwise
representative and is therefore not an $L^1$ notion. In the main text we use $V(f;S)$ only in the
smooth setting, where \eqref{eq:vitali_smooth_identity} applies.
\end{remark}

For the nonsmooth extension of the mixed-derivative formulation, we use
distributional mixed derivatives. If $U\subset\mathbb R^d$ is open, we write
$\mathcal M(U)$ for the finite signed Radon measures on $U$.

\begin{definition}[Distributional mixed derivative]
\label{def:distributional_mixed_derivative_main}
Let $U\subset\mathbb R^d$ be open, let
$I=\{i_1<\cdots<i_k\}\subset[d]$ be nonempty, and let
$h\in L^1_{\mathrm{loc}}(U)$. The distribution $D_Ih$ is defined by
$$
\langle D_Ih,\zeta\rangle
:=
(-1)^k\int_U h(x)\partial_I\zeta(x)\,dx,
\qquad
\zeta\in C_c^\infty(U).
$$
If this distribution is represented by a finite signed Radon measure, we use
the same notation $D_Ih\in\mathcal M(U)$ and denote its total variation by
$|D_Ih|(U)$. If $h$ is smooth, then
$$
D_Ih=\partial_Ih(x)\,dx.
$$
\end{definition}

If $d=1$ and $f\in C^1([a,b])$, then
$$
V(f;[a,b])
=
\int_a^b|f'(t)|\,dt
=
|Df|((a,b))
=
\TV_\infty(f;[a,b]).
$$
For a general $BV$ equivalence class, the distributional quantities are
representative-independent, whereas classical Jordan or Vitali variation is
a pointwise notion and may change after modifying a representative on a null
set. For $d\ge2$, they measure
different regularity. The anisotropic variation is first-order and additive in
the coordinate directions, while the smooth Vitali variation measures the
top-order mixed derivative. The resulting residual certificates also use
different envelopes, $\Lambda_{\mathrm{sl}}^I$ and
$\Lambda_{\mathrm{mix}}^I$, and neither certificate uniformly dominates the
other. Explicit product-density families exhibiting both possible orderings
are given in Appendix~\ref{subsec:appendix_branch_separation}.

\subsection{Exact centered anisotropic \texorpdfstring{$\TV_\infty$}{TV-infinity} bound } 
\label{subsec:exact_centered_tv_bounds}

The TV estimate uses a one-dimensional primitive bound to construct a vector
field with divergence equal to the centered residual and with zero normal trace
on the boundary.

\begin{lemma}[Half-oscillation inequality]
\label{lem:half_oscillation}
Let $g\in L^1([a,b])$ satisfy $\int_a^b g(t)\,dt=0$, and define
$G(t):=\int_a^t g(s)\,ds$. Then
\begin{equation}
\label{eq:half_oscillation}
\|G\|_{L^\infty([a,b])}
\le
\frac12\int_a^b |g(s)|\,ds.
\end{equation}
Moreover,
\begin{equation}
\label{eq:recentered_half_oscillation}
\inf_{c\in\mathbb R}\|G-c\|_{L^\infty([a,b])}
\le
\frac14\int_a^b |g(s)|\,ds.
\end{equation}
\end{lemma}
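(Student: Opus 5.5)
The plan is to split $g$ into its positive and negative parts and use the zero-mean condition to express $G(t)$ in two ways. Write $g=g^+-g^-$ with $g^\pm\ge 0$ and set $P:=\int_a^b g^+$ and $N:=\int_a^b g^-$. The hypothesis $\int_a^b g=0$ gives $P=N=\tfrac12\int_a^b|g|$.

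\emph{First inequality.} Since $G$ is absolutely continuous, it suffices to bound $|G(t)|$ for each fixed $t\in[a,b]$. On one side,
\[
G(t)\le\int_a^t g^+\le P .
\]
On the other side,
\[
G(t)\ge-\int_a^t g^-\ge -N .
\]
Hence $|G(t)|\le\max(P,N)=\tfrac12\int_a^b|g|$, which is \eqref{eq:half_oscillation}. One could equally use the identity $G(t)=-\int_t^b g$, but the positive/negative-part bound already suffices.

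\emph{Recentered inequality.} Here I would take $c$ to be the midpoint of the range of $G$. Let $M:=\max G$ and $m:=\min G$; both exist because $G$ is continuous on a compact interval. With $c=(M+m)/2$ we get
\[
\|G-c\|_\infty=\tfrac12(M-m).
\]
So the whole task reduces to the oscillation bound $M-m\le\tfrac12\int_a^b|g|$.

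To prove the oscillation bound, choose points $s,t$ with $G(s)=m$ and $G(t)=M$.
\begin{itemize}
\item If $s<t$, then $M-m=\int_s^t g\le\int_s^t g^+\le P$.
\item If $t<s$, then $M-m=-\int_t^s g\le\int_t^s g^-\le N$.
\end{itemize}
In either case $M-m\le\tfrac12\int_a^b|g|$, and therefore
\[
\inf_{c}\|G-c\|_\infty\le \tfrac14\int_a^b|g| .
\]

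The step requiring the most care is this oscillation bound. A naive estimate $M-m\le 2\|G\|_\infty$ only recovers the factor $\tfrac12$, so it is not enough. The improvement to $\tfrac14$ comes from the fact that the increment between the argmin and the argmax is controlled by a single sign part of $g$, not by $|g|$. That sign part carries exactly half the mass of $|g|$, precisely because $g$ has zero mean. Finally, the $L^\infty$ norms and the $\operatorname{ess\,sup}$ coincide with suprema here, since $G$ is continuous.
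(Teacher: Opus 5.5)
Your proof is correct. The first inequality is proved exactly as in the paper, through the sign-part decomposition and $\int_a^b g^+=\int_a^b g^-=\tfrac12\int_a^b|g|$.

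For the recentered bound you take a different route. You control the increment of $G$ between a minimizer $s$ and a maximizer $t$ directly by a single sign part of $g$, namely $\int_s^t g^+\le P$ or $\int_t^s g^-\le N$. In your argument the zero-mean hypothesis enters through $P=N$.

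The paper instead works with the variation of the primitive. It uses $\TV(G;[a,b])=\int_a^b|g|$ and argues that, because $G(a)=G(b)=0$, the graph must climb from $m$ to $M$ and come back. This gives $\TV(G;[a,b])\ge 2(M-m)$. There the zero-mean hypothesis enters through $G(b)=0$.

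Your version is more elementary and fully self-contained. It needs neither the identity $\TV(G)=\int|g|$ for absolutely continuous $G$ nor the somewhat informal \emph{matched return excursion} step. In the paper that step would be made rigorous by the partition through $a$, $s$, $t$, $b$ (in increasing order) together with $m\le 0\le M$.

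The paper's version, in turn, shows that only bounded variation and equal endpoint values are used. So the estimate $\operatorname{osc}_{[a,b]}G\le\tfrac12\TV(G;[a,b])$ holds for any such $G$, not only for primitives of zero-mean integrable functions. It also stays in the variation language used throughout the paper.
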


\begin{proof}
Write $g=g_+-g_-$, where $g_+=\max\{g,0\}$ and
$g_-=\max\{-g,0\}$. Since $g$ has zero mean,
$\int_a^b g_+=\int_a^b g_-=:A$, and therefore
$\int_a^b |g|=2A$. For every $t\in[a,b]$,
$$
G(t)=\int_a^t g_+(s)\,ds-\int_a^t g_-(s)\,ds,
$$
so $-A\le G(t)\le A$. This proves \eqref{eq:half_oscillation}.

For the recentered estimate, let $M:=\sup_{[a,b]}G$ and
$m:=\inf_{[a,b]}G$. Since $G(a)=G(b)=0$ and $G'=g$ a.e.,
$\TV(G;[a,b])=\int_a^b |g(s)|\,ds$. Any excursion from $m$ to $M$ must be
matched by a return excursion because $G$ starts and ends at the same value.
Hence $\TV(G;[a,b])\ge2(M-m)$. Therefore
$\operatorname{osc}_{[a,b]}G\le \frac12\int_a^b |g(s)|\,ds$. The best
constant approximation to a bounded function in $L^\infty$ has error equal to
one half of its oscillation, and the claim follows.
\end{proof}
The primitive constructed from the centered residual will serve as the vector
field in a Gauss--Green integration by parts. We use the following form of that
identity on the box.
\begin{lemma}[Gauss--Green identity on a box]
\label{lem:gg_bv_box}
Let $f\in BV(S)$ and let $\Psi\in\mathrm{Lip}(\overline S;\mathbb R^d)$. Then
\begin{equation}
\label{eq:gg_bv_box}
\int_S f\,\operatorname{div}\Psi\,d\omega
=
\int_{\partial S}\operatorname{Tr}(f)(\Psi\cdot n)\,d\mathcal H^{d-1}
-
\int_S \Psi\cdot d(Df).
\end{equation}
\end{lemma}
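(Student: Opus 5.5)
The plan is to reduce the identity to the classical Gauss--Green formula for $BV$ functions on Lipschitz domains, using two facts: a box is a bounded Lipschitz domain, and a Lipschitz vector field can be uniformly approximated by smooth fields. First, the case $\Psi\in C^1(\overline S;\mathbb R^d)$. Since $S$ is a bounded Lipschitz domain, every $f\in BV(S)$ has a trace $\operatorname{Tr}(f)\in L^1(\partial S,\mathcal H^{d-1})$. Moreover, the standard trace theorem (Evans--Gariepy, Ambrosio--Fusco--Pallara) gives, for every $\phi\in C^1(\mathbb R^d;\mathbb R^d)$,
\[
\int_S f\,\operatorname{div}\phi\,d\omega=-\int_S\phi\cdot d(Df)+\int_{\partial S}\operatorname{Tr}(f)\,(\phi\cdot n)\,d\mathcal H^{d-1}.
\]
Here $n$ is the outward unit normal, which is defined $\mathcal H^{d-1}$-a.e. on $\partial S$: it equals $\pm e_i$ on the open faces, and the lower-dimensional edges are $\mathcal H^{d-1}$-null. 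A $C^1$ field on $\overline S$ extends to a $C^1$ field on $\mathbb R^d$, for instance by reflection across the faces followed by a cutoff. This gives \eqref{eq:gg_bv_box} for such $\Psi$.

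Second, I pass to Lipschitz $\Psi$ by approximation. By Kirszbraun (or componentwise McShane), extend $\Psi$ to a Lipschitz field on $\mathbb R^d$ with the same constant $L$, and set $\Psi_\varepsilon:=\rho_\varepsilon*\Psi$. Then $\Psi_\varepsilon\to\Psi$ uniformly on $\overline S$, $\|\nabla\Psi_\varepsilon\|_\infty\le L$, and $\operatorname{div}\Psi_\varepsilon\to\operatorname{div}\Psi$ a.e. on $S$ by Rademacher's theorem and Lebesgue differentiation. Now let $\varepsilon\to0$ in each of the three terms.
\begin{itemize}
\item Boundary term: it converges because $\operatorname{Tr}(f)\in L^1(\partial S)$ and $\Psi_\varepsilon\cdot n\to\Psi\cdot n$ uniformly.
\item Measure term: $\int_S\Psi_\varepsilon\cdot d(Df)$ converges because $|Df|(S)<\infty$ and the convergence is uniform.
\item Volume term: $\int_S f\operatorname{div}\Psi_\varepsilon$ converges by dominated convergence, with dominating function $dL|f|\in L^1(S)$.
\end{itemize}

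The main obstacle is interpretive rather than technical. In this identity, $Df$ must be the derivative measure on the interior $S^\circ$, consistent with the convention fixed in Section~\ref{subsec:density_variation}. Any jump of $f$ at $\partial S$ is then carried entirely by the trace term, so no boundary mass is counted twice. I will also note that $\Psi$, being continuous, is defined everywhere on $\overline S$, so the integral $\int_S\Psi\cdot d(Df)$ is unambiguous even where $Df$ has singular parts.

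If one prefers to avoid quoting the general Lipschitz-domain trace theorem, there is a self-contained alternative for the box. Approximate $f$ strictly by $f_k\in C^\infty(\overline S)$ with $f_k\to f$ in $L^1$, $|Df_k|(S)\to|Df|(S)$ and $\operatorname{Tr}f_k\to\operatorname{Tr}f$ in $L^1(\partial S)$, which is possible by continuity of the trace under strict convergence. Apply the classical divergence theorem face by face, and then pass to the limit. Strict convergence gives weak-$*$ convergence $Df_k\rightharpoonup Df$ tested against the continuous field $\Psi$, and no mass escapes to $\partial S$, which is exactly what makes the limit in the measure term legitimate.
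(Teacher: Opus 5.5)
Your proof is correct and follows the same route as the paper, which invokes the standard $BV$ Gauss--Green/trace theorem on bounded Lipschitz domains (Leoni, Ch.~9) directly for Lipschitz fields; you instead quote the $C^1$ version and supply the McShane-extension-plus-mollification step yourself. One small remark: extending a $C^1$ field ``by reflection'' would need a higher-order reflection to remain $C^1$, but that step is unnecessary anyway, since your mollified fields $\Psi_\varepsilon$ already lie in $C^\infty(\mathbb R^d;\mathbb R^d)$.
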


\begin{proof}
Since $S$ is a bounded Lipschitz domain, $BV(S)$ admits a trace on
$\partial S$, and the Gauss--Green formula holds for $BV$ functions paired
with Lipschitz vector fields; see, for example,
\cite[Chapter~9]{giovanni_leoni_sobolev}. Equation
\eqref{eq:gg_bv_box} is the specialization to the box $S$.
\end{proof}

\begin{theorem}[$k$-direction anisotropic $\TV_\infty$ bound]
\label{thm:k_direction_tv_bound}
Let $I\subseteq[d]$ be nonempty and set $k:=|I|$. Let $f\in BV(S)$ be a
probability density on $S$, and let $\varphi\in W^{1,\infty}(S)$ be centered on
$I$. Then
\begin{equation}
\label{eq:k_direction_tv_bound}
|\mathbb E_f[\varphi]|
\le
\frac{1}{2k}
\Lambda_{\mathrm{sl}}^I(\varphi)
\TV_I(f;S).
\end{equation}
Consequently,
\begin{equation}
\label{eq:k_direction_tv_bound_tvinfty}
|\mathbb E_f[\varphi]|
\le
\frac{1}{2k}
\Lambda_{\mathrm{sl}}^I(\varphi)
\TV_\infty(f;S).
\end{equation}
If $\|\varphi\|_{L^\infty(S)}\le h$, then
\begin{equation}
\label{eq:k_direction_tv_bound_sup}
|\mathbb E_f[\varphi]|
\le
\frac{h}{2k}
\left(\max_{i\in I}(b_i-a_i)\right)
\TV_I(f;S).
\end{equation}
\end{theorem}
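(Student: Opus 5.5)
The plan is to represent $\varphi$ as the divergence of a vector field built from one-dimensional primitives, integrate by parts with Lemma~\ref{lem:gg_bv_box}, and read off the bound from Lemma~\ref{lem:half_oscillation}.

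\textbf{Construction of the field.} For each $i\in I$ define the coordinate primitive
$$
G_i(\omega):=\int_{a_i}^{\omega_i}\varphi(\omega_{-i},s)\,ds .
$$
Since $\varphi\in W^{1,\infty}(S)$, each $G_i$ is Lipschitz on $\overline S$ and satisfies $\partial_iG_i=\varphi$ a.e. Because $\varphi$ is centered in direction $i$, we have $G_i=0$ both on $\{\omega_i=a_i\}$ (trivially) and on $\{\omega_i=b_i\}$ (by centering). Set
$$
\Psi:=\frac1k\sum_{i\in I}G_i e_i .
$$
Then $\operatorname{div}\Psi=\frac1k\sum_{i\in I}\partial_iG_i=\varphi$. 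Moreover $\Psi\cdot n=0$ on $\partial S$: on faces normal to $e_j$ with $j\notin I$ the $j$-th component of $\Psi$ vanishes, and on faces normal to $e_i$ with $i\in I$ we have $G_i=0$.

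\textbf{Integration by parts.} Lemma~\ref{lem:gg_bv_box} then gives
$$
\mathbb E_f[\varphi]=-\int_S\Psi\cdot d(Df)=-\frac1k\sum_{i\in I}\int_S G_i\,d(D_if),
$$
the boundary term vanishing because $\Psi\cdot n=0$. Hence
$$
|\mathbb E_f[\varphi]|\le\frac1k\sum_{i\in I}\|G_i\|_{L^\infty}\,|D_if|(S).
$$
Since $G_i$ is continuous, its sup is a genuine pointwise supremum, so integrating it against a measure causes no issue.

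\textbf{Bounding the primitives.} Lemma~\ref{lem:half_oscillation}, applied on each slice $\omega_{-i}$ where $\varphi(\omega_{-i},\cdot)$ has zero integral, gives
$$
\|G_i\|_\infty\le\tfrac12\operatorname*{ess\,sup}_{\omega_{-i}}\int_{a_i}^{b_i}|\varphi(\omega_{-i},t)|\,dt\le\tfrac12\Lambda_{\mathrm{sl}}^I(\varphi).
$$
Summing over $i\in I$ yields \eqref{eq:k_direction_tv_bound}. The bound \eqref{eq:k_direction_tv_bound_tvinfty} follows from $\TV_I\le\TV_\infty$. For \eqref{eq:k_direction_tv_bound_sup}, use $\int_{a_i}^{b_i}|\varphi|\le h(b_i-a_i)$.

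\textbf{Main obstacle.} The delicate point is the a.e.-versus-everywhere issue. The centering holds only for a.e. $\omega_{-i}$, whereas the boundary trace and the sup bound must hold everywhere on $\overline S$. Because $\varphi$ is Lipschitz, we may take the continuous representative. The slice integral $\omega_{-i}\mapsto\int\varphi(\omega_{-i},t)\,dt$ is then continuous and vanishes a.e., hence everywhere. The same continuity upgrades the ess-sup bound on $G_i$ to a pointwise bound. This also justifies that $G_i$ is Lipschitz jointly in all variables, as required by Lemma~\ref{lem:gg_bv_box}.
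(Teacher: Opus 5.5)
Your proposal is correct and follows the paper's proof essentially step for step: the same field $\Psi=\frac1k\sum_{i\in I}G_ie_i$ with vanishing normal trace, the same Gauss--Green identity, and the same slice-wise application of the half-oscillation lemma. The paper's one extra step, replacing $f$ by $f-u$ using $\int_S\varphi=0$, is cosmetic, since $\Psi\cdot n=0$ already removes the boundary term. Your continuity argument, which upgrades a.e.\ centering and the ess-sup bound to everywhere statements, is a more careful version of what the paper uses implicitly when it passes to the Lipschitz representative.
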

\begin{proof}
We subtract the uniform density to remove the constant part of $f$. This does
not change the variation inside $S$, because $u$ has no distributional
derivative in the interior of the box. Let $g:=f-u$. Since $u$ is constant on
$S$, its distributional derivatives inside $S$ vanish. Hence
$$
D_i g=D_i f
\qquad\text{as measures on }S,
$$
for each coordinate $i$, and therefore
$$
\TV_I(g;S)=\TV_I(f;S).
$$

Because $I$ is nonempty and $\varphi$ is centered on $I$, choosing any
$i_0\in I$ and applying Fubini's theorem gives
$$
\int_S\varphi(\omega)\,d\omega=0.
$$
Thus $\mathbb E_u[\varphi]=0$, and hence
$$
\mathbb E_f[\varphi]
=
\mathbb E_f[\varphi]-\mathbb E_u[\varphi]
=
\int_S (f-u)\varphi\,d\omega
=
\int_S g\varphi\,d\omega.
$$

Define a vector field $\Psi:\overline S\to\mathbb R^d$ by
$$
\Psi_i(\omega)
:=
\frac1k
\int_{a_i}^{\omega_i}
\varphi(\omega_1,\ldots,\omega_{i-1},t,\omega_{i+1},\ldots,\omega_d)\,dt,
\qquad i\in I,
$$
and set $\Psi_i(\omega):=0$ for $i\notin I$. Since
$\varphi\in W^{1,\infty}(S)$, we use its Lipschitz representative on
$\overline S$. Then $\Psi\in\mathrm{Lip}(\overline S;\mathbb R^d)$. For
$i\in I$,
$$
\partial_i\Psi_i(\omega)=\frac1k\varphi(\omega)
\qquad\text{for a.e. }\omega\in S,
$$
and therefore
$$
\operatorname{div}\Psi
=
\sum_{i\in I}\partial_i\Psi_i
=
\varphi
\qquad\text{a.e. on }S.
$$

The key point is that the normal trace of $\Psi$ vanishes, and this is
exactly where the centering assumption is used. If $i\in I$, then $\Psi_i=0$ on the lower
face $\{\omega_i=a_i\}$. On the upper face $\{\omega_i=b_i\}$,
$$
\Psi_i(\omega_1,\ldots,b_i,\ldots,\omega_d)
=
\frac1k
\int_{a_i}^{b_i}
\varphi(\omega_1,\ldots,t,\ldots,\omega_d)\,dt,
$$
which is zero for a.e. $\omega_{-i}$ because $\varphi$ is centered in direction
$i$. If $j\notin I$, then $\Psi_j$ is identically zero. Since the outward
normal on the two faces orthogonal to $e_j$ is $\pm e_j$, it follows that
$$
\Psi\cdot n=0
\qquad
\mathcal H^{d-1}\text{-a.e. on }\partial S.
$$

It remains to estimate the size of $\Psi$. Fix $i\in I$ and a slice
$\omega_{-i}\in S_{-i}$ for which the centering identity holds. Set
$$
h_{\omega_{-i}}(t)
:=
\varphi(\omega_1,\ldots,\omega_{i-1},t,\omega_{i+1},\ldots,\omega_d).
$$
Then
$$
\int_{a_i}^{b_i}h_{\omega_{-i}}(t)\,dt=0.
$$
By Lemma~\ref{lem:half_oscillation},
$$
\sup_{s\in[a_i,b_i]}
\left|
\int_{a_i}^{s}h_{\omega_{-i}}(t)\,dt
\right|
\le
\frac12\int_{a_i}^{b_i}|h_{\omega_{-i}}(t)|\,dt.
$$
Thus
$$
\|\Psi_i\|_{L^\infty(S)}
\le
\frac{1}{2k}
\operatorname*{ess\,sup}_{\omega_{-i}\in S_{-i}}
\int_{a_i}^{b_i}
|\varphi(\omega_1,\ldots,\omega_{i-1},t,\omega_{i+1},\ldots,\omega_d)|\,dt.
$$
The factor $1/(2k)$ comes from averaging over the $k$ selected coordinate
directions and applying the half-oscillation estimate on each slice.
Taking the maximum over $i\in I$ gives
\begin{equation}
\label{eq:vector_field_sup_bound_tv}
\|\Psi\|_{L^\infty(S;\ell_\infty)}
\le
\frac{1}{2k}
\Lambda_{\mathrm{sl}}^I(\varphi).
\end{equation}

Since $g\in BV(S)$ and $\Psi\in\mathrm{Lip}(\overline S;\mathbb R^d)$, the
Gauss--Green identity yields
$$
\int_S g\varphi\,d\omega
=
\int_S g\,\operatorname{div}\Psi\,d\omega
=
\int_{\partial S}\operatorname{Tr}(g)(\Psi\cdot n)\,d\mathcal H^{d-1}
-
\int_S\Psi\cdot d(Dg).
$$
The boundary term vanishes. Since only coordinates in $I$ contribute,
$$
\left|\int_S g\varphi\,d\omega\right|
\le
\sum_{i\in I}\|\Psi_i\|_{L^\infty(S)}\,|D_i g|(S).
$$
Using
$$
\|\Psi_i\|_{L^\infty(S)}
\le
\|\Psi\|_{L^\infty(S;\ell_\infty)}
$$
and \eqref{eq:vector_field_sup_bound_tv}, we obtain
$$
|\mathbb E_f[\varphi]|
=
\left|\int_S g\varphi\,d\omega\right|
\le
\frac{1}{2k}
\Lambda_{\mathrm{sl}}^I(\varphi)
\TV_I(g;S)
=
\frac{1}{2k}
\Lambda_{\mathrm{sl}}^I(\varphi)
\TV_I(f;S).
$$
This proves \eqref{eq:k_direction_tv_bound}. Since
$\TV_I(f;S)\le \TV_\infty(f;S)$, \eqref{eq:k_direction_tv_bound_tvinfty}
follows.

Finally, if $\|\varphi\|_{L^\infty(S)}\le h$, then for every $i\in I$,
$$
\operatorname*{ess\,sup}_{\omega_{-i}\in S_{-i}}
\int_{a_i}^{b_i}
|\varphi(\omega_1,\ldots,\omega_{i-1},t,\omega_{i+1},\ldots,\omega_d)|\,dt
\le
(b_i-a_i)h.
$$
Therefore
$$
\Lambda_{\mathrm{sl}}^I(\varphi)
\le
\left(\max_{i\in I}(b_i-a_i)\right)h,
$$
and substitution into \eqref{eq:k_direction_tv_bound} gives
\eqref{eq:k_direction_tv_bound_sup}.
\end{proof}

\begin{corollary}[Bounded-residual anisotropic extension]
\label{cor:bounded_residual_tv_extension}
Let $I\subseteq[d]$ be nonempty and set $k:=|I|$. Let
$f\in BV(S)$ be a probability density, and let
$\varphi\in L^\infty(S)$ be centered on $I$. Then \eqref{eq:k_direction_tv_bound} and
\eqref{eq:k_direction_tv_bound_tvinfty} hold. If, in addition,
$\|\varphi\|_{L^\infty(S)}\le h$, then
\eqref{eq:k_direction_tv_bound_sup} also holds.
\end{corollary}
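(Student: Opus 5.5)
We would reduce the corollary to Theorem~\ref{thm:k_direction_tv_bound} by approximation. The only place where the theorem used $\varphi\in W^{1,\infty}(S)$ was to make the primitive field $\Psi$ Lipschitz, so that Lemma~\ref{lem:gg_bv_box} applies. For $\varphi\in L^\infty(S)$, the field $\Psi_i(\omega)=\frac1k\int_{a_i}^{\omega_i}\varphi(\ldots,t,\ldots)\,dt$ is still Lipschitz in $\omega_i$, but only measurable in the other coordinates. We therefore regularize $\varphi$ while preserving centering on $I$.

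The approximation goes as follows.
\begin{itemize}[leftmargin=*]
\item Choose $\varphi_n\in C^\infty(\overline S)$ with $\varphi_n\to\varphi$ in $L^1(S)$. Take $\|\varphi_n\|_{L^\infty}\le\|\varphi\|_{L^\infty}$ and, if possible, slice envelopes not exceeding those of $\varphi$. One way is to reflect $\varphi$ evenly across the faces of $S$ and mollify with a product kernel.
\item Replace $\varphi_n$ by $\mathcal P_I\varphi_n$. By Proposition~\ref{prop:coordinate_projector_algebra}, this function is centered on $I$ and lies in $W^{1,\infty}(S)$.
\item Since $\mathcal P_I$ is a bounded projection on $L^1$ and $\mathcal P_I\varphi=\varphi$, we get $\mathcal P_I\varphi_n\to\varphi$ in $L^1(S)$.
\end{itemize}
Applying Theorem~\ref{thm:k_direction_tv_bound} to $\mathcal P_I\varphi_n$ gives
\[
|\mathbb E_f[\mathcal P_I\varphi_n]|\le\frac1{2k}\Lambda^I_{\mathrm{sl}}(\mathcal P_I\varphi_n)\TV_I(f;S).
\]

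Next we pass to the limit on the left-hand side. A $BV$ density on a box is generally not bounded for $d\ge2$, so $L^1$ convergence of $\mathcal P_I\varphi_n$ alone does not give convergence of $\int f\,\mathcal P_I\varphi_n$. Instead we use uniform $L^\infty$ bounds. The mollified $\varphi_n$ are bounded by $\|\varphi\|_\infty$, hence $\mathcal P_I\varphi_n$ is bounded by $2^k\|\varphi\|_\infty$. Passing to a subsequence converging a.e., dominated convergence with dominating function $2^k\|\varphi\|_\infty f\in L^1$ gives $\mathbb E_f[\mathcal P_I\varphi_n]\to\mathbb E_f[\varphi]$.

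\textbf{Main obstacle: the envelope.} The difficulty is controlling $\Lambda^I_{\mathrm{sl}}(\mathcal P_I\varphi_n)$ by $\Lambda^I_{\mathrm{sl}}(\varphi)$ in the limit. The projector can inflate slice $L^1$ norms: for $i\in I$, the factor $\Id-\Pi_i$ is only bounded by $2$ on slices, and the factors $\Id-\Pi_j$, $j\ne i$, mix slices. Applying the projector to $\varphi_n$ therefore risks losing the constant.

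To avoid this, we would not project at all and would instead keep centering through the mollification. For each $i\in I$, the even reflection across the faces $\{\omega_i=a_i\}$ and $\{\omega_i=b_i\}$ is $2(b_i-a_i)$-periodic in $\omega_i$ with zero period integral. Convolving with a kernel in the $\omega_i$ variable therefore preserves zero slice averages only up to boundary leakage near the faces. This leakage is exactly what breaks the naive approach.

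The cleaner route is to avoid smoothing $\varphi$ and instead justify Gauss--Green directly for the field $\Psi$, which is in $L^\infty$ with $\operatorname{div}\Psi=\varphi\in L^\infty$, i.e.\ a bounded divergence-measure field.
\begin{itemize}[leftmargin=*]
\item The pairing of a bounded divergence-measure field with $BV$ functions (Anzellotti) gives
\[
\int_S g\,\operatorname{div}\Psi=\int_{\partial S}\operatorname{Tr}(g)\,[\Psi\cdot n]-\int_S(\Psi,Dg).
\]
Here the weak normal trace is computed slice by slice: it equals $\frac1k\int_{a_i}^{b_i}\varphi\,dt=0$ on the upper faces and $0$ on the lower faces.
\item The Anzellotti measure $(\Psi,Dg)$ satisfies $|(\Psi,D_ig)\text{-part}|\le\|\Psi_i\|_\infty|D_ig|$ in the coordinatewise sense. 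The slice bound $\|\Psi_i\|_\infty\le\frac1{2k}\Lambda^I_{\mathrm{sl}}(\varphi)$ still holds by Lemma~\ref{lem:half_oscillation}.
\end{itemize}
If this coordinatewise Anzellotti estimate is awkward, we would fall back to mollifying $g=f-u$ instead of $\varphi$. We take $g_\varepsilon\in C^\infty(\overline S)$ with $g_\varepsilon\to g$ strictly in each direction, so that $|D_ig_\varepsilon|(S)\to|D_ig|(S)$; this is possible on boxes with coordinatewise strict approximation. In that case Fubini and one-dimensional integration by parts along $\omega_i$ apply directly, because $\Psi_i$ is absolutely continuous in $\omega_i$ and vanishes at both endpoints on a.e.\ slice. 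We then pass to the limit using $\varphi\in L^\infty$ and $g_\varepsilon\to g$ in $L^1$. The final bound \eqref{eq:k_direction_tv_bound_sup} follows exactly as in the theorem.
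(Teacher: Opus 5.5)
Of your three routes, only the last one (the fallback) is a complete argument. It is correct, and it differs genuinely from the paper's proof.

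\textbf{The paper's route.} The paper does no regularization. It applies the $BV$ slicing theorem: for a.e.\ $\omega_{-i}$ the line restriction $f(\omega_{-i},\cdot)$ lies in $BV(a_i,b_i)$, and $\int_{S_{-i}}|Df_{\omega_{-i}}|((a_i,b_i))\,d\omega_{-i}=|D_if|(S)$. On each line it integrates by parts against $F_i(\omega_{-i},t)=\int_{a_i}^{t}\varphi(\omega_{-i},s)\,ds$, which is Lipschitz in $t$ and vanishes at both endpoints by centering. Averaging over $i\in I$ and applying Lemma~\ref{lem:half_oscillation} finishes the proof. There is no Gauss--Green formula, no subtraction of $u$, and no limit passage.

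\textbf{Your route.} You regularize the density instead of the residual. Take smooth $g_\varepsilon\to g$ in $L^1(S)$ with $\sum_{i\in I}|D_ig_\varepsilon|(S)\to\TV_I(g;S)$. Fubini and classical one-dimensional integration by parts against your $\Psi_i$ then give the bound with the same constant $1/(2k)$. Since $\varphi\in L^\infty(S)$, you can pass to the limit on the left-hand side.

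\textbf{What each buys.} You trade the slicing theorem for an approximation theorem. A small gain is that $\Psi_i$ is only ever paired with absolutely continuous measures, so essential-supremum bounds suffice. The paper, by contrast, needs $F_i$ to be continuous on each good line, because it integrates $F_i$ against possibly singular slice measures $Df_{\omega_{-i}}$.

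\textbf{What you still need to supply.} You assert the coordinatewise strict approximation but do not prove it. One line suffices, for example either of the following.
\begin{itemize}
\item Reflect $g$ evenly across the faces of $S$. The one-sided traces agree, so the reflected function $\tilde g$ satisfies $|D_i\tilde g|(\partial S)=0$. Mollifying gives $\int_S|\partial_ig_\varepsilon|\le|D_i\tilde g|(S+B_\varepsilon)\to|D_ig|(S)$.
\item Combine Anzellotti--Giaquinta strict approximation with Reshetnyak continuity for the integrand $\nu\mapsto|\nu_i|$.
\end{itemize}

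\textbf{What to drop.} In a write-up, remove the first two routes. Your diagnosis of why smoothing $\varphi$ fails is right: it destroys exact centering near the faces, and $\mathcal P_I$ can inflate $\Lambda_{\mathrm{sl}}^I$. The coordinatewise Anzellotti estimate is not the standard one. The standard bound $|(\Psi,Dg)|\le\|\Psi\|_\infty|Dg|$ is Euclidean and does not give $\TV_I$ with constant $1/(2k)$. Proving the coordinatewise version would require exactly the strict approximation of your fallback, so that route adds nothing.
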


\begin{proof}
Fix $i\in I$. For a.e. $\omega_{-i}\in S_{-i}$, define
\[
F_i(\omega_{-i},t)
:=
\int_{a_i}^{t}\varphi(\omega_{-i},s)\,ds,
\qquad
t\in[a_i,b_i].
\]
Then $F_i(\omega_{-i},\cdot)$ is absolutely continuous,
\[
\partial_tF_i(\omega_{-i},t)
=
\varphi(\omega_{-i},t)
\qquad
\text{for a.e. }t,
\]
and coordinate-$i$ centering gives
\[
F_i(\omega_{-i},a_i)
=
F_i(\omega_{-i},b_i)
=
0.
\]

By the slicing theorem for $BV$ functions
\cite{AmbrosioFuscoPallara2000,giovanni_leoni_sobolev}, for a.e.
$\omega_{-i}\in S_{-i}$ the slice
\[
f_{\omega_{-i}}(t):=f(\omega_{-i},t)
\]
belongs to $BV(a_i,b_i)$, and
\[
\int_{S_{-i}}
|Df_{\omega_{-i}}|((a_i,b_i))\,d\omega_{-i}
=
|D_i f|(S).
\]
Applying the one-dimensional $BV$ integration-by-parts formula on each
slice and using the zero endpoint values of $F_i$ gives
\[
\int_S f(\omega)\varphi(\omega)\,d\omega
=
-
\int_{S_{-i}}
\int_{(a_i,b_i)}
F_i(\omega_{-i},t)\,
dDf_{\omega_{-i}}(t)\,
d\omega_{-i}.
\]
Averaging this identity over $i\in I$ and taking absolute values yields
\[
|\mathbb E_f[\varphi]|
\le
\frac1k
\sum_{i\in I}
\|F_i\|_{L^\infty(S)}\,|D_i f|(S).
\]
Lemma~\ref{lem:half_oscillation} gives
\[
\|F_i\|_{L^\infty(S)}
\le
\frac12
\operatorname*{ess\,sup}_{\omega_{-i}\in S_{-i}}
\int_{a_i}^{b_i}
|\varphi(\omega_{-i},t)|\,dt
\le
\frac12\Lambda_{\mathrm{sl}}^I(\varphi).
\]
Therefore
\[
|\mathbb E_f[\varphi]|
\le
\frac{1}{2k}
\Lambda_{\mathrm{sl}}^I(\varphi)
\sum_{i\in I}|D_i f|(S),
\]
which proves \eqref{eq:k_direction_tv_bound}. The conclusions
\eqref{eq:k_direction_tv_bound_tvinfty} and
\eqref{eq:k_direction_tv_bound_sup} follow exactly as in
Theorem~\ref{thm:k_direction_tv_bound}.
\end{proof}

\begin{corollary}[One-direction primitive bound]
\label{cor:one_direction_primitive_bound}
Let $f\in BV(S)$ be a probability density, let
$\varphi\in L^\infty(S)$, and fix $i\in[d]$. If $\varphi$ is centered in
direction $i$, then
\begin{equation}
\label{eq:one_direction_primitive_bound}
|\mathbb E_f[\varphi]|
\le
\Lambda_{\mathrm{pr}}^i(\varphi)\,|D_i f|(S).
\end{equation}
\end{corollary}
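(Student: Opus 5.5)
The plan is to repeat the slicing argument from the proof of Corollary~\ref{cor:bounded_residual_tv_extension}, but with a single direction $i$. There is no averaging over $I$, and the primitive is bounded directly by its definition rather than through Lemma~\ref{lem:half_oscillation}.

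First, for a.e.\ $\omega_{-i}\in S_{-i}$, I define the primitive
\[
F_i(\omega_{-i},t):=\int_{a_i}^{t}\varphi(\omega_{-i},s)\,ds,\qquad t\in[a_i,b_i].
\]
Since $\varphi\in L^\infty(S)$, Fubini's theorem shows that the map $t\mapsto\varphi(\omega_{-i},t)$ is bounded and integrable for a.e.\ slice. Hence $F_i(\omega_{-i},\cdot)$ is absolutely continuous, with $\partial_tF_i=\varphi(\omega_{-i},\cdot)$ a.e. Centering in direction $i$ gives $F_i(\omega_{-i},a_i)=F_i(\omega_{-i},b_i)=0$ for a.e.\ $\omega_{-i}$. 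By the very definition of $\Lambda_{\mathrm{pr}}^i$,
\[
\sup_{t\in[a_i,b_i]}|F_i(\omega_{-i},t)|\le\Lambda_{\mathrm{pr}}^i(\varphi)
\]
for a.e.\ $\omega_{-i}$.

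Next, I apply the slicing theorem for $BV$ functions. For a.e.\ $\omega_{-i}$, the slice $f_{\omega_{-i}}$ lies in $BV(a_i,b_i)$, and $\int_{S_{-i}}|Df_{\omega_{-i}}|((a_i,b_i))\,d\omega_{-i}=|D_if|(S)$. On each good slice, the one-dimensional integration-by-parts formula for a $BV$ function against an absolutely continuous function that vanishes at both endpoints gives
\[
\int_{a_i}^{b_i}f_{\omega_{-i}}(t)\varphi(\omega_{-i},t)\,dt=-\int_{(a_i,b_i)}F_i(\omega_{-i},t)\,dDf_{\omega_{-i}}(t).
\]
No boundary terms appear, precisely because $F_i$ vanishes at $a_i$ and $b_i$. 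This is the only place where centering enters.

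Then I integrate over $\omega_{-i}$ using Fubini, take absolute values, and bound $|F_i|$ by $\Lambda_{\mathrm{pr}}^i(\varphi)$:
\[
|\mathbb E_f[\varphi]|\le\Lambda_{\mathrm{pr}}^i(\varphi)\int_{S_{-i}}|Df_{\omega_{-i}}|((a_i,b_i))\,d\omega_{-i}=\Lambda_{\mathrm{pr}}^i(\varphi)\,|D_if|(S).
\]
This is \eqref{eq:one_direction_primitive_bound}.

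The main obstacle is justifying the slice-wise integration by parts and its measurability in $\omega_{-i}$. The issues are that $f_{\omega_{-i}}$ is only $BV$ and $F_i(\omega_{-i},\cdot)$ is only absolutely continuous. Since $F_i$ is continuous, the pairing with the measure $Df_{\omega_{-i}}$ is representative-independent, and the formula follows from the standard product rule for $BV\times W^{1,1}$ in one dimension. For measurability I would rely on the slicing theorem cited in Corollary~\ref{cor:bounded_residual_tv_extension}. Alternatively, I could avoid slicing by mollifying $f$ on the interior and passing to the limit.
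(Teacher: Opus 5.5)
Your proposal is correct and matches the paper's proof step for step. Both use the slicewise primitive $F_i$, which vanishes at $a_i$ and $b_i$ by centering, then one-dimensional $BV$ integration by parts on each slice, the slicing identity $\int_{S_{-i}}|Df_{\omega_{-i}}|((a_i,b_i))\,d\omega_{-i}=|D_if|(S)$, and the identification of $\sup|F_i|$ with $\Lambda_{\mathrm{pr}}^i(\varphi)$; your remarks on the $BV\times W^{1,1}$ product rule and on measurability just make explicit what the paper leaves implicit.
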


\begin{proof}
For a.e. $\omega_{-i}\in S_{-i}$, define
\[
F_i(\omega_{-i},t)
:=
\int_{a_i}^{t}\varphi(\omega_{-i},s)\,ds.
\]
Then $F_i$ is absolutely continuous in $t$, its derivative is
$\varphi$, and centering gives
\[
F_i(\omega_{-i},a_i)
=
F_i(\omega_{-i},b_i)
=
0.
\]
Applying one-dimensional $BV$ integration by parts on coordinate-$i$
slices and then using the slicing theorem gives
\[
|\mathbb E_f[\varphi]|
\le
\|F_i\|_{L^\infty(S)}\,|D_i f|(S).
\]
By definition,
\[
\|F_i\|_{L^\infty(S)}
=
\Lambda_{\mathrm{pr}}^i(\varphi),
\]
which proves the result.
\end{proof}
We pause to note that this is an intrinsically signed mechanism. If a
nonnegative residual averages to zero on almost every coordinate slice, then it
must vanish almost everywhere. The estimates below therefore do not control
nonnegative approximation errors by cancellation; they control signed residuals
whose overestimation and underestimation regions balance on slices.
\begin{corollary}[All-axes anisotropic $\TV_\infty$ bound]
\label{cor:all_axes_tv_bound}
Let $f\in BV(S)$ be a probability density, and let
$\varphi\in L^\infty(S)$ be centered in every coordinate direction. Then
\begin{equation}
\label{eq:all_axes_tv_bound}
|\mathbb E_f[\varphi]|
\le
\frac{1}{2d}\Lambda_{\mathrm{sl}}(\varphi)\,\TV_\infty(f;S).
\end{equation}
If $\|\varphi\|_{L^\infty(S)}\le h$, then
\begin{equation}
\label{eq:all_axes_tv_bound_sup}
|\mathbb E_f[\varphi]|
\le
\frac{h}{2d}
\left(\max_{i\in[d]}(b_i-a_i)\right)
\TV_\infty(f;S).
\end{equation}
\end{corollary}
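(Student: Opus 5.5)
This follows by specializing Corollary~\ref{cor:bounded_residual_tv_extension} to the full direction set $I=[d]$, so $k=d$. The hypotheses match: $f\in BV(S)$ is a probability density, and $\varphi\in L^\infty(S)$ is centered in every coordinate direction, which is exactly centering on $I=[d]$. The corollary then gives \eqref{eq:k_direction_tv_bound} with $k=d$:
\[
|\mathbb E_f[\varphi]|\le \frac{1}{2d}\Lambda_{\mathrm{sl}}^{[d]}(\varphi)\,\TV_{[d]}(f;S).
\]

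Next I identify the two quantities on the right. By the notational convention in Section~\ref{subsec:coordinate_projectors}, $\Lambda_{\mathrm{sl}}^{[d]}=\Lambda_{\mathrm{sl}}$. By \eqref{eq:def_anisotropic_tv}, $\TV_{[d]}(f;S)=\sum_{i=1}^d|D_if|(S)=\TV_\infty(f;S)$. This yields \eqref{eq:all_axes_tv_bound}.

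For the sup-norm version, I either invoke \eqref{eq:k_direction_tv_bound_sup} from the same corollary with $I=[d]$, or argue directly. Directly: if $\|\varphi\|_{L^\infty}\le h$, then each slice integral $\int_{a_i}^{b_i}|\varphi(\omega_{-i},t)|\,dt$ is at most $(b_i-a_i)h$ for a.e.\ $\omega_{-i}$ (by Fubini). Hence $\Lambda_{\mathrm{sl}}(\varphi)\le h\max_i(b_i-a_i)$, and substituting into \eqref{eq:all_axes_tv_bound} gives \eqref{eq:all_axes_tv_bound_sup}.

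There is no real obstacle here. The only point needing care is the a.e.-slice statement behind the Fubini step, and that is already handled inside the cited corollary.
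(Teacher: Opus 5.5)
Your proposal is correct and is essentially the paper's proof: both specialize Corollary~\ref{cor:bounded_residual_tv_extension} to $I=[d]$, so that $k=d$, $\TV_{[d]}(f;S)=\TV_\infty(f;S)$ and $\Lambda_{\mathrm{sl}}^{[d]}(\varphi)=\Lambda_{\mathrm{sl}}(\varphi)$. Your direct derivation of the sup-norm version through $\Lambda_{\mathrm{sl}}(\varphi)\le h\max_i(b_i-a_i)$ is the same substitution the paper carries out in Theorem~\ref{thm:k_direction_tv_bound}.
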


\begin{proof}
Apply Corollary~\ref{cor:bounded_residual_tv_extension} with $I=[d]$.
Then $k=d$, $\TV_I(f;S)=\TV_\infty(f;S)$, and
$\Lambda_{\mathrm{sl}}^I(\varphi)=\Lambda_{\mathrm{sl}}(\varphi)$.
\end{proof}

\begin{remark}[Centered residuals must be signed]
\label{rem:positivity_centering_triviality}
If $\varphi\ge0$ a.e. on $S$ and $\varphi$ is centered in one coordinate
direction, then $\varphi=0$ a.e. on $S$. Thus the cancellation mechanism in
the centered bounds is inherently a signed-residual mechanism. It is not a
bound for nonnegative approximation errors; it is a bound for residuals whose
positive and negative parts cancel on coordinate slices.
\end{remark}


\subsection{Exact centered mixed-derivative and Vitali bounds } 
\label{subsec:exact_centered_mixed_derivative_vitali_bounds}

We next prove the mixed-derivative analogue of the anisotropic $\TV_\infty$
bound. The argument is again an integration-by-parts argument, but now the
primitive is taken successively in the selected coordinates. Throughout this
subsection, for a nonempty $I\subseteq[d]$ with ordered elements
$i_1<\cdots<i_k$, we write $\partial_I f:=\partial_{i_1}\cdots\partial_{i_k}f$.
In the all-axes case $I=[d]$, this is the top-order mixed derivative, and in
the smooth setting the Vitali identity gives
$$
V(f;S)=\int_S |\partial_{[d]} f(\omega)|\,d\omega .
$$
Thus the theorem below is a partial mixed-derivative bound, while its all-axes
case gives the Vitali-variation bound. 

\begin{theorem}[$k$-direction mixed-derivative residual bound]
\label{thm:k_direction_mixed_derivative_bound}
Let $\emptyset\neq I\subseteq[d]$, let $k=|I|$, let
$f\in C^k(\overline S)$, and let $\varphi\in L^\infty(S)$ be centered on $I$.
Define the upper-anchored $I$-orthant primitive
\begin{equation}
\label{eq:def_UI_primitive}
(U_I\varphi)(\omega)
:=
\int_{\omega_{i_1}}^{b_{i_1}}
\cdots
\int_{\omega_{i_k}}^{b_{i_k}}
\varphi(\omega_{-I},t_I)\,
dt_{i_k}\cdots dt_{i_1}.
\end{equation}
Then $U_I\varphi\in L^\infty(S)$ and
\begin{equation}
\label{eq:UI_primitive_sup_bound}
\|U_I\varphi\|_{L^\infty(S)}
\le
2^{-k}\Lambda_{\mathrm{mix}}^I(\varphi).
\end{equation}
Moreover,
\begin{equation}
\label{eq:k_direction_mixed_derivative_identity}
\int_S\varphi(\omega)f(\omega)\,d\omega
=
\int_S (U_I\varphi)(\omega)\,\partial_I f(\omega)\,d\omega .
\end{equation}
Consequently,
\begin{equation}
\label{eq:k_direction_mixed_derivative_bound}
\left|
\int_S\varphi(\omega)f(\omega)\,d\omega
\right|
\le
2^{-k}\Lambda_{\mathrm{mix}}^I(\varphi)
\int_S|\partial_I f(\omega)|\,d\omega .
\end{equation}
No sign or normalization assumption is imposed on $f$.
\end{theorem}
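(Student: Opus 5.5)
The plan is to reduce everything to repeated one-dimensional integration by parts in the selected coordinates, using centering to kill boundary terms, and to get the sup bound from a product-structure version of the half-oscillation Lemma~\ref{lem:half_oscillation}.

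\emph{Step 1: vanishing of $U_I\varphi$ on the faces.}
By Fubini, for a.e.\ $\omega_{-I}$ the function $t_I\mapsto\varphi(\omega_{-I},t_I)$ is integrable on $S_I$, and by centering it has zero integral along a.e.\ line in each direction $i\in I$. Since the iterated integrals over $[\omega_{i},b_{i}]$ can be taken in any order (Fubini), $U_I\varphi$ vanishes whenever some $\omega_{i}=a_{i}$, $i\in I$: integrating first in that coordinate over the full interval $[a_i,b_i]$ gives zero. It also vanishes trivially when $\omega_{i}=b_{i}$.

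\emph{Step 2: the sup bound \eqref{eq:UI_primitive_sup_bound}.}
Fix $\omega_{-I}$ and write $\psi(t_I)=\varphi(\omega_{-I},t_I)$. I would show by induction on $k$ that
\[
\Bigl|\int_{\prod_{i\in I}[\omega_i,b_i]}\psi\Bigr|\le 2^{-k}\|\psi\|_{L^1(S_I)} .
\]
The idea is to split off the last coordinate: set
\[
g(t_{i_k}):=\int_{\prod_{j<k}[\omega_{i_j},b_{i_j}]}\psi(\cdot,t_{i_k})
\]
for the iterated integral over the remaining coordinates.
\begin{itemize}
\item Then $g$ has zero integral over $[a_{i_k},b_{i_k}]$, by centering in $i_k$ (Fubini again).
\item Lemma~\ref{lem:half_oscillation}, in the form ``$|\int_s^b g|\le\frac12\int|g|$'' (since $\int_s^b g=-\int_a^s g$), gives $|U_I\varphi|\le\frac12\int|g(t)|\,dt$.
\item For each fixed $t_{i_k}$, the slice $\psi(\cdot,t_{i_k})$ is centered in $i_1,\dots,i_{k-1}$ for a.e.\ $t_{i_k}$. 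The induction hypothesis in dimension $k-1$ gives $|g(t)|\le 2^{-(k-1)}\int|\psi(\cdot,t)|$.
\end{itemize}
Integrating in $t$ yields $2^{-k}\|\psi\|_{L^1(S_I)}$. Taking the ess sup over $\omega_{-I}$ gives the claim. Measurability of $U_I\varphi$ follows since it is continuous in $\omega_I$ and measurable in $\omega_{-I}$.

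\emph{Step 3: the identity \eqref{eq:k_direction_mixed_derivative_identity}.}
I would integrate by parts once per coordinate. Define partial primitives $U_{\{i_1,\dots,i_m\}}\varphi$ for $m=0,\dots,k$, with $U_\varnothing\varphi=\varphi$. Then $-\partial_{i_m}U_{\{i_1..i_m\}}\varphi=U_{\{i_1..i_{m-1}\}}\varphi$ a.e., and each $U_{\{i_1..i_m\}}\varphi$ is absolutely continuous in $\omega_{i_m}$. On a.e.\ line in direction $i_m$,
\[
\int_{a}^{b} U_{m-1}\,\partial_{J_{m-1}}^{c}f\,dt=-\bigl[U_m\,\partial f\bigr]_a^b+\int_a^b U_m\,\partial_{i_m}(\cdot)\,dt .
\]
Here the derivative of $f$ carried so far is $\partial_{i_1}\cdots\partial_{i_{m-1}}f\in C^{1}$ in $\omega_{i_m}$. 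The boundary terms vanish because:
\begin{itemize}
\item $U_m=0$ at $\omega_{i_m}=b_{i_m}$ by definition;
\item $U_m=0$ at $\omega_{i_m}=a_{i_m}$ by centering in direction $i_m$ of $\varphi$ (Step 1 applied to the partial primitive, since centering in $i_m$ survives the integrations in the other coordinates).
\end{itemize}
After $k$ steps, and with Fubini to integrate over the remaining variables, the result is
\[
\int\varphi f=\int U_I\varphi\,\partial_I f .
\]
The sign bookkeeping works out with no stray sign: each step contributes $(-1)(-1)=+1$, one from the integration by parts and one from $-\partial U=U_{\text{prev}}$.

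\emph{Step 4: conclusion.}
The bound \eqref{eq:k_direction_mixed_derivative_bound} is then Hölder's inequality combined with Step 2. No sign or normalization condition on $f$ was used anywhere.

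The main obstacle I expect is the careful justification of Step 3 for merely $L^\infty$ $\varphi$. This requires the a.e.-line absolute continuity of the partial primitives, the persistence of centering under partial integration, and Fubini at each stage. I would handle it cleanly by noting that all functions involved are bounded on a bounded box and that $f\in C^k(\overline S)$ makes every product integrable.
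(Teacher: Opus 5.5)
Your proposal is correct and follows essentially the same route as the paper. The sup bound uses one application of the half-oscillation lemma per selected coordinate; your induction on $k$ just unrolls the paper's recursion $|H_r|\le\frac12\int|H_{r-1}|$. The identity comes from coordinate-by-coordinate integration by parts of the partial upper-anchored primitives, with upper-face terms vanishing by construction and lower-face terms by centering. The one step worth stating explicitly is that your procedure produces $\partial_{i_k}\cdots\partial_{i_1}f$, which equals $\partial_I f$ because mixed partials of a $C^k$ function commute.
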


\begin{proof}
Since $\varphi\in L^\infty(S)$ and $S$ is bounded, $\varphi\in L^1(S)$.
Therefore the primitive $U_I\varphi$ in \eqref{eq:def_UI_primitive} is well
defined for a.e. $\omega\in S$.

We first prove the $L^\infty$ estimate
\eqref{eq:UI_primitive_sup_bound}. Fix $\omega\in S$. The variables
$\omega_{-I}$ are fixed. Define
$$
H_0(t_{i_1},\ldots,t_{i_k})
:=
\varphi(\omega_{-I},t_{i_1},\ldots,t_{i_k})
\qquad\text{on }S_I.
$$
For $r=1,\ldots,k$, define recursively
$$
H_r(t_{i_{r+1}},\ldots,t_{i_k})
:=
\int_{\omega_{i_r}}^{b_{i_r}}
H_{r-1}(s,t_{i_{r+1}},\ldots,t_{i_k})\,ds,
$$
with the convention that $H_k$ is a scalar. Then
$$
H_k=(U_I\varphi)(\omega).
$$

We claim that, for each $r=1,\ldots,k$ and for a.e. value of the remaining
variables,
$$
\int_{a_{i_r}}^{b_{i_r}}
H_{r-1}(s,t_{i_{r+1}},\ldots,t_{i_k})\,ds=0.
$$
For $r=1$, this is exactly the centering of $\varphi$ in direction $i_1$, with
$\omega_{-I}$ and $t_{i_2},\ldots,t_{i_k}$ fixed. For general $r$,
$H_{r-1}$ is obtained from $\varphi$ by integrating over the previous
variables $i_1,\ldots,i_{r-1}$ with fixed lower limits
$\omega_{i_1},\ldots,\omega_{i_{r-1}}$. Hence, by Fubini's theorem,
$$
\begin{aligned}
&\int_{a_{i_r}}^{b_{i_r}}
H_{r-1}(s,t_{i_{r+1}},\ldots,t_{i_k})\,ds
\\
&\quad =
\int_{\omega_{i_1}}^{b_{i_1}}
\cdots
\int_{\omega_{i_{r-1}}}^{b_{i_{r-1}}}
\left(
\int_{a_{i_r}}^{b_{i_r}}
\varphi(\omega_{-I},t_{i_1},\ldots,t_{i_{r-1}},s,
t_{i_{r+1}},\ldots,t_{i_k})\,ds
\right)
dt_{i_{r-1}}\cdots dt_{i_1}.
\end{aligned}
$$
The inner integral is zero for a.e. value of the remaining variables because
$\varphi$ is centered in direction $i_r$. This proves the claim.

Since the full integral of $H_{r-1}$ in coordinate $i_r$ is zero, its
upper-anchored primitive is the negative of its lower-anchored primitive:
$$
\int_{\omega_{i_r}}^{b_{i_r}}
H_{r-1}(s,t_{i_{r+1}},\ldots,t_{i_k})\,ds
=
-
\int_{a_{i_r}}^{\omega_{i_r}}
H_{r-1}(s,t_{i_{r+1}},\ldots,t_{i_k})\,ds.
$$
We may therefore apply Lemma~\ref{lem:half_oscillation} to
$H_{r-1}$ as a function of the single variable $t_{i_r}$, with the remaining
variables fixed. This gives
$$
|H_r(t_{i_{r+1}},\ldots,t_{i_k})|
\le
\frac12
\int_{a_{i_r}}^{b_{i_r}}
|H_{r-1}(s,t_{i_{r+1}},\ldots,t_{i_k})|\,ds
$$
for a.e. $(t_{i_{r+1}},\ldots,t_{i_k})$. Iterating this estimate for
$r=1,\ldots,k$ yields
$$
|H_k|
\le
2^{-k}
\int_{S_I}|H_0(t_I)|\,dt_I.
$$
Since $H_k=(U_I\varphi)(\omega)$ and
$$
\int_{S_I}|H_0(t_I)|\,dt_I
=
\int_{S_I}|\varphi(\omega_{-I},t_I)|\,dt_I,
$$
we obtain
$$
|(U_I\varphi)(\omega)|
\le
2^{-k}
\int_{S_I}|\varphi(\omega_{-I},t_I)|\,dt_I.
$$
Taking the essential supremum over $\omega_{-I}\in S_{-I}$ gives
$$
\|U_I\varphi\|_{L^\infty(S)}
\le
2^{-k}
\operatorname*{ess\,sup}_{\omega_{-I}\in S_{-I}}
\int_{S_I}|\varphi(\omega_{-I},t_I)|\,dt_I
=
2^{-k}\Lambda_{\mathrm{mix}}^I(\varphi).
$$
This proves \eqref{eq:UI_primitive_sup_bound}.

We now prove the integration-by-parts identity
\eqref{eq:k_direction_mixed_derivative_identity}. For $r=0,\ldots,k$, define
the partial upper-anchored primitives by
$$
U_I^{(0)}\varphi:=\varphi,
$$
and, for $r=1,\ldots,k$,
$$
(U_I^{(r)}\varphi)(\omega)
:=
\int_{\omega_{i_1}}^{b_{i_1}}
\cdots
\int_{\omega_{i_r}}^{b_{i_r}}
\varphi(\omega_{-I},t_{i_1},\ldots,t_{i_r},
\omega_{i_{r+1}},\ldots,\omega_{i_k})\,
dt_{i_r}\cdots dt_{i_1}.
$$
Thus $U_I^{(k)}\varphi=U_I\varphi$.

For each $r=1,\ldots,k$, the map
$\omega_{i_r}\mapsto (U_I^{(r)}\varphi)(\omega)$ is absolutely continuous for
a.e. value of the remaining variables, and differentiation with respect to the
lower integration limit gives
\begin{equation}
\label{eq:partial_primitive_derivative_mixed}
\partial_{i_r}(U_I^{(r)}\varphi)
=
-\,U_I^{(r-1)}\varphi
\qquad\text{a.e. on }S.
\end{equation}

We also record the boundary traces needed for integration by parts. On the
upper face $\{\omega_{i_r}=b_{i_r}\}$, the integral in the $i_r$ variable has
zero length, and therefore
$$
U_I^{(r)}\varphi=0
\qquad\text{on }\{\omega_{i_r}=b_{i_r}\}.
$$
On the lower face $\{\omega_{i_r}=a_{i_r}\}$, the integral over the $i_r$
variable becomes an integral over the full interval $[a_{i_r},b_{i_r}]$.
More explicitly,
$$
\begin{aligned}
&(U_I^{(r)}\varphi)(\omega_1,\ldots,a_{i_r},\ldots,\omega_d)
\\
&\quad =
\int_{\omega_{i_1}}^{b_{i_1}}
\cdots
\int_{\omega_{i_{r-1}}}^{b_{i_{r-1}}}
\left(
\int_{a_{i_r}}^{b_{i_r}}
\varphi(\omega_{-I},t_{i_1},\ldots,t_{i_{r-1}},s,
\omega_{i_{r+1}},\ldots,\omega_{i_k})\,ds
\right)
dt_{i_{r-1}}\cdots dt_{i_1}.
\end{aligned}
$$
The inner integral is zero for a.e. value of the remaining variables because
$\varphi$ is centered in direction $i_r$. Hence
\begin{equation}
\label{eq:partial_primitive_boundary_zero}
U_I^{(r)}\varphi=0
\qquad\text{for a.e. point on }\{\omega_{i_r}=a_{i_r}\}.
\end{equation}
Thus $U_I^{(r)}\varphi$ has zero trace on both faces orthogonal to direction
$i_r$.

The identity is proved by induction, integrating by parts in one selected
coordinate at a time. At each step, the boundary term vanishes: on the upper
face by construction of the upper-anchored primitive, and on the lower face by
the slice-centering assumption. So for every $r=0,\ldots,k$,
\begin{equation}
\label{eq:iterated_mixed_identity}
\int_S\varphi(\omega)f(\omega)\,d\omega
=
\int_S
(U_I^{(r)}\varphi)(\omega)\,
\partial_{i_1}\cdots\partial_{i_r}f(\omega)\,d\omega,
\end{equation}
with the convention that $\partial_{i_1}\cdots\partial_{i_0}f:=f$. For
$r=0$, \eqref{eq:iterated_mixed_identity} is immediate.

Assume that \eqref{eq:iterated_mixed_identity} holds for $r-1$, where
$1\le r\le k$. Then
$$
\int_S\varphi f
=
\int_S
(U_I^{(r-1)}\varphi)
\partial_{i_1}\cdots\partial_{i_{r-1}}f.
$$
Using \eqref{eq:partial_primitive_derivative_mixed}, this becomes
$$
\int_S\varphi f
=
-\int_S
\partial_{i_r}(U_I^{(r)}\varphi)
\partial_{i_1}\cdots\partial_{i_{r-1}}f.
$$
Since $f\in C^k(\overline S)$ and $U_I^{(r)}\varphi$ is absolutely continuous
in direction $i_r$ for a.e. value of the other variables, Fubini's theorem and
one-dimensional integration by parts in the variable $\omega_{i_r}$ give
$$
\begin{aligned}
\int_S\varphi f
&=
-\int_{S_{-i_r}}
\left[
(U_I^{(r)}\varphi)
\partial_{i_1}\cdots\partial_{i_{r-1}}f
\right]_{\omega_{i_r}=a_{i_r}}^{b_{i_r}}
\,d\omega_{-i_r}
\\
&\quad
+
\int_S
(U_I^{(r)}\varphi)
\partial_{i_r}
\bigl(
\partial_{i_1}\cdots\partial_{i_{r-1}}f
\bigr)
\,d\omega.
\end{aligned}
$$
The boundary term vanishes by the two face identities above. Since $f$ is
$C^k$, the mixed partial derivatives commute, and therefore
$$
\partial_{i_r}
\bigl(
\partial_{i_1}\cdots\partial_{i_{r-1}}f
\bigr)
=
\partial_{i_1}\cdots\partial_{i_r}f.
$$
Thus
$$
\int_S\varphi f
=
\int_S
(U_I^{(r)}\varphi)
\partial_{i_1}\cdots\partial_{i_r}f.
$$
This proves the induction step. Taking $r=k$ gives
$$
\int_S\varphi(\omega)f(\omega)\,d\omega
=
\int_S
(U_I\varphi)(\omega)\partial_I f(\omega)\,d\omega,
$$
which proves \eqref{eq:k_direction_mixed_derivative_identity}.

Finally, Hölder's inequality and \eqref{eq:UI_primitive_sup_bound} imply
$$
\begin{aligned}
\left|
\int_S\varphi(\omega)f(\omega)\,d\omega
\right|
&=
\left|
\int_S
(U_I\varphi)(\omega)\partial_I f(\omega)\,d\omega
\right|
\\
&\le
\|U_I\varphi\|_{L^\infty(S)}
\int_S|\partial_I f(\omega)|\,d\omega
\\
&\le
2^{-k}\Lambda_{\mathrm{mix}}^I(\varphi)
\int_S|\partial_I f(\omega)|\,d\omega.
\end{aligned}
$$
This proves \eqref{eq:k_direction_mixed_derivative_bound}.
\end{proof}

\begin{remark}[Analytic scope of the mixed-derivative identity]
\label{rem:analytic_scope_mixed_derivative}
Theorem~\ref{thm:k_direction_mixed_derivative_bound} is an analytic statement.
Its proof does not use nonnegativity or normalization of $f$. Thus the same
identity and estimate hold for any $g\in C^k(\overline S)$ after replacing
$f$ by $g$. This point is used in the mollification argument: a globally
mollified extension is smooth on $S$, but its restriction to $S$ need not be a
probability density.
\end{remark}

\begin{corollary}[All-axes mixed-derivative and Vitali bound]
\label{cor:all_axes_vitali_bound}
Let $f\in C^d(\overline S)$ and let $\varphi\in L^\infty(S)$ be centered on
$[d]$. Then $U_{[d]}\varphi\in L^\infty(S)$ and
$$
\|U_{[d]}\varphi\|_{L^\infty(S)}
\le
2^{-d}\|\varphi\|_{L^1(S)}.
$$
Moreover,
$$
\int_S\varphi(\omega)f(\omega)\,d\omega
=
\int_S
(U_{[d]}\varphi)(\omega)\,\partial_{[d]}f(\omega)\,d\omega,
$$
and therefore
$$
\left|
\int_S\varphi(\omega)f(\omega)\,d\omega
\right|
\le
2^{-d}\|\varphi\|_{L^1(S)}
\int_S|\partial_{[d]}f(\omega)|\,d\omega.
$$
In the smooth Vitali setting, where
$$
V(f;S)=\int_S|\partial_{[d]}f(\omega)|\,d\omega,
$$
this gives
\begin{equation}
\label{eq:all_axes_vitali_bound}
\left|
\int_S\varphi(\omega)f(\omega)\,d\omega
\right|
\le
2^{-d}\|\varphi\|_{L^1(S)}V(f;S).
\end{equation}
\end{corollary}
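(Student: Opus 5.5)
This corollary is the specialization $I=[d]$ of Theorem~\ref{thm:k_direction_mixed_derivative_bound}, so I would not redo any integration by parts. I will take $I=[d]$, so that $k=d$, $S_I=S$, and $S_{-I}$ is a single point. The hypotheses $f\in C^d(\overline S)$ and $\varphi\in L^\infty(S)$ centered on $[d]$ are then exactly those of the theorem with $k=d$.

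The first step is to identify the envelope. Since $S_{-[d]}$ is a singleton, the essential supremum in the definition of $\Lambda_{\mathrm{mix}}^{[d]}$ is trivial, and
\[
\Lambda_{\mathrm{mix}}^{[d]}(\varphi)=\int_S|\varphi(t)|\,dt=\|\varphi\|_{L^1(S)},
\]
as already recorded after the definition of the envelopes. Substituting this into \eqref{eq:UI_primitive_sup_bound} gives $U_{[d]}\varphi\in L^\infty(S)$ together with the bound $2^{-d}\|\varphi\|_{L^1(S)}$ on its norm.

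The identity \eqref{eq:k_direction_mixed_derivative_identity} with $\partial_{[d]}f=\partial_1\cdots\partial_d f$ is exactly the stated representation. The estimate \eqref{eq:k_direction_mixed_derivative_bound} then yields the bound with $\int_S|\partial_{[d]}f|$.

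For the Vitali statement, I would note that $f\in C^d(\overline S)$ makes $\partial_{[d]}f$ continuous on the compact set $\overline S$, hence in $L^1(S)$. So \eqref{eq:vitali_smooth_identity} applies and replaces the integral by $V(f;S)$, which gives \eqref{eq:all_axes_vitali_bound}.

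There is no real obstacle here. The only point to check is that the all-axes case does not conflict with the ordering and essential-supremum conventions used in the theorem. Because $S_{-I}$ is a point, it does not, and the orthant primitive integrates over all $d$ coordinates in the order $1,\dots,d$.
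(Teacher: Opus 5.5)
Your proposal is correct and matches the paper's proof: specialize Theorem~\ref{thm:k_direction_mixed_derivative_bound} to $I=[d]$, use that $S_{-[d]}$ is a singleton so that $\Lambda_{\mathrm{mix}}^{[d]}(\varphi)=\|\varphi\|_{L^1(S)}$, and then apply the smooth Vitali identity \eqref{eq:vitali_smooth_identity} for the final inequality. Your remark that $\partial_{[d]}f$ is continuous on the compact set $\overline S$, and hence integrable, is a correct justification of the hypothesis of that identity, which the paper leaves implicit.
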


\begin{proof}
Apply Theorem~\ref{thm:k_direction_mixed_derivative_bound} with $I=[d]$.
Then $|I|=d$, $S_I=S$, and $S_{-I}$ is a singleton, so
$$
\Lambda_{\mathrm{mix}}^{[d]}(\varphi)
=
\int_S|\varphi(\omega)|\,d\omega
=
\|\varphi\|_{L^1(S)}.
$$
The primitive in the theorem is $U_{[d]}\varphi$, and the displayed estimates
follow directly from the primitive bound, the integration-by-parts identity,
and the mixed-derivative estimate in Theorem~\ref{thm:k_direction_mixed_derivative_bound}.
The final inequality follows from the smooth Vitali identity.
\end{proof}

\begin{remark}[Partial mixed derivatives and Vitali variation]
\label{rem:partial_mixed_and_vitali_bound}
Theorem~\ref{thm:k_direction_mixed_derivative_bound} is stated for an arbitrary
nonempty $I\subseteq[d]$ because the same integration-by-parts argument gives
partial mixed-derivative bounds. The classical Vitali variation appears only
in the all-axes case $I=[d]$, where $\partial_I f=\partial_{[d]}f$.
\end{remark}

\subsection{Defect-adjusted projector bounds } 
\label{subsec:defect_adjusted_projector_bounds}

The preceding estimates are exact-centered estimates. They quantify the
cancellation obtained when the residual has zero average on the selected
coordinate slices. For convex approximation, this cannot generally be imposed: convexity
constrains $\widetilde v$, not the difference $\widetilde v-v^{\INT}$. A
convex approximation induces the residual
$\varphi=\widetilde v-v^{\INT}$, and there is no reason for this residual to
be exactly centered.

The projector decomposition gives the corresponding general statement. For
every nonempty $I\subseteq[d]$,
$$
\varphi=\mathcal P_I\varphi+\mathcal R_I\varphi,
$$
where $\mathcal P_I\varphi$ is centered on $I$ and $\mathcal R_I\varphi$ is the
slice-mean defect. The exact-centered bounds apply to the first component; the
second component is paid for directly.

\begin{theorem}[Defect-adjusted projector bounds]
\label{thm:defect_adjusted_projector_bounds}
Fix a nonempty $I\subseteq[d]$ and write $k=|I|$.

\emph{(i) Anisotropic $\TV_\infty$ bound } 
If $f\in BV(S)$ is a probability density and
$\varphi\in L^\infty(S)$, then
\begin{equation}
\label{eq:defect_adjusted_tv_bound}
|\mathbb E_f[\varphi]|
\le
\frac{1}{2k}
\Lambda_{\mathrm{sl}}^I(\mathcal P_I\varphi)
\TV_I(f;S)
+
\|\mathcal R_I\varphi\|_{L^\infty(S)}.
\end{equation}

\emph{(ii) Smooth mixed-derivative bound } 
If $f\in C^k(\overline S)$ is a probability density and
$\varphi\in L^\infty(S)$, then
\begin{equation}
\label{eq:defect_adjusted_mixed_derivative_bound}
|\mathbb E_f[\varphi]|
\le
2^{-k}
\Lambda_{\mathrm{mix}}^I(\mathcal P_I\varphi)
\int_S|\partial_I f(\omega)|\,d\omega
+
\|\mathcal R_I\varphi\|_{L^\infty(S)}.
\end{equation}

If $\mathcal R_I\varphi=0$, equivalently if $\varphi$ is centered on $I$, the
corresponding exact-centered bound is recovered. For $I=[d]$, the second
estimate is the smooth defect-adjusted Vitali bound.
\end{theorem}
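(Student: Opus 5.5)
The plan is to split the expectation along the projector decomposition of Proposition~\ref{prop:coordinate_projector_algebra} and treat the two pieces separately. Since $\varphi\in L^\infty(S)$ and $\mathcal P_I,\mathcal R_I$ are bounded projections on $L^\infty(S)$, both $\mathcal P_I\varphi$ and $\mathcal R_I\varphi$ lie in $L^\infty(S)$. By linearity of the integral,
\[
\mathbb E_f[\varphi]=\mathbb E_f[\mathcal P_I\varphi]+\mathbb E_f[\mathcal R_I\varphi],
\]
so the triangle inequality reduces the claim to bounding each term.

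For the centered term, Proposition~\ref{prop:coordinate_projector_algebra} gives $\Pi_i\mathcal P_I\varphi=0$ for every $i\in I$, so $\mathcal P_I\varphi$ is a bounded residual centered on $I$. In case (i) we apply Corollary~\ref{cor:bounded_residual_tv_extension} to $\mathcal P_I\varphi$. This yields
\[
|\mathbb E_f[\mathcal P_I\varphi]|\le \tfrac{1}{2k}\Lambda_{\mathrm{sl}}^I(\mathcal P_I\varphi)\TV_I(f;S).
\]
The corollary is used rather than Theorem~\ref{thm:k_direction_tv_bound} because only $L^\infty$ regularity is assumed. In case (ii) we apply Theorem~\ref{thm:k_direction_mixed_derivative_bound} to $\mathcal P_I\varphi$ with $f\in C^k(\overline S)$. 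This gives $2^{-k}\Lambda_{\mathrm{mix}}^I(\mathcal P_I\varphi)\int_S|\partial_If|$.

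For the defect term, we use that $f$ is a probability density, so $f\ge0$ and $\int_Sf=1$. Hence
\[
|\mathbb E_f[\mathcal R_I\varphi]|\le \int_S|\mathcal R_I\varphi|\,f\le\|\mathcal R_I\varphi\|_{L^\infty(S)}.
\]
This is the only place where positivity and normalization of $f$ enter, which is why the statement requires a probability density in (ii) even though Theorem~\ref{thm:k_direction_mixed_derivative_bound} does not. Adding the two estimates gives \eqref{eq:defect_adjusted_tv_bound} and \eqref{eq:defect_adjusted_mixed_derivative_bound}.

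For the closing remarks: if $\mathcal R_I\varphi=0$, then $\mathcal P_I\varphi=\varphi$, and Proposition~\ref{prop:coordinate_projector_algebra} identifies this with centering on $I$, so the exact-centered bounds reappear. For $I=[d]$ with smooth $f$, the identity \eqref{eq:vitali_smooth_identity} replaces $\int_S|\partial_{[d]}f|$ by $V(f;S)$.

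There is no real obstacle. The one point to check is that $\mathcal P_I\varphi$ meets the hypotheses of the cited centered results, namely that it is bounded and centered on $I$ almost everywhere. Both follow from the $L^\infty$ contraction property and the identity $\Pi_i\mathcal P_I=0$.
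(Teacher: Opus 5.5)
Your proposal is correct and follows the paper's proof essentially step for step. You split $\varphi=\mathcal P_I\varphi+\mathcal R_I\varphi$, apply Corollary~\ref{cor:bounded_residual_tv_extension} in case (i) and Theorem~\ref{thm:k_direction_mixed_derivative_bound} in case (ii) to the bounded, $I$-centered component, bound $|\mathbb E_f[\mathcal R_I\varphi]|$ by $\|\mathcal R_I\varphi\|_{L^\infty(S)}$ using that $f$ is a probability density, and recover the exact-centered case from the equivalence $\mathcal R_I\varphi=0 \Leftrightarrow \varphi$ centered on $I$, exactly as the paper does.
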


\begin{proof}
By Proposition~\ref{prop:coordinate_projector_algebra},
$\mathcal P_I\varphi$ is centered on $I$. Since $\mathcal P_I$ is bounded on
$L^\infty(S)$, one has
$\mathcal P_I\varphi\in L^\infty(S)$. Therefore
Corollary~\ref{cor:bounded_residual_tv_extension} gives
$$
|\mathbb E_f[\mathcal P_I\varphi]|
\le
\frac{1}{2k}
\Lambda_{\mathrm{sl}}^I(\mathcal P_I\varphi)
\TV_I(f;S).
$$
Since $f$ is a probability density,
$$
|\mathbb E_f[\mathcal R_I\varphi]|
\le
\|\mathcal R_I\varphi\|_{L^\infty(S)}.
$$
Using $\varphi=\mathcal P_I\varphi+\mathcal R_I\varphi$ and the triangle
inequality proves \eqref{eq:defect_adjusted_tv_bound}.

The mixed-derivative estimate follows by the same decomposition, now applying
Theorem~\ref{thm:k_direction_mixed_derivative_bound} to the centered component
$\mathcal P_I\varphi$. Since
$\mathcal P_I\varphi\in L^\infty(S)$ and is centered on $I$,
Theorem~\ref{thm:k_direction_mixed_derivative_bound} yields
$$
|\mathbb E_f[\mathcal P_I\varphi]|
\le
2^{-k}
\Lambda_{\mathrm{mix}}^I(\mathcal P_I\varphi)
\int_S|\partial_I f(\omega)|\,d\omega.
$$
The same probability-density bound on
$\mathbb E_f[\mathcal R_I\varphi]$ gives
\eqref{eq:defect_adjusted_mixed_derivative_bound}.

Finally, Proposition~\ref{prop:coordinate_projector_algebra} gives
$\mathcal R_I\varphi=0$ if and only if $\varphi$ is centered on $I$. In that
case $\mathcal P_I\varphi=\varphi$, and the defect term vanishes.
\end{proof}

For the stochastic recourse problem, the theorem is applied to the residual
$\varphi_x=\widetilde v(x,\cdot)-v^{\INT}(x,\cdot)$. This gives the
decision-facing form of the bounds.

\begin{corollary}[Defect-adjusted residual bounds]
\label{cor:defect_adjusted_recourse_residual}
Fix $x\in X$ and a nonempty $I\subseteq[d]$, and write $k=|I|$.

\emph{(i) Anisotropic $\TV_\infty$ bound } 
If $f\in BV(S)$ is a probability density and
$\varphi_x\in L^\infty(S)$, then
\begin{equation}
\label{eq:defect_adjusted_recourse_tv}
|\widetilde Q(x)-Q(x)|
\le
\frac{1}{2k}
\Lambda_{\mathrm{sl}}^I(\mathcal P_I\varphi_x)
\TV_I(f;S)
+
\|\mathcal R_I\varphi_x\|_{L^\infty(S)}.
\end{equation}

\emph{(ii) Smooth mixed-derivative bound } 
If $f\in C^k(\overline S)$ is a probability density and
$\varphi_x\in L^\infty(S)$, then
\begin{equation}
\label{eq:defect_adjusted_recourse_mixed_derivative}
|\widetilde Q(x)-Q(x)|
\le
2^{-k}
\Lambda_{\mathrm{mix}}^I(\mathcal P_I\varphi_x)
\int_S|\partial_I f(\omega)|\,d\omega
+
\|\mathcal R_I\varphi_x\|_{L^\infty(S)}.
\end{equation}
For $I=[d]$, the second estimate is the smooth defect-adjusted Vitali residual
bound.
\end{corollary}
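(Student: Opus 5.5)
This corollary is a direct specialization of Theorem~\ref{thm:defect_adjusted_projector_bounds}, so the plan is to fix $x\in X$ and apply that theorem to the single function $\varphi:=\varphi_x=\widetilde v(x,\cdot)-v^{\INT}(x,\cdot)$ on $S$. The only link to the recourse problem is the expected-residual identity \eqref{eq:expected_residual_identity_section}, $\widetilde Q(x)-Q(x)=\mathbb E_f[\varphi_x]$. This identity lets us replace the left-hand sides of \eqref{eq:defect_adjusted_tv_bound} and \eqref{eq:defect_adjusted_mixed_derivative_bound} by $|\widetilde Q(x)-Q(x)|$.

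The steps in order are as follows. First, check that the hypotheses of the theorem hold for $\varphi_x$. The assumption $\varphi_x\in L^\infty(S)$ is given, and $f$ is a probability density on $S$ with either $f\in BV(S)$ for part~(i) or $f\in C^k(\overline S)$ for part~(ii). Integrability of $v^{\INT}(x,\cdot)$ and $\widetilde v(x,\cdot)$ under $f$ is part of the standing assumptions. This makes $Q(x)$ and $\widetilde Q(x)$ finite, so their difference equals $\mathbb E_f[\varphi_x]$ by linearity of the integral. Second, invoke part~(i) of Theorem~\ref{thm:defect_adjusted_projector_bounds} with the chosen nonempty $I$ to obtain \eqref{eq:defect_adjusted_recourse_tv}. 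Third, invoke part~(ii) to obtain \eqref{eq:defect_adjusted_recourse_mixed_derivative}. Finally, note that for $I=[d]$ one has $\int_S|\partial_{[d]}f|=V(f;S)$ by the smooth Vitali identity \eqref{eq:vitali_smooth_identity}, which gives the stated Vitali form.

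There is no real obstacle here; the argument is bookkeeping. The one point deserving a sentence is the use of linearity of the integral in $\widetilde Q(x)-Q(x)=\mathbb E_f[\widetilde v(x,\cdot)]-\mathbb E_f[v^{\INT}(x,\cdot)]$. This requires each term to be separately integrable, which the standing finiteness and integrability assumptions guarantee. It would fail only if both expectations were infinite.
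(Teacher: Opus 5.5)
Your proposal is correct and is exactly the paper's argument: it uses the residual identity $\widetilde Q(x)-Q(x)=\mathbb E_f[\varphi_x]$ and then applies Theorem~\ref{thm:defect_adjusted_projector_bounds} with $\varphi=\varphi_x$. As a minor aside, you do not need a separate standing assumption that $\widetilde v(x,\cdot)$ is integrable, because it follows from $\varphi_x\in L^\infty(S)$ together with the $f$-integrability of $v^{\INT}(x,\cdot)$.
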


\begin{proof}
By the residual identity,
$$
\widetilde Q(x)-Q(x)=\mathbb E_f[\varphi_x].
$$
The two estimates follow from
Theorem~\ref{thm:defect_adjusted_projector_bounds} with
$\varphi=\varphi_x$.
\end{proof}

The projector decomposition is therefore structural. It turns the exact
slice-centered cancellation mechanism into a bound for arbitrary bounded residuals:
$\mathcal P_I\varphi$ is controlled by variation of the density, while
$\mathcal R_I\varphi$ enters as an explicit defect. This is the form needed
for convex approximation. One may regularize or audit the defect, but the
theory does not require exact centering as a hard modeling constraint.

For a smooth density and a bounded residual, every nonempty direction set
$I$ gives two valid defect-adjusted certificates. It is therefore natural to
retain the best certificate rather than to prescribe a direction set or a
variation formulation in advance.

\begin{corollary}[Best residual-geometry certificate]
\label{cor:best_defect_adjusted_certificate}
Let $f\in C^d(\overline S)$ be a probability density and let
$\varphi\in L^\infty(S)$. For every nonempty $I\subseteq[d]$, define
$$
\mathfrak B_{\mathrm{TV},I}(\varphi;f)
:=
\frac{1}{2|I|}
\Lambda_{\mathrm{sl}}^I(\mathcal P_I\varphi)
\TV_I(f;S),
$$
$$
\mathfrak B_{\mathrm{mix},I}(\varphi;f)
:=
2^{-|I|}
\Lambda_{\mathrm{mix}}^I(\mathcal P_I\varphi)
\int_S|\partial_I f(\omega)|\,d\omega,
$$
and
$$
\Delta_I(\varphi)
:=
\|\mathcal R_I\varphi\|_{L^\infty(S)}.
$$
Then
\begin{equation}
\label{eq:best_defect_adjusted_certificate}
|\mathbb E_f[\varphi]|
\le
\min_{\emptyset\neq I\subseteq[d]}
\left\{
\Delta_I(\varphi)
+
\min\left
\{
\mathfrak B_{\mathrm{TV},I}(\varphi;f),
\mathfrak B_{\mathrm{mix},I}(\varphi;f)
\right\}
\right\}.
\end{equation}
\end{corollary}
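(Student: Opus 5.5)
The plan is to reduce the statement to Theorem~\ref{thm:defect_adjusted_projector_bounds}, applied once for each nonempty direction set $I$, and then take minima. Since $S$ is bounded and $f\in C^d(\overline S)$, we have $f\in C^k(\overline S)$ for every $k\le d$. Also $C^1(\overline S)\subset BV(S)$ on the bounded box, because $\int_S\|\nabla f\|_1<\infty$. Hence, for every nonempty $I\subseteq[d]$ with $k=|I|$, both hypotheses of Theorem~\ref{thm:defect_adjusted_projector_bounds} hold: $f$ is a probability density in $BV(S)$ for part~(i), and a probability density in $C^k(\overline S)$ for part~(ii). The residual $\varphi\in L^\infty(S)$ is exactly the assumption needed in both parts.

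Fix such an $I$. Part~(i) gives $|\mathbb E_f[\varphi]|\le \mathfrak B_{\mathrm{TV},I}(\varphi;f)+\Delta_I(\varphi)$, and part~(ii) gives $|\mathbb E_f[\varphi]|\le \mathfrak B_{\mathrm{mix},I}(\varphi;f)+\Delta_I(\varphi)$. These are just the definitions of $\mathfrak B_{\mathrm{TV},I}$, $\mathfrak B_{\mathrm{mix},I}$ and $\Delta_I$ written out. Since the left-hand side is the same in both, it is bounded by the smaller of the two right-hand sides, namely $\Delta_I(\varphi)+\min\{\mathfrak B_{\mathrm{TV},I},\mathfrak B_{\mathrm{mix},I}\}$.

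Finally, this holds for each of the finitely many ($2^d-1$) nonempty subsets $I$. So $|\mathbb E_f[\varphi]|$ is a lower bound for every term in the family, and hence for their minimum, which exists because the family is finite. This gives \eqref{eq:best_defect_adjusted_certificate}.

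There is no genuine obstacle here. The only point to check is that all quantities are finite, so that the minimum is meaningful. Proposition~\ref{prop:coordinate_projector_algebra} gives $\mathcal P_I\varphi,\mathcal R_I\varphi\in L^\infty(S)$, so $\Delta_I$ and both envelopes of $\mathcal P_I\varphi$ are finite. The density quantities $\TV_I(f;S)$ and $\int_S|\partial_I f|$ are finite by smoothness of $f$ on the compact box. Even if some term were $+\infty$, the bound would still hold trivially for that term.
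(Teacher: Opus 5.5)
Your proposal is correct and matches the paper's proof: for each nonempty $I$ you apply both parts of Theorem~\ref{thm:defect_adjusted_projector_bounds}, keep the smaller of the two bounds, and then minimize over the finitely many direction sets. Your explicit check that $f\in C^d(\overline S)$ gives both the $BV(S)$ hypothesis and the $C^{|I|}(\overline S)$ hypothesis is a detail the paper leaves implicit.
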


\begin{proof}
Fix a nonempty $I\subseteq[d]$. The anisotropic and mixed-derivative parts of
Theorem~\ref{thm:defect_adjusted_projector_bounds} give
$$
|\mathbb E_f[\varphi]|
\le
\Delta_I(\varphi)+\mathfrak B_{\mathrm{TV},I}(\varphi;f)
$$
and
$$
|\mathbb E_f[\varphi]|
\le
\Delta_I(\varphi)+\mathfrak B_{\mathrm{mix},I}(\varphi;f).
$$
Taking the smaller of the two bounds and then the minimum over the finitely
many nonempty direction sets proves the result.
\end{proof}

The same principle closes the loop between residual geometry and first-stage
decision quality.

\begin{corollary}[Uniform residual and decision-quality certificate]
\label{cor:uniform_decision_quality_certificate}
Let $f\in C^d(\overline S)$ be a probability density, and assume that
$\varphi_x\in L^\infty(S)$ for every $x\in X$. For every nonempty
$I\subseteq[d]$, define
$$
L_{\mathrm{sl},I}
:=
\sup_{x\in X}
\Lambda_{\mathrm{sl}}^I(\mathcal P_I\varphi_x),
$$
$$
L_{\mathrm{mix},I}
:=
\sup_{x\in X}
\Lambda_{\mathrm{mix}}^I(\mathcal P_I\varphi_x),
$$
and
$$
\delta_I
:=
\sup_{x\in X}
\|\mathcal R_I\varphi_x\|_{L^\infty(S)}.
$$
These quantities are allowed to take the value $+\infty$. Assume that the
certificate defined below is finite, and set
\begin{equation}
\label{eq:def_uniform_best_certificate}
\varepsilon_*
:=
\min_{\emptyset\neq I\subseteq[d]}
\left\{
\delta_I
+
\min\left
\{
\frac{L_{\mathrm{sl},I}}{2|I|}\TV_I(f;S),
\;
2^{-|I|}L_{\mathrm{mix},I}
\int_S|\partial_I f(\omega)|\,d\omega
\right\}
\right\}.
\end{equation}
Then
\begin{equation}
\label{eq:uniform_best_residual_bound}
\sup_{x\in X}|\widetilde Q(x)-Q(x)|
\le
\varepsilon_*.
\end{equation}
If $x^\star\in\arg\min_XF$ and
$\widetilde x\in\arg\min_X\widetilde F$, then
\begin{equation}
\label{eq:uniform_best_decision_quality}
0
\le
F(\widetilde x)-F(x^\star)
\le
2\varepsilon_*,
\qquad
|F(x^\star)-\widetilde F(\widetilde x)|
\le
\varepsilon_*.
\end{equation}
\end{corollary}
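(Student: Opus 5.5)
The argument has two parts. The first is a pointwise-in-$x$ bound that we then make uniform. The second is the standard decision-quality argument applied with $\varepsilon_*$.

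\textbf{Uniform bound.} Fix $x\in X$ and a nonempty $I\subseteq[d]$. Since $f\in C^d(\overline S)\subset C^{|I|}(\overline S)$ and $f$ is bounded on the compact box, $f\in W^{1,1}(S)\subset BV(S)$. Both parts of Corollary~\ref{cor:defect_adjusted_recourse_residual} therefore apply to $\varphi_x\in L^\infty(S)$ and give
\[
|\widetilde Q(x)-Q(x)|\le \|\mathcal R_I\varphi_x\|_{L^\infty(S)}+\min\Bigl\{\tfrac{1}{2|I|}\Lambda_{\mathrm{sl}}^I(\mathcal P_I\varphi_x)\TV_I(f;S),\;2^{-|I|}\Lambda_{\mathrm{mix}}^I(\mathcal P_I\varphi_x)\textstyle\int_S|\partial_If|\Bigr\}.
\]
Each $x$-dependent quantity is at most its supremum: $\|\mathcal R_I\varphi_x\|\le\delta_I$, $\Lambda_{\mathrm{sl}}^I(\mathcal P_I\varphi_x)\le L_{\mathrm{sl},I}$, and $\Lambda_{\mathrm{mix}}^I(\mathcal P_I\varphi_x)\le L_{\mathrm{mix},I}$. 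The density factors are nonnegative and do not depend on $x$, and $\min$ is monotone in each argument. Hence $|\widetilde Q(x)-Q(x)|$ is bounded by the bracketed expression in \eqref{eq:def_uniform_best_certificate} for this $I$.

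Two points need care here. We use the conventions $0\cdot\infty=0$ and $\infty+a=\infty$. An infinite term makes the corresponding bound trivial, so nothing breaks. If, say, $\TV_I(f;S)=0$ but $L_{\mathrm{sl},I}=\infty$, the $I$-term is still a valid bound: for every $x$ we have $\Lambda_{\mathrm{sl}}^I(\mathcal P_I\varphi_x)<\infty$, because $\mathcal P_I$ is bounded on $L^\infty$, so the pointwise product is actually $0$. Since the estimate holds for every nonempty $I$, we take the minimum over the finitely many $I$ and then the supremum over $x$, which gives \eqref{eq:uniform_best_residual_bound}.

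\textbf{Decision quality.} Put $\varepsilon:=\varepsilon_*<\infty$. Then $|\widetilde F(x)-F(x)|=|\widetilde Q(x)-Q(x)|\le\varepsilon$ for all $x\in X$. The lower bound $F(\widetilde x)-F(x^\star)\ge0$ holds because $x^\star$ minimizes $F$. For the upper bound,
\[
F(\widetilde x)\le\widetilde F(\widetilde x)+\varepsilon\le \widetilde F(x^\star)+\varepsilon\le F(x^\star)+2\varepsilon .
\]
For the value estimate, $\widetilde F(\widetilde x)\le\widetilde F(x^\star)\le F(x^\star)+\varepsilon$ and $F(x^\star)\le F(\widetilde x)\le\widetilde F(\widetilde x)+\varepsilon$. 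Together these give $|F(x^\star)-\widetilde F(\widetilde x)|\le\varepsilon$.

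\textbf{Main obstacle.} The only delicate step is the bookkeeping with extended-real suprema in the monotonicity argument, in particular the $0\cdot\infty$ case above. Everything else follows directly from Corollary~\ref{cor:defect_adjusted_recourse_residual}.
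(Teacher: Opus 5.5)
Your proof is correct and follows essentially the paper's route. The paper applies Corollary~\ref{cor:best_defect_adjusted_certificate} to each $\varphi_x$, bounds the $x$-dependent envelopes and defects by their suprema, and cites the standard comparison of minima; you instead go one level down to Corollary~\ref{cor:defect_adjusted_recourse_residual}, which carries the same content, and you write the comparison out explicitly. Your treatment of the extended-real cases, namely observing that $\Lambda_{\mathrm{sl}}^I(\mathcal P_I\varphi_x)<\infty$ for each $x$ so that the convention $0\cdot\infty=0$ still gives a valid bound, is a detail the paper leaves implicit.
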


\begin{proof}
Apply Corollary~\ref{cor:best_defect_adjusted_certificate} to $\varphi_x$ and
use the definitions of $L_{\mathrm{sl},I}$, $L_{\mathrm{mix},I}$, and
$\delta_I$. This proves \eqref{eq:uniform_best_residual_bound}. Since the
first-stage linear term is the same in $F$ and $\widetilde F$,
\eqref{eq:uniform_best_residual_bound} implies
$$
\sup_{x\in X}|F(x)-\widetilde F(x)|\le\varepsilon_*.
$$
The two inequalities in \eqref{eq:uniform_best_decision_quality} follow from
the standard comparison of the minima of two functions whose uniform distance
is at most $\varepsilon_*$.
\end{proof}

\subsection{Tensorization and optimality of the constants}
\label{subsec:main_tensorization_sharpness}

The preceding results do not require independence. Product structure is
nevertheless useful for interpreting the two certificates and for identifying
their optimal constants. The coordinate projectors preserve tensor structure,
while the two density-variation terms separate in fundamentally different
ways.

\begin{proposition}[Tensor-product residuals and product densities]
\label{prop:main_tensor_summary}
Let
$$
S=\prod_{j=1}^dJ_j,
\qquad
J_j=[a_j,b_j],
$$
and let $I\subseteq[d]$ be nonempty. Suppose
$$
\varphi(\omega)=\prod_{j=1}^dh_j(\omega_j),
\qquad
h_j\in L^\infty(J_j),
$$
and define
$$
\bar h_j
:=
\frac{1}{b_j-a_j}\int_{J_j}h_j(t)\,dt.
$$
Then
\begin{equation}
\label{eq:main_tensor_projector_formula}
\mathcal P_I\varphi
=
\left(\prod_{i\in I}(h_i-\bar h_i)\right)
\left(\prod_{j\notin I}h_j\right).
\end{equation}
In particular, if $\bar h_i=0$ for every $i\in I$, then
$\mathcal P_I\varphi=\varphi$.

Now let
$$
f(\omega)=\prod_{j=1}^df_j(\omega_j),
$$
where every $f_j\in C^d(J_j)$ is a nonnegative probability density. Then
\begin{equation}
\label{eq:main_product_TVI_formula}
\TV_I(f;S)
=
\sum_{i\in I}\int_{J_i}|f_i'(t)|\,dt,
\end{equation}
\begin{equation}
\label{eq:main_product_mixed_formula}
\int_S|\partial_If(\omega)|\,d\omega
=
\prod_{i\in I}\int_{J_i}|f_i'(t)|\,dt.
\end{equation}
For $I=[d]$, the smooth Vitali identity gives
\begin{equation}
\label{eq:main_product_vitali_formula}
V(f;S)
=
\prod_{i=1}^d\int_{J_i}|f_i'(t)|\,dt.
\end{equation}
\end{proposition}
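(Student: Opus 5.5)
The plan has two independent parts: the projector formula, and the variation formulas for product densities.

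For \eqref{eq:main_tensor_projector_formula}, I will first compute the action of a single coordinate average on a tensor product. By Fubini, for $\varphi=\prod_j h_j(\omega_j)$,
$$
\Pi_i\varphi=\bar h_i\prod_{j\ne i}h_j(\omega_j),
$$
since averaging in coordinate $i$ only touches the factor $h_i$. Hence $(\Id-\Pi_i)\varphi=(h_i-\bar h_i)\prod_{j\ne i}h_j$, which is again a tensor product in which only the $i$th factor has changed. I will then induct over the elements of $I$, in any order, since Proposition~\ref{prop:coordinate_projector_algebra} gives commutativity. Applying $(\Id-\Pi_{i'})$ for $i'\neq i$ acts only on the $i'$th factor and leaves the already-centered factor $h_i-\bar h_i$ untouched. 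This yields \eqref{eq:main_tensor_projector_formula}. The special case follows because $\bar h_i=0$ makes each factor unchanged. No regularity beyond $L^\infty$ is needed, so this step is purely algebraic.

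For the density formulas, $f\in C^1$ on the closed box, so I will use the smooth identity $|D_if|(S)=\int_S|\partial_if|\,d\omega$. This holds because $f\in C^1(\overline S)$ implies $D_if=\partial_if\,d\omega$ on $S^\circ$. Here $\partial_if=f_i'(\omega_i)\prod_{j\ne i}f_j(\omega_j)$. Taking absolute values, and using $f_j\ge0$, Fubini factorizes the integral as
$$
\int_{J_i}|f_i'|\,dt\prod_{j\neq i}\int_{J_j}f_j\,dt,
$$
and each $\int f_j=1$. Summing over $i\in I$ gives \eqref{eq:main_product_TVI_formula}. Similarly,
$$
\partial_If=\prod_{i\in I}f_i'(\omega_i)\prod_{j\notin I}f_j(\omega_j),
$$
and the same factorization gives \eqref{eq:main_product_mixed_formula}. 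The normalization $\int f_j=1$ again removes the factors with $j\notin I$, and the nonnegativity of $f_j$ lets $|f_j|=f_j$.

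Finally, for $I=[d]$, $f$ is $C^d$ as a product of $C^d$ factors, and $\partial_{[d]}f$ is continuous, hence in $L^1(S)$. The smooth Vitali identity \eqref{eq:vitali_smooth_identity} therefore applies, and combined with \eqref{eq:main_product_mixed_formula} it gives \eqref{eq:main_product_vitali_formula}.

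The only point needing care is the use of nonnegativity and normalization of the $f_j$ to remove the factors outside the differentiated coordinates. The identification of the distributional variation with the classical gradient integral is the other place regularity enters, and it is already recorded in Subsection~\ref{subsec:density_variation}. I do not expect a genuine obstacle.
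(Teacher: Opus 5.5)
Your proposal is correct and follows essentially the same route as the paper. Both arguments use Fubini to show that $\Pi_i$ only replaces $h_i$ by $\bar h_i$, and both apply the commuting factors $\Id-\Pi_i$ one at a time to obtain $\mathcal P_I\varphi$. Both then use the explicit product formulas for $\partial_i f$ and $\partial_I f$ together with Fubini, nonnegativity, and normalization of the marginals, and finish with the smooth Vitali identity for $I=[d]$.
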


\begin{proof}
For a tensor-product residual, $\Pi_i$ acts only on the $i$th factor and
replaces $h_i$ by its average $\bar h_i$. Since the operators commute,
\eqref{eq:main_tensor_projector_formula} follows. For the product density,
$$
\partial_i f(\omega)
=
f_i'(\omega_i)\prod_{j\neq i}f_j(\omega_j),
$$
and
$$
\partial_I f(\omega)
=
\left(\prod_{i\in I}f_i'(\omega_i)\right)
\left(\prod_{j\notin I}f_j(\omega_j)\right).
$$
The formulas follow from Fubini's theorem, nonnegativity, and normalization of
the marginal densities. The all-axes identity follows from
\eqref{eq:vitali_smooth_identity}. The corresponding measure-level formulas,
the tensorization of both residual envelopes, and the classical Vitali product
theorem for fixed pointwise representatives are proved in
Appendix~\ref{subsec:appendix_tensorization_projectors}--\ref{subsec:appendix_product_variation}.
\end{proof}
The tensor-product identity shows that the projector does not create new
interactions. It acts independently on each coordinate factor by replacing it
with its centered version.
\begin{figure}[t]
\centering
\begin{tikzpicture}[
  node distance=1.5cm,
  box/.style={
    draw,
    rounded corners,
    align=center,
    inner sep=5pt
  },
  >=Latex
]

\node[box] (A)
{$\varphi=h_1h_2$};

\node[box, right=of A] (B)
{$(\Id-\Pi_1)\varphi$\\
$=(h_1-\bar h_1)h_2$};

\node[box, right=of B] (C)
{$\mathcal P_{\{1,2\}}\varphi$\\
$=(h_1-\bar h_1)(h_2-\bar h_2)$};

\draw[->] (A) -- node[above] {$\Id-\Pi_1$} (B);
\draw[->] (B) -- node[above] {$\Id-\Pi_2$} (C);

\end{tikzpicture}

\caption{
Factorwise action of the centered projector on a tensor-product residual.
Each selected factor is replaced by its centered version.
}
\label{fig:tensor_projector_geometry}
\end{figure}

\begin{corollary}[Additive and multiplicative product-density certificates]
\label{cor:main_additive_multiplicative_certificates}
Under the hypotheses of Proposition~\ref{prop:main_tensor_summary}, let
$\varphi$ be centered on $I$ and write $k=|I|$. If
$\varphi\in W^{1,\infty}(S)$, then
\begin{equation}
\label{eq:main_product_tv_certificate}
|\mathbb E_f[\varphi]|
\le
\frac{1}{2k}\Lambda_{\mathrm{sl}}^I(\varphi)
\sum_{i\in I}\int_{J_i}|f_i'(t)|\,dt.
\end{equation}
If $\varphi\in L^\infty(S)$, then
\begin{equation}
\label{eq:main_product_mixed_certificate}
|\mathbb E_f[\varphi]|
\le
2^{-k}\Lambda_{\mathrm{mix}}^I(\varphi)
\prod_{i\in I}\int_{J_i}|f_i'(t)|\,dt.
\end{equation}
\end{corollary}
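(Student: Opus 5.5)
This is a direct substitution: plug the product-density formulas of Proposition~\ref{prop:main_tensor_summary} into the exact-centered bounds already proved. We take the hypotheses of that proposition: $f=\prod_j f_j$ with each $f_j$ a smooth nonnegative probability density on $J_j$, and $\varphi$ centered on $I$.

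For \eqref{eq:main_product_tv_certificate}, first check that $f\in BV(S)$. Since each $f_j\in C^d(J_j)\subset C^1(J_j)$, the product $f$ lies in $C^1(\overline S)$ and hence in $BV(S)$. It is also a probability density on $S$, by Fubini and the normalization of the marginals. Since $\varphi\in W^{1,\infty}(S)$ is centered on $I$, Theorem~\ref{thm:k_direction_tv_bound} gives
$$|\mathbb E_f[\varphi]|\le \frac{1}{2k}\Lambda_{\mathrm{sl}}^I(\varphi)\TV_I(f;S).$$
Substituting \eqref{eq:main_product_TVI_formula} for $\TV_I(f;S)$ yields the claim. The same conclusion holds for merely bounded $\varphi$ via Corollary~\ref{cor:bounded_residual_tv_extension}, so the $W^{1,\infty}$ hypothesis is only a convenience.

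For \eqref{eq:main_product_mixed_certificate}, note that $f\in C^k(\overline S)$ because $k\le d$ and each factor is $C^d$. Theorem~\ref{thm:k_direction_mixed_derivative_bound} applies to any $\varphi\in L^\infty(S)$ centered on $I$ and gives
$$|\mathbb E_f[\varphi]|\le 2^{-k}\Lambda_{\mathrm{mix}}^I(\varphi)\int_S|\partial_I f|\,d\omega.$$
By \eqref{eq:main_product_mixed_formula} this integral equals $\prod_{i\in I}\int_{J_i}|f_i'|$, which is the claim.

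The only point needing any care is the regularity and interpretation of the product formulas. We must confirm that $|D_if|(S)$ coincides with $\int_S|\partial_i f|\,d\omega$ for a $C^1$ product density, so that \eqref{eq:main_product_TVI_formula} can be used inside Theorem~\ref{thm:k_direction_tv_bound}. This holds because for $C^1(\overline S)$ functions the interior derivative measure is absolutely continuous with density $\partial_i f$, and the boundary is excluded by convention. Both formulas then follow from Fubini's theorem together with $\int_{J_j}f_j=1$ and $f_j\ge 0$, which Proposition~\ref{prop:main_tensor_summary} already provides. No further obstacle is expected.
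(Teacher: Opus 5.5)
Your proposal is correct and matches the paper's proof: both substitute the product formulas \eqref{eq:main_product_TVI_formula} and \eqref{eq:main_product_mixed_formula} into the exact-centered bounds of Theorem~\ref{thm:k_direction_tv_bound} and Theorem~\ref{thm:k_direction_mixed_derivative_bound}. Your extra checks are the routine verifications the paper leaves implicit: $f\in C^1(\overline S)\subset BV(S)$, $f\in C^k(\overline S)$, and $|D_if|(S)=\int_S|\partial_i f|\,d\omega$.
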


\begin{proof}
Substitute \eqref{eq:main_product_TVI_formula} and
\eqref{eq:main_product_mixed_formula} into the two exact-centered bounds.
\end{proof}

\begin{remark}[No uniform ordering of the two certificates]
\label{rem:main_no_uniform_ordering_certificates}
The anisotropic certificate is additive in the selected marginal variations,
whereas the mixed-derivative certificate is multiplicative. The two bounds
also use different residual envelopes. Consequently, neither certificate
uniformly dominates the other. Appendix~\ref{subsec:appendix_branch_separation}
gives explicit smooth product densities and centered tensor-product residuals
for which the mixed-derivative certificate is strictly smaller, and a second
family for which the anisotropic certificate is strictly smaller.
\end{remark}

\begin{theorem}[Optimality of the variation and defect coefficients]
\label{thm:main_sharp_constants}
Let $I\subseteq[d]$ be nonempty and write $k=|I|$.

\begin{enumerate}
\item[\emph{(i)}] The coefficient $1/(2k)$ in
\eqref{eq:k_direction_tv_bound} is optimal. No smaller coefficient gives a
uniform bound over centered residuals in $W^{1,\infty}(S)$ and smooth
probability densities. The same conclusion holds for the corresponding
$\TV_\infty$ form because the sharpness construction has no variation outside
$I$.

\item[\emph{(ii)}] The coefficient $2^{-k}$ in
\eqref{eq:k_direction_mixed_derivative_bound} is optimal, even over smooth
probability densities and centered residuals in $W^{1,\infty}(S)$.

\item[\emph{(iii)}] For $I=[d]$, the coefficient $2^{-d}$ in the smooth
all-axes Vitali bound \eqref{eq:all_axes_vitali_bound} is optimal.

\item[\emph{(iv)}] The coefficient one multiplying
$\|\mathcal R_I\varphi\|_{L^\infty(S)}$ in the defect-adjusted bounds is
optimal.
\end{enumerate}
\end{theorem}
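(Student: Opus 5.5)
The plan is to prove all four parts with a single tensorized extremal family. Using Proposition~\ref{prop:main_tensor_summary}, the residual and the density will factor across coordinates, so that both sides of each inequality can be computed exactly. For each $i\in I$ let $L_i:=b_i-a_i$ and $m_i:=(a_i+b_i)/2$, and define the odd sign profile $s_i:=\mathbf 1_{[a_i,m_i)}-\mathbf 1_{[m_i,b_i]}$. I take the residual $\varphi:=\prod_{i\in I}s_i(\omega_i)$, with factor $1$ in the coordinates $j\notin I$. Each $s_i$ has zero mean, so by \eqref{eq:main_tensor_projector_formula} $\varphi$ is centered on $I$. Since $|\varphi|=1$, the slice envelopes are $\Lambda_{\mathrm{sl}}^I(\varphi)=\max_{i\in I}L_i$ and $\Lambda_{\mathrm{mix}}^I(\varphi)=\prod_{i\in I}L_i$. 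For the density I take the product $f=\prod_j f_j$, with $f_j$ uniform for $j\notin I$ and $f_i=(2/L_i)\mathbf 1_{[a_i,m_i)}$ for $i\in I$. Then $\mathbb E_f[\varphi]=\prod_{i\in I}\mathbb E_{f_i}[s_i]=1$.

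For (i) I would first reduce to a cube: after an affine rescaling, take $L_i=L$ for all $i\in I$, so that the maximum in $\Lambda_{\mathrm{sl}}^I$ is not lossy. Each $f_i$ has a single interior jump of height $2/L$, so \eqref{eq:main_product_TVI_formula} gives $\TV_I(f;S)=2k/L$. The right side of \eqref{eq:k_direction_tv_bound} is then $\frac1{2k}\cdot L\cdot\frac{2k}{L}=1$, which equals the left side, so no coefficient below $1/(2k)$ can work. Because $f$ is uniform outside $I$, $\TV_\infty(f;S)=\TV_I(f;S)$, and the $\TV_\infty$ form is also sharp. For (ii), \eqref{eq:main_product_mixed_formula} gives $\int_S|\partial_I f|=\prod_{i\in I}(2/L_i)$. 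The bound is therefore $2^{-k}\prod_i L_i\prod_i(2/L_i)=1$, again equality, and this needs no equal-sides assumption. Part (iii) is the case $I=[d]$ combined with \eqref{eq:main_product_vitali_formula}.

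The main obstacle is regularity: the theorem asks for smooth densities and $W^{1,\infty}$ residuals, while the extremal objects above are step functions. I would handle this with a one-parameter regularization $\varepsilon\downarrow0$. Replace $s_i$ by an odd (about $m_i$) Lipschitz ramp $s_i^\varepsilon$ that equals $\pm1$ outside an $\varepsilon$-neighbourhood of $m_i$. Oddness keeps the mean zero, so $\varphi^\varepsilon$ is still centered on $I$ and its envelopes are at most those above. Replace $f_i$ by a smooth monotone nonincreasing density $f_i^\varepsilon$ that equals $2/L_i$ up to distance $\varepsilon$ from $m_i$ and $0$ beyond, normalized. Monotonicity gives $\int|f_i^{\varepsilon\prime}|=f_i^\varepsilon(a_i)-f_i^\varepsilon(b_i)\to2/L_i$, and dominated convergence gives $\mathbb E_{f^\varepsilon}[\varphi^\varepsilon]\to1$. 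The ratio of the left side to the bound therefore tends to $1$, which is enough to rule out any smaller coefficient. The care here is keeping each $f_i^\varepsilon$ in $C^d$, nonnegative and normalized at the same time; if normalization is awkward, I would instead rescale by a factor tending to $1$.

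For (iv) I take a constant residual $\varphi\equiv c\neq0$. Then $\Pi_i\varphi=c$ for every $i$, so $\mathcal P_I\varphi=0$ and $\mathcal R_I\varphi=c$. Both variation terms in \eqref{eq:defect_adjusted_tv_bound} and \eqref{eq:defect_adjusted_mixed_derivative_bound} vanish, and $|\mathbb E_f[\varphi]|=|c|=\|\mathcal R_I\varphi\|_{L^\infty(S)}$ for every probability density $f$. So a coefficient below one on the defect fails.
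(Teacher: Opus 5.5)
Your construction is essentially the paper's. You tensorize one-dimensional odd residuals with marginal densities supported on one half of a window. Each density has one jump at the midpoint, and its other jump is pushed onto a face of $S$, so the relative variation does not count it. You then pass to a smoothing limit. The constant residual for (iv) is exactly the paper's argument. Parts (ii)--(iv) go through as written, because the mixed certificate is multiplicative and $2^{-k}\prod_{i\in I}L_i\prod_{i\in I}(2/L_i)=1$ on any box.

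The gap is the step in (i) that says ``after an affine rescaling, take $L_i=L$.'' The sharpness claim concerns the given box $S$. The ratio $|\mathbb E_f[\varphi]|/(\Lambda_{\mathrm{sl}}^I(\varphi)\,\TV_I(f;S))$ is not invariant under anisotropic scaling $\omega_i\mapsto\lambda_i\omega_i$. The slice integral in direction $i$ scales by $\lambda_i$ and $|D_if|$ scales by $\lambda_i^{-1}$. However, $\Lambda_{\mathrm{sl}}^I$ takes a maximum over $i\in I$ while $\TV_I$ takes a sum, so the factors do not cancel.

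Concretely, your full-interval pair on a non-cubic box gives only the ratio $\frac{1}{2k}\cdot\frac{k}{(\max_{i\in I}L_i)\sum_{i\in I}L_i^{-1}}$. This is strictly below $1/(2k)$ whenever the $L_i$, $i\in I$, differ. For example, on $[0,1]\times[0,2]$ with $I=[2]$ you get $1/6$ instead of $1/4$. Transporting a cube extremizer back to $S$ produces exactly this same pair, so the reduction proves nothing for such boxes.

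The repair is the device the paper uses, which is why its construction has a window width $w<\min_i\ell_i$.
\begin{itemize}
\item Fix a common window length $w\le\min_{i\in I}L_i$ and anchor the window at a face. The paper takes $[b_i-w,b_i]$; with your orientation, take $[a_i,a_i+w]$.
\item Let the $i$th residual factor be your odd profile on the window and a Lipschitz ramp to zero outside it.
\item Take $f_i=(2/w)\mathbf 1_{[a_i,a_i+w/2)}$ before smoothing.
\end{itemize}
Centering is preserved, and $\Lambda_{\mathrm{sl}}^I\le w$, $\TV_I=2k/w$, and the pairing tends to $1$. Your smoothing limit then yields the ratio $1/(2k)$ on every box. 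Anchoring at a face matters: a window placed in the interior would give the density two interior jumps and double $\TV_I$.
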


\begin{proof}
Parts \emph{(i)}--\emph{(iii)} follow from the single smooth tensor-product
construction in
Appendix~\ref{subsec:appendix_one_dimensional_near_extremizer}--\ref{subsec:appendix_mixed_vitali_sharpness}; see also
Figure~\ref{fig:appendix_unified_sharpness_construction}. For part \emph{(iv)},
take $\varphi\equiv c\neq0$. Then
$$
\mathcal P_I\varphi=0,
\qquad
\mathcal R_I\varphi=\varphi,
$$
and every probability density satisfies
$$
|\mathbb E_f[\varphi]|
=|c|
=\|\mathcal R_I\varphi\|_{L^\infty(S)}.
$$
\end{proof}

\begin{remark}[Scope of the sharpness statements]
\label{rem:main_scope_sharpness}
The sharpness statements concern the internal box quantities
$\TV_I(f;S)$ and $\int_S|\partial_If|$. They do not assert sharpness of the
nonsmooth extension-seminorm bound introduced below. They also do not require
one residual-density pair to saturate the centered and defect terms
simultaneously. The conclusion is coefficientwise: none of the displayed
coefficients can be reduced uniformly over its stated class.
\end{remark}

\subsection{Nonsmooth mixed derivatives and localization beyond box-supported laws}
\label{subsec:main_nonsmooth_extension}

The anisotropic formulation is already formulated at the $BV$ level. The
mixed-derivative formulation can also be extended beyond smooth densities, but the
quantity that is stable under mollification is the distributional measure
$D_I f$, not the representative-dependent corner formula for classical Vitali
variation. Because mollification is most naturally performed on the whole
space, the resulting quantity is defined through admissible extensions.

\begin{definition}[Extension mixed-variation seminorm]
\label{def:main_extension_mixed_variation}
Let $f\in L^1(S)$ and let $I\subseteq[d]$ be nonempty. Define
\begin{equation}
\label{eq:main_extension_mixed_variation}
V_I^{\mathrm{ext}}(f;S)
:=
\inf
\left\{
|D_I\bar f|(\mathbb R^d):
\begin{array}{l}
\bar f\in L^1(\mathbb R^d),\\
\bar f=f\text{ a.e. on }S,\\
D_I\bar f\in\mathcal M(\mathbb R^d)
\end{array}
\right\},
\end{equation}
with value $+\infty$ if the admissible class is empty.
\end{definition}

\begin{theorem}[Defect-adjusted nonsmooth mixed-derivative bound]
\label{thm:main_defect_adjusted_nonsmooth_mixed}
Fix a nonempty $I\subseteq[d]$. Let $f\in L^1(S)$ be a probability density and assume
$V_I^{\mathrm{ext}}(f;S)<\infty$. Then, for every
$\varphi\in L^\infty(S)$,
\begin{equation}
\label{eq:main_defect_adjusted_nonsmooth_mixed}
|\mathbb E_f[\varphi]|
\le
2^{-|I|}
\Lambda_{\mathrm{mix}}^I(\mathcal P_I\varphi)
V_I^{\mathrm{ext}}(f;S)
+
\|\mathcal R_I\varphi\|_{L^\infty(S)}.
\end{equation}
\end{theorem}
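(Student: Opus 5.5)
The plan is to reduce to the exact-centered estimate for the component $\psi:=\mathcal P_I\varphi$ and to pay for the defect as in Theorem~\ref{thm:defect_adjusted_projector_bounds}. Writing $\varphi=\psi+\mathcal R_I\varphi$, we have $|\mathbb E_f[\mathcal R_I\varphi]|\le\|\mathcal R_I\varphi\|_{L^\infty(S)}$ because $f$ is a probability density. By Proposition~\ref{prop:coordinate_projector_algebra}, $\psi\in L^\infty(S)$ is centered on $I$. It therefore suffices to prove
$$
\Bigl|\int_S\psi f\,d\omega\Bigr|\le 2^{-k}\Lambda_{\mathrm{mix}}^I(\psi)\,|D_I\bar f|(\mathbb R^d)
$$
for every admissible extension $\bar f$, and then take the infimum over $\bar f$.

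To prove this, I fix an admissible $\bar f$ and a standard mollifier $\rho_\varepsilon$, and set $f_\varepsilon:=\rho_\varepsilon*\bar f\in C^\infty(\mathbb R^d)$. Its restriction to $\overline S$ lies in $C^k(\overline S)$. By Remark~\ref{rem:analytic_scope_mixed_derivative}, the identity and estimate of Theorem~\ref{thm:k_direction_mixed_derivative_bound} hold with $f_\varepsilon$ in place of $f$, without positivity or normalization:
$$
\Bigl|\int_S\psi f_\varepsilon\Bigr|\le 2^{-k}\Lambda_{\mathrm{mix}}^I(\psi)\int_S|\partial_I f_\varepsilon|\,d\omega.
$$
Next, $\partial_I f_\varepsilon=\rho_\varepsilon*D_I\bar f$ as a smooth function, since the distributional derivative commutes with convolution. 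The standard bound for convolving a finite measure with a nonnegative unit-mass kernel then gives
$$
\int_S|\partial_If_\varepsilon|\le\int_{\mathbb R^d}|\rho_\varepsilon*D_I\bar f|\le|D_I\bar f|(\mathbb R^d).
$$
Finally, $f_\varepsilon\to\bar f$ in $L^1(\mathbb R^d)$, and $\bar f=f$ a.e. on $S$. Since $\psi$ is bounded on $S$, $\int_S\psi f_\varepsilon\to\int_S\psi f$. Letting $\varepsilon\to0$ yields the claimed inequality.

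The main obstacle is conceptual rather than technical. We must make sure that mass of $D_I\bar f$ concentrated on $\partial S$ is accounted for. Boundary jumps of $\bar f$ produce derivative mass that, after mollification, lies inside $S$ near the faces. This is exactly why the seminorm uses $|D_I\bar f|(\mathbb R^d)$ and not the interior quantity, and the whole-space inequality above handles it automatically. The only other point to check is that Theorem~\ref{thm:k_direction_mixed_derivative_bound} genuinely needs no normalization of $f_\varepsilon$, which Remark~\ref{rem:analytic_scope_mixed_derivative} provides, together with the fact that the identity $\partial_I(\rho_\varepsilon*\bar f)=\rho_\varepsilon*D_I\bar f$ requires only $D_I\bar f\in\mathcal M(\mathbb R^d)$. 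If $V_I^{\mathrm{ext}}(f;S)$ is finite, the infimum is over a nonempty class, and the bound follows.
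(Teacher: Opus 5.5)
Your proposal is correct and follows essentially the same route as the paper: split off the defect $\mathcal R_I\varphi$ at cost $\|\mathcal R_I\varphi\|_{L^\infty(S)}$, then prove the centered estimate for $\mathcal P_I\varphi$. For that estimate, you mollify an admissible whole-space extension, apply the smooth mixed-derivative theorem in its non-normalized analytic form together with the contraction $\int_{\mathbb R^d}|\partial_I\bar f_\varepsilon|\le|D_I\bar f|(\mathbb R^d)$, pass to the $L^1$ limit, and take the infimum over extensions, which is exactly Theorem~\ref{thm:appendix_distributional_extension_bound} and Corollary~\ref{cor:appendix_intrinsic_nonsmooth_mixed_bound} as cited in the paper's proof.
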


\begin{proof}
By Proposition~\ref{prop:coordinate_projector_algebra},
$\mathcal P_I\varphi$ is centered on $I$. The centered nonsmooth estimate is
proved in
Theorem~\ref{thm:appendix_distributional_extension_bound} and
Corollary~\ref{cor:appendix_intrinsic_nonsmooth_mixed_bound}. Applying that
estimate to $\mathcal P_I\varphi$ and bounding the expectation of
$\mathcal R_I\varphi$ by its $L^\infty$ norm gives
\eqref{eq:main_defect_adjusted_nonsmooth_mixed}.
\end{proof}

The seminorm in Definition~\ref{def:main_extension_mixed_variation} includes
possible boundary-extension costs. The following properties make this
distinction explicit.

\begin{proposition}[Internal variation and boundary-extension cost]
\label{prop:main_extension_variation_properties}
The map $V_I^{\mathrm{ext}}(\cdot;S)$ is an extended seminorm. Moreover:

\begin{enumerate}
\item[\emph{(i)}] If the internal derivative $D_If$ is a finite measure on
$\operatorname{int}S$, then
$$
|D_If|(\operatorname{int}S)
\le
V_I^{\mathrm{ext}}(f;S).
$$
In particular, for smooth $f$,
$$
\int_S|\partial_If(\omega)|\,d\omega
\le
V_I^{\mathrm{ext}}(f;S).
$$

\item[\emph{(ii)}] If $f\in C_c^k(\operatorname{int}S)$, then
$$
V_I^{\mathrm{ext}}(f;S)
=
\int_S|\partial_If(\omega)|\,d\omega.
$$

\item[\emph{(iii)}] Boundary contact can create a strict gap. For
$J=[a,b]$ and $f\equiv1$ on $J$,
$$
V_{\{1\}}^{\mathrm{ext}}(1;J)=2,
$$
although the internal variation is zero.
\end{enumerate}
\end{proposition}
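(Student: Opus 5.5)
The proof splits into four parts: the seminorm property, then (i), (ii) and (iii) in turn.

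\emph{Extended seminorm.} If $\bar f,\bar g$ are admissible extensions of $f,g$, then $\bar f+\bar g$ is an admissible extension of $f+g$, and $\lambda\bar f$ is one for $\lambda f$. Linearity of $D_I$ gives $|D_I(\bar f+\bar g)|(\mathbb R^d)\le|D_I\bar f|(\mathbb R^d)+|D_I\bar g|(\mathbb R^d)$ and $|D_I(\lambda\bar f)|=|\lambda|\,|D_I\bar f|$. Taking infima gives subadditivity and absolute homogeneity, with the usual conventions when a value is $+\infty$; for $\lambda=0$ the zero extension is admissible.

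\emph{Part (i).} Take any admissible $\bar f$. For $\zeta\in C_c^\infty(\operatorname{int}S)$, the function $\bar f$ coincides with $f$ on $\operatorname{supp}\zeta$, so
\[
\langle D_I\bar f,\zeta\rangle=(-1)^k\int\bar f\,\partial_I\zeta=\langle D_If,\zeta\rangle .
\]
Hence the restriction of $D_I\bar f$ to $\operatorname{int}S$ equals $D_If$ as distributions, and therefore as measures. It follows that $|D_If|(\operatorname{int}S)=|D_I\bar f|(\operatorname{int}S)\le|D_I\bar f|(\mathbb R^d)$. Taking the infimum over admissible $\bar f$ gives the inequality. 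The smooth case follows because then $D_If=\partial_If\,d\omega$, and $\partial S$ is Lebesgue-null.

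\emph{Part (ii).} Extend $f$ by zero. The extension lies in $C_c^k(\mathbb R^d)$, so $D_I\bar f=\partial_I\bar f\,dx$, which is supported in $S$. Its total mass is therefore $\int_S|\partial_If|$, which gives $\le$; part (i) gives $\ge$.

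\emph{Part (iii).} This is the only real computation. For $d=1$ and any admissible $\bar f\in L^1(\mathbb R)$ with $D\bar f$ a finite measure, $\bar f$ has a BV representative on $\mathbb R$. Integrability forces $\bar f(t)\to0$ as $t\to\pm\infty$ along the good representative: a BV function on $\mathbb R$ has limits at $\pm\infty$, and those limits must vanish because $\bar f\in L^1$. On the other hand $\bar f=1$ a.e. on $(a,b)$. Taking essential limits, the variation on $(-\infty,a]$ is at least $|1-0|$, and likewise on $[b,\infty)$, so $|D\bar f|(\mathbb R)\ge 2$. The indicator $\mathbf 1_{[a,b]}$ attains this, since $D\mathbf 1_{[a,b]}=\delta_a-\delta_b$ has mass $2$. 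The internal variation is $0$ because $f$ is constant on $(a,b)$.

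The main care point is in (iii): one must justify that an $L^1$ function on $\mathbb R$ with finite derivative measure has a representative with limits $0$ at $\pm\infty$, and then lower-bound the variation on the two half-lines via the one-dimensional identity $|Dg|(U)=$ essential pointwise variation on $U$. I would cite \cite{AmbrosioFuscoPallara2000} for this identity.
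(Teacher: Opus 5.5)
Your proposal is correct and follows essentially the same route as the paper: scaling and adding extensions for the seminorm property, identifying the restriction of $D_I\bar f$ to $\operatorname{int}S$ with $D_If$ for (i), the zero extension combined with (i) for (ii), and for (iii) the upper bound from $D\mathbf 1_{[a,b]}=\delta_a-\delta_b$ together with the lower bound that any $BV(\mathbb R)\cap L^1(\mathbb R)$ extension has zero limits at $\pm\infty$ and so spends one unit of variation rising to $1$ and one unit returning to $0$. Your added care about justifying the vanishing limits at infinity and the one-dimensional essential-variation identity makes explicit what the paper asserts in a single line.
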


\begin{proof}
The proof is given in
Proposition~\ref{prop:appendix_extension_mixed_variation_properties}. The
compact-support identity follows because the zero extension introduces no
boundary derivative measure, while the lower bound in part \emph{(i)} prevents
a smaller extension cost.
\end{proof}

The admissible class is nonempty for a broad class of product densities.

\begin{corollary}[A finite extension cost for product $BV$ densities]
\label{cor:main_product_bv_extension_cost}
Let
$$
f(\omega)=\prod_{j=1}^df_j(\omega_j),
$$
where every $f_j\in BV([a_j,b_j])$ is a nonnegative probability density. Let
$f_j(a_j+)$ and $f_j(b_j-)$ denote the one-sided traces. Then
\begin{equation}
\label{eq:main_product_bv_extension_cost}
V_I^{\mathrm{ext}}(f;S)
\le
\prod_{i\in I}
\left(
|Df_i|(J_i)+f_i(a_i+)+f_i(b_i-)
\right).
\end{equation}
If the selected marginal traces vanish at both endpoints, then
\begin{equation}
\label{eq:main_product_bv_extension_equality}
V_I^{\mathrm{ext}}(f;S)
=
\prod_{i\in I}|Df_i|(J_i).
\end{equation}
\end{corollary}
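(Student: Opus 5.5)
The plan is to exhibit an explicit admissible extension and compute its mixed-derivative mass, and then, for the equality case, combine this with the lower bound of Proposition~\ref{prop:main_extension_variation_properties}(i).

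\emph{Upper bound.} Take $\bar f$ to be the zero extension of $f$ outside $S$. Then
\[
\bar f=\prod_{j=1}^d\bar f_j,
\]
where $\bar f_j:=f_j\mathbf 1_{J_j}$ is the zero extension of $f_j$ to $\mathbb R$. Since $f_j\in BV(J_j)$ is a probability density, $\bar f_j\in L^1(\mathbb R)$ and $\bar f\in L^1(\mathbb R^d)$. The one-dimensional distributional derivative of $\bar f_j$ on $\mathbb R$ is
\[
D\bar f_j=Df_j\llcorner(a_j,b_j)+f_j(a_j+)\,\delta_{a_j}-f_j(b_j-)\,\delta_{b_j}.
\]
This follows by testing against $\zeta\in C_c^\infty(\mathbb R)$ and applying one-dimensional $BV$ integration by parts with one-sided traces. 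Its total mass is therefore at most $|Df_j|(J_j)+f_j(a_j+)+f_j(b_j-)$.

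Next, testing $D_I\bar f$ against product test functions $\zeta=\prod_j\zeta_j(\omega_j)$ and using Fubini gives
\[
D_I\bar f=\Bigl(\bigotimes_{i\in I}D\bar f_i\Bigr)\otimes\Bigl(\bigotimes_{j\notin I}\bar f_j\,dt_j\Bigr).
\]
Linear combinations of product test functions are dense in $C_c^\infty(\mathbb R^d)$ in the topology relevant for distributions, so this identity holds as distributions. The right-hand side is a product of finite measures, so it is a finite Radon measure, and $\bar f$ is admissible. The total variation of a product measure is the product of the total variations. Since $\int|\bar f_j|=1$ for $j\notin I$ by nonnegativity and normalization, we obtain
\[
|D_I\bar f|(\mathbb R^d)=\prod_{i\in I}|D\bar f_i|(\mathbb R).
\]
Together with the mass bound on $D\bar f_i$, this proves \eqref{eq:main_product_bv_extension_cost}.

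\emph{Equality when the traces vanish.} In this case the upper bound reduces to $\prod_{i\in I}|Df_i|(J_i)$. For the reverse inequality, note that the internal derivative $D_If$ on $\operatorname{int}S$ is the same product measure restricted to the interior, namely $\bigotimes_{i\in I}Df_i\llcorner(a_i,b_i)\otimes\bigotimes_{j\notin I}f_j\,dt_j$. Its mass is
\[
|D_If|(\operatorname{int}S)=\prod_{i\in I}|Df_i|(J_i),
\]
again using the normalization of the unselected marginals. Proposition~\ref{prop:main_extension_variation_properties}(i) then gives $|D_If|(\operatorname{int}S)\le V_I^{\mathrm{ext}}(f;S)$, which yields \eqref{eq:main_product_bv_extension_equality}.

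\emph{Main obstacle.} The step needing the most care is the identification of $D_I\bar f$ with the tensor product of the one-dimensional measures, together with the identity $|\mu\otimes\nu|=|\mu|\otimes|\nu|$. I would handle the first by the density argument on product test functions indicated above, and the second via the Hahn (polar) decomposition $d\mu=\sigma\,d|\mu|$ with $|\sigma|=1$ for each factor.
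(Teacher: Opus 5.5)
Your proposal is correct and follows essentially the same route as the paper. Both arguments zero-extend each marginal, use the one-dimensional $BV$ extension formula with boundary Dirac masses $f_j(a_j+)\delta_{a_j}-f_j(b_j-)\delta_{b_j}$, tensorize to get $|D_I\bar f|(\mathbb R^d)=\prod_{i\in I}|D\bar f_i|(\mathbb R)$ using normalization of the unselected marginals, and close the equality case with the interior product formula and the internal lower bound of Proposition~\ref{prop:main_extension_variation_properties}(i). Your ``at most'' for $|D\bar f_j|(\mathbb R)$ is in fact an equality, since the three pieces have disjoint supports, but the inequality is all the upper bound needs.
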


\begin{proof}
The zero extensions of the marginals give an admissible tensor-product
extension. The derivative and total-variation calculation is given in
Proposition~\ref{prop:appendix_product_bv_extension_cost}. If the selected
traces vanish, the resulting upper bound agrees with the internal lower bound
from Proposition~\ref{prop:main_extension_variation_properties}.
\end{proof}

The mollification proof does not smooth $f$ only inside $S$ and then
renormalize. One first chooses a whole-space extension $\bar f$, forms
$\bar f_\varepsilon=\bar f*\eta_\varepsilon$, applies the analytic smooth
mixed-derivative estimate to $\bar f_\varepsilon|_S$, and then passes to the
limit. Although global mass is preserved, the restriction
$\bar f_\varepsilon|_S$ need not be normalized because convolution may
transport mass across $\partial S$. Figure~\ref{fig:appendix_mollification_step_density}
illustrates this boundary spillover. Figure~\ref{fig:appendix_tensor_mollification_geometry}
illustrates the tensor-product identity
$$
\partial_I\bar f_\varepsilon
=
(D_I\bar f)*\eta_\varepsilon
$$
and the contraction of the global mixed-derivative variation budget.

The box results also give explicit bounds for laws with full support. Let $p$
be a probability density on $\mathbb R^d$, let
$$
Z_S:=\int_Sp(\omega)\,d\omega>0,
$$
and write $\mathcal P_I^S$ and $\mathcal R_I^S$ for the projectors applied to
the restriction of a residual to $S$.

\begin{corollary}[Localization of full-support laws]
\label{cor:main_full_support_localization}
Fix a nonempty $I\subseteq[d]$. Let $\varphi:\mathbb R^d\to\mathbb R$ be measurable and assume
$$
\int_{\mathbb R^d\setminus S}|\varphi(\omega)|p(\omega)\,d\omega<\infty.
$$
Then the following hold.

\begin{enumerate}
\item[\emph{(i)}] If $p|_S\in BV(S)$ and
$\varphi|_S\in L^\infty(S)$, then
\begin{equation}
\label{eq:main_full_support_tv_localization}
\begin{aligned}
|\mathbb E_p[\varphi]|
\le{}&
\frac{1}{2|I|}
\Lambda_{\mathrm{sl}}^I(\mathcal P_I^S\varphi)
\TV_I(p;S)
\\
&+
Z_S\|\mathcal R_I^S\varphi\|_{L^\infty(S)}
+
\int_{\mathbb R^d\setminus S}|\varphi|p.
\end{aligned}
\end{equation}

\item[\emph{(ii)}] If $p|_S\in C^{|I|}(\overline S)$ and
$\varphi|_S\in L^\infty(S)$, then
\begin{equation}
\label{eq:main_full_support_mixed_localization}
\begin{aligned}
|\mathbb E_p[\varphi]|
\le{}&
2^{-|I|}
\Lambda_{\mathrm{mix}}^I(\mathcal P_I^S\varphi)
\int_S|\partial_Ip(\omega)|\,d\omega
\\
&+
Z_S\|\mathcal R_I^S\varphi\|_{L^\infty(S)}
+
\int_{\mathbb R^d\setminus S}|\varphi|p.
\end{aligned}
\end{equation}

\item[\emph{(iii)}] If
$V_I^{\mathrm{ext}}(p|_S;S)<\infty$ and
$\varphi|_S\in L^\infty(S)$, then
\begin{equation}
\label{eq:main_full_support_nonsmooth_localization}
\begin{aligned}
|\mathbb E_p[\varphi]|
\le{}&
2^{-|I|}
\Lambda_{\mathrm{mix}}^I(\mathcal P_I^S\varphi)
V_I^{\mathrm{ext}}(p|_S;S)
\\
&+
Z_S\|\mathcal R_I^S\varphi\|_{L^\infty(S)}
+
\int_{\mathbb R^d\setminus S}|\varphi|p.
\end{aligned}
\end{equation}
If $D_Ip\in\mathcal M(\mathbb R^d)$, then
$$
V_I^{\mathrm{ext}}(p|_S;S)
\le
|D_Ip|(\mathbb R^d).
$$
\end{enumerate}
\end{corollary}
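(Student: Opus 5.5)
The plan is to split the expectation at the box boundary and then apply the box results to the restricted, unnormalized density $p|_S$. Write
$$
\mathbb E_p[\varphi]=\int_S\varphi\,p\,d\omega+\int_{\mathbb R^d\setminus S}\varphi\,p\,d\omega .
$$
The second term is bounded in absolute value by $\int_{\mathbb R^d\setminus S}|\varphi|p$, which is finite by hypothesis. For the first term, apply the decomposition $\varphi|_S=\mathcal P_I^S\varphi+\mathcal R_I^S\varphi$ from Proposition~\ref{prop:coordinate_projector_algebra}. Then
$$
\Bigl|\int_S(\mathcal R_I^S\varphi)\,p\Bigr|\le\|\mathcal R_I^S\varphi\|_{L^\infty(S)}\int_Sp=Z_S\|\mathcal R_I^S\varphi\|_{L^\infty(S)} .
$$
This uses only $p\ge0$ and $\varphi|_S\in L^\infty(S)$, so it is common to all three parts. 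What remains is to bound the centered integral $\int_S(\mathcal P_I^S\varphi)\,p$ without the normalization used in Theorem~\ref{thm:defect_adjusted_projector_bounds}.

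For (i), I would not invoke the probability-density version directly. Two routes are available, and both avoid the $Z_S$ factor on the variation term.

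\emph{Normalize.} Set $f:=p|_S/Z_S$, a probability density in $BV(S)$. Corollary~\ref{cor:bounded_residual_tv_extension} applied to $\mathcal P_I^S\varphi$ gives
$$
\Bigl|\int_S(\mathcal P_I^S\varphi)\,p\Bigr|=Z_S\bigl|\mathbb E_f[\mathcal P_I^S\varphi]\bigr|\le Z_S\frac{1}{2|I|}\Lambda_{\mathrm{sl}}^I(\mathcal P_I^S\varphi)\,\TV_I(f;S)=\frac{1}{2|I|}\Lambda_{\mathrm{sl}}^I(\mathcal P_I^S\varphi)\,\TV_I(p;S),
$$
since $\TV_I$ is positively homogeneous.

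\emph{Slice directly.} Alternatively, observe that the slicing proof of Corollary~\ref{cor:bounded_residual_tv_extension} never used nonnegativity or normalization of $f$: the subtraction of $u$ only appears in the Lipschitz version. It therefore applies verbatim to $p|_S$.

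For (ii), Theorem~\ref{thm:k_direction_mixed_derivative_bound} is analytic (Remark~\ref{rem:analytic_scope_mixed_derivative}). It can be applied directly with $g=p|_S\in C^{|I|}(\overline S)$ and the centered bounded residual $\mathcal P_I^S\varphi$, giving $2^{-|I|}\Lambda_{\mathrm{mix}}^I(\mathcal P_I^S\varphi)\int_S|\partial_Ip|$ with no normalization issue.

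For (iii), I would use the centered nonsmooth estimate from Theorem~\ref{thm:appendix_distributional_extension_bound} and Corollary~\ref{cor:appendix_intrinsic_nonsmooth_mixed_bound}. The one point to check is that that estimate holds for nonnegative $L^1$ functions, or at least is homogeneous. If it is stated only for probability densities, normalize again. Both sides scale linearly: $V_I^{\mathrm{ext}}(p|_S/Z_S;S)=V_I^{\mathrm{ext}}(p|_S;S)/Z_S$, because the admissible extensions scale with $f$. The final claim of (iii) is immediate from Definition~\ref{def:main_extension_mixed_variation}: if $D_Ip\in\mathcal M(\mathbb R^d)$, then $\bar f=p$ is admissible provided $p\in L^1(\mathbb R^d)$, which holds because $p$ is a density. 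Hence the infimum is at most $|D_Ip|(\mathbb R^d)$.

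Summing the three pieces with the triangle inequality yields each displayed bound. The main obstacle I expect is bookkeeping rather than analysis: making sure that each invoked centered estimate is legitimately applied to an unnormalized nonnegative density. The normalization/homogeneity argument is the fallback that settles it in every case. A smaller technical point is measurability and integrability of $\varphi p$ on $\mathbb R^d$, which follows from $\varphi|_S\in L^\infty$, $p\in L^1$, and the tail hypothesis.
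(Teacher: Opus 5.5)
Your proposal is correct and follows essentially the same route as the paper. In both, the tail is split off, the remainder is passed to the conditional density $f_S=p/Z_S$ on $S$, the homogeneity of $\TV_I$, $\partial_I$ and $V_I^{\mathrm{ext}}$ under this scaling is used, the box bounds are applied and multiplied back by $Z_S$, and $p$ itself serves as an admissible extension for the final claim. The only cosmetic difference is that you split off $\mathcal R_I^S\varphi$ before normalizing and invoke the analytic scope of Theorem~\ref{thm:k_direction_mixed_derivative_bound} directly in part (ii), whereas the paper applies the full defect-adjusted box theorems to $f_S$ in all three parts.
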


\begin{proof}
Decompose the expectation into its contribution on $S$ and its tail outside
$S$. On $S$, use the conditional density $p/Z_S$. The normalization factor
cancels in the variation term and remains only in front of the projector
defect. The complete calculation is given in
Theorem~\ref{thm:appendix_full_support_localization}.
\end{proof}

\begin{corollary}[Uniform tail control]
\label{cor:main_uniform_tail_control}
Let $\{\varphi_x:x\in X\}$ be a family of measurable residuals. If
$|\varphi_x(\omega)|\le H$ for every $x$ and almost every $\omega$, then
$$
\sup_{x\in X}
\int_{\mathbb R^d\setminus S}|\varphi_x(\omega)|p(\omega)\,d\omega
\le
H(1-Z_S).
$$
More generally, if $|\varphi_x|\le G$ for every $x$, where
$G\in L^1(p)$, then
$$
\sup_{x\in X}
\int_{\mathbb R^d\setminus S}|\varphi_x|p
\le
\int_{\mathbb R^d\setminus S}Gp,
$$
and the right-hand side converges to zero along boxes exhausting
$\mathbb R^d$.
\end{corollary}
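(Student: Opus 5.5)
The proof splits into the uniform bound and the convergence claim, and the pointwise domination $|\varphi_x|\le G$ (respectively $|\varphi_x|\le H$) does all the work.

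For the first claim, fix $x\in X$. Since $|\varphi_x(\omega)|\le H$ for a.e. $\omega$ and $p\ge0$,
\[
\int_{\mathbb R^d\setminus S}|\varphi_x|p
\le
H\int_{\mathbb R^d\setminus S}p
=
H\Bigl(\int_{\mathbb R^d}p-\int_Sp\Bigr)
=
H(1-Z_S),
\]
because $p$ is a probability density on $\mathbb R^d$. The right-hand side does not depend on $x$, so taking the supremum over $X$ proves the bound. This is in fact the special case $G\equiv H$ of the general statement, since a constant lies in $L^1(p)$.

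For the general domination statement, the same monotonicity argument applies. For each $x$ we have $|\varphi_x|p\le Gp$ a.e. on $\mathbb R^d\setminus S$. Integrating gives $\int_{\mathbb R^d\setminus S}|\varphi_x|p\le\int_{\mathbb R^d\setminus S}Gp$, and this is finite because $G\in L^1(p)$. Again the bound is uniform in $x$.

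For the convergence, let $(S_n)$ be boxes exhausting $\mathbb R^d$, meaning that every point, or at least a.e. point, eventually lies in $S_n$. I will read ``exhausting'' as an increasing sequence with union $\mathbb R^d$. It also suffices that $\mathbf 1_{\mathbb R^d\setminus S_n}\to0$ pointwise a.e., which holds for any sequence with $\bigcup_n\bigcap_{m\ge n}S_m=\mathbb R^d$. Set $g_n:=G\,p\,\mathbf 1_{\mathbb R^d\setminus S_n}$. Then $g_n\to0$ a.e. and $0\le g_n\le Gp\in L^1(\mathbb R^d)$. By dominated convergence, $\int_{\mathbb R^d\setminus S_n}Gp\to0$. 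In the increasing case one can use monotone convergence applied to $Gp\mathbf 1_{S_n}$ instead. The bounded case gives the explicit rate $H(1-Z_{S_n})\to0$ by the same argument with $G=H$.

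There is no real obstacle here. The only point needing care is to specify what ``exhausting'' means, so that the indicators of the complements tend to zero a.e. and dominated convergence applies.
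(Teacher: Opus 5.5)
Your proposal is correct and follows essentially the same route as the paper, which simply states that both bounds follow from pointwise domination and integrability. You supply the details the paper leaves implicit: integrating $|\varphi_x|p\le Gp$ over $\mathbb R^d\setminus S$, and applying dominated (or monotone) convergence along the exhausting boxes.
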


\begin{proof}
The bounds follow directly from domination and integrability.
\end{proof}

\begin{remark}[Geometric scope beyond boxes]
\label{rem:main_geometric_scope_beyond_boxes}
Appendix~\ref{subsec:appendix_affine_images_boxes} shows that the box theory
transfers exactly to an invertible affine image $\Omega=T(S)$ once the
projectors, residual envelopes, and density derivatives are transported to the
affine frame. The first-order variation is then expressed through directional derivative
measures in the transported affine frame, and the mixed term through iterated
directional derivatives in the same frame. Appendix~\ref{subsec:appendix_lipschitz_domains} gives a weaker,
domain-dependent $BV$ substitute on a connected Lipschitz domain by means of a
right inverse of the divergence operator. The latter result does not preserve
the coefficients $1/(2|I|)$ or $2^{-|I|}$, and the present method does not
yield an intrinsic classical Vitali counterpart on an arbitrary nonlinear
domain.
\end{remark}

The exact-centered and defect-adjusted bounds identify the analytical content
of the residual certificate. The optimality theorem shows that the centered
variation coefficients cannot be reduced uniformly, while the extension
results identify the precise regularity and geometric scope of the argument.
We now turn to a different limitation: even though the defect is explicit, it
cannot generally be forced to zero by requiring the approximation to be
convex.

\section{Exact centering and convex-approximation obstruction}
\label{sec:exact_centering_obstruction}

The preceding section identifies an analytical limitation and a regularity
extension. The variation coefficients are sharp in their natural box classes,
and the mixed-derivative estimate remains stable beyond smooth densities after
introducing the extension seminorm. We now identify a distinct
representational limitation. The defect term is not merely a technical
convenience: exact centering is a compatibility condition between the slice
averages of the integer-recourse value and the convexity of the approximation.

There are exceptional cases. If $v^{\INT}$ is already convex on the box, then
$\widetilde v=v^{\INT}$ gives the zero residual. In dimension one, an
unrestricted constant approximation can always center the residual over the
interval. These cases should not be confused with the general situation. In
higher dimension, requiring
$\bar\Pi_i(\widetilde v-v^{\INT})=0$ forces the $i$-slice average of
$v^{\INT}$ to be representable by a convex function of the remaining
coordinates.

This is also where the present box-based theory differs from classical
shifted-LP cancellation. For scalar unit-lattice ceiling recourse, the
half-shifted LP residual is a centered unit-periodic sawtooth, and periodic
zero cell mean coincides with one-dimensional centering on full unit cells.
That coincidence is special. On bounded boxes and in dimension $d\ge2$,
periodic cell means, coordinate slice-centering, and convex representability
of the induced slice-average profile are different requirements. The periodic
comparison is recorded in Appendix~\ref{app:periodic_finite_box_shifted_lp}.

\begin{proposition}[Slice-average compatibility for exact one-direction centering]
\label{prop:one_direction_centered_realizability}
Fix $i\in[d]$ and let $v\in L^1(S)$. Let
$U=\prod_{j\in[d]}(\alpha_j,\beta_j)$ be an open box containing $S$, and let
$U_{-i}$ denote its projection onto the coordinates different from $i$. The
following statements are equivalent.

\begin{enumerate}
\item[\emph{(i)}] There exists a finite-valued convex function
$\widetilde v:U\to\mathbb R$ such that, after restriction to $S$,
$$
\bar\Pi_i(\widetilde v-v)=0
\qquad
\text{a.e. on }S_{-i}.
$$

\item[\emph{(ii)}] The slice-average profile $\bar\Pi_i v$ admits a
representative on $S_{-i}$ that is the restriction of a finite-valued convex
function $m_i:U_{-i}\to\mathbb R$.
\end{enumerate}

In particular, if $\bar\Pi_i v$ has no representative whose restriction to
$\operatorname{ri}(S_{-i})$ is continuous, then no finite-valued convex
approximation on a neighborhood of $S$ can induce a residual centered in
direction $i$.
\end{proposition}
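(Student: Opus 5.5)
The plan is to prove both implications directly, using two facts about convex functions. First, averaging a convex function over one coordinate preserves convexity in the remaining coordinates. Second, a convex function of $\omega_{-i}$ can be lifted to a convex function of $\omega$ that does not depend on $\omega_i$. The "in particular" clause then follows because finite convex functions on open sets are continuous.

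For (i)$\Rightarrow$(ii), I will take $\widetilde v$ as in (i) and define
$$
m_i(\omega_{-i}):=\frac{1}{b_i-a_i}\int_{a_i}^{b_i}\widetilde v(\omega_{-i},t)\,dt,
\qquad \omega_{-i}\in U_{-i}.
$$
This integral is well defined and finite. Since $[a_i,b_i]\subset(\alpha_i,\beta_i)$, the segment $\{\omega_{-i}\}\times[a_i,b_i]$ is a compact subset of $U$, and $\widetilde v$ is continuous on $U$ because it is finite and convex on an open set. For $\lambda\in[0,1]$, convexity of $\widetilde v$ at each fixed $t$ gives
$$
\widetilde v(\lambda\omega_{-i}+(1-\lambda)\omega'_{-i},t)\le\lambda\widetilde v(\omega_{-i},t)+(1-\lambda)\widetilde v(\omega'_{-i},t).
$$
Integrating this in $t$ shows that $m_i$ is convex and finite on $U_{-i}$. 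The centering identity in (i) says $\bar\Pi_i v=\bar\Pi_i\widetilde v=m_i$ a.e. on $S_{-i}$. Hence $m_i|_{S_{-i}}$ is a representative of $\bar\Pi_i v$, which is (ii).

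For (ii)$\Rightarrow$(i), I will set $\widetilde v(\omega):=m_i(\omega_{-i})$ on $U$. This is finite-valued and convex, since it is the composition of a convex function with a linear projection. Its $i$-slice average on $S$ is $m_i(\omega_{-i})$, which equals $\bar\Pi_i v$ a.e. on $S_{-i}$. Therefore $\bar\Pi_i(\widetilde v-v)=0$ a.e. One measurability point needs checking: $\bar\Pi_i v$ is defined only a.e., by Fubini, so (ii) should be read as an a.e. equality with a representative. That is exactly what the centering condition needs.

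For the final clause, I will argue by contradiction. If some finite convex $\widetilde v$ on a neighborhood of $S$ induced a residual centered in direction $i$, I would first shrink the neighborhood to an open box $U\supset S$. Then (i) holds, so (ii) gives a finite convex $m_i$ on the open set $U_{-i}$. Such an $m_i$ is continuous on $U_{-i}$, hence on $\operatorname{ri}(S_{-i})$, and it represents $\bar\Pi_i v$, contradicting the hypothesis. I expect no real obstacle. The only delicate point is ensuring that $m_i$ is finite and convex up to the boundary of $S_{-i}$, and this is why $U$ is chosen as an open box strictly containing $S$: continuity of $\widetilde v$ on the compact segments is available uniformly there.
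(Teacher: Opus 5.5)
Your proposal is correct and follows the paper's proof essentially step for step. You average $\widetilde v$ over the $i$th coordinate to obtain the convex profile $m_i$ for (i)$\Rightarrow$(ii), lift $m_i$ to $\omega\mapsto m_i(\omega_{-i})$ for (ii)$\Rightarrow$(i), and use continuity of finite convex functions on open sets for the final clause; your added observation that any open neighborhood of the compact box $S$ contains an open box $U\supset S$ is a detail the paper leaves implicit, and you handle it correctly.
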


\begin{proof}
Assume first that \emph{(i)} holds. Define, for $z\in U_{-i}$,
$$
m_i(z):=
\frac{1}{b_i-a_i}
\int_{a_i}^{b_i}
\widetilde v(z_1,\ldots,z_{i-1},t,z_i,\ldots,z_{d-1})\,dt .
$$
Since $\widetilde v$ is finite-valued and convex on the open box $U$, it is
continuous. Hence $m_i$ is finite-valued. Convexity of $m_i$ follows by
integrating the convexity inequality for $\widetilde v$ in the $i$th
coordinate. The centering condition gives
$\bar\Pi_i\widetilde v=\bar\Pi_i v$ a.e. on $S_{-i}$, while
$\bar\Pi_i\widetilde v=m_i$. Thus $\bar\Pi_i v$ has the required convex
representative.

Conversely, suppose \emph{(ii)} holds. Define
$\widetilde v(\omega):=m_i(\omega_{-i})$ on $U$. This function is finite-valued
and convex as the composition of a convex function with the coordinate
projection. It is independent of coordinate $i$, so
$\bar\Pi_i\widetilde v=m_i=\bar\Pi_i v$ a.e. on $S_{-i}$. Hence
$\bar\Pi_i(\widetilde v-v)=0$ a.e. on $S_{-i}$.

The final statement follows because every finite-valued convex function on an
open convex set is continuous.
\end{proof}

Proposition~\ref{prop:one_direction_centered_realizability} shows that exact
centering is not a free normalization. It requires the relevant slice-average
profile of $v^{\INT}$ to have convex geometry. This requirement can fail even
for very simple integer-recourse functions.

\begin{proposition}[TU ceiling obstruction]
\label{prop:2d_tu_no_exact_smz_main}
There exists a separable two-dimensional totally unimodular ceiling-recourse
function for which no finite-valued convex approximation on a neighborhood of
the box can induce a residual centered in one coordinate direction. Hence
exact all-axes slice-centering is not generally attainable within convex
approximation classes.
\end{proposition}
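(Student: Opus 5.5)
The plan is to reduce the statement to Proposition~\ref{prop:one_direction_centered_realizability}. We exhibit a separable TU ceiling recourse value $v$ on a two-dimensional box for which the slice-average profile $\bar\Pi_1 v$ has no representative that is continuous on $\operatorname{ri}(S_{-1})$. The final clause of that proposition then rules out any finite-valued convex $\widetilde v$ on a neighborhood of $S$ inducing a residual centered in direction~$1$. A fortiori, exact all-axes centering, i.e.\ centering on $I=\{1,2\}$, is impossible.

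\textbf{Construction.} Take $W=\Id_2$ (totally unimodular), $q=(1,1)$, $n_2=2$, $n_3=0$, and suppress the first stage by setting $Tx=0$. The second-stage problem $\min\{y_1+y_2: y\ge\omega,\ y\in\mathbb Z_+^2\}$ is separable. On a box $S=[a_1,b_1]\times[a_2,b_2]$ with $a_j\ge0$, its value is
\[
v(\omega)=\lceil\omega_1\rceil+\lceil\omega_2\rceil .
\]
Choose $S=[0,1]\times[0,2]$, or any box with $a_2<1<b_2$, so that the second coordinate crosses the integer point $1$ in its interior.

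\textbf{Slice average.} Averaging in coordinate $1$ gives
\[
(\bar\Pi_1 v)(\omega_2)=c_1+\lceil\omega_2\rceil,\qquad c_1:=\frac{1}{b_1-a_1}\int_{a_1}^{b_1}\lceil t\rceil\,dt .
\]
This is a step function in $\omega_2$ with a unit jump at $\omega_2=1\in\operatorname{int}[a_2,b_2]$. Any representative agrees a.e.\ with this step function. Its essential left and right limits at $1$ are therefore $c_1+1$ and $c_1+2$, so no representative is continuous at $1$. By the proposition, $\bar\Pi_1 v$ is not the restriction of a finite-valued convex function on $U_{-1}$, and exact centering in direction $1$ fails. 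The same argument with the roles of the coordinates swapped, provided $[a_1,b_1]$ also contains an integer in its interior, shows that direction $2$ fails as well. Because $\mathcal R_{\{1,2\}}\varphi=0$ requires centering in both directions (Proposition~\ref{prop:coordinate_projector_algebra}), all-axes centering is unattainable.

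\textbf{Main obstacle.} The only delicate point is the essential-discontinuity argument: a convex function could a priori match the step profile only up to a null set. This is handled by the fact that a function continuous at $1$ would have equal essential one-sided limits, whereas the step profile's essential limits differ by $1$. A secondary check is that the recourse is complete and finite on the box, so the example fits the standing assumptions; $W=\Id$ and $q\ge0$ ensure this.
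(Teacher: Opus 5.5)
Your proposal is correct and follows the paper's argument: the same identity-matrix ceiling value $\lceil b_1\rceil+\lceil b_2\rceil$, the observation that the coordinate-$1$ slice average is a step function in $b_2$ with a jump at $b_2=1$, and the reduction to Proposition~\ref{prop:one_direction_centered_realizability}. (The paper's appendix carries out that last step directly: the continuity of $\bar\Pi_1\widetilde v$ forces two different constant values to meet at $b_2=1$.) Your aside about direction~$2$ is unnecessary, and its proviso is not met by your chosen box $[0,1]\times[0,2]$, where the coordinate-$2$ profile is a.e.\ constant and direction-$2$ centering is in fact attainable; the conclusion only needs the failure in direction~$1$, which you establish.
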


\begin{proof}
The construction is the identity-matrix recourse value
$$
v^{\INT}(b_1,b_2)=\lceil b_1\rceil+\lceil b_2\rceil
\qquad \text{on } [0,2]^2 .
$$
This is a separable totally unimodular ceiling-recourse function. Its
coordinate-$1$ slice average is equal a.e. to
$3/2+\lceil b_2\rceil$, which has a jump at $b_2=1$. By
Proposition~\ref{prop:one_direction_centered_realizability}, exact
coordinate-$1$ residual centering would require this slice-average profile to
admit a continuous convex representative on the remaining coordinate. This is
impossible. The full verification is given in Appendix~\ref{app:tu_ceiling_obstruction}.
\end{proof}
The obstruction is geometric rather than computational. Exact centering would
require the discontinuous slice-average profile of the integer recourse value
to coincide with the slice average of a finite-valued convex function.

The conclusion is not that exact centering never occurs. Rather, exact
centering is a structural compatibility condition. It requires the relevant
slice-average profiles of the exact recourse value to be representable by
convex functions. Since convexity constrains $\widetilde v$, not the residual
$\widetilde v-v^{\INT}$, such compatibility cannot be assumed for general
mixed-integer recourse.

This is why the defect-adjusted formulation is the natural general statement.
For any residual,
$$
\varphi=\mathcal P_I\varphi+\mathcal R_I\varphi .
$$
The component $\mathcal P_I\varphi$ is centered in the selected directions and
is controlled by the exact-centered estimates, while the slice-mean defect
$\mathcal R_I\varphi$ enters additively. The theory therefore does not require
exact centering as a hard modeling constraint; it quantifies what remains true
when exact centering is unavailable or undesirable.

The two limitations are complementary. The sharpness construction shows that
the centered variation term cannot be uniformly improved, even for smooth
densities. The ceiling-recourse obstruction shows that the noncentered defect
cannot generally be eliminated through convex approximation. Thus both terms
in the defect-adjusted certificate represent genuine structural phenomena
rather than artifacts of the proof.

\section{Computational framework and audit methodology}
\label{sec:framework_design_revised}

The bounds in Section~\ref{sec:problem_setup} do not say that exact
slice-centering should be imposed as a hard constraint. Section~\ref{sec:exact_centering_obstruction}
shows why: exact centering is a compatibility condition between slice averages
of $v^{\INT}$ and convexity of the approximation. The computational principle
is therefore weaker. We fit convex approximations using ordinary residual-error terms together with soft penalties on coordinate slice means and lifted projector defects. The
centered envelopes entering the variation terms are evaluated afterward on
the independent audit grid. Corollary~\ref{cor:best_defect_adjusted_certificate}
also shows that the direction set and the variation formulation may be selected
rather than fixed in advance. In the two-dimensional experiment, we audit the
one-direction residual through the directional primitive certificate and the
two-direction residual through the all-axes mixed-derivative/Vitali
certificate.

The section proceeds in the same order as the theory. We first develop the
discrete projector calculus on the training grid, mirroring the continuous
operators of Section~\ref{subsec:coordinate_projectors}. We then define the LP
relaxation and the finite-box LP-slope calibration comparator used as a structured
benchmark. The direct max-affine fits and their projector penalties follow,
and the section ends with the validation and audit diagnostics.

Throughout this section, points of the box $S$ are denoted by $b$. For a convex
approximation $\widetilde v:S\to\mathbb R$, write
$$
\varphi_{\widetilde v}(b):=\widetilde v(b)-v^{\INT}(b).
$$
For each nonempty $I\subseteq[d]$, the continuous decomposition is
$$
\varphi_{\widetilde v}
=
\mathcal P_I\varphi_{\widetilde v}
+
\mathcal R_I\varphi_{\widetilde v}.
$$
The fitted approximation is judged by ordinary approximation quality, but it is
also audited through the quantities appearing in the defect-adjusted bounds:
the centered residual component, its slice or mixed envelope, and the
projector defect. The discrete operators below are the grid analogues of the continuous
coordinate averages and projectors. Their purpose is not to approximate a
density, but to measure slice-level residual bias on the training or audit
grid.

\subsection{Discrete projector calculus on a tensor grid } 
\label{subsec:framework_discrete_projectors}

Let the training grid be the tensor grid
$$
B^h
=
\left\{
b^\alpha=
(\xi_{\alpha_1}^{(1)},\ldots,\xi_{\alpha_d}^{(d)}):
\alpha\in\mathcal N
\right\}
\subset S,
\qquad
\mathcal N:=\prod_{i\in[d]}\{1,\ldots,n_i\}.
$$
At each grid point we evaluate
$$
y_\alpha:=v^{\INT}(b^\alpha),
\qquad
\alpha\in\mathcal N.
$$
For a candidate approximation $\widetilde v$, define the grid residual array
$R_{\widetilde v}^h=(r_\alpha)_{\alpha\in\mathcal N}$ by
$$
r_\alpha:=\widetilde v(b^\alpha)-y_\alpha .
$$
When the approximation is clear, we write simply $R$.

For each coordinate $i\in[d]$, let $\varpi_r^i>0$, $r=1,\ldots,n_i$, be
normalized quadrature weights for coordinate-$i$ averaging:
$$
\sum_{r=1}^{n_i}\varpi_r^i=1.
$$
These are geometric averaging weights for the projector calculus. They are not
training weights, validation weights, audit weights, or density weights.

For $\alpha=(\alpha_i,\alpha_{-i})$, define the unlifted coordinate-$i$
average by
$$
(A_i^hR)_{\alpha_{-i}}
:=
\sum_{r=1}^{n_i}
\varpi_r^i
R_{(\alpha_1,\ldots,\alpha_{i-1},r,\alpha_{i+1},\ldots,\alpha_d)}.
$$
The lifted coordinate-average operator is
$$
(\Pi_i^hR)_\alpha:=(A_i^hR)_{\alpha_{-i}}.
$$
Thus $A_i^hR$ is the lower-dimensional slice-average profile, while
$\Pi_i^hR$ is the same profile lifted back to the full tensor grid by making it
constant in coordinate $i$.

For $I\subseteq[d]$, set
$$
\mathcal P_I^h
:=
\prod_{i\in I}(\Id-\Pi_i^h),
\qquad
\mathcal P_\varnothing^h:=\Id,
\qquad
\mathcal R_I^h:=\Id-\mathcal P_I^h.
$$
The array $\mathcal P_I^hR$ is centered in the selected discrete directions;
$\mathcal R_I^hR$ is the lifted slice-mean defect.

\begin{proposition}[Discrete projector algebra]
\label{prop:discrete_projector_algebra_framework}
The operators $\Pi_i^h$ are linear, idempotent, and commute. Consequently,
$\mathcal P_I^h$ and $\mathcal R_I^h$ are projections for every
$I\subseteq[d]$. If $i\in I$, then
$$
\Pi_i^h\mathcal P_I^hR=0
\qquad
\text{for every residual array }R.
$$
For nonempty $I$,
$$
\mathcal R_I^hR=0
\quad\Longleftrightarrow\quad
\Pi_i^hR=0\ \text{for every }i\in I.
$$
\end{proposition}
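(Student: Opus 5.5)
The plan is to mirror the proof of Proposition~\ref{prop:coordinate_projector_algebra}, replacing integrals by finite weighted sums. The operators $\Pi_i^h$ act on the finite-dimensional space $\mathbb R^{\mathcal N}$ of residual arrays, so no analytic issues arise and everything reduces to algebraic identities for the weighted sums $A_i^h$.

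\emph{Linearity and idempotence.} Linearity of $\Pi_i^h$ is immediate from the definition. For idempotence, observe that $\Pi_i^hR$ does not depend on the index $\alpha_i$. Averaging an array that is constant in coordinate $i$ with weights $\varpi_r^i$ returns that constant, because $\sum_r\varpi_r^i=1$. Hence $(\Pi_i^h)^2=\Pi_i^h$.

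\emph{Commutativity.} For $i\neq j$, both $\Pi_i^h\Pi_j^hR$ and $\Pi_j^h\Pi_i^hR$ evaluated at $\alpha$ equal the double sum
\[
\sum_{r,s}\varpi_r^i\varpi_s^j R_{\alpha[i\mapsto r,\,j\mapsto s]}.
\]
Equality follows by interchanging two finite sums, which is the discrete Fubini step. For $i=j$ the claim is trivial. This is the only point where the tensor structure of the grid and of the weights is genuinely used: the weights for coordinate $i$ do not depend on the other indices. I expect this to be the main (though mild) point to state carefully.

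\emph{Consequences.} The remaining assertions then follow exactly as in the continuous case.
\begin{itemize}
\item The factors $\Id-\Pi_i^h$ are commuting idempotents, so their product $\mathcal P_I^h$ is idempotent, and therefore $\mathcal R_I^h=\Id-\mathcal P_I^h$ is idempotent as well.
\item For $i\in I$, commutativity lets us move the factor $\Id-\Pi_i^h$ to the front. This gives $\Pi_i^h\mathcal P_I^h=\Pi_i^h(\Id-\Pi_i^h)\prod_{j\in I\setminus\{i\}}(\Id-\Pi_j^h)=0$.
\item For nonempty $I$, suppose $\mathcal R_I^hR=0$. Then $R=\mathcal P_I^hR$, and the previous identity gives $\Pi_i^hR=0$ for every $i\in I$.
\item Conversely, suppose $\Pi_i^hR=0$ for all $i\in I$. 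Then each factor $\Id-\Pi_i^h$ fixes $R$, so $\mathcal P_I^hR=R$ and $\mathcal R_I^hR=0$.
\end{itemize}
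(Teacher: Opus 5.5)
Your proposal is correct and follows essentially the same route as the paper's proof. Idempotence comes from constancy in coordinate $i$, commutativity from interchanging the tensor-grid averages, and the same factor-rearrangement argument then gives both $\Pi_i^h\mathcal P_I^h=0$ and the equivalence $\mathcal R_I^hR=0\Leftrightarrow\Pi_i^hR=0$ for $i\in I$; your explicit use of $\sum_r\varpi_r^i=1$ and of the double-sum interchange only spells out steps the paper states in one line.
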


\begin{proof}
Linearity is immediate. Since $\Pi_i^hR$ is already constant in coordinate
$i$, applying $\Pi_i^h$ again does not change it, so $(\Pi_i^h)^2=\Pi_i^h$.
The operators commute because coordinate averages on a tensor grid can be
taken in any order. Hence the commuting product
$\mathcal P_I^h=\prod_{i\in I}(\Id-\Pi_i^h)$ is a projection, and so is
$\mathcal R_I^h=\Id-\mathcal P_I^h$.

If $i\in I$, then
$$
\Pi_i^h\mathcal P_I^h
=
\Pi_i^h(\Id-\Pi_i^h)
\prod_{j\in I\setminus\{i\}}(\Id-\Pi_j^h)
=0.
$$
This proves discrete centering. Finally,
$\mathcal R_I^hR=0$ implies $R=\mathcal P_I^hR$, hence
$\Pi_i^hR=0$ for every $i\in I$. Conversely, if $\Pi_i^hR=0$ for every
$i\in I$, then each factor $\Id-\Pi_i^h$ leaves $R$ unchanged, so
$\mathcal P_I^hR=R$ and $\mathcal R_I^hR=0$.
\end{proof}

In two dimensions, for $I=[2]$, the projector has the familiar
inclusion--exclusion form
$$
\mathcal P_{[2]}^hR
=
R-\Pi_1^hR-\Pi_2^hR+\bar r^h\mathbf 1,
$$
where
$$
\bar r^h
=
\sum_{\alpha\in\mathcal N}
\left(
\prod_{i\in[d]}\varpi_{\alpha_i}^i
\right)r_\alpha.
$$
The two-dimensional case makes the inclusion--exclusion structure explicit.
The operator $\mathcal P_{\{1,2\}}$ first removes the row and column means and
then restores the global mean to avoid double subtraction.















If the two grid indices are written as $(p,q)$, then
$$
(\mathcal R_{[2]}^hR)_{pq}
=
(A_1^hR)_q+(A_2^hR)_p-\bar r^h.
$$
Thus the two-direction defect is the row-mean plus column-mean residual bias,
corrected by the global mean.

\subsection{LP relaxation and finite-box LP-slope calibration comparator } 
\label{subsec:framework_shifted_lp}

The first benchmark is the LP relaxation
\begin{equation}
\label{eq:framework_lp_primal_definition}
v^{\LP}(b)
:=
\min_y
\{q^\top y:Wy\ge b,\ y\ge0\}.
\end{equation}
Assume that, on $S$, it admits the finite max-affine representation
\begin{equation}
\label{eq:framework_lp_max_affine_definition}
v^{\LP}(b)
=
\max_{k\in[K]}\{a_k+s_k^\top b\}.
\end{equation}
In the usual dual representation, the slopes $s_k$ come from dual extreme
points; the notation also allows affine offsets.

The finite-box LP-slope calibration comparator keeps this LP slope dictionary and fits
only vertical shifts:
$$
\mathcal G_{\LP}
=
\left\{
b\mapsto
\max_{k\in[K]}\{a_k+s_k^\top b+\beta_k\}:
\beta\in\mathbb R^K
\right\}.
$$
For every training point $b^\alpha$, choose an active LP piece
$$
k(\alpha)
\in
\arg\max_{\ell\in[K]}
\{a_\ell+s_\ell^\top b^\alpha\},
$$
using deterministic tie-breaking, and define
$$
\mathcal N_k:=\{\alpha\in\mathcal N:k(\alpha)=k\}.
$$
Let $\kappa_\alpha^{\mathrm{tr}}$ be normalized training weights. The weighted average
LP gap on the training grid is
$$
\bar g
:=
\sum_{\alpha\in\mathcal N}
\kappa_\alpha^{\mathrm{tr}}
\bigl(y_\alpha-v^{\LP}(b^\alpha)\bigr).
$$
The finite-box LP-slope calibration comparator answers a focused question: if the LP
affine geometry is kept fixed and only the intercepts are corrected, how much
of the finite-box residual can be removed? This separates slope accuracy from
vertical calibration. The parameter $\tau$ shrinks each piece-specific correction toward the global
average LP gap, which prevents unstable corrections for LP pieces active on
only a small part of the training grid.
\begin{definition}[finite-box LP-slope calibration comparator]
\label{def:framework_gamma_empirical}
Fix $\tau>0$. For each LP piece $k\in[K]$, let $\gamma_k$ be the minimizer of
\begin{equation}
\label{eq:framework_gamma_piece_problem}
\min_{\gamma\in\mathbb R}
\left\{
\sum_{\alpha\in\mathcal N_k}
\kappa_\alpha^{\mathrm{tr}}
\bigl(\gamma-(y_\alpha-a_k-s_k^\top b^\alpha)\bigr)^2
+
\tau(\gamma-\bar g)^2
\right\}.
\end{equation}
The finite-box LP-slope calibration comparator is
\begin{equation}
\label{eq:framework_gamma_empirical_formula_new}
v^\Gamma(b)
=
\max_{k\in[K]}
\{a_k+s_k^\top b+\gamma_k\}.
\end{equation}
\end{definition}
The construction is piecewise with respect to the LP-active partition: the sets $\mathcal N_k$ are determined by
the active partition of the unshifted LP relaxation. Because the intercept
corrections can change which affine piece is active, $v^\Gamma$ is not, in
general, the empirical least-squares minimizer over the entire class
$\mathcal G_{\LP}$. We use it as a transparent LP-slope calibration benchmark
that preserves the slope dictionary of the LP relaxation.
\begin{proposition}[Closed-form LP-slope calibration intercepts]
\label{prop:framework_gamma_optimizer}
For each $k\in[K]$, the minimizer in
\eqref{eq:framework_gamma_piece_problem} is
\begin{equation}
\label{eq:framework_eslp_intercepts_new}
\gamma_k
=
\frac{
\sum_{\alpha\in\mathcal N_k}
\kappa_\alpha^{\mathrm{tr}}
\bigl(y_\alpha-a_k-s_k^\top b^\alpha\bigr)
+
\tau\bar g
}{
\sum_{\alpha\in\mathcal N_k}\kappa_\alpha^{\mathrm{tr}}+\tau
}.
\end{equation}
If $\mathcal N_k=\emptyset$, then $\gamma_k=\bar g$.
\end{proposition}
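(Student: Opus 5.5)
The statement is a one-variable strictly convex quadratic minimization, so I would prove it directly by first-order optimality. Fix $k\in[K]$ and abbreviate the piece-specific target gaps by
$$
z_\alpha:=y_\alpha-a_k-s_k^\top b^\alpha,\qquad \alpha\in\mathcal N_k,
$$
and the objective in \eqref{eq:framework_gamma_piece_problem} by
$$
J_k(\gamma):=\sum_{\alpha\in\mathcal N_k}\kappa_\alpha^{\mathrm{tr}}(\gamma-z_\alpha)^2+\tau(\gamma-\bar g)^2 .
$$

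\emph{Step 1: strict convexity.} The first step is to note that $J_k$ is a quadratic polynomial in $\gamma$ with leading coefficient
$$
W_k+\tau,\qquad W_k:=\sum_{\alpha\in\mathcal N_k}\kappa_\alpha^{\mathrm{tr}}\ge 0 .
$$
The training weights are normalized and hence nonnegative, and $\tau>0$. Therefore $W_k+\tau>0$, so $J_k$ is strictly convex and coercive, and it has a unique minimizer. This is the only point where the hypothesis $\tau>0$ is used. It is also what makes $\gamma_k$ well defined even when $\mathcal N_k$ is empty or carries zero total weight.

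\emph{Step 2: first-order condition.} Next I would set $J_k'(\gamma)=0$:
$$
2\sum_{\alpha\in\mathcal N_k}\kappa_\alpha^{\mathrm{tr}}(\gamma-z_\alpha)+2\tau(\gamma-\bar g)=0 .
$$
Collecting the terms in $\gamma$ gives
$$
(W_k+\tau)\,\gamma=\sum_{\alpha\in\mathcal N_k}\kappa_\alpha^{\mathrm{tr}}z_\alpha+\tau\bar g .
$$
Dividing by $W_k+\tau>0$ and substituting back the definition of $z_\alpha$ yields \eqref{eq:framework_eslp_intercepts_new}. Since $J_k$ is convex, this stationary point is the global minimizer.

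\emph{Step 3: empty pieces.} If $\mathcal N_k=\emptyset$, both sums in the formula are empty and equal zero. The formula then reduces to $\gamma_k=\tau\bar g/\tau=\bar g$. Equivalently, $J_k(\gamma)=\tau(\gamma-\bar g)^2$, which is minimized exactly at $\bar g$.

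The only point needing care is the well-posedness in Step 1: the denominator must be positive even when the piece has no training mass. The shrinkage term guarantees this, and there is no other real obstacle.
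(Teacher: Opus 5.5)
Your proposal is correct and follows the paper's proof essentially verbatim. Both arguments get strict convexity of the scalar quadratic from $\tau>0$, solve the first-order condition for $\gamma$, and treat the empty-piece case, where the formula reduces to $\bar g$. One small wording point: nonnegativity of $\kappa_\alpha^{\mathrm{tr}}$ does not follow from normalization; it is an implicit standing assumption on training weights, which the paper also uses tacitly.
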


\begin{proof}
For fixed $k$, the objective in \eqref{eq:framework_gamma_piece_problem} is a
strictly convex scalar quadratic because $\tau>0$. Its first-order condition is
$$
0
=
2\sum_{\alpha\in\mathcal N_k}
\kappa_\alpha^{\mathrm{tr}}
\bigl(\gamma-(y_\alpha-a_k-s_k^\top b^\alpha)\bigr)
+
2\tau(\gamma-\bar g).
$$
Solving this equation gives \eqref{eq:framework_eslp_intercepts_new}. If
$\mathcal N_k=\emptyset$, the corresponding sum is zero and the formula gives
$\gamma_k=\bar g$.
\end{proof}

The function $v^\Gamma$ retains the LP slope dictionary but uses intercepts
calibrated on the finite training box.
It preserves the affine geometry of the LP relaxation and fits the vertical
corrections on the same training box used for the direct convex fits. This is
different from the classical shifted-LP constants, which are derived from
full-cell periodic or asymptotically periodic residuals. The distinction is
discussed in Appendix~\ref{app:periodic_finite_box_shifted_lp}.

\subsection{Direct max-affine convex fitting } 
\label{subsec:framework_direct_fitting}

The direct convex fits use a max-affine model with one supporting plane
attached to each training point. For each $\alpha\in\mathcal N$, introduce a
fitted height $u_\alpha\in\mathbb R$ and a fitted subgradient
$g_\alpha\in\mathbb R^d$. Define
\begin{equation}
\label{eq:framework_max_affine_extension_new}
\widetilde v_{u,g}(b)
=
\max_{\alpha\in\mathcal N}
\{u_\alpha+g_\alpha^\top(b-b^\alpha)\}.
\end{equation}
The supporting-plane inequalities are
\begin{equation}
\label{eq:framework_convexity_cuts_new}
u_\beta
\ge
u_\alpha+g_\alpha^\top(b^\beta-b^\alpha),
\qquad
\alpha,\beta\in\mathcal N.
\end{equation}

\begin{proposition}[Supporting-plane construction]
\label{prop:supporting_plane_construction_framework}
If $(u,g)$ satisfies \eqref{eq:framework_convexity_cuts_new}, then
$\widetilde v_{u,g}$ is convex and interpolates the fitted heights:
$$
\widetilde v_{u,g}(b^\alpha)=u_\alpha,
\qquad
\alpha\in\mathcal N.
$$
Conversely, if $\widetilde v$ is convex on a neighborhood of $S$,
$u_\alpha=\widetilde v(b^\alpha)$, and
$g_\alpha\in\partial\widetilde v(b^\alpha)$, then
\eqref{eq:framework_convexity_cuts_new} holds.
\end{proposition}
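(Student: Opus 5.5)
The forward direction is a pointwise-maximum argument, and the converse is the subgradient inequality read at grid points.

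\emph{Forward direction.} Each map $b\mapsto u_\alpha+g_\alpha^\top(b-b^\alpha)$ is affine. The function $\widetilde v_{u,g}$ in \eqref{eq:framework_max_affine_extension_new} is a pointwise maximum of finitely many affine functions, so it is convex and finite-valued on all of $\mathbb R^d$. In particular it is convex on $S$.

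For interpolation, fix a grid point $b^\beta$ and evaluate the maximum there. The term with index $\alpha=\beta$ equals $u_\beta$, so $\widetilde v_{u,g}(b^\beta)\ge u_\beta$. For every other index $\alpha$, the supporting-plane inequality \eqref{eq:framework_convexity_cuts_new} gives $u_\alpha+g_\alpha^\top(b^\beta-b^\alpha)\le u_\beta$. Hence no term in the maximum exceeds $u_\beta$, and $\widetilde v_{u,g}(b^\beta)=u_\beta$.

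\emph{Converse direction.} Let $\widetilde v$ be convex on an open convex neighborhood $U$ of $S$. Set $u_\alpha=\widetilde v(b^\alpha)$ and take $g_\alpha\in\partial\widetilde v(b^\alpha)$. By definition of the subdifferential, for every $b\in U$,
$$
\widetilde v(b)\ge \widetilde v(b^\alpha)+g_\alpha^\top(b-b^\alpha).
$$
Every grid point lies in $B^h\subset S\subset U$, so we may take $b=b^\beta$. This gives $u_\beta\ge u_\alpha+g_\alpha^\top(b^\beta-b^\alpha)$, which is \eqref{eq:framework_convexity_cuts_new}.

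\emph{Subtle point.} The only delicate step is that subgradients must exist at every grid point, including points on $\partial S$. This is why the hypothesis asks for convexity on a neighborhood of $S$. Every grid point is then an interior point of the open convex domain $U$, where a finite convex function has a nonempty subdifferential. If instead one only had a convex function on the closed box, the converse would be read as conditional on the given choices $g_\alpha\in\partial\widetilde v(b^\alpha)$ existing. The subgradient inequality would still hold on $S$, which contains all grid points, so the argument goes through unchanged.
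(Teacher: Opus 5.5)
Your proposal is correct and follows essentially the same route as the paper's proof. Convexity comes from taking a pointwise maximum of affine functions. Interpolation comes from the $\alpha=\beta$ term together with the supporting-plane inequalities. The converse is the subgradient inequality evaluated at the grid points, and your added remark about why the neighborhood hypothesis guarantees that subgradients exist at boundary grid points is a harmless elaboration.
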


\begin{proof}
The function $\widetilde v_{u,g}$ is a pointwise maximum of affine functions,
hence convex. At $b^\beta$, the affine function indexed by $\beta$ gives
$\widetilde v_{u,g}(b^\beta)\ge u_\beta$. The inequalities
\eqref{eq:framework_convexity_cuts_new} give
$$
u_\alpha+g_\alpha^\top(b^\beta-b^\alpha)\le u_\beta
\qquad
\text{for all }\alpha\in\mathcal N.
$$
Taking the maximum over $\alpha$ yields
$\widetilde v_{u,g}(b^\beta)\le u_\beta$, hence equality. The converse is the
subgradient inequality for convex functions.
\end{proof}

The fitted residual array is
$$
R(u)_\alpha:=u_\alpha-y_\alpha.
$$
Let $e_\alpha\ge0$ represent $|R(u)_\alpha|$, let $t\ge0$ represent the
training-grid residual sup-norm, and let $\eta_{\alpha,i}\ge0$ represent
$|g_{\alpha,i}|$. With $\theta_{\fit}\in[0,1]$ and
$\lambda_{\mathrm{grad}}\ge0$, define
\begin{equation}
\label{eq:framework_common_fit_objective}
J_0(u,g,e,\eta,t)
:=
\theta_{\fit}t
+
(1-\theta_{\fit})
\sum_{\alpha\in\mathcal N}\kappa_\alpha^{\mathrm{tr}}e_\alpha
+
\lambda_{\mathrm{grad}}
\sum_{\alpha\in\mathcal N}
\kappa_\alpha^{\mathrm{tr}}
\sum_{i\in[d]}\eta_{\alpha,i}.
\end{equation}
The parameter $\theta_{\fit}$ balances worst-case and average training error,
while $\lambda_{\mathrm{grad}}$ regularizes the fitted slopes.

The fit-only approximation $C_0$ minimizes $J_0$ subject to
\eqref{eq:framework_convexity_cuts_new} and
$$
e_\alpha\ge u_\alpha-y_\alpha,
\qquad
e_\alpha\ge y_\alpha-u_\alpha,
\qquad
t\ge e_\alpha,
\qquad
\alpha\in\mathcal N,
$$
together with
$$
\eta_{\alpha,i}\ge g_{\alpha,i},
\qquad
\eta_{\alpha,i}\ge -g_{\alpha,i},
\qquad
\alpha\in\mathcal N,\quad i\in[d].
$$
Thus $C_0$ is the direct convex fit without projector regularization.

\subsection{Projector-regularized max-affine fitting } 
\label{subsec:framework_projector_regularized_fitting}

The projector-regularized fits do not impose exact centering as a hard
constraint; Section~\ref{sec:exact_centering_obstruction} shows that exact
centering may be infeasible. Instead, the penalties reduce systematic
slice-level bias and the lifted defect quantities that appear in the
defect-adjusted bounds. For $i\in[d]$, define the signed coordinate-$i$ slice-mean
residual by
$$
m_{\alpha_{-i}}^{(i)}(u)
:=
(A_i^hR(u))_{\alpha_{-i}}.
$$
Large values of $m_{\alpha_{-i}}^{(i)}(u)$ indicate systematic residual bias
along the coordinate-$i$ grid line indexed by $\alpha_{-i}$. For every $i\in[d]$, let
$$
\mathcal N_{-i}
:=
\prod_{j\neq i}\{1,\ldots,n_j\}
$$
denote the index set of coordinate-$i$ slices.

\begin{definition}[Projector-regularized max-affine approximation]
\label{def:framework_CI_unified}
Let $\emptyset\neq I\subseteq[d]$. Fix penalties $\mu_i\ge0$ for $i\in I$ and
$\mu_I\ge0$. The approximation $C_I$ is any max-affine function
$C_I(b)=\widetilde v_{u^\star,g^\star}(b)$ associated with an optimal solution
of
\begin{equation}
\label{eq:framework_CI_unified_lp}
\begin{aligned}
\min\quad&
J_0(u,g,e,\eta,t)
+
\sum_{i\in I}
\mu_i
\sum_{\alpha_{-i}}
\left(
\prod_{j\neq i}\varpi_{\alpha_j}^j
\right)
d_{\alpha_{-i}}^{(i)}
+
\mu_I z_I
\\
\text{s.t } \quad&
u_\beta
\ge
u_\alpha
+
g_\alpha^\top
\left(
b^\beta-b^\alpha
\right),
\qquad
\alpha,\beta\in\mathcal N,
\\
&
e_\alpha
\ge
u_\alpha-y_\alpha,
\qquad
e_\alpha
\ge
y_\alpha-u_\alpha,
\qquad
t\ge e_\alpha,
\qquad
\alpha\in\mathcal N,
\\
&
\eta_{\alpha,r}
\ge
g_{\alpha,r},
\qquad
\eta_{\alpha,r}
\ge
-g_{\alpha,r},
\qquad
r\in[d],
\quad
\alpha\in\mathcal N,
\\
&
m_{\alpha_{-i}}^{(i)}(u)
=
\left(
A_i^hR(u)
\right)_{\alpha_{-i}},
\qquad
i\in I,
\quad
\alpha_{-i}\in\mathcal N_{-i},
\\
&
d_{\alpha_{-i}}^{(i)}
\ge
m_{\alpha_{-i}}^{(i)}(u),
\qquad
d_{\alpha_{-i}}^{(i)}
\ge
-m_{\alpha_{-i}}^{(i)}(u),
\qquad
i\in I,
\quad
\alpha_{-i}\in\mathcal N_{-i},
\\
&
z_I
\ge
\left(
\mathcal R_I^hR(u)
\right)_\alpha,
\qquad
z_I
\ge
-\left(
\mathcal R_I^hR(u)
\right)_\alpha,
\qquad
\alpha\in\mathcal N,
\\
&
e_\alpha\ge0,
\qquad
t\ge0,
\qquad
\eta_{\alpha,r}\ge0,
\qquad
\alpha\in\mathcal N,
\quad
r\in[d],
\\
&
d_{\alpha_{-i}}^{(i)}\ge0,
\qquad
i\in I,
\quad
\alpha_{-i}\in\mathcal N_{-i},
\\
&
z_I\ge0.
\end{aligned}
\end{equation}

\end{definition}

The formulation \eqref{eq:framework_CI_unified_lp} is a linear program. The
supporting-plane inequalities enforce convexity of the fitted max-affine
surface, and the residual, slope, slice-mean, and projector-defect quantities
are handled by linear epigraph constraints. Since each $\Pi_i^h$ is linear,
the operators $\mathcal P_I^h$ and $\mathcal R_I^h$ are linear as well.

The penalty terms are soft residual-geometry controls rather than hard
centering constraints. They discourage systematic slice-level bias and large lifted projector defects, which are precisely the residual quantities that enter the defect-adjusted bounds.

\subsection{Validation and audit diagnostics } 
\label{subsec:framework_validation_audit}

Model selection is performed on a validation grid, while the final diagnostics
are reported on a separate audit grid. Let
$R=(r_\alpha)_{\alpha\in\mathcal N_{\aud}}$ be the audit-grid residual of a
selected approximation, and let $\kappa_\alpha^{\aud}$ be normalized audit
weights. We report
$$
L^\infty(R):=\max_{\alpha\in\mathcal N_{\aud}}|r_\alpha|,
$$
$$
L^1(R):=
\sum_{\alpha\in\mathcal N_{\aud}}
\kappa_\alpha^{\aud}|r_\alpha|,
$$
and
$$
L^2(R):=
\left(
\sum_{\alpha\in\mathcal N_{\aud}}
\kappa_\alpha^{\aud}r_\alpha^2
\right)^{1/2}.
$$
These are ordinary pointwise, average absolute, and root-mean-square
approximation errors on the audit grid.

The decision-facing diagnostic is the density-weighted signed residual
mismatch
$$
\widehat{\mathcal E}^h(R)
:=
\left|
\sum_{\alpha\in\mathcal N_{\aud}}
\kappa_\alpha^{f,h}r_\alpha
\right|,
$$
where $\kappa_\alpha^{f,h}$ are normalized quadrature weights incorporating the
audit density. This discretizes $|\mathbb E_f[\widetilde v-v^{\INT}]|$. It is
not sufficient by itself, because positive and negative residuals can cancel
under the chosen density.

The projector diagnostics are
$$
\|\Pi_i^hR\|_{\infty,h},
\qquad i\in[d],
$$
and, for nonempty $I\subseteq[d]$,
$$
\|\mathcal R_I^hR\|_{\infty,h}.
$$
They distinguish oscillatory residuals from residuals with systematic
slice-level bias. A small density-weighted mismatch together with a large
projector defect is not a well-centered residual; it is a residual whose error
happens to cancel under the audit density. This is precisely the distinction
the theory is designed to make: the defect-adjusted bounds control the
centered component and the slice-mean defect separately.

On a tensor audit grid, these operators are computed by the same weighted
slice averages used in the training projector calculus. Thus, for fixed
$\alpha_{-i}$, $\Pi_i^hR$ replaces the residual values along the coordinate-$i$
grid line by their weighted average. The projector $\mathcal P_I^h$ then
subtracts the selected slice averages by inclusion--exclusion, and
$\mathcal R_I^hR$ records the lifted slice-mean bias that remains. Hence the
projector diagnostics are not additional optimization quantities; they are
post-processing computations on the audit residual array.

We also compute discrete analogues of the defect-adjusted bounds. For nonempty
$I\subseteq[d]$, define the slice-envelope proxy
$$
\Phi_{\mathrm{sl},I}^h(R)
:=
\max_{i\in I}
\max_{\alpha_{-i}}
\operatorname{Trapz}_{i,h}
\left[
|R(\alpha_{-i},\cdot)|
\right],
$$
and the mixed-slice proxy
$$
\Phi_{\mathrm{mix},I}^h(R)
:=
\max_{\alpha_{-I}}
\operatorname{Trapz}_{I,h}
\left[
|R(\alpha_{-I},\cdot)|
\right].
$$
The anisotropic defect proxy is
$$
\widehat{\mathcal B}_{\mathrm{TV},\Def}^{(I)}(R)
:=
\frac{1}{2|I|}
\TV_I(f;S)
\Phi_{\mathrm{sl},I}^h(\mathcal P_I^hR)
+
\|\mathcal R_I^hR\|_{\infty,h}.
$$
The smooth mixed-derivative, or all-axes Vitali, defect proxy is
$$
\widehat{\mathcal B}_{\mathrm{mix},\Def}^{(I)}(R)
:=
2^{-|I|}
\Phi_{\mathrm{mix},I}^h(\mathcal P_I^hR)
\int_S|\partial_I f(b)|\,db
+
\|\mathcal R_I^hR\|_{\infty,h}.
$$
For $I=[d]$, this is denoted
$\widehat{\mathcal B}_{\mathrm{Vit},\Def}^{([d])}$ when the smooth Vitali
identity is used.

These quantities mirror the continuous defect-adjusted estimates. They are
audit diagnostics, not certified continuous-domain upper bounds, unless
quadrature errors and off-grid ranges are enclosed separately. The numerical
study therefore evaluates each approximation at four levels: ordinary
approximation error, density-weighted signed mismatch, projector-defect
structure, and defect-adjusted residual proxies.

The supporting-plane system in the direct max-affine fits is enforced by cut
generation in the implementation. The finite termination and correctness
argument are deferred to Appendix~\ref{app:cut_generation}.

\section{Numerical illustration}
\label{sec:numerics}

This section gives a small numerical illustration of the residual-projector
framework. The purpose is not to propose a new benchmark problem, nor to claim
that one approximation family dominates another in general. The purpose is
more specific: to show how the residual quantities in
Section~\ref{sec:problem_setup} are computed, how projector-regularized convex
fits differ from an LP relaxation, and how the defect-adjusted bounds guide the
audit of a convex approximation.

We work on the box $S=[0,10]^2$ with recourse argument $b=(b_1,b_2)$. For
each $b\in S$, the exact mixed-integer recourse value is
\begin{equation}
\label{eq:numerics_exact_recourse}
v^{\INT}(b)
=
\min_{y,z,w}
\left\{
q^\top y+\rho z+\kappa w:
Ay+z\mathbf 1\ge b,\;
(A_1+A_2)y+w\ge b_1+b_2,\;
y\in\mathbb Z_+^3,\ z,w\ge0
\right\},
\end{equation}
where $A_1$ and $A_2$ are the rows of
$$
A=
\begin{pmatrix}
2&1&3\\
1&2&3
\end{pmatrix},
\qquad
q=(2,\ 2.45,\ 4.35)^\top,
\qquad
\rho=1.65,
\qquad
\kappa=0.20.
$$
The three integer variables represent discrete recourse actions with coverage
vectors $(2,1)$, $(1,2)$, and $(3,3)$. The variable $z$ prices the largest
remaining coordinate shortfall, while $w$ prices the remaining aggregate
shortfall. This gives a small finite recourse model whose value is finite on
$S$, but still nonsmooth and nonconvex because of the integer variables.

The exact value in \eqref{eq:numerics_exact_recourse} is evaluated by
enumerating $y_j\in\{0,\ldots,8\}$ for $j=1,2,3$. This cap is exact on
$[0,10]^2$ by a simple dominance argument. If $y_1\ge9$, then removing one unit
of the first action still leaves at least $(16,8)$ units of coverage from that
action alone. On the whole box, the first coordinate and the aggregate
constraint remain covered, while the second-coordinate shortfall can increase
by at most one unit. The objective decreases by $2$, and the penalty can
increase by at most $\rho=1.65$, so such a solution cannot be optimal. The same
argument applies to $y_2$. If $y_3\ge5$, removing one unit of the third action
still leaves at least $(12,12)$ units of coverage from that action alone,
which covers both coordinates and the aggregate requirement throughout $S$ and
strictly lowers the cost.

We compare five convex approximations, all constructed on the training grid and
then evaluated on the independent audit grid. The first is the LP relaxation
$v^{\LP}$, obtained by replacing $y\in\mathbb Z_+^3$ with
$y\in\mathbb R_+^3$. Its dual feasible region is
$$
\mathcal D
=
\left\{
\pi\in\mathbb R_+^3:
A_{1j}\pi_1+A_{2j}\pi_2+(A_{1j}+A_{2j})\pi_3\le q_j\ \forall j,\;
\pi_1+\pi_2\le\rho,\;
\pi_3\le\kappa
\right\}.
$$
Thus
\begin{equation}
\label{eq:numerics_lp_dual}
v^{\LP}(b)
=
\max_{\pi\in\operatorname{ext}(\mathcal D)}
\{(\pi_1+\pi_3)b_1+(\pi_2+\pi_3)b_2\}.
\end{equation}
The extreme points of $\mathcal D$ give the affine pieces of the LP relaxation.

The second approximation is the finite-box LP-slope calibration comparator $v^\Gamma$.
It keeps exactly the affine slopes in \eqref{eq:numerics_lp_dual}. For each
training point, we first assign the point to an active LP piece using the
unshifted LP relaxation. We then fit one intercept correction for each active
piece by the finite-box formula in Definition~\ref{def:framework_gamma_empirical}.
Thus $v^\Gamma$ tests how much of the residual can be removed by vertical
calibration of the LP pieces, without changing the LP slope dictionary.

The remaining three approximations are direct max-affine convex fits. The
model $C_0$ solves the fit-only supporting-plane formulation with approximation
error and slope regularization. The model $C_{\{1\}}$ adds a penalty on the
coordinate-$1$ slice-mean residual. The model $C_{\{1,2\}}$ penalizes both coordinate
slice means and the lifted two-direction projector defect. These are not
hard-centering models. They keep the approximation convex while reducing the
defect quantities that appear in the defect-adjusted bounds.

The training, validation, and audit grids have sizes $31\times31$,
$41\times41$, and $121\times121$. Hyperparameters are selected on the
validation grid; all reported values are computed on the audit grid. The audit
density is the product density $f(b_1,b_2)=f_1(b_1)f_1(b_2)$, where $f_1$ is a
normal density with mean $\mu=5$ and standard deviation $\sigma=3$, truncated
to $[0,10]$ and renormalized. Proposition~\ref{prop:main_tensor_summary} gives
the variation constants directly from the marginal variation:
$$
|D_1f|(S)=|D_2f|(S)=\TV(f_1;[0,10]),
\qquad
\TV_\infty(f;S)=2\,\TV(f_1;[0,10]),
$$
and
$$
V(f;S)
=
\int_S|\partial_1\partial_2 f(b)|\,db
=
\left(\TV(f_1;[0,10])\right)^2.
$$
For the truncated normal used here this gives
$$
\TV(f_1;[0,10])=0.220742,
\qquad
\TV_\infty(f;S)=0.441484,
\qquad
V(f;S)=0.048727.
$$
These variation constants enter only the defect-adjusted audit proxies. The
ordinary approximation errors and the density-weighted residual mismatch are
computed directly from the audit-grid residuals.

For a selected approximation $\widetilde v$, define the audit residual
$$
\varphi_{\widetilde v}^h(b^{pq})
=
\widetilde v(b^{pq})-v^{\INT}(b^{pq}).
$$
Write this residual array as $R=(R_{pq})$ on the $121\times121$ audit grid,
with geometric quadrature weights $\varpi_p^1$ and $\varpi_q^2$ in the two
coordinates. The discrete coordinate averages are computed explicitly as
$$
(\Pi_1^hR)_{pq}
=
\sum_r \varpi_r^1 R_{rq},
\qquad
(\Pi_2^hR)_{pq}
=
\sum_s \varpi_s^2 R_{ps}.
$$
Thus $\Pi_1^hR$ is the columnwise slice mean lifted constantly in the first
coordinate, while $\Pi_2^hR$ is the rowwise slice mean lifted constantly in the
second coordinate. With
$$
\bar R
:=
\sum_{r,s}\varpi_r^1\varpi_s^2R_{rs},
$$
the two-direction projectors are
$$
(\mathcal P_{\{1,2\}}^hR)_{pq}
=
R_{pq}-(\Pi_1^hR)_{pq}-(\Pi_2^hR)_{pq}+\bar R,
\text{ and }
(\mathcal R_{\{1,2\}}^hR)_{pq}
=
(\Pi_1^hR)_{pq}+(\Pi_2^hR)_{pq}-\bar R.
$$
These formulas are the discrete row--column centering identities used in the
reported diagnostics.

We report ordinary approximation errors $L^\infty$, $L^1$, and $L^2$, the
observed density-weighted residual mismatch
$$
|\widehat{\mathbb E}_f[\varphi_{\widetilde v}^h]|
=
\left|
\sum_{p,q}
\kappa_{pq}^{f,h}\varphi_{\widetilde v}^h(b^{pq})
\right|,
$$
and the projector quantities
$
\|\Pi_1^h\varphi^h\|_\infty,
\|\Pi_2^h\varphi^h\|_\infty,
\|\mathcal R_{\{1,2\}}^h\varphi^h\|_\infty.
$\\
The last three quantities distinguish oscillatory residual error from
systematic slice-level bias. This distinction is exactly what the theory uses:
the centered part is controlled by variation of the density, while the
noncentered defect is paid for directly.

We also compute two theorem-facing audit proxies. For one-direction centering,
$I=\{i\}$, the centered component is $(\Id-\Pi_i^h)R$ and the defect is
$\Pi_i^hR$. We therefore use the primitive-envelope version of the
one-direction bound:
$$
\widehat{\mathcal B}^{(1)}_{\mathrm{TV},\mathrm{Def}}(R)
=
\min_{i=1,2}
\left\{
|D_i f|(S)\,
\Phi_{i,\mathrm{pr}}^h((\Id-\Pi_i^h)R)
+
\|\Pi_i^hR\|_{\infty,h}
\right\}.
$$
Here $\Phi_{i,\mathrm{pr}}^h$ is the maximum absolute cumulative trapezoidal
primitive along coordinate-$i$ grid lines. For example,
$$
\Phi_{1,\mathrm{pr}}^h(R)
=
\max_q\max_m
\left|
\operatorname{Trapz}_{1,h}
\left[
R_{1q},\ldots,R_{mq}
\right]
\right|,
$$
with the analogous definition in direction $2$.

For the two-direction Vitali proxy, the centered component is
$\mathcal P_{\{1,2\}}^hR$ and the defect is
$\mathcal R_{\{1,2\}}^hR$. Since $d=2$, the factor in the all-axes
mixed-derivative bound is $2^{-2}=1/4$, giving
$$
\widehat{\mathcal B}^{(2)}_{\mathrm{Vit},\mathrm{Def}}(R)
=
\frac14\,
V(f;S)\,
\operatorname{Trapz}_{\{1,2\},h}
\left[
|\mathcal P_{\{1,2\}}^hR|
\right]
+
\|\mathcal R_{\{1,2\}}^hR\|_{\infty,h}.
$$
These are audit-grid diagnostics mirroring the continuous defect-adjusted
bounds. They are not claimed to be rigorous continuous-domain certificates
unless the quadrature errors and off-grid ranges are enclosed separately.

The selected hyperparameters and cut-generation diagnostics are shown in
Table~\ref{tab:numerics_hyperparameters}. The direct fits are solved by cut
generation for the supporting-plane inequalities, followed by a full all-pairs
convexity check. The small negative worst slacks are numerical tolerances from
the final separation check.

\begin{table}[t]
\centering
\caption{Selected hyperparameters and cut-generation diagnostics } 
\label{tab:numerics_hyperparameters}
\setlength{\tabcolsep}{4pt}
\small
\resizebox{\textwidth}{!}{%
\begin{tabular}{lccccc}
\toprule
Method & Selected parameters & Direction & Cuts & Iterations & Worst slack \\
\midrule
$C_0$ &
$\lambda_{\mathrm{grad}}=5\cdot10^{-4}$ &
-- & $9851$ & $11$ & $-3.720\cdot10^{-11}$ \\
$C_{\{1\}}$ &
$\lambda_{\mathrm{grad}}=5\cdot10^{-4},\ \mu=5\cdot10^{-2}$ &
$1$ & $10117$ & $10$ & $-7.387\cdot10^{-10}$ \\
$C_{\{1,2\}}$ &
$\lambda_{\mathrm{grad}}=5\cdot10^{-4},\
(\mu_1,\mu_2,\mu_{\{1,2\}})=(5\cdot10^{-3},5\cdot10^{-3},10^{-3})$ &
-- & $10154$ & $10$ & $-1.874\cdot10^{-11}$ \\
$v^\Gamma$ &
$\tau=10^{-4}$ &
-- & -- & -- & -- \\
\bottomrule
\end{tabular}
}
\end{table}
Table~\ref{tab:numerics_main_diagnostics} reports the audit-grid results. The
LP relaxation is weakest in every displayed metric. It has worst-case error
$1.1625$, average absolute error $0.4820$, and density-weighted residual
mismatch $0.5077$. Its two-direction defect is also large, equal to $0.8262$.
Thus the LP residual is not merely large pointwise; it contains a substantial
slice-mean component, which is precisely the component that cannot be removed
by centered cancellation.
\begin{table}[t]
\centering
\caption{Approximation metrics and residual-projector diagnostics on the audit grid } 
\label{tab:numerics_main_diagnostics}
\setlength{\tabcolsep}{3pt}
\scriptsize
\resizebox{\textwidth}{!}{%
\begin{tabular}{lrrrrrrrrr}
\toprule
Method
& $L^\infty$
& $L^1$
& $L^2$
& $|\widehat{\mathbb E}_f[\varphi^h]|$
& $\|\Pi_1^h\varphi^h\|_\infty$
& $\|\Pi_2^h\varphi^h\|_\infty$
& $\|\mathcal R_{\{1,2\}}^h\varphi^h\|_\infty$
& $\widehat{\mathcal B}^{(1)}_{\mathrm{TV},\mathrm{Def}}$
& $\widehat{\mathcal B}^{(2)}_{\mathrm{Vit},\mathrm{Def}}$ \\
\midrule
$v^{\LP}$   & $1.1625$ & $0.4820$ & $0.5437$ & $0.5077$ & $0.6444$ & $0.6639$ & $0.8262$ & $0.9628$ & $1.0461$ \\
$v^\Gamma$ & $\mathbf{0.6018}$ & $0.1944$ & $0.2331$ & $\mathbf{0.0068}$ & $\mathbf{0.1812}$ & $\mathbf{0.1713}$ & $\mathbf{0.3552}$ & $\mathbf{0.3982}$ & $\mathbf{0.5648}$ \\
$C_0$       & $0.6042$ & $0.1943$ & $0.2345$ & $0.0252$ & $0.2543$ & $0.1957$ & $0.4416$ & $0.4423$ & $0.6501$ \\
$C_{\{1\}}$       & $0.6042$ & $\mathbf{0.1935}$ & $\mathbf{0.2327}$ & $0.0173$ & $0.2383$ & $0.1861$ & $0.4388$ & $0.4319$ & $0.6468$ \\
$C_{\{1,2\}}$       & $0.6042$ & $0.1940$ & $0.2337$ & $0.0211$ & $0.2378$ & $0.1957$ & $0.4177$ & $0.4390$ & $0.6263$ \\
\bottomrule
\end{tabular}
}
\end{table}

LP-slope calibration comparator removes most of the systematic LP bias.
Relative to $v^{\LP}$, the approximation $v^\Gamma$ reduces $L^\infty$ from
$1.1625$ to $0.6018$, $L^1$ from $0.4820$ to $0.1944$, and the observed
density-weighted mismatch from $0.5077$ to $0.0068$. The two-direction defect
falls from $0.8262$ to $0.3552$. On this example, the LP slopes are already
well aligned with the integer-recourse surface, so finite-box intercept
correction is a strong structured benchmark.
The direct max-affine fits also improve strongly on the raw LP relaxation.
All three direct fits reduce the weighted $L^1$ error by about $60\%$
and reduce the density-weighted residual mismatch by more than 95\%. They also reduce the lifted slice means, the two-direction defect,
and the defect-adjusted proxies. Thus their improvement is visible not only in
ordinary approximation error, but also in the quantities that enter the
residual-projector bounds.
The direct fits do not dominate $v^\Gamma$ in this illustration. The finite-box LP-slope calibration comparator is best in worst-case error, density-weighted mismatch, both
one-direction slice-mean defects, the two-direction defect, and the two
displayed defect-adjusted proxies. This is not a contradiction of the
projector framework. It shows that, when the LP slope dictionary is already
well matched to the recourse surface, fitting finite-box intercepts can be more
effective than refitting a new max-affine surface.

Within the direct family, the projector penalties move the residual geometry in
the predicted directions. The one-direction fit $C_{\{1\}}$ gives the best direct
weighted average diagnostics, with $L^1=0.1935$, $L^2=0.2327$, and
density-weighted mismatch $0.0173$; it also gives the smallest one-direction
defect-adjusted proxy among the direct fits, $B^{(1)}_{\TV,\Def}=0.4319$.
The two-direction fit $C_{\{1,2\}}$ is stronger for the two-direction projector
geometry: its two-direction defect is $0.4177$, compared with $0.4416$ for
$C_0$ and $0.4388$ for $C_{\{1\}}$, and it gives the smallest direct
Vitali-type defect-adjusted proxy, $B^{(2)}_{\Vit,\Def}=0.6263$. Thus $C_{\{1\}}$
is the better direct fit for average and expectation-oriented diagnostics,
whereas $C_{\{1,2\}}$ is the better direct fit for the two-direction defect structure.

This illustration gives a limited but useful message. The raw LP relaxation is
weak because its residual is large and strongly noncentered. The LP-slope calibration comparator is strong here because the LP slope dictionary is already
well aligned with the exact value surface. The projector-regularized direct
fits improve substantially on the raw LP relaxation, and their penalties move
the residual geometry in the predicted directions: $C_{\{1\}}$ improves the
one-direction and expectation-oriented diagnostics, while $C_{\{1,2\}}$ improves the
two-direction defect structure.

\section{Conclusion}
\label{sec:conclusion}

In this paper we studied expectation error for convex approximations of
mixed-integer recourse through the signed residual
$\widetilde v-v^{\INT}$. The main point is that the error entering the
first-stage objective is not a pointwise approximation error, but its signed
expectation under the uncertainty density. Coordinate slice-centering removes
the boundary terms in the relevant integration-by-parts identities. This gives
anisotropic $\TV_\infty$ and mixed-derivative bounds for exactly centered
residuals, and coordinate projectors extend the theory to arbitrary bounded residuals
by separating the centered component from an explicit slice-mean defect. The
best-certificate formulation allows the selected directions and the variation
formulation to be chosen after the residual and density are specified, and its
uniform form gives a direct first-stage decision-quality guarantee.

The product formulas clarify the difference between the two variation
formulations. For product densities, the anisotropic term is additive in the
marginal variations, whereas the mixed-derivative term is multiplicative. A
single smooth tensor-product construction shows that the coefficients
$1/(2|I|)$ and $2^{-|I|}$ are sharp for every nonempty direction set. In the
all-axes case, this gives sharpness of the smooth Vitali coefficient $2^{-d}$.
The coefficient multiplying the projector defect is also optimal. These
results show that the two terms in the defect-adjusted certificate cannot be
removed by a uniform improvement of the analysis.

The mixed-derivative formulation also admits a precise nonsmooth extension. The
stable object is a finite distributional mixed-derivative measure of an
admissible whole-space extension, which leads to the extension seminorm
$V_I^{\mathrm{ext}}(f;S)$. The mollification argument preserves the global
mixed-variation budget but may move mass across the boundary of the box; this
is why the extension cost may exceed the internal smooth variation. The same
framework yields explicit tail-adjusted bounds for full-support laws and
transfers exactly to affine images of boxes in the transported coordinate
frame. On general Lipschitz domains, the present method retains only a
domain-dependent first-order $BV$ estimate rather than the explicit box
constants or a classical Vitali counterpart.

Finally, the convex-realizability obstruction identifies a separate modeling
limitation. The sharpness results show that the centered variation term cannot
be uniformly improved, while the ceiling-recourse example shows that the
defect cannot generally be forced to zero within convex approximation classes.
The numerical illustration demonstrates how these two structural components
can be audited for max-affine approximations. The experiment is not meant as a
large-scale computational benchmark; its role is to show that ordinary
approximation error, expected residual mismatch, the centered variation term,
and the projector defect are distinct and computable diagnostics for convex
approximations of integer-recourse value functions.

\appendix

\section*{Appendix overview}

The appendices collect the proofs, constructions, figures, and comparison
material supporting the main theorem path. Appendix~\ref{app:tensorization_mollification_sharpness_extensions}
contains the tensor-product identities, the two certificate-separation examples,
the smooth sharpness construction, the mollification figures and proofs, the
extension mixed-variation results, the full-support localization formulas, and
the extensions beyond boxes. Appendix~\ref{app:tu_ceiling_obstruction}
verifies the two-dimensional totally unimodular ceiling obstruction.
Appendix~\ref{app:cut_generation} records the finite-dimensional validity of
the max-affine fitting programs and the cut-generation procedure.
Appendix~\ref{app:periodic_finite_box_shifted_lp} separates the classical
periodic cancellation mechanism from the LP-slope calibration comparator
used in the numerical illustration.
 
\section{Tensorization, mollification, sharpness, and extensions beyond boxes}
\label{app:tensorization_mollification_sharpness_extensions}

This appendix collects the additional analytical material needed for the
extended version of the paper. The order is deliberate. We first isolate the
exact tensor-product identities for residuals and densities. We then use those
identities to construct a single one-dimensional near-extremizing pair whose
tensor products prove sharpness of both variation bounds. After that, we record
the mollification argument and the corresponding distributional extension of
the mixed-derivative formulation. The final parts explain precisely how the box
results localize full-support laws, how they transform under invertible affine
maps, and what remains available on a general Lipschitz domain.

Throughout this appendix,
$$
S=\prod_{j=1}^d J_j,
\qquad
J_j=[a_j,b_j],
\qquad
\ell_j:=b_j-a_j,
$$
and $I\subseteq[d]$ is nonempty, with
$$
I=\{i_1<\cdots<i_k\},
\qquad
k:=|I|.
$$
The coordinate-average operators $\Pi_i$, the centered projector
$\mathcal P_I$, the defect projector $\mathcal R_I$, and the envelopes
$\Lambda_{\mathrm{sl}}^I$ and $\Lambda_{\mathrm{mix}}^I$ are those defined in
Section~\ref{sec:problem_setup}. We use the distributional mixed derivative
and measure notation from Definition~\ref{def:distributional_mixed_derivative_main}.

\subsection{Tensorization of the coordinate projectors and residual envelopes}
\label{subsec:appendix_tensorization_projectors}

The coordinate projectors are particularly transparent on tensor-product
residuals. This is useful for interpretation, but it is also the structural
reason that one-dimensional near-extremizers can be lifted to arbitrary
selected direction sets.

\begin{proposition}[Tensorization of residual projectors and envelopes]
\label{prop:appendix_tensorized_projectors_envelopes}
Let $h_j\in L^\infty(J_j)$ for $j=1,\ldots,d$, and define
$$
\varphi(\omega):=\prod_{j=1}^d h_j(\omega_j),
\qquad
\omega\in S.
$$
For each $j$, let
$$
\bar h_j:=\frac1{\ell_j}\int_{J_j}h_j(t)\,dt.
$$
Then:

\begin{enumerate}
\item[\emph{(i)}] For every $i\in[d]$,
$$
\Pi_i\varphi
=
\bar h_i\prod_{j\neq i}h_j,
$$
and
\begin{equation}
\label{eq:appendix_tensorized_projector_formula}
\mathcal P_I\varphi
=
\left(\prod_{i\in I}(h_i-\bar h_i)\right)
\left(\prod_{j\notin I}h_j\right).
\end{equation}
In particular, if $\bar h_i=0$ for every $i\in I$, then
$$
\mathcal P_I\varphi=\varphi,
\qquad
\mathcal R_I\varphi=0.
$$

\item[\emph{(ii)}]
\begin{equation}
\label{eq:appendix_tensorized_mixed_envelope}
\Lambda_{\mathrm{mix}}^I(\varphi)
=
\left(\prod_{i\in I}\|h_i\|_{L^1(J_i)}\right)
\left(\prod_{j\notin I}\|h_j\|_{L^\infty(J_j)}\right).
\end{equation}

\item[\emph{(iii)}]
\begin{equation}
\label{eq:appendix_tensorized_slice_envelope}
\Lambda_{\mathrm{sl}}^I(\varphi)
=
\max_{i\in I}
\left\{
\|h_i\|_{L^1(J_i)}
\prod_{j\neq i}\|h_j\|_{L^\infty(J_j)}
\right\}.
\end{equation}

\item[\emph{(iv)}] If $f_j\in L^1(J_j)$ and
$f(\omega)=\prod_{j=1}^d f_j(\omega_j)$, then
\begin{equation}
\label{eq:appendix_tensorized_pairing}
\int_S\varphi(\omega)f(\omega)\,d\omega
=
\prod_{j=1}^d
\int_{J_j}h_j(t)f_j(t)\,dt.
\end{equation}
\end{enumerate}
\end{proposition}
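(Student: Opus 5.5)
The proof is a direct computation with Fubini's theorem, carried out part by part. The only delicate point is the essential suprema in (ii) and (iii).

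For (i), fix $i$ and a slice $\omega_{-i}$. Since $\varphi(\omega_{-i},t)=h_i(t)\prod_{j\neq i}h_j(\omega_j)$, averaging in $t$ over $J_i$ gives
\[
\bar\Pi_i\varphi(\omega_{-i})=\bar h_i\prod_{j\neq i}h_j(\omega_j),
\]
and lifting gives the formula for $\Pi_i\varphi$. Hence $(\Id-\Pi_i)$ acts on a tensor product by replacing the $i$th factor $h_i$ with $h_i-\bar h_i$, and the result is again a tensor product with bounded factors. Because the $\Pi_i$ commute (Proposition~\ref{prop:coordinate_projector_algebra}), I would apply the factors $(\Id-\Pi_i)$, $i\in I$, one at a time. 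Induction on $|I|$ then yields \eqref{eq:appendix_tensorized_projector_formula}. If every $\bar h_i$ vanishes, the formula reads $\mathcal P_I\varphi=\varphi$, and $\mathcal R_I\varphi=0$ follows.

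For (iv), $\varphi f=\prod_j h_jf_j$ is a product of one-variable functions. Each factor satisfies $h_jf_j\in L^1(J_j)$, since $h_j\in L^\infty$ and $f_j\in L^1$. Tonelli's theorem applied to $|\varphi f|$ gives integrability, and Fubini's theorem then factorizes the integral.

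For (ii), fix $\omega_{-I}$. Fubini's theorem gives
\[
\int_{S_I}|\varphi(\omega_{-I},t_I)|\,dt_I=\Bigl(\prod_{i\in I}\|h_i\|_{L^1(J_i)}\Bigr)\prod_{j\notin I}|h_j(\omega_j)|.
\]
The essential supremum over $\omega_{-I}\in S_{-I}$ of the product $\prod_{j\notin I}|h_j(\omega_j)|$ equals $\prod_{j\notin I}\|h_j\|_{L^\infty}$. The bound $\le$ is immediate. For $\ge$, first dispose of the case where some factor has zero $L^\infty$ norm, since then both sides are zero. Otherwise, for $\varepsilon>0$ each set $E_j=\{|h_j|>\|h_j\|_\infty-\varepsilon\}$ has positive one-dimensional measure. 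Their Cartesian product therefore has positive measure in $S_{-I}$, and on it the product exceeds $\prod_j(\|h_j\|_\infty-\varepsilon)$. Letting $\varepsilon\to0$ gives equality. When $I=[d]$ the product over $j\notin I$ is empty and the formula reduces to $\|\varphi\|_{L^1}$, which is consistent with the earlier remark.

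Part (iii) uses the same argument with $I$ replaced by the single index $i$. For each $i\in I$ it gives
\[
\operatorname*{ess\,sup}_{\omega_{-i}}\int_{J_i}|\varphi(\omega_{-i},t)|\,dt=\|h_i\|_{L^1}\prod_{j\neq i}\|h_j\|_{L^\infty},
\]
and taking the maximum over $i\in I$ yields \eqref{eq:appendix_tensorized_slice_envelope}.

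I expect the main obstacle to be this essential-supremum-of-products step. It needs the positive-measure rectangle argument and separate handling of degenerate zero factors. Everything else is bookkeeping with Fubini's theorem and the commutation of the $\Pi_i$.
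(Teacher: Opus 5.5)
Your proposal is correct and follows the paper's proof essentially step by step. You use slicewise averaging and factor-by-factor application of $\Id-\Pi_i$ for (i), Fubini on fixed slices followed by an essential supremum for (ii)--(iii), and Fubini for (iv). The one addition is that you explicitly justify $\operatorname{ess\,sup}_{\omega_{-I}}\prod_{j\notin I}|h_j(\omega_j)|=\prod_{j\notin I}\|h_j\|_{L^\infty(J_j)}$ via the positive-measure rectangle argument, with the zero-norm case handled separately; the paper leaves this step implicit.
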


\begin{proof}
Fix $i\in[d]$. Since every factor other than $h_i$ is constant along an
$i$-coordinate slice,
$$
\begin{aligned}
(\Pi_i\varphi)(\omega)
&=
\frac1{\ell_i}
\int_{J_i}
 h_i(t)\prod_{j\neq i}h_j(\omega_j)\,dt
\\
&=
\bar h_i\prod_{j\neq i}h_j(\omega_j).
\end{aligned}
$$
Consequently,
$$
(\Id-\Pi_i)\varphi
=
(h_i-\bar h_i)\prod_{j\neq i}h_j.
$$
The factors $\Id-\Pi_i$ commute and act on distinct tensor factors, which
proves \eqref{eq:appendix_tensorized_projector_formula}.

For fixed $\omega_{-I}$, Fubini gives
$$
\int_{S_I}|\varphi(\omega_{-I},t_I)|\,dt_I
=
\left(\prod_{j\notin I}|h_j(\omega_j)|\right)
\prod_{i\in I}\|h_i\|_{L^1(J_i)}.
$$
Taking the essential supremum over the independent coordinates outside $I$
proves \eqref{eq:appendix_tensorized_mixed_envelope}. Likewise, for fixed
$i\in I$,
$$
\int_{J_i}|\varphi(\omega_{-i},t)|\,dt
=
\left(\prod_{j\neq i}|h_j(\omega_j)|\right)
\|h_i\|_{L^1(J_i)}.
$$
Taking the essential supremum and then the maximum over $i\in I$ proves
\eqref{eq:appendix_tensorized_slice_envelope}. Finally,
\eqref{eq:appendix_tensorized_pairing} is Fubini's theorem.
\end{proof}

\begin{remark}[Projectors preserve tensor structure]
\label{rem:appendix_projectors_preserve_tensor_structure}
The identity \eqref{eq:appendix_tensorized_projector_formula} is stronger than
an envelope estimate. It shows that $\mathcal P_I$ does not create interactions
between tensor factors. It simply replaces each selected factor $h_i$ by its
centered version $h_i-\bar h_i$.
\end{remark}

\subsection{Product formulas for distributional and classical variation}
\label{subsec:appendix_product_variation}

The product formulas for density variation require one distinction. The
coordinate derivative measures and the mixed distributional derivative are
$L^1$-based objects and therefore depend only on the almost-everywhere classes
of the marginals. Classical Vitali variation is instead defined through point
values at rectangle vertices. Its product formula is therefore stated for
fixed pointwise representatives. These statements agree in the smooth regime,
but they should not be identified for arbitrary $BV$ representatives.

\begin{proposition}[Distributional derivatives of a product density]
\label{prop:appendix_product_distributional_derivatives}
Let $f_j\in BV([a_j,b_j])$ be nonnegative one-dimensional probability densities:
$$
f_j\ge0,
\qquad
\int_{J_j}f_j(t)\,dt=1,
\qquad
j=1,\ldots,d.
$$
Define
$$
f(\omega):=\prod_{j=1}^d f_j(\omega_j).
$$
Then $f\in BV(S)$ and:

\begin{enumerate}
\item[\emph{(i)}] For every $i\in[d]$,
\begin{equation}
\label{eq:appendix_product_first_derivative_measure}
D_i f
=
Df_i\otimes
\bigotimes_{j\neq i}\bigl(f_j(t_j)\,dt_j\bigr),
\end{equation}
with the tensor factors placed in coordinate order. Consequently,
\begin{equation}
\label{eq:appendix_product_coordinate_variation}
|D_i f|(S)=|Df_i|(J_i),
\qquad
\TV_I(f;S)=\sum_{i\in I}|Df_i|(J_i).
\end{equation}

\item[\emph{(ii)}]
\begin{equation}
\label{eq:appendix_product_mixed_derivative_measure}
D_I f
=
\left(\bigotimes_{i\in I}Df_i\right)
\otimes
\left(\bigotimes_{j\notin I}f_j(t_j)\,dt_j\right),
\end{equation}
and
\begin{equation}
\label{eq:appendix_product_mixed_derivative_mass}
|D_I f|(S)
=
\prod_{i\in I}|Df_i|(J_i).
\end{equation}
\end{enumerate}
\end{proposition}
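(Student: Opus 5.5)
The plan is to verify the derivative formulas by testing against separable test functions and then extending by density. Each $f_j\in BV(J_j)$ has a finite derivative measure $Df_j$ on $(a_j,b_j)$, and $f_j\in L^1\cap L^\infty(J_j)$, since one-dimensional $BV$ functions are bounded. For $\zeta=\prod_j\zeta_j(\omega_j)$ with $\zeta_j\in C_c^\infty((a_j,b_j))$, Fubini gives
\[
-\int_S f\,\partial_i\zeta\,d\omega
=
\Bigl(-\int_{J_i}f_i\zeta_i'\Bigr)\prod_{j\neq i}\int_{J_j}f_j\zeta_j
=
\int\zeta_i\,dDf_i\;\prod_{j\neq i}\int\zeta_j f_j\,dt_j .
\]
This is exactly the pairing of $\zeta$ with the product measure on the right-hand side of \eqref{eq:appendix_product_first_derivative_measure}. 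The same computation with $(-1)^k\partial_I\zeta$ gives $(-1)^k\prod_{i\in I}\int f_i\zeta_i'=\prod_{i\in I}\int\zeta_i\,dDf_i$, which is the pairing with the right-hand side of \eqref{eq:appendix_product_mixed_derivative_measure}.

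Linear combinations of separable test functions are dense in $C_c^\infty(S^\circ)$ in the $C^1$ (resp.\ $C^k$) topology on a fixed compact subset; tensor-product polynomial or Stone--Weierstrass approximation, applied to the derivatives, gives this. Both sides are finite measures (resp.\ distributions of order $\le k$ controlled by finite measures), so the identities extend to all test functions. In particular $D_if$ is a finite measure for every $i$, so $f\in BV(S)$, and $D_If$ is the stated product measure.

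For the masses, I will use that the total variation of a product of finite measures is the product of the total variations: $|\mu\otimes\nu|=|\mu|\otimes|\nu|$. This can be seen by taking Hahn decompositions of the factors, or more directly by writing $\mu=\sigma|\mu|$ with $|\sigma|=1$, so that $\mu\otimes\nu=(\sigma\otimes\tau)\,|\mu|\otimes|\nu|$ with $|\sigma\otimes\tau|=1$. Hence
\[
|D_if|(S)=|Df_i|(J_i)\prod_{j\neq i}\int f_j=|Df_i|(J_i),
\]
using $f_j\ge0$ and the normalization. Summing over $I$ gives the formula for $\TV_I$, and the analogous product computation gives $\prod_{i\in I}|Df_i|(J_i)$.

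The one technical point is the interior convention. The paper measures $|D_if|(S)$ as $|D_if|(S^\circ)$, and $Df_i$ lives on the open interval $(a_i,b_i)$ while the other factors live on the closed intervals $J_j$. The product measure on $S^\circ$ therefore uses the factors restricted to the open intervals. Since the $f_j\,dt_j$ are absolutely continuous, the endpoints carry no mass, so nothing changes. I expect the density-of-separable-test-functions step to be the main item to justify carefully. If needed, I would replace it by slicing: for fixed $\omega_{-i}$, the slice of $f$ is $f_i$ scaled by the constant $\prod_{j\neq i}f_j(\omega_j)$, and the slicing identity for $BV$ then yields $|D_if|(S)$ directly.
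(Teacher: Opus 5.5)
Your proposal is correct, but it reaches the derivative identities by a different route from the paper. The paper never restricts to separable test functions. It takes an arbitrary $\psi\in C_c^1(\operatorname{int}S)$ and notes that each slice $t\mapsto\psi(\omega_{-i},t)$ is a $C_c^1$ test function on $(a_i,b_i)$. It then applies one-dimensional $BV$ integration by parts slice by slice. By Fubini for product measures, the resulting iterated integral $-\int_{S_{-i}}\bigl(\prod_{j\neq i}f_j\bigr)\int\psi\,d(Df_i)\,d\omega_{-i}$ is the pairing of $\psi$ with $Df_i\otimes\bigotimes_{j\neq i}f_j\,dt_j$. Part (ii) follows by repeating this in the coordinates of $I$. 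This is close in spirit to the slicing fallback you mention, and it makes unnecessary the density-of-tensor-products lemma you flag as the main technical item.

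Your route gives a single one-line computation for both (i) and (ii), with no iteration through mixed product measures, at the price of that density lemma. The lemma is standard, and your sketch (a product cutoff $\prod_j\chi_j$ times a $C^k$ polynomial approximation) works. In the extension step, you only need continuity of $\zeta\mapsto\int_S f\,\partial_i\zeta$ (resp.\ $\int_S f\,\partial_I\zeta$) under $C^1$ (resp.\ $C^k$) convergence with supports in a fixed compact set. This follows from $f\in L^1(S)$, which Tonelli gives; the left side is not known a priori to be a measure, so it should not be described as one. Your mass computation via $|\mu\otimes\nu|=|\mu|\otimes|\nu|$ and polar decomposition, and your handling of the interior convention, coincide with the paper's argument.
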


\begin{proof}
Fix $i\in[d]$ and let $\psi\in C_c^1(\operatorname{int}S)$. Fubini's theorem
and one-dimensional $BV$ integration by parts give
$$
\begin{aligned}
\int_S f(\omega)\partial_i\psi(\omega)\,d\omega
&=
\int_{S_{-i}}
\left(\prod_{j\neq i}f_j(\omega_j)\right)
\left(\int_{J_i}f_i(t)\partial_i\psi(\omega_{-i},t)\,dt\right)d\omega_{-i}
\\
&=
-
\int_{S_{-i}}
\left(\prod_{j\neq i}f_j(\omega_j)\right)
\left(\int_{J_i}\psi(\omega_{-i},t)\,d(Df_i)(t)\right)d\omega_{-i}.
\end{aligned}
$$
This proves \eqref{eq:appendix_product_first_derivative_measure}. Repeating the
same argument in the distinct coordinates indexed by $I$ proves
\eqref{eq:appendix_product_mixed_derivative_measure}.

For finite signed measures $\mu_1,\ldots,\mu_r$, polar decomposition gives
$$
|\mu_1\otimes\cdots\otimes\mu_r|
=
|\mu_1|\otimes\cdots\otimes|\mu_r|.
$$
Applying this identity and using normalization of the remaining marginal
measures yields \eqref{eq:appendix_product_coordinate_variation} and
\eqref{eq:appendix_product_mixed_derivative_mass}.
\end{proof}

For a pointwise function $h:J=[a,b]\to\mathbb R$, its
one-dimensional Jordan variation is
$$
\operatorname{Var}(h;J)
:=
\sup_{a=t_0<\cdots<t_n=b}
\sum_{r=0}^{n-1}|h(t_{r+1})-h(t_r)|.
$$

\begin{theorem}[Classical Vitali variation of a product]
\label{thm:appendix_classical_vitali_product}
For each $j$, let $h_j:J_j\to\mathbb R$ be a fixed pointwise representative
of finite one-dimensional Jordan variation, and define
$$
h(\omega):=\prod_{j=1}^d h_j(\omega_j).
$$
Then
\begin{equation}
\label{eq:appendix_classical_vitali_product}
V(h;S)
=
\prod_{j=1}^d\operatorname{Var}(h_j;J_j).
\end{equation}
\end{theorem}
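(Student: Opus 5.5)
The plan is a two-inequality argument built on the factorization of quasivolumes over product boxes.

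\emph{Factorization on a single box.} For a box $Q=\prod_j[\alpha_j,\beta_j]$, the alternating vertex sum \eqref{eq:def_quasivolume} factorizes when $h$ is a product:
$$
\sigma(h,Q)=\prod_{j=1}^d\bigl(h_j(\beta_j)-h_j(\alpha_j)\bigr).
$$
This holds because the sum over $\nu\in\{0,1\}^d$ of $(-1)^{|\nu|}\prod_j h_j(\cdot)$ expands as $\prod_j\bigl(h_j(\beta_j)-h_j(\alpha_j)\bigr)$ by distributivity. This identity is the only structural input; everything else is partition bookkeeping.

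\emph{Lower bound ($\ge$).} Fix arbitrary one-dimensional partitions $a_j=t^j_0<\cdots<t^j_{n_j}=b_j$. Their grid product is an element of $\mathfrak P(S)$. For this grid, the sum of $|\sigma|$ over grid cells factorizes as
$$
\prod_j\sum_r|h_j(t^j_{r+1})-h_j(t^j_r)|.
$$
Hence $V(h;S)$ dominates this product. Taking suprema independently in each factor gives $V(h;S)\ge\prod_j\operatorname{Var}(h_j;J_j)$. Some care is needed with the conventions $\infty\cdot0$, but all variations here are finite, so none arise.

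\emph{Upper bound ($\le$).} This is the main obstacle, because a general $\mathcal Q\in\mathfrak P(S)$ need not be a grid. The plan is to refine $\mathcal Q$ to the grid generated by all coordinate breakpoints of its boxes. Each $S_j\in\mathcal Q$ is then exactly a union of grid cells, and these cells form a sub-grid of $S_j$. Quasivolume is additive over such grid subdivisions: this can be checked directly from the factorized formula by telescoping in each coordinate, or more generally from the standard additivity of the alternating sum. Therefore
$$
|\sigma(h,S_j)|\le\sum_{\text{cells}\subset S_j}|\sigma(h,\text{cell})|
$$
by the triangle inequality. Summing over $S_j$ bounds the $\mathcal Q$-sum by the refined-grid sum. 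The grid sum equals a product of one-dimensional Jordan sums, each bounded by $\operatorname{Var}(h_j;J_j)$.

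\emph{Justifying the covering step.} The main point to verify is that the grid cells contained in distinct $S_j$ are disjoint, and that every grid cell lies in exactly one $S_j$. Disjoint interiors together with covering $S$ give this, since each open grid cell avoids all box boundaries and so meets exactly one box. Taking the supremum over $\mathcal Q$ then yields $V(h;S)\le\prod_j\operatorname{Var}(h_j;J_j)$, which combined with the lower bound completes the proof.
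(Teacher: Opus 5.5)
Your proposal is correct and follows the same route as the paper: factorize the quasivolume over a product box, get the lower bound from tensor grids of one-dimensional partitions, and get the upper bound by refining an arbitrary partition to the tensor grid of all its coordinate cuts, then applying additivity and the triangle inequality. Your check that each open grid cell lies in exactly one box of $\mathcal Q$ makes explicit a step the paper leaves implicit; degenerate boxes, if allowed, contribute zero quasivolume and do not affect the argument.
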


\begin{proof}
For a rectangle $R=\prod_{j=1}^d[u_j,v_j]\subset S$, the alternating vertex
sum factorizes:
\begin{equation}
\label{eq:appendix_product_quasivolume}
\sigma(h,R)
=
\prod_{j=1}^d\bigl(h_j(v_j)-h_j(u_j)\bigr).
\end{equation}
For one-dimensional partitions $P_j$, the sum over the induced tensor grid is
therefore the product of the corresponding one-dimensional variation sums.
Taking partitions approaching the marginal variations proves the lower bound
in \eqref{eq:appendix_classical_vitali_product}.

For the reverse inequality, refine an arbitrary rectangular partition to the
tensor grid induced by all of its coordinate cuts. Splitting a rectangle in
coordinate $m$ replaces an increment $\Delta_m$ by
$\Delta_m^-+\Delta_m^+$. By the triangle inequality,
$$
|\Delta_m|\prod_{j\neq m}|\Delta_j|
\le
\bigl(|\Delta_m^-|+|\Delta_m^+|\bigr)
\prod_{j\neq m}|\Delta_j|.
$$
Thus refinement cannot decrease the sum of absolute quasivolumes. The grid
calculation then bounds every rectangular partition by the product of the
one-dimensional variations. Taking the supremum proves the result.
\end{proof}

\begin{corollary}[Smooth product-density formulas]
\label{cor:appendix_smooth_product_density_formulas}
If each $f_j$ is a smooth probability density, then
\begin{equation}
\label{eq:appendix_smooth_product_TVI}
\TV_I(f;S)
=
\sum_{i\in I}\int_{J_i}|f_i'(t)|\,dt,
\end{equation}
\begin{equation}
\label{eq:appendix_smooth_product_mixed}
\int_S|\partial_I f(\omega)|\,d\omega
=
\prod_{i\in I}\int_{J_i}|f_i'(t)|\,dt,
\end{equation}
and, for $I=[d]$,
\begin{equation}
\label{eq:appendix_smooth_product_vitali}
V(f;S)
=
\prod_{i=1}^d\int_{J_i}|f_i'(t)|\,dt.
\end{equation}
\end{corollary}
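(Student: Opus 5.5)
The corollary follows from the measure-level statements already proved, once smoothness is used to identify each derivative measure with an absolutely continuous one. I take ``smooth'' to mean at least $f_j\in C^1(\overline{J_j})$ for every $j$, so that each $f_j$ lies in $BV(J_j)$. Its distributional derivative is then the absolutely continuous measure $Df_j=f_j'(t)\,dt$, with
$$
|Df_j|(J_j)=\int_{J_j}|f_j'(t)|\,dt .
$$

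For \eqref{eq:appendix_smooth_product_TVI}, I substitute this identity into \eqref{eq:appendix_product_coordinate_variation} of Proposition~\ref{prop:appendix_product_distributional_derivatives}. That gives $|D_if|(S)=\int_{J_i}|f_i'|$, and summing over $i\in I$ gives the formula. A direct check is also available, since the paper records $\TV_\infty(f;S)=\int_S\|\nabla f\|_1$ for $C^1$ densities. Here $\partial_if=f_i'(\omega_i)\prod_{j\ne i}f_j(\omega_j)$. Fubini's theorem, together with nonnegativity and unit mass of the $f_j$, then reduces $\int_S|\partial_if|$ to $\int_{J_i}|f_i'|$.

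For \eqref{eq:appendix_smooth_product_mixed}, I use \eqref{eq:appendix_product_mixed_derivative_measure}. With $f\in C^k(\overline S)$, the measure $D_If$ has density $\partial_If=\prod_{i\in I}f_i'(\omega_i)\prod_{j\notin I}f_j(\omega_j)$, so $|D_If|(S)=\int_S|\partial_If|$. Equation \eqref{eq:appendix_product_mixed_derivative_mass} then gives the product of the one-dimensional integrals. Equivalently, one can apply Fubini directly to the absolute value of the factorized integrand and use $\int_{J_j} f_j=1$ for each $j\notin I$.

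For \eqref{eq:appendix_smooth_product_vitali}, I take $I=[d]$. The product $f$ is then in $C^d(\overline S)$ with $\partial_{[d]}f\in L^1(S)$, so the smooth Vitali identity \eqref{eq:vitali_smooth_identity} turns the previous formula into $V(f;S)$. As a cross-check, Theorem~\ref{thm:appendix_classical_vitali_product} with $h_j=f_j$ gives $\prod_j\operatorname{Var}(f_j;J_j)$. For $C^1$ functions on a closed interval, $\operatorname{Var}(f_j;J_j)=\int_{J_j}|f_j'|$, so the two expressions agree.

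No step is a real obstacle. The only point needing care is regularity: $f$ must have enough smoothness on the closed box for \eqref{eq:vitali_smooth_identity} to apply, and the relative variation $|Df_j|((a_j,b_j))$ must coincide with $\int|f_j'|$, which holds because there is no boundary mass.
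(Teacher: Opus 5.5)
Your proposal is correct and follows the paper's own argument: identify $Df_i=f_i'(t)\,dt$, substitute into Proposition~\ref{prop:appendix_product_distributional_derivatives} for the first two formulas, and use the smooth Vitali identity together with $\partial_1\cdots\partial_d f=\prod_i f_i'(\omega_i)$ for the third. One small point: if ``smooth'' meant only $C^1$ marginals, $f$ would not literally lie in $C^d(\overline S)$ as \eqref{eq:vitali_smooth_identity} requires, but your cross-check through Theorem~\ref{thm:appendix_classical_vitali_product} covers that case anyway.
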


\begin{proof}
For smooth marginals, $Df_i=f_i'(t)\,dt$, so the first two identities follow
from Proposition~\ref{prop:appendix_product_distributional_derivatives}. The
last identity follows from the smooth Vitali formula and
$$
\partial_1\cdots\partial_d f(\omega)=\prod_{i=1}^d f_i'(\omega_i).
$$
\end{proof}

\begin{corollary}[Additive and multiplicative product-density bounds]
\label{cor:appendix_additive_multiplicative_bounds}
Let $f=\prod_jf_j$ be a smooth product density and let $\varphi$ be centered
on $I$. Then
$$
|\mathbb E_f[\varphi]|
\le
\frac1{2k}\Lambda_{\mathrm{sl}}^I(\varphi)
\sum_{i\in I}\int_{J_i}|f_i'(t)|\,dt
$$
for $\varphi\in L^\infty(S)$, whereas
$$
|\mathbb E_f[\varphi]|
\le
2^{-k}\Lambda_{\mathrm{mix}}^I(\varphi)
\prod_{i\in I}\int_{J_i}|f_i'(t)|\,dt
$$
for $\varphi\in L^\infty(S)$.
\end{corollary}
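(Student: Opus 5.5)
This is a direct substitution: the two product-density formulas of Corollary~\ref{cor:appendix_smooth_product_density_formulas} are inserted into the exact-centered estimates of Section~\ref{sec:problem_setup}. Throughout, $f=\prod_j f_j$ is a smooth product of one-dimensional probability densities, so $f\in C^d(\overline S)\subset BV(S)$ is itself a probability density on $S$. Both hypotheses of the exact-centered theorems on the density side are therefore met.

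For the additive bound, I will use Corollary~\ref{cor:bounded_residual_tv_extension} rather than Theorem~\ref{thm:k_direction_tv_bound}. The corollary needs only $\varphi\in L^\infty(S)$ centered on $I$, and this is exactly the regularity in the statement. It yields
\[
|\mathbb E_f[\varphi]|\le\frac{1}{2k}\Lambda_{\mathrm{sl}}^I(\varphi)\TV_I(f;S).
\]
I then replace $\TV_I(f;S)$ by $\sum_{i\in I}\int_{J_i}|f_i'(t)|\,dt$ using \eqref{eq:appendix_smooth_product_TVI}. That identity comes from Proposition~\ref{prop:appendix_product_distributional_derivatives}: the marginals are smooth, so $Df_i=f_i'\,dt$, and the remaining marginals integrate to one.

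For the multiplicative bound, I apply Theorem~\ref{thm:k_direction_mixed_derivative_bound}. Its hypotheses are $f\in C^k(\overline S)$, which holds for smooth marginals, and $\varphi\in L^\infty(S)$ centered on $I$. It gives
\[
|\mathbb E_f[\varphi]|\le 2^{-k}\Lambda_{\mathrm{mix}}^I(\varphi)\int_S|\partial_I f|.
\]
Then \eqref{eq:appendix_smooth_product_mixed} turns the integral into $\prod_{i\in I}\int_{J_i}|f_i'|$. That identity uses the factorization $\partial_I f=\prod_{i\in I}f_i'(\omega_i)\prod_{j\notin I}f_j(\omega_j)$, then Fubini, nonnegativity, and the normalization of the unselected marginals.

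No step is a genuine obstacle. The one point to check is the regularity class in the first bound, since Theorem~\ref{thm:k_direction_tv_bound} assumed $W^{1,\infty}$; routing through the $L^\infty$ extension in Corollary~\ref{cor:bounded_residual_tv_extension} covers this. One should also confirm that smoothness of the marginals up to the endpoints gives $f\in C^d(\overline S)$, so that both the $BV$ and the $C^k$ hypotheses hold at once.
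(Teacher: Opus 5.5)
Your proposal is correct and is essentially the paper's proof, which simply substitutes \eqref{eq:appendix_smooth_product_TVI} and \eqref{eq:appendix_smooth_product_mixed} into the exact-centered bounds of Section~\ref{sec:problem_setup}. Routing the additive bound through Corollary~\ref{cor:bounded_residual_tv_extension} rather than Theorem~\ref{thm:k_direction_tv_bound} is the right choice, because that is what lets the stated hypothesis $\varphi\in L^\infty(S)$, rather than $W^{1,\infty}(S)$, suffice.
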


\begin{proof}
Substitute \eqref{eq:appendix_smooth_product_TVI} and
\eqref{eq:appendix_smooth_product_mixed} into the exact-centered bounds from
the main text.
\end{proof}

\begin{remark}[Representative dependence]
\label{rem:appendix_representative_distinction}
Theorem~\ref{thm:appendix_classical_vitali_product} concerns fixed pointwise
representatives. Proposition~\ref{prop:appendix_product_distributional_derivatives}
concerns $L^1$ equivalence classes. For smooth marginals the distinction
disappears. For arbitrary $BV$ classes, changing a function at one point can
change a corner quasivolume without changing any distributional derivative
measure.
\end{remark}

\subsection{Two regimes separating the variation certificates}
\label{subsec:appendix_branch_separation}

The additive and multiplicative product formulas show that the two variation
certificates cannot be ordered uniformly. The following smooth examples make
both possible orderings explicit. Throughout this subsection, let $k\ge2$,
let $S=[0,1]^k$, and take $I=[k]$.

\begin{example}[Small marginal variation]
\label{ex:appendix_mixed_certificate_better}
Let
$$
\varphi(x)
:=
\prod_{i=1}^k\sin(2\pi x_i),
$$
and, for $0<\varepsilon<\pi/4$, let
$$
f_\varepsilon(x)
:=
\prod_{i=1}^k
\left(1+\varepsilon\sin(2\pi x_i)\right).
$$
Then $f_\varepsilon$ is a smooth product density and $\varphi$ is centered in
every direction. Moreover,
$$
\Lambda_{\mathrm{sl}}^{[k]}(\varphi)=\frac2\pi,
\qquad
\Lambda_{\mathrm{mix}}^{[k]}(\varphi)
=\left(\frac2\pi\right)^k,
$$
and every marginal has variation
$$
\int_0^1
\left|\frac{d}{dt}
\left(1+\varepsilon\sin(2\pi t)\right)
\right|dt
=4\varepsilon.
$$
Consequently, the anisotropic certificate equals
$$
\frac{1}{2k}\frac2\pi(4k\varepsilon)
=\frac{4\varepsilon}{\pi},
$$
whereas the mixed-derivative certificate equals
$$
2^{-k}\left(\frac2\pi\right)^k(4\varepsilon)^k
=\left(\frac{4\varepsilon}{\pi}\right)^k.
$$
Since $4\varepsilon/\pi<1$ and $k\ge2$, the mixed-derivative certificate is
strictly smaller. The actual expectation is
$$
\mathbb E_{f_\varepsilon}[\varphi]
=\left(\frac\varepsilon2\right)^k.
$$
\end{example}

\begin{example}[Large oscillatory marginal variation]
\label{ex:appendix_tv_certificate_better}
Fix an integer $m\ge1$ and $0<a<1$, and define
$$
\varphi_m(x)
:=
\prod_{i=1}^k\sin(2\pi m x_i),
$$
$$
f_{a,m}(x)
:=
\prod_{i=1}^k
\left(1+a\sin(2\pi m x_i)\right).
$$
Again, $f_{a,m}$ is a smooth product density and $\varphi_m$ is centered in
every direction. The residual envelopes are unchanged:
$$
\Lambda_{\mathrm{sl}}^{[k]}(\varphi_m)=\frac2\pi,
\qquad
\Lambda_{\mathrm{mix}}^{[k]}(\varphi_m)
=\left(\frac2\pi\right)^k.
$$
Every marginal now has variation $4am$. Hence the anisotropic certificate is
$$
\frac{4am}{\pi},
$$
while the mixed-derivative certificate is
$$
\left(\frac{4am}{\pi}\right)^k.
$$
Whenever $4am/\pi>1$, the anisotropic certificate is strictly smaller for
$k\ge2$. The actual expectation is
$$
\mathbb E_{f_{a,m}}[\varphi_m]
=\left(\frac a2\right)^k.
$$
\end{example}

These examples separate the certificates, not the underlying regularity
classes. Both densities and residuals are smooth. The difference is caused by
the additive-versus-multiplicative dependence of the two bounds.

\subsection{A one-dimensional smooth near-extremizing pair}
\label{subsec:appendix_one_dimensional_near_extremizer}

Let $J=[a,b]$, choose
$$
0<w<b-a,
\qquad
m:=b-\frac w2,
\qquad
0<\delta<\frac w4,
$$
and define
$$
h_{w,\delta}(r)
:=
\begin{cases}
r/\delta, & 0\le r\le\delta,\\[1mm]
1, & \delta<r<w/2-\delta,\\[1mm]
(w/2-r)/\delta, & w/2-\delta\le r\le w/2.
\end{cases}
$$
Set
\begin{equation}
\label{eq:appendix_definition_psi_w_delta}
\psi_{w,\delta}(t)
:=
\begin{cases}
-h_{w,\delta}(m-t), & m-w/2\le t\le m,\\[1mm]
\hphantom{-}h_{w,\delta}(t-m), & m\le t\le m+w/2,\\[1mm]
0, & |t-m|>w/2.
\end{cases}
\end{equation}
Thus $\psi_{w,\delta}$ is supported on $[b-w,b]$ and is odd about $m$.

For the smooth density, define
$$
\vartheta(r)
:=
\begin{cases}
0, & r\le0,\\[1mm]
\exp(-1/r), & r>0,
\end{cases}
$$
$$
H(s)
:=
\frac{\vartheta(s+1)}
{\vartheta(s+1)+\vartheta(1-s)}.
$$
Then $H\in C^\infty(\mathbb R)$,
$$
H=0\text{ on }(-\infty,-1],
\qquad
H=1\text{ on }[1,\infty),
$$
$$
H'\ge0,
\qquad
H(-s)=1-H(s).
$$
For $0<\varepsilon<w/2$, define
\begin{equation}
\label{eq:appendix_definition_g_w_epsilon}
g_{w,\varepsilon}(t)
:=
\frac2wH\left(\frac{t-m}{\varepsilon}\right).
\end{equation}

\begin{lemma}[One-dimensional smooth near-extremizer]
\label{lem:appendix_one_dimensional_near_extremizer}
The functions above satisfy:

\begin{enumerate}
\item[\emph{(i)}]
$\psi_{w,\delta}\in W^{1,\infty}(J)$,
$$
\int_J\psi_{w,\delta}(t)\,dt=0,
\qquad
\|\psi_{w,\delta}\|_{L^\infty(J)}=1,
$$
\begin{equation}
\label{eq:appendix_psi_L1}
\|\psi_{w,\delta}\|_{L^1(J)}=w-2\delta,
\end{equation}
$$
\int_m^b\psi_{w,\delta}(t)\,dt
=\frac w2-\delta.
$$

\item[\emph{(ii)}]
$g_{w,\varepsilon}$ is the restriction of a $C^\infty$ function, is
nonnegative, and
$$
\int_Jg_{w,\varepsilon}(t)\,dt=1,
\qquad
\int_J|g_{w,\varepsilon}'(t)|\,dt=\frac2w.
$$

\item[\emph{(iii)}] If
$$
g_{w,0}(t):=\frac2w\mathbf 1_{(m,b]}(t),
$$
then
$$
\|g_{w,\varepsilon}-g_{w,0}\|_{L^1(J)}
\le
\frac{4\varepsilon}{w},
$$
and
\begin{equation}
\label{eq:appendix_one_dimensional_pairing_limit}
\int_J\psi_{w,\delta}(t)g_{w,\varepsilon}(t)\,dt
\longrightarrow
1-\frac{2\delta}{w}
\qquad
\text{as }\varepsilon\downarrow0.
\end{equation}
\end{enumerate}
\end{lemma}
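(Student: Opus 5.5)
The three parts are elementary, so I will verify them in order by direct computation, using the odd symmetry of $\psi_{w,\delta}$ about $m$ and the reflection identity $H(-s)=1-H(s)$.

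\emph{Part (i).} First, $h_{w,\delta}$ is piecewise linear and continuous on $[0,w/2]$, with slopes $\pm1/\delta$ or $0$, and it vanishes at both endpoints. Hence $\psi_{w,\delta}$ is continuous: it equals $0$ at $t=m$ and at $t=m\pm w/2$, and it is Lipschitz with constant $1/\delta$, so $\psi_{w,\delta}\in W^{1,\infty}(J)$. The support $[m-w/2,m+w/2]=[b-w,b]$ lies in $J$ because $w<b-a$. Oddness about $m$ gives zero mean, and $\max h_{w,\delta}=1$ (attained because $\delta<w/4$) gives $\|\psi_{w,\delta}\|_{L^\infty(J)}=1$. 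The trapezoid area is
$$
\int_0^{w/2}h_{w,\delta}=\frac w2-2\delta+2\cdot\frac\delta2=\frac w2-\delta .
$$
This gives $\int_m^b\psi_{w,\delta}=w/2-\delta$, and by symmetry $\|\psi_{w,\delta}\|_{L^1(J)}=w-2\delta$.

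\emph{Part (ii).} Smoothness and nonnegativity follow from those of $H$. For the mass, I will use $H(-s)=1-H(s)$. Set $s=(t-m)/\varepsilon$. Since $\varepsilon<w/2$, the transition layer $[m-\varepsilon,m+\varepsilon]$ lies inside $J$. Pairing $t$ with $2m-t$ on $[m-w/2,m+w/2]$ gives
$$
\int_{m-w/2}^{m+w/2}H\left(\frac{t-m}{\varepsilon}\right)dt=\frac w2 ,
$$
while $H\equiv0$ for $t\le m-\varepsilon$, and the upper endpoint $m+w/2$ is exactly $b$. Hence $\int_J g_{w,\varepsilon}=(2/w)(w/2)=1$. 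Monotonicity of $H$ gives $\int_J|g'_{w,\varepsilon}|=(2/w)\bigl(H(w/(2\varepsilon))-H(-(m-a)/\varepsilon)\bigr)=2/w$. Both values of $H$ here are saturated, since $w/(2\varepsilon)>1$ and $(m-a)/\varepsilon>1$.

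\emph{Part (iii).} The difference $g_{w,\varepsilon}-g_{w,0}$ is supported in $[m-\varepsilon,m+\varepsilon]$ and is bounded by $2/w$ there. This gives the $L^1$ bound $4\varepsilon/w$. Since $\|\psi_{w,\delta}\|_\infty=1$, the pairing differs from $\int_J\psi_{w,\delta}\,g_{w,0}=(2/w)(w/2-\delta)=1-2\delta/w$ by at most $4\varepsilon/w\to0$.

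The only point requiring care is the mass identity in (ii). There the symmetry argument must use exactly the window $[m-w/2,m+w/2]$, whose right end is $b$. One also needs $\varepsilon<w/2$ and $m-a>w/2>\varepsilon$, so that $H$ is saturated at both ends of $J$. Everything else is routine.
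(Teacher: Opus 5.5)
Your proposal is correct and follows essentially the same route as the paper. It uses the trapezoid-area computation for $h_{w,\delta}$ and the reflection identity $H(s)+H(-s)=1$ for the mass of $g_{w,\varepsilon}$; the only difference is that you pair $t$ with $2m-t$ over the whole window $[b-w,b]$, whereas the paper integrates $H$ over the transition layer and adds the flat part $w/2-\varepsilon$. The variation then follows from monotonicity of $H$, and the pairing limit from the $L^1$ estimate on $[m-\varepsilon,m+\varepsilon]$, exactly as in the paper; your explicit checks that $b-w>a$ and $m-a>\varepsilon$ supply conditions the paper uses only implicitly.
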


\begin{proof}
Oddness gives the zero mean. Moreover,
$$
\int_0^{w/2}h_{w,\delta}(r)\,dr
=
\frac{\delta}{2}
+
\left(\frac w2-2\delta\right)
+
\frac{\delta}{2}
=
\frac w2-\delta,
$$
which proves the $L^1$ and positive-half identities.

The symmetry $H(s)+H(-s)=1$ gives
$$
\int_{-1}^1H(s)\,ds=1.
$$
Since $H((t-m)/\varepsilon)$ is zero before $m-\varepsilon$, one after
$m+\varepsilon$, and $b-m=w/2$,
$$
\int_JH\left(\frac{t-m}{\varepsilon}\right)dt
=
\varepsilon\int_{-1}^1H(s)\,ds
+
\left(\frac w2-\varepsilon\right)
=
\frac w2.
$$
Thus $g_{w,\varepsilon}$ has mass one. It is nondecreasing, so
$$
\int_J|g_{w,\varepsilon}'|
=
g_{w,\varepsilon}(b)-g_{w,\varepsilon}(a)
=
\frac2w.
$$
The smooth density and the half-window density differ only on
$[m-\varepsilon,m+\varepsilon]$, where both are bounded by $2/w$. Hence
$$
\|g_{w,\varepsilon}-g_{w,0}\|_1
\le
2\varepsilon\frac2w.
$$
Finally,
$$
\int_J\psi_{w,\delta}g_{w,0}
=
\frac2w\int_m^b\psi_{w,\delta}
=
1-\frac{2\delta}{w},
$$
and the pairing limit follows from the $L^1$ estimate.
\end{proof}

\begin{remark}[Relative variation]
\label{rem:appendix_smooth_extremizer_internal_variation}
The density rises from zero to $2/w$ across one smooth transition and remains
constant up to $b$. Its variation on $J$ is therefore $2/w$. No boundary jump
at $b$ is counted because the variation is relative to the interval.
\end{remark}

\subsection{Sharpness of the
\texorpdfstring{$k$}{k}-direction anisotropic
\texorpdfstring{$\TV_\infty$}{TV-infinity} constant}
\label{subsec:appendix_k_direction_tv_sharpness}

We now show that the coefficient $1/(2k)$ in
Theorem~\ref{thm:k_direction_tv_bound} cannot be reduced. The construction
tensorizes the one-dimensional near-extremizing pair from
Lemma~\ref{lem:appendix_one_dimensional_near_extremizer}.

Choose
$$
0<w<\min_{i\in I}\ell_i,
\qquad
0<\delta<\frac{w}{4},
\qquad
0<\varepsilon<\frac{w}{2}.
$$
For each $i\in I$, translate the one-dimensional functions
$\psi_{w,\delta}$ and $g_{w,\varepsilon}$ to the interval $J_i$, and denote
the translated functions by
$\psi_{w,\delta}^{(i)}$ and $g_{w,\varepsilon}^{(i)}$, respectively.
For every $j\notin I$, let
$$
\nu_j(t):=\ell_j^{-1},
\qquad t\in J_j,
$$
be the uniform probability density on $J_j$. Define
\begin{equation}
\label{eq:appendix_tensorized_tv_sharpness_pair}
f_{w,\varepsilon}^I(\omega)
:=
\left(
\prod_{i\in I}
g_{w,\varepsilon}^{(i)}(\omega_i)
\right)
\left(
\prod_{j\notin I}
\nu_j(\omega_j)
\right),
\end{equation}
and
\begin{equation}
\label{eq:appendix_tensorized_tv_sharpness_residual}
\varphi_{w,\delta}^I(\omega)
:=
\prod_{i\in I}
\psi_{w,\delta}^{(i)}(\omega_i).
\end{equation}
Each selected residual factor has zero mean. Hence
$\varphi_{w,\delta}^I$ is centered on $I$. Moreover,
$f_{w,\varepsilon}^I$ is a smooth probability density on $S$.

\begin{theorem}[Sharpness of the $k$-direction anisotropic bound]
\label{thm:appendix_k_direction_tv_sharpness}
Let $\emptyset\neq I\subseteq[d]$ and let $k:=|I|$. Then
\begin{equation}
\label{eq:appendix_tv_sharpness_supremum}
\sup
\left\{
\frac{|\mathbb E_f[\varphi]|}
{\Lambda_{\mathrm{sl}}^I(\varphi)\,\TV_I(f;S)}
:
\begin{array}{l}
f\in C^\infty(\overline S),\
f\ge0,\
\displaystyle\int_S f=1,
\\[1mm]
\varphi\in W^{1,\infty}(S),\
\varphi\text{ centered on }I,
\\[1mm]
\Lambda_{\mathrm{sl}}^I(\varphi)\,\TV_I(f;S)>0
\end{array}
\right\}
=
\frac{1}{2k}.
\end{equation}
Here $C^\infty(\overline S)$ denotes smooth extendability to an open
neighborhood of $S$.
\end{theorem}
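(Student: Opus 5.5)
The upper bound is immediate: Theorem~\ref{thm:k_direction_tv_bound} gives $|\mathbb E_f[\varphi]|\le\frac{1}{2k}\Lambda_{\mathrm{sl}}^I(\varphi)\TV_I(f;S)$ for every admissible pair, so the supremum is at most $1/(2k)$. For the reverse inequality I would evaluate the ratio on the tensorized family $(f_{w,\varepsilon}^I,\varphi_{w,\delta}^I)$ of \eqref{eq:appendix_tensorized_tv_sharpness_pair}--\eqref{eq:appendix_tensorized_tv_sharpness_residual}. I would then show that it tends to $1/(2k)$ when first $\varepsilon\downarrow0$ and then $\delta\downarrow0$.

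First I would check admissibility. Each $g_{w,\varepsilon}^{(i)}$ is the restriction of a $C^\infty$ function on $\mathbb R$ and the $\nu_j$ are constants, so $f_{w,\varepsilon}^I$ extends smoothly to a neighborhood of $S$. It is a probability density by Lemma~\ref{lem:appendix_one_dimensional_near_extremizer}(ii). The residual $\varphi_{w,\delta}^I$ is a finite product of bounded Lipschitz functions of distinct coordinates, hence lies in $W^{1,\infty}(S)$. It is centered on $I$ by the tensor projector formula in Proposition~\ref{prop:appendix_tensorized_projectors_envelopes}(i), since each selected factor has zero mean.

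Next I would compute the three quantities using the tensor formulas, writing the residual with factors $h_i=\psi_{w,\delta}^{(i)}$ for $i\in I$ and $h_j\equiv1$ for $j\notin I$.
\begin{itemize}
\item By \eqref{eq:appendix_tensorized_pairing}, $\mathbb E_f[\varphi]=\prod_{i\in I}\int_{J_i}\psi^{(i)}_{w,\delta}g^{(i)}_{w,\varepsilon}$, because each factor $\int\nu_j$ equals $1$. By \eqref{eq:appendix_one_dimensional_pairing_limit} this tends to $(1-2\delta/w)^k$ as $\varepsilon\downarrow0$.
\item By \eqref{eq:appendix_tensorized_slice_envelope} together with $\|\psi\|_{L^\infty}=1$ and \eqref{eq:appendix_psi_L1}, $\Lambda_{\mathrm{sl}}^I(\varphi)=w-2\delta$, independently of $i$ and of $\varepsilon$.
\item By Corollary~\ref{cor:appendix_smooth_product_density_formulas} and Lemma~\ref{lem:appendix_one_dimensional_near_extremizer}(ii), $\TV_I(f;S)=\sum_{i\in I}2/w=2k/w$. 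The uniform factors contribute nothing, because variation is relative to the interior.
\end{itemize}
The denominator is therefore $2k(w-2\delta)/w>0$, and the ratio converges to
$$
\frac{(1-2\delta/w)^k}{2k(1-2\delta/w)}=\frac{(1-2\delta/w)^{k-1}}{2k}.
$$
Letting $\delta\downarrow0$ shows that the supremum is at least $1/(2k)$.

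The main point needing care is the limit in $\varepsilon$. The density has to stay smooth, so the pairing only approaches its discontinuous limit. The $L^1$ estimate $\|g_{w,\varepsilon}-g_{w,0}\|_1\le4\varepsilon/w$ together with $\|\psi\|_\infty=1$ controls each factor. Since the product of $k$ convergent bounded factors converges, the joint limit follows. Because the supremum is not claimed to be attained, approximating it along $\varepsilon,\delta\to0$ suffices.
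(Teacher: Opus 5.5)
Your proposal is correct and follows the paper's proof essentially step for step. You use the upper bound from Theorem~\ref{thm:k_direction_tv_bound}, the same tensorized pair with $\Lambda_{\mathrm{sl}}^I=w-2\delta$, $\TV_I=2k/w$ and $\mathbb E_f[\varphi]=q_{w,\delta,\varepsilon}^k$, and then take $\varepsilon\downarrow0$ followed by $\delta/w\downarrow0$; your explicit admissibility checks are a small addition that the paper leaves implicit.
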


\begin{proof}
Theorem~\ref{thm:k_direction_tv_bound} gives
$$
\frac{|\mathbb E_f[\varphi]|}
{\Lambda_{\mathrm{sl}}^I(\varphi)\,\TV_I(f;S)}
\le
\frac{1}{2k}
$$
for every admissible pair. It therefore remains to construct a sequence of
admissible pairs for which the ratio approaches $1/(2k)$.

Consider the pair
$(\varphi_{w,\delta}^I,f_{w,\varepsilon}^I)$ defined in
\eqref{eq:appendix_tensorized_tv_sharpness_pair} and
\eqref{eq:appendix_tensorized_tv_sharpness_residual}.
By Proposition~\ref{prop:appendix_tensorized_projectors_envelopes} and
Lemma~\ref{lem:appendix_one_dimensional_near_extremizer},
$$
\Lambda_{\mathrm{sl}}^I
\bigl(\varphi_{w,\delta}^I\bigr)
=
w-2\delta.
$$
Indeed, every selected residual factor has $L^\infty$ norm one and
$L^1$ norm $w-2\delta$.

For every $i\in I$, the selected marginal density satisfies
$$
\int_{J_i}
\left|
\frac{d}{dt}
g_{w,\varepsilon}^{(i)}(t)
\right|dt
=
\frac{2}{w}.
$$
The remaining marginal densities are constant. The product-density formula
therefore gives
$$
\TV_I(f_{w,\varepsilon}^I;S)
=
\sum_{i\in I}
\int_{J_i}
\left|
\frac{d}{dt}
g_{w,\varepsilon}^{(i)}(t)
\right|dt
=
\frac{2k}{w}.
$$

Let
$$
q_{w,\delta,\varepsilon}
:=
\int_{J_i}
\psi_{w,\delta}^{(i)}(t)
g_{w,\varepsilon}^{(i)}(t)\,dt.
$$
The translated one-dimensional pairs are identical up to translation, so this
quantity is independent of $i\in I$. Tensorization gives
$$
\mathbb E_{f_{w,\varepsilon}^I}
\bigl[\varphi_{w,\delta}^I\bigr]
=
q_{w,\delta,\varepsilon}^{\,k}.
$$
Consequently,
\begin{equation}
\label{eq:appendix_tv_sharpness_ratio}
\frac{
\left|
\mathbb E_{f_{w,\varepsilon}^I}
\bigl[\varphi_{w,\delta}^I\bigr]
\right|
}
{
\Lambda_{\mathrm{sl}}^I
\bigl(\varphi_{w,\delta}^I\bigr)
\TV_I(f_{w,\varepsilon}^I;S)
}
=
\frac{
|q_{w,\delta,\varepsilon}|^k
}
{
(w-2\delta)(2k/w)
}.
\end{equation}

Lemma~\ref{lem:appendix_one_dimensional_near_extremizer} gives
$$
q_{w,\delta,\varepsilon}
\longrightarrow
1-\frac{2\delta}{w}
\qquad
\text{as }\varepsilon\downarrow0.
$$
Taking the limit in \eqref{eq:appendix_tv_sharpness_ratio} yields
\begin{align*}
\lim_{\varepsilon\downarrow0}
\frac{
\left|
\mathbb E_{f_{w,\varepsilon}^I}
\bigl[\varphi_{w,\delta}^I\bigr]
\right|
}
{
\Lambda_{\mathrm{sl}}^I
\bigl(\varphi_{w,\delta}^I\bigr)
\TV_I(f_{w,\varepsilon}^I;S)
}
&=
\frac{
\left(1-\frac{2\delta}{w}\right)^k
}
{
2k\left(1-\frac{2\delta}{w}\right)
}
\\
&=
\frac{1}{2k}
\left(
1-\frac{2\delta}{w}
\right)^{k-1}.
\end{align*}
Finally, letting $\delta/w\downarrow0$ gives
$$
\frac{1}{2k}
\left(
1-\frac{2\delta}{w}
\right)^{k-1}
\longrightarrow
\frac{1}{2k}.
$$
The upper bound is therefore attained in the limit. Hence the coefficient
$1/(2k)$ is optimal, even when the density is smooth and the centered residual
is Lipschitz.
\end{proof}

The geometry of the construction is shown in
Figure~\ref{fig:appendix_unified_sharpness_construction}. In one dimension,
the residual is odd about the midpoint of a terminal window, while the smooth
density approaches the normalized indicator of its upper half. Tensorization
produces an alternating sign pattern for the residual and concentrates the
product density near the corner on which all residual factors are positive.
The quantitative content of the construction is contained entirely in the
preceding proof.

\begin{figure}[t]
\centering

\begin{subfigure}[t]{0.43\textwidth}
\centering
\begin{tikzpicture}[x=5.0cm,y=1.35cm,>=Latex]

\draw[->] (-0.03,0) -- (1.07,0) node[right] {$t$};
\draw[->] (0,-1.25) -- (0,1.47);

\draw[very thick]
(0,0)
-- (0.08,-1)
-- (0.42,-1)
-- (0.50,0)
-- (0.58,1)
-- (0.92,1)
-- (1,0);

\draw[thick,dashed]
(0,0)
.. controls (0.38,0) and (0.43,0.01) .. (0.47,0.18)
.. controls (0.49,0.38) and (0.51,0.80) .. (0.54,1.05)
.. controls (0.57,1.25) and (0.62,1.30) .. (1,1.30);

\draw[densely dotted]
(0.50,-1.17) -- (0.50,1.39);

\node[anchor=north] at (0,-1.28) {$b-w$};
\node[anchor=north] at (0.50,-1.28) {$m$};
\node[anchor=north] at (1,-1.28) {$b$};

\end{tikzpicture}
\caption{One-dimensional building blocks. The solid curve is the residual and
the dashed curve is the density.}
\label{fig:appendix_unified_sharpness_1d}
\end{subfigure}
\hfill
\begin{subfigure}[t]{0.49\textwidth}
\centering
\begin{tikzpicture}[scale=0.82,>=Latex]

\def\W{5.2}
\def\m{2.6}
\def\e{0.28}

\fill[gray!28] (0,\m) rectangle (\m,\W);
\fill[gray!28] (\m,0) rectangle (\W,\m);

\fill[gray!10] (\m-\e,0) rectangle (\m+\e,\W);
\fill[gray!10] (0,\m-\e) rectangle (\W,\m+\e);

\draw[thick] (0,0) rectangle (\W,\W);

\draw[dashed] (\m,0) -- (\m,\W);
\draw[dashed] (0,\m) -- (\W,\m);

\draw[thick,rounded corners=4pt]
(\m+0.18,\m+0.18) rectangle (\W-0.12,\W-0.12);
\draw[thick,rounded corners=4pt]
(\m+0.48,\m+0.48) rectangle (\W-0.32,\W-0.32);
\draw[thick,rounded corners=4pt]
(\m+0.82,\m+0.82) rectangle (\W-0.58,\W-0.58);

\node[anchor=north] at (\m,-0.12) {$m_1$};
\node[anchor=east] at (-0.12,\m) {$m_2$};

\end{tikzpicture}
\caption{Two-dimensional tensor geometry. The alternating shades represent
the two residual signs, and the nested curves indicate density concentration.}
\label{fig:appendix_unified_sharpness_2d}
\end{subfigure}

\caption{
Geometry of the tensor-product near-extremizing construction. The
one-dimensional pair is shown on the left, and its two-dimensional
tensorization is shown on the right. Quantitative identities and limiting
ratios are given in the proof of
Theorem~\ref{thm:appendix_k_direction_tv_sharpness}.
}
\label{fig:appendix_unified_sharpness_construction}
\end{figure}

The same family will be used in
Theorem~\ref{thm:appendix_k_direction_mixed_sharpness}. The difference between
the two sharpness arguments lies only in the density-variation scaling:
first-order coordinate variations add over the selected directions, whereas
the selected mixed derivative tensorizes multiplicatively.

\subsection{Sharpness of the mixed-derivative and Vitali constants}
\label{subsec:appendix_mixed_vitali_sharpness}

\begin{theorem}[Sharpness of the $k$-direction mixed-derivative constant]
\label{thm:appendix_k_direction_mixed_sharpness}
Let $I\subseteq[d]$ be nonempty and $k=|I|$. Then
\begin{equation}
\label{eq:appendix_mixed_sharpness_supremum}
\sup
\left\{
\frac{
\left|\int_S\varphi(\omega)f(\omega)\,d\omega\right|
}
{
\Lambda_{\mathrm{mix}}^I(\varphi)
\int_S|\partial_I f(\omega)|\,d\omega
}:
\begin{array}{l}
f\in C^\infty(\overline S),\ f\ge0,\ \int_Sf=1,\\
\varphi\in W^{1,\infty}(S),\ \varphi\text{ centered on }I,\\
\Lambda_{\mathrm{mix}}^I(\varphi)\int_S|\partial_I f|>0
\end{array}
\right\}
=
2^{-k}.
\end{equation}
\end{theorem}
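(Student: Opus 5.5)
The upper bound is already available: Theorem~\ref{thm:k_direction_mixed_derivative_bound} gives $|\int_S\varphi f|\le 2^{-k}\Lambda_{\mathrm{mix}}^I(\varphi)\int_S|\partial_I f|$ for every admissible pair, since a smooth $f$ lies in $C^k(\overline S)$ and $\varphi\in W^{1,\infty}(S)\subset L^\infty(S)$. So the supremum is at most $2^{-k}$, and it remains to show that the ratio can be made arbitrarily close to $2^{-k}$. For this I will reuse exactly the tensorized family $(\varphi_{w,\delta}^I,f_{w,\varepsilon}^I)$ from \eqref{eq:appendix_tensorized_tv_sharpness_pair}--\eqref{eq:appendix_tensorized_tv_sharpness_residual}. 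Only the residual envelope and the density-variation term change.

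\textbf{Step 1 (residual envelope).} By Proposition~\ref{prop:appendix_tensorized_projectors_envelopes}(ii), with no factors outside $I$ in $\varphi_{w,\delta}^I$ (those factors are identically $1$, with $L^\infty$ norm $1$), we get
$$\Lambda_{\mathrm{mix}}^I(\varphi_{w,\delta}^I)=\prod_{i\in I}\|\psi_{w,\delta}^{(i)}\|_{L^1(J_i)}=(w-2\delta)^k,$$
using \eqref{eq:appendix_psi_L1}. The residual is Lipschitz, being a product of bounded Lipschitz factors, and it is centered on $I$ because each factor has zero mean.

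\textbf{Step 2 (mixed variation).} By Corollary~\ref{cor:appendix_smooth_product_density_formulas}, \eqref{eq:appendix_smooth_product_mixed},
$$\int_S|\partial_I f_{w,\varepsilon}^I|=\prod_{i\in I}\int_{J_i}|(g^{(i)}_{w,\varepsilon})'|=(2/w)^k,$$
by Lemma~\ref{lem:appendix_one_dimensional_near_extremizer}(ii). The uniform factors outside $I$ contribute their unit mass. The density is $C^\infty$ on a neighborhood of $S$, nonnegative, and normalized.

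\textbf{Step 3 (pairing and limit).} Tensorization \eqref{eq:appendix_tensorized_pairing} gives $\int\varphi f=q_{w,\delta,\varepsilon}^k$ with the same $q$ as in the $\TV$ sharpness proof. Hence the ratio equals
$$\frac{|q_{w,\delta,\varepsilon}|^k}{(w-2\delta)^k(2/w)^k}=\left(\frac{|q_{w,\delta,\varepsilon}|}{2(1-2\delta/w)}\right)^k.$$
Letting $\varepsilon\downarrow0$ and using \eqref{eq:appendix_one_dimensional_pairing_limit}, this tends to $(1/2)^k=2^{-k}$ exactly, for every fixed $\delta$. So the supremum equals $2^{-k}$. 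Unlike the $\TV$ case, the $\delta$-loss cancels completely here, because both numerator and denominator are $k$-th powers.

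There is no real obstacle. The only points needing care are checking positivity of the denominator (it is $(w-2\delta)^k(2/w)^k>0$) and confirming that the smoothness class matches the hypothesis of Theorem~\ref{thm:k_direction_mixed_derivative_bound}. The Vitali sharpness of Theorem~\ref{thm:main_sharp_constants}(iii) then follows by taking $I=[d]$ and invoking \eqref{eq:vitali_smooth_identity}.
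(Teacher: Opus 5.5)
Your proposal is correct and follows the paper's proof essentially verbatim. Both take the upper bound from Theorem~\ref{thm:k_direction_mixed_derivative_bound}, reuse the tensorized pair $(\varphi^I_{w,\delta},f^I_{w,\varepsilon})$ with $\Lambda_{\mathrm{mix}}^I=(w-2\delta)^k$, $\int_S|\partial_I f|=(2/w)^k$ and pairing $q_{w,\delta,\varepsilon}^k$, and let $\varepsilon\downarrow0$ to get exactly $2^{-k}$ with no $\delta$-loss, unlike the $\TV$ case.
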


\begin{proof}
The exact-centered mixed-derivative theorem gives the upper bound. For the
same tensorized pair,
$$
\Lambda_{\mathrm{mix}}^I(\varphi_{w,\delta}^I)
=(w-2\delta)^k,
$$
$$
\int_S|\partial_I f_{w,\varepsilon}^I(\omega)|\,d\omega
=\left(\frac2w\right)^k,
$$
and
$$
\int_S\varphi_{w,\delta}^If_{w,\varepsilon}^I
=q_{w,\delta,\varepsilon}^k.
$$
Therefore
$$
\frac{
\left|\int_S\varphi_{w,\delta}^If_{w,\varepsilon}^I\right|
}
{
\Lambda_{\mathrm{mix}}^I(\varphi_{w,\delta}^I)
\int_S|\partial_I f_{w,\varepsilon}^I|
}
=
\frac{|q_{w,\delta,\varepsilon}|^k}
{(w-2\delta)^k(2/w)^k}.
$$
Letting $\varepsilon\downarrow0$ gives
$$
\frac{(1-2\delta/w)^k}
{(w-2\delta)^k(2/w)^k}
=
2^{-k}.
$$
Thus the upper constant is approached by smooth product densities and
Lipschitz tensor-product residuals.
\end{proof}

\begin{corollary}[Sharpness of the all-axes Vitali constant]
\label{cor:appendix_vitali_sharpness}
For $I=[d]$,
\begin{equation}
\label{eq:appendix_vitali_sharpness_supremum}
\sup
\left\{
\frac{
\left|\int_S\varphi(\omega)f(\omega)\,d\omega\right|
}
{\|\varphi\|_{L^1(S)}V(f;S)}:
\begin{array}{l}
f\in C^\infty(\overline S),\ f\ge0,\ \int_Sf=1,\\
\varphi\in W^{1,\infty}(S),\ \varphi\text{ centered on }[d],\\
\|\varphi\|_{L^1(S)}V(f;S)>0
\end{array}
\right\}
=
2^{-d}.
\end{equation}
Hence the coefficient $2^{-d}$ in the smooth all-axes Vitali bound is sharp.
\end{corollary}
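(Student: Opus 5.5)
The plan is to specialize Theorem~\ref{thm:appendix_k_direction_mixed_sharpness} to $I=[d]$ and then check that the two denominators agree in the smooth setting. The upper bound needs no new argument. Every admissible pair has $f\in C^\infty(\overline S)\subset C^d(\overline S)$ and $\varphi\in W^{1,\infty}(S)\subset L^\infty(S)$ centered on $[d]$. Corollary~\ref{cor:all_axes_vitali_bound} therefore gives
\[
\Bigl|\int_S\varphi f\Bigr|\le 2^{-d}\|\varphi\|_{L^1(S)}V(f;S).
\]
Here we use the smooth identity \eqref{eq:vitali_smooth_identity}, which holds because $\partial_{[d]}f$ is continuous on the compact box and hence integrable. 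So the supremum is at most $2^{-d}$.

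For the lower bound, we take the tensorized pair $(\varphi_{w,\delta}^{[d]},f_{w,\varepsilon}^{[d]})$ from \eqref{eq:appendix_tensorized_tv_sharpness_pair}--\eqref{eq:appendix_tensorized_tv_sharpness_residual} with $I=[d]$, so there are no uniform factors. We then translate the quantities of Theorem~\ref{thm:appendix_k_direction_mixed_sharpness}.
\begin{itemize}
\item Since $S_{-[d]}$ is a singleton, $\Lambda_{\mathrm{mix}}^{[d]}(\varphi)=\|\varphi\|_{L^1(S)}$. This equals $(w-2\delta)^d$ by \eqref{eq:appendix_tensorized_mixed_envelope} and \eqref{eq:appendix_psi_L1}.
\item By Corollary~\ref{cor:appendix_smooth_product_density_formulas}, \eqref{eq:appendix_smooth_product_vitali}, we have $V(f_{w,\varepsilon}^{[d]};S)=\int_S|\partial_{[d]}f_{w,\varepsilon}^{[d]}|=(2/w)^d$.
\end{itemize}
Hence the ratio in \eqref{eq:appendix_vitali_sharpness_supremum} coincides with the ratio computed in the proof of Theorem~\ref{thm:appendix_k_direction_mixed_sharpness}. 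That ratio tends to $2^{-d}$ as $\varepsilon\downarrow0$. We also check that the denominator is positive, which is clear since both factors are positive.

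The only point needing care is admissibility, namely that $f_{w,\varepsilon}^{[d]}\in C^\infty(\overline S)$ in the sense of smooth extendability. Each marginal $g_{w,\varepsilon}$ is the restriction of the smooth function $\tfrac2wH((t-m)/\varepsilon)$ on $\mathbb R$, so the product extends smoothly. The same fact justifies the smooth Vitali identity for this $f$, with no representative issue (Remark~\ref{rem:vitali_scope}). The residual is Lipschitz, being a product of bounded Lipschitz factors, and it is centered because each factor has zero mean (Proposition~\ref{prop:appendix_tensorized_projectors_envelopes}). Combining the two bounds gives equality of the supremum with $2^{-d}$, and hence sharpness of the coefficient in \eqref{eq:all_axes_vitali_bound}.
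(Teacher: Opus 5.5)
Your proposal is correct and follows the paper's route. The paper likewise obtains the corollary as the $I=[d]$ specialization of Theorem~\ref{thm:appendix_k_direction_mixed_sharpness}, using $\Lambda_{\mathrm{mix}}^{[d]}(\varphi)=\|\varphi\|_{L^1(S)}$ and the smooth Vitali identity $V(f;S)=\int_S|\partial_1\cdots\partial_d f|$; your explicit checks of the upper bound via Corollary~\ref{cor:all_axes_vitali_bound} and of the admissibility of the tensorized pair spell out what the paper leaves implicit.
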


\begin{proof}
For $I=[d]$,
$$
\Lambda_{\mathrm{mix}}^{[d]}(\varphi)=\|\varphi\|_{L^1(S)},
$$
and, for smooth $f$,
$$
V(f;S)=\int_S|\partial_1\cdots\partial_df(\omega)|\,d\omega.
$$
The result is therefore the all-axes specialization of
Theorem~\ref{thm:appendix_k_direction_mixed_sharpness}.
\end{proof}

\begin{remark}[Sharpness of the defect coefficient]
\label{rem:appendix_defect_coefficient_sharp}
The coefficient one in front of the projector defect is also optimal. If
$\varphi\equiv c\neq0$, then, for every nonempty $I$,
$$
\mathcal P_I\varphi=0,
\qquad
\mathcal R_I\varphi=\varphi,
$$
and every probability density satisfies
$$
|\mathbb E_f[\varphi]|
=|c|
=\|\mathcal R_I\varphi\|_{L^\infty(S)}.
$$
\end{remark}

\begin{remark}[Scope of the sharpness construction]
\label{rem:appendix_scope_sharpness_construction}
The construction proves sharpness for the internal smooth box quantities
$\TV_I(f;S)$ and $\int_S|\partial_If|$. It does not assert sharpness of the
extension-seminorm estimate involving $V_I^{\mathrm{ext}}(f;S)$. The defect
example is separate and establishes optimality of its coefficient without
claiming simultaneous equality in both parts of a defect-adjusted bound.
\end{remark}

\subsection{Mollification and the extension mixed-variation seminorm}
\label{subsec:appendix_mollification_extension}

The anisotropic $\TV_\infty$ formulation is already formulated at the $BV$ level.
The role of mollification is specific to the mixed-derivative formulation. The
quantity stable under convolution is the distributional mixed derivative
measure $D_I f$, not the representative-dependent corner formula for classical
Vitali variation. We use the standard approximate-identity and $BV$
convolution properties; see, for example, \cite{evans,AmbrosioFuscoPallara2000}. Figures~\ref{fig:appendix_mollification_step_density} and
\ref{fig:appendix_tensor_mollification_geometry} illustrate the two points that
are load-bearing in the proof. First, convolution replaces a singular
mixed-derivative measure by a smooth density without increasing its total
variation mass. Second, smoothing is carried out for an extension on the whole
space. Consequently, global mass is preserved, but the restriction of the
mollified extension to $S$ need not remain normalized on $S$.

Let $\eta\in C_c^\infty(\mathbb R^d)$ satisfy
$$
\eta\ge0,
\qquad
\int_{\mathbb R^d}\eta(x)\,dx=1,
$$
and set
$$
\eta_\varepsilon(x):=\varepsilon^{-d}\eta(x/\varepsilon),
\qquad
h_\varepsilon:=h*\eta_\varepsilon.
$$

\begin{theorem}[Mollification of a finite mixed-derivative measure]
\label{thm:appendix_mollification_mixed_measure}
Let $h\in L^1(\mathbb R^d)$ and assume that $D_Ih$ is a finite signed Radon
measure. Then:

\begin{enumerate}
\item[\emph{(i)}]
$h_\varepsilon\in C^\infty(\mathbb R^d)$ and
$$
h_\varepsilon\to h
\qquad
\text{in }L^1(\mathbb R^d).
$$

\item[\emph{(ii)}]
\begin{equation}
\label{eq:appendix_mollification_commutes_mixed}
\partial_Ih_\varepsilon=(D_Ih)*\eta_\varepsilon,
\end{equation}
and
\begin{equation}
\label{eq:appendix_mollification_mixed_contraction}
\int_{\mathbb R^d}|\partial_Ih_\varepsilon(x)|\,dx
\le
|D_Ih|(\mathbb R^d).
\end{equation}

\item[\emph{(iii)}] If $h\ge0$, then $h_\varepsilon\ge0$ and
$$
\int_{\mathbb R^d}h_\varepsilon
=
\int_{\mathbb R^d}h.
$$
\end{enumerate}
\end{theorem}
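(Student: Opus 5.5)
The plan is to prove the three items in order, using only standard facts about convolution with an approximate identity and the defining duality of the distributional mixed derivative from Definition~\ref{def:distributional_mixed_derivative_main}.

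For \emph{(i)}, smoothness of $h_\varepsilon=h*\eta_\varepsilon$ follows by differentiating under the integral: $\partial^\alpha h_\varepsilon=h*\partial^\alpha\eta_\varepsilon$, which is justified because $h\in L^1$ and $\partial^\alpha\eta_\varepsilon$ is bounded and compactly supported. The convergence $h_\varepsilon\to h$ in $L^1(\mathbb R^d)$ is the usual approximate-identity argument. One writes
\[
h_\varepsilon-h=\int\eta(y)\bigl(h(\cdot-\varepsilon y)-h\bigr)\,dy,
\]
applies Minkowski's integral inequality, and then uses continuity of translation in $L^1$ together with dominated convergence.

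For \emph{(ii)}, the identity \eqref{eq:appendix_mollification_commutes_mixed} is checked pointwise. Fix $x$ and put $\zeta(y):=\eta_\varepsilon(x-y)\in C_c^\infty(\mathbb R^d)$. Differentiating under the integral gives
\[
\partial_Ih_\varepsilon(x)=\int h(y)\,\partial_{x,I}\eta_\varepsilon(x-y)\,dy.
\]
Since each $x$-derivative equals minus the corresponding $y$-derivative, $\partial_{x,I}\eta_\varepsilon(x-y)=(-1)^k\partial_{y,I}\zeta(y)$. By the definition of $D_Ih$,
\[
\partial_Ih_\varepsilon(x)=(-1)^k\int h\,\partial_I\zeta=\langle D_Ih,\zeta\rangle=\int\eta_\varepsilon(x-y)\,d(D_Ih)(y),
\]
and the last expression is exactly $((D_Ih)*\eta_\varepsilon)(x)$. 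The one delicate point in the whole argument is the sign bookkeeping $(-1)^k$ between $x$- and $y$-derivatives, and it cancels exactly as shown.

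The contraction \eqref{eq:appendix_mollification_mixed_contraction} then follows from Tonelli's theorem:
\[
\int\Bigl|\int\eta_\varepsilon(x-y)\,d(D_Ih)(y)\Bigr|\,dx\le\int\!\!\int\eta_\varepsilon(x-y)\,dx\,d|D_Ih|(y)=|D_Ih|(\mathbb R^d),
\]
using $\eta_\varepsilon\ge0$ and $\int\eta_\varepsilon=1$.

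For \emph{(iii)}, nonnegativity of $h_\varepsilon$ is immediate from $h\ge0$ and $\eta_\varepsilon\ge0$. Conservation of mass follows from Fubini's theorem: $\int h_\varepsilon=\int h\int\eta_\varepsilon=\int h$.

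None of these steps is a genuine obstacle. The only care required is the justification of differentiation under the integral and the sign convention in (ii).
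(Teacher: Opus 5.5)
Your proposal is correct and follows the paper's own proof. The paper likewise cites standard approximate-identity facts for (i), uses the pointwise representation $\partial_I h_\varepsilon(x)=\int\eta_\varepsilon(x-y)\,d(D_Ih)(y)$ together with Tonelli and $\int\eta_\varepsilon=1$ for (ii), and uses Fubini for (iii). The only difference is that you write out the duality and sign computation with $\zeta(y)=\eta_\varepsilon(x-y)$, which the paper compresses into the phrase ``convolution commutes with distributional differentiation.''
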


\begin{proof}
Smoothness and $L^1$ convergence are standard approximate-identity facts.
Convolution commutes with distributional differentiation, which proves
\eqref{eq:appendix_mollification_commutes_mixed}. Pointwise,
$$
\partial_Ih_\varepsilon(x)
=
\int_{\mathbb R^d}\eta_\varepsilon(x-y)\,d(D_Ih)(y),
$$
so
$$
|\partial_Ih_\varepsilon(x)|
\le
\int_{\mathbb R^d}\eta_\varepsilon(x-y)\,d|D_Ih|(y).
$$
Fubini--Tonelli and $\int\eta_\varepsilon=1$ give the contraction. Positivity
and preservation of mass follow in the same way.
\end{proof}

\begin{example}[A step density and its global mollification]
\label{ex:appendix_step_density_mollification}
Let $S=[0,1]$ and consider the probability density
$$
f(t):=2\mathbf 1_{[1/2,1]}(t),
\qquad t\in S.
$$
Take its zero extension
$$
\bar f(t):=2\mathbf 1_{[1/2,1]}(t),
\qquad t\in\mathbb R.
$$
Then
$$
D\bar f=2\delta_{1/2}-2\delta_1,
\qquad
|D\bar f|(\mathbb R)=4,
$$
whereas the relative variation inside $S$ is only
$$
|Df|(S)=2.
$$
For $\bar f_\varepsilon=\bar f*\eta_\varepsilon$, one has
$$
\bar f_\varepsilon\in C^\infty(\mathbb R),
\qquad
\bar f_\varepsilon\ge0,
\qquad
\int_{\mathbb R}\bar f_\varepsilon=1,
$$
$$
\bar f_\varepsilon\to\bar f
\quad\text{in }L^1(\mathbb R),
\qquad
\int_{\mathbb R}|\bar f_\varepsilon'(t)|\,dt\le4.
$$
If the mollifier is positive on a neighborhood of the origin, then for every
sufficiently small $\varepsilon>0$ some mass is transported across the boundary
point $t=1$. Hence
$$
\int_0^1\bar f_\varepsilon(t)\,dt<1,
$$
even though global mass is preserved. This is the simplest example showing why
the proof of Theorem~\ref{thm:appendix_distributional_extension_bound} uses
the analytic scope of
Theorem~\ref{thm:k_direction_mixed_derivative_bound}; see
Remark~\ref{rem:analytic_scope_mixed_derivative}. That estimate does not
require the mollified restriction to be a probability density on $S$.
\end{example}

\begin{figure}[H]
\centering
\begin{subfigure}[t]{0.49\linewidth}
\centering
\begin{tikzpicture}
\begin{axis}[
  width=0.94\linewidth,
  height=5.0cm,
  axis lines=left,
  xmin=-0.18,
  xmax=1.20,
  ymin=-0.10,
  ymax=2.45,
  xtick={0,0.5,1},
  xticklabels={$0$,$1/2$,$1$},
  ytick={0,1,2},
  xlabel={$t$},
  ylabel={density},
  clip=false,
  legend style={draw=none,at={(0.03,0.97)},anchor=north west,font=\small}
]
\addplot[very thick,const plot] coordinates {
  (-0.18,0) (0.5,0) (0.5,2) (1,2) (1,0) (1.20,0)
};
\addlegendentry{$\bar f=2\mathbf 1_{[1/2,1]}$}
\addplot[thick,dashed,smooth] coordinates {
  (-0.18,0) (0.36,0) (0.43,0.10) (0.47,0.48) (0.50,1.00)
  (0.53,1.52) (0.57,1.90) (0.64,2.00) (0.86,2.00)
  (0.93,1.90) (0.97,1.52) (1.00,1.00) (1.03,0.48)
  (1.07,0.10) (1.14,0)
};
\addlegendentry{schematic $\bar f_\varepsilon$}
\addplot[densely dotted] coordinates {(0,-0.10) (0,2.25)};
\addplot[densely dotted] coordinates {(1,-0.10) (1,2.25)};
\node[anchor=south,align=center] at (axis cs:1.08,0.25)
{mass outside\\$S=[0,1]$};
\draw[->] (axis cs:1.08,0.22) -- (axis cs:1.035,0.48);
\end{axis}
\end{tikzpicture}
\caption{Global smoothing and boundary spillover.}
\label{fig:appendix_mollification_step_density_profile}
\end{subfigure}
\hfill
\begin{subfigure}[t]{0.49\linewidth}
\centering
\begin{tikzpicture}
\begin{axis}[
  width=0.94\linewidth,
  height=5.0cm,
  axis lines=middle,
  xmin=0.30,
  xmax=1.14,
  ymin=-2.65,
  ymax=2.65,
  xtick={0.5,1},
  xticklabels={$1/2$,$1$},
  ytick={-2,0,2},
  xlabel={$t$},
  clip=false
]
\draw[->,very thick] (axis cs:0.5,0) -- (axis cs:0.5,2.15);
\draw[->,very thick] (axis cs:1,0) -- (axis cs:1,-2.15);
\node[anchor=south] at (axis cs:0.5,2.15) {$2\delta_{1/2}$};
\node[anchor=north] at (axis cs:1,-2.15) {$-2\delta_1$};
\addplot[thick,dashed,smooth] coordinates {
  (0.39,0) (0.43,0.10) (0.46,0.70) (0.50,1.75)
  (0.54,0.70) (0.57,0.10) (0.61,0)
};
\addplot[thick,dashed,smooth] coordinates {
  (0.89,0) (0.93,-0.10) (0.96,-0.70) (1.00,-1.75)
  (1.04,-0.70) (1.07,-0.10) (1.11,0)
};
\node[align=center] at (axis cs:0.75,1.48)
{$(D\bar f)*\eta_\varepsilon$};
\end{axis}
\end{tikzpicture}
\caption{A singular derivative becomes a smooth signed density.}
\label{fig:appendix_mollification_step_density_derivative}
\end{subfigure}
\caption{One-dimensional illustration of Theorem~\ref{thm:appendix_mollification_mixed_measure}. The solid step in panel~\textup{(a)} is the whole-space extension $\bar f=2\mathbf 1_{[1/2,1]}$. Convolution rounds both jumps and preserves the total mass on $\mathbb R$, but part of the mollified mass lies outside $S=[0,1]$. Panel~\textup{(b)} shows the corresponding distributional identity $D\bar f=2\delta_{1/2}-2\delta_1$ and its convolution with the mollifier. The theorem uses only the contraction $\|\bar f_\varepsilon'\|_{L^1(\mathbb R)}\le |D\bar f|(\mathbb R)=4$; it does not require equality, disjoint positive and negative bumps, or normalization of $\bar f_\varepsilon|_S$.}
\label{fig:appendix_mollification_step_density}
\end{figure}

Figure~\ref{fig:appendix_mollification_step_density} separates the three
operations that are sometimes conflated. The extension $f\mapsto\bar f$ may
create boundary derivative mass. The convolution $\bar f\mapsto\bar
f_\varepsilon$ smooths this measure and does not increase its total variation.
The restriction $\bar f_\varepsilon\mapsto\bar f_\varepsilon|_S$ is then used
only inside the analytic box estimate. One notices that the contraction is a
global statement on $\mathbb R$, whereas the expectation is taken on $S$.
This is why the admissible-extension cost in
$V_I^{\mathrm{ext}}(f;S)$ can be strictly larger than the internal mixed
variation of $f$ on $S$.

\begin{proposition}[Tensor-product mollification]
\label{prop:appendix_tensor_product_mollification}
Let
$$
\eta_\varepsilon^\otimes(x)
:=
\prod_{j=1}^d\eta_\varepsilon^{(j)}(x_j)
$$
be a tensor product of one-dimensional mollifiers. If
$h(x)=\prod_{j=1}^dh_j(x_j)$ with $h_j\in L^1(\mathbb R)$, then
\begin{equation}
\label{eq:appendix_tensor_mollifier_factorization}
h*\eta_\varepsilon^\otimes
=
\prod_{j=1}^d(h_j*\eta_\varepsilon^{(j)}).
\end{equation}
\end{proposition}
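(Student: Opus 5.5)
The plan is to compute the convolution directly from its definition as an integral over $\mathbb R^d$ and factor the integrand with Tonelli--Fubini. For $x\in\mathbb R^d$ we have
$$
(h*\eta_\varepsilon^\otimes)(x)
=
\int_{\mathbb R^d}\prod_{j=1}^d h_j(x_j-y_j)\,\eta_\varepsilon^{(j)}(y_j)\,dy .
$$
The integrand is a product of functions of the separate variables $y_1,\ldots,y_d$. Once it is known to be absolutely integrable, Fubini's theorem splits the integral into the product of the one-dimensional integrals $\int_{\mathbb R}h_j(x_j-y_j)\eta_\varepsilon^{(j)}(y_j)\,dy_j=(h_j*\eta_\varepsilon^{(j)})(x_j)$. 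This gives \eqref{eq:appendix_tensor_mollifier_factorization} pointwise.

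First I will justify integrability. Each $\eta_\varepsilon^{(j)}$ is a one-dimensional mollifier, so it is bounded, nonnegative, and compactly supported, while $h_j\in L^1(\mathbb R)$. Hence for every $x_j$ the function $y_j\mapsto|h_j(x_j-y_j)|\eta_\varepsilon^{(j)}(y_j)$ has integral at most $\|\eta_\varepsilon^{(j)}\|_\infty\|h_j\|_{L^1}<\infty$. Tonelli's theorem applied to the nonnegative product then shows that
$$
\int_{\mathbb R^d}\prod_j|h_j(x_j-y_j)|\,\eta_\varepsilon^{(j)}(y_j)\,dy
=
\prod_j\bigl(|h_j|*\eta_\varepsilon^{(j)}\bigr)(x_j)<\infty
$$
for every $x$. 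This is exactly the absolute integrability Fubini needs, and it holds at every point, not just almost everywhere.

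I will also note two consistency checks. First, $h=\prod_jh_j$ lies in $L^1(\mathbb R^d)$ with norm $\prod_j\|h_j\|_{L^1}$, again by Tonelli. Second, $\eta_\varepsilon^\otimes$ is a genuine mollifier on $\mathbb R^d$: it is nonnegative, smooth, compactly supported, and has unit mass. So the left-hand side is the same object as in Theorem~\ref{thm:appendix_mollification_mixed_measure}, which means the consequences of that theorem, such as $\partial_I h_\varepsilon=(D_Ih)*\eta_\varepsilon^\otimes$, may be combined with the factorization.

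The only point needing care is this pointwise integrability. I do not expect a real obstacle, because the sup-norm bound on the mollifiers gives finiteness everywhere.
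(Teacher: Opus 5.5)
Your proposal is correct and follows the paper's route: the paper's proof simply applies Fubini's theorem to the product integrand in the convolution. Your Tonelli argument, using boundedness of the one-dimensional mollifiers and $h_j\in L^1(\mathbb R)$ to get absolute integrability at every $x$, just spells out the hypothesis that the paper's one-line proof leaves implicit.
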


\begin{proof}
Apply Fubini's theorem to the product integrand in the convolution.
\end{proof}

\begin{example}[A two-dimensional tensor-product mollification]
\label{ex:appendix_tensor_product_mollification}
Let
$$
g(t):=2\mathbf 1_{[1/2,1]}(t),
\qquad
\bar f(x_1,x_2):=g(x_1)g(x_2).
$$
Then $\bar f$ is a probability density on $\mathbb R^2$, supported on
$[1/2,1]^2$, and
$$
D_{\{1,2\}}\bar f=Dg\otimes Dg.
$$
Since
$$
Dg=2\delta_{1/2}-2\delta_1,
$$
one obtains
$$
D_{\{1,2\}}\bar f
=
4\bigl(
\delta_{(1/2,1/2)}
-
\delta_{(1/2,1)}
-
\delta_{(1,1/2)}
+
\delta_{(1,1)}
\bigr),
$$
and therefore
$$
|D_{\{1,2\}}\bar f|(\mathbb R^2)=16.
$$
For a tensor-product mollifier,
$$
\bar f_\varepsilon
=
\bar f*\eta_\varepsilon^\otimes
=
(g*\eta_\varepsilon^{(1)})(x_1)
(g*\eta_\varepsilon^{(2)})(x_2),
$$
while
$$
\partial_1\partial_2\bar f_\varepsilon
=
(D_{\{1,2\}}\bar f)*\eta_\varepsilon^\otimes,
\qquad
\int_{\mathbb R^2}
|\partial_1\partial_2\bar f_\varepsilon|
\le16.
$$
\end{example}

\begin{figure}[H]
\centering

\begin{subfigure}[t]{0.30\textwidth}
\centering
\begin{tikzpicture}[scale=2.55,>=Latex]

\draw[thick] (0,0) rectangle (1,1);
\draw[->] (-0.04,0) -- (1.09,0) node[right] {$x_1$};
\draw[->] (0,-0.04) -- (0,1.09) node[above] {$x_2$};

\fill[gray!42] (0.5,0.5) rectangle (1,1);
\draw[thick] (0.5,0.5) rectangle (1,1);

\draw[dashed] (0.5,0) -- (0.5,1);
\draw[dashed] (0,0.5) -- (1,0.5);

\node[anchor=north,font=\scriptsize] at (0,0) {$0$};
\node[anchor=north,font=\scriptsize] at (0.5,0) {$1/2$};
\node[anchor=north,font=\scriptsize] at (1,0) {$1$};

\node[anchor=east,font=\scriptsize] at (0,0) {$0$};
\node[anchor=east,font=\scriptsize] at (0,0.5) {$1/2$};
\node[anchor=east,font=\scriptsize] at (0,1) {$1$};

\node[font=\small] at (0.75,0.75) {$4$};

\end{tikzpicture}
\caption{$\bar f=g\otimes g$.}
\label{fig:appendix_tensor_mollification_original}
\end{subfigure}
\hfill
\begin{subfigure}[t]{0.30\textwidth}
\centering
\begin{tikzpicture}[scale=2.55,>=Latex]

\draw[thick] (0,0) rectangle (1,1);
\draw[->] (-0.04,0) -- (1.09,0) node[right] {$x_1$};
\draw[->] (0,-0.04) -- (0,1.09) node[above] {$x_2$};

\draw[dashed] (0.5,0) -- (0.5,1);
\draw[dashed] (0,0.5) -- (1,0.5);

\fill[gray!10,rounded corners=7pt]
  (0.38,0.38) rectangle (1.04,1.04);
\fill[gray!18,rounded corners=7pt]
  (0.44,0.44) rectangle (1.02,1.02);
\fill[gray!27,rounded corners=6pt]
  (0.50,0.50) rectangle (1.00,1.00);
\fill[gray!38,rounded corners=5pt]
  (0.58,0.58) rectangle (0.98,0.98);
\fill[gray!50,rounded corners=4pt]
  (0.67,0.67) rectangle (0.96,0.96);

\draw[gray!70,rounded corners=7pt]
  (0.38,0.38) rectangle (1.04,1.04);
\draw[gray!70,rounded corners=6pt]
  (0.50,0.50) rectangle (1.00,1.00);
\draw[gray!70,rounded corners=4pt]
  (0.67,0.67) rectangle (0.96,0.96);

\node[anchor=north,font=\scriptsize] at (0,0) {$0$};
\node[anchor=north,font=\scriptsize] at (0.5,0) {$1/2$};
\node[anchor=north,font=\scriptsize] at (1,0) {$1$};

\node[anchor=east,font=\scriptsize] at (0,0) {$0$};
\node[anchor=east,font=\scriptsize] at (0,0.5) {$1/2$};
\node[anchor=east,font=\scriptsize] at (0,1) {$1$};

\end{tikzpicture}
\caption{$\bar f_\varepsilon
=\bar f*\eta_\varepsilon^\otimes$.}
\label{fig:appendix_tensor_mollification_smoothed}
\end{subfigure}
\hfill
\begin{subfigure}[t]{0.30\textwidth}
\centering
\begin{tikzpicture}[scale=2.55,>=Latex]

\draw[thick] (0,0) rectangle (1,1);
\draw[->] (-0.04,0) -- (1.09,0) node[right] {$x_1$};
\draw[->] (0,-0.04) -- (0,1.09) node[above] {$x_2$};

\draw[dashed] (0.5,0) -- (0.5,1);
\draw[dashed] (0,0.5) -- (1,0.5);

\draw[thick,fill=white] (0.5,0.5) circle (0.072);
\node[font=\small] at (0.5,0.5) {$+$};

\draw[thick,fill=white] (1,1) circle (0.072);
\node[font=\small] at (1,1) {$+$};

\draw[thick,fill=white] (0.5,1) circle (0.072);
\node[font=\small] at (0.5,1) {$-$};

\draw[thick,fill=white] (1,0.5) circle (0.072);
\node[font=\small] at (1,0.5) {$-$};

\node[anchor=north,font=\scriptsize] at (0,0) {$0$};
\node[anchor=north,font=\scriptsize] at (0.5,0) {$1/2$};
\node[anchor=north,font=\scriptsize] at (1,0) {$1$};

\node[anchor=east,font=\scriptsize] at (0,0) {$0$};
\node[anchor=east,font=\scriptsize] at (0,0.5) {$1/2$};
\node[anchor=east,font=\scriptsize] at (0,1) {$1$};

\end{tikzpicture}
\caption{$D_{\{1,2\}}\bar f$.}
\label{fig:appendix_tensor_mollification_mixed_derivative}
\end{subfigure}

\caption{
Tensor-product mollification geometry for
$\bar f=g\otimes g$, where
$g=2\mathbf 1_{[1/2,1]}$.
Panel~\textup{(a)} shows the nonsmooth product density, which equals $4$ on
$[1/2,1]^2$ and vanishes elsewhere. Panel~\textup{(b)} schematically shows its
mollification by a tensor-product kernel. Panel~\textup{(c)} shows the signed
atomic structure
$\displaystyle
D_{\{1,2\}}\bar f
=
4\bigl(
\delta_{(1/2,1/2)}
-\delta_{(1/2,1)}
-\delta_{(1,1/2)}
+\delta_{(1,1)}
\bigr).
$
After convolution, these four atoms become smooth signed bumps and
$\displaystyle
\partial_1\partial_2\bar f_\varepsilon
=
(D_{\{1,2\}}\bar f)*\eta_\varepsilon^\otimes,
\qquad
\|\partial_1\partial_2\bar f_\varepsilon\|_{L^1(\mathbb R^2)}
\le
|D_{\{1,2\}}\bar f|(\mathbb R^2)
=
16.
$
}
\label{fig:appendix_tensor_mollification_geometry}
\end{figure}

The figure separates the density, its smooth approximation, and the signed
mixed-derivative measure. The third panel represents the atoms of
$D_{\{1,2\}}\bar f$ before convolution, not the spatial profile of
$\partial_1\partial_2\bar f_\varepsilon$. The latter consists of four smooth
signed bumps centered at the displayed locations and may exhibit overlap and
cancelation.

Figure~\ref{fig:appendix_tensor_mollification_geometry} explains why the
product formulas and the mollification argument fit together. The nonsmooth
mixed derivative is a tensor product of signed one-dimensional jump measures.
A tensor-product mollifier smooths each factor separately, and therefore the
checkerboard sign pattern survives at the level of the smoothed mixed
derivative. The total variation estimate is nevertheless global and
sign-insensitive: cancellations may occur pointwise after the four blobs begin
to overlap, but convolution cannot increase the $L^1$ mass beyond the total
variation of the original measure. This is precisely the property required to
pass from the smooth mixed-derivative estimate to the distributional extension.

\begin{theorem}[Distributional extension of the mixed-derivative bound]
\label{thm:appendix_distributional_extension_bound}
Let $f\in L^1(S)$ be a probability density and let
$\varphi\in L^\infty(S)$ be centered on $I$. Assume that there exists
$\bar f\in L^1(\mathbb R^d)$ such that
$$
\bar f=f\quad\text{a.e. on }S,
$$
and $D_I\bar f$ is a finite signed Radon measure. Then
\begin{equation}
\label{eq:appendix_distributional_extension_bound}
\left|\int_S\varphi(\omega)f(\omega)\,d\omega\right|
\le
2^{-k}\Lambda_{\mathrm{mix}}^I(\varphi)
|D_I\bar f|(\mathbb R^d).
\end{equation}
\end{theorem}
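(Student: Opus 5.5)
The proof follows the route announced before Example~\ref{ex:appendix_step_density_mollification}: mollify a whole-space extension, apply the smooth analytic estimate on $S$, and pass to the limit. Fix an admissible extension $\bar f$ and a mollifier $\eta$, and set $\bar f_\varepsilon:=\bar f*\eta_\varepsilon$. By Theorem~\ref{thm:appendix_mollification_mixed_measure}, $\bar f_\varepsilon\in C^\infty(\mathbb R^d)$ and $\bar f_\varepsilon\to\bar f$ in $L^1(\mathbb R^d)$. Part (ii) of the same theorem gives $\partial_I\bar f_\varepsilon=(D_I\bar f)*\eta_\varepsilon$ together with the contraction
$$
\int_{\mathbb R^d}|\partial_I\bar f_\varepsilon|\le |D_I\bar f|(\mathbb R^d).
$$
The restriction $g_\varepsilon:=\bar f_\varepsilon|_S$ lies in $C^k(\overline S)$ because it is the restriction of a smooth function on a neighborhood of $S$.

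Next, apply Theorem~\ref{thm:k_direction_mixed_derivative_bound} to the pair $(\varphi,g_\varepsilon)$. By Remark~\ref{rem:analytic_scope_mixed_derivative}, that theorem needs no sign or normalization hypothesis on the density, which matters because $g_\varepsilon$ need not have mass one on $S$. This yields
$$
\left|\int_S\varphi g_\varepsilon\right|\le 2^{-k}\Lambda_{\mathrm{mix}}^I(\varphi)\int_S|\partial_I\bar f_\varepsilon|\le 2^{-k}\Lambda_{\mathrm{mix}}^I(\varphi)\,|D_I\bar f|(\mathbb R^d),
$$
where the last inequality is the global contraction, since integrating over $S$ only drops mass.

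Finally, let $\varepsilon\downarrow0$. Since $\varphi\in L^\infty(S)$,
$$
\left|\int_S\varphi(g_\varepsilon-f)\right|\le\|\varphi\|_{L^\infty(S)}\|\bar f_\varepsilon-\bar f\|_{L^1(S)}\to0,
$$
using $\bar f=f$ a.e. on $S$. Therefore $\int_S\varphi g_\varepsilon\to\int_S\varphi f$. The right-hand side of the bound does not depend on $\varepsilon$, so the inequality passes to the limit and gives \eqref{eq:appendix_distributional_extension_bound}.

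The main point needing care is the middle step. The smooth estimate must be invoked purely analytically, because mollification transports mass across $\partial S$ and $g_\varepsilon$ is not a probability density on $S$. The commutation identity $\partial_I(\bar f*\eta_\varepsilon)=(D_I\bar f)*\eta_\varepsilon$ is standard: test against $\zeta\in C_c^\infty$ and move the convolution onto $\zeta$.
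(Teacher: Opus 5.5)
Your proposal is correct and follows the paper's own proof essentially step for step. Like the paper, you mollify the whole-space extension, apply the analytic smooth estimate of Theorem~\ref{thm:k_direction_mixed_derivative_bound} to the restriction $\bar f_\varepsilon|_S$ (which need not be normalized), bound $\int_S|\partial_I\bar f_\varepsilon|$ by the global contraction, and pass to the limit using $L^1$ convergence; you simply spell out the Hölder estimate and the $C^k(\overline S)$ regularity that the paper leaves implicit.
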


\begin{proof}
Let $\bar f_\varepsilon=\bar f*\eta_\varepsilon$. By
Theorem~\ref{thm:appendix_mollification_mixed_measure},
$$
\int_{\mathbb R^d}|\partial_I\bar f_\varepsilon|
\le
|D_I\bar f|(\mathbb R^d).
$$
Applying Theorem~\ref{thm:k_direction_mixed_derivative_bound} on $S$
with $f$ replaced by $\bar f_\varepsilon|_S$ gives
$$
\left|\int_S\varphi\bar f_\varepsilon\right|
\le
2^{-k}\Lambda_{\mathrm{mix}}^I(\varphi)
|D_I\bar f|(\mathbb R^d).
$$
The restriction $\bar f_\varepsilon|_S$ need not be a probability density;
this is immaterial because the smooth estimate is analytic. Finally,
$\bar f_\varepsilon\to\bar f$ in $L^1$, and therefore
$$
\int_S\varphi\bar f_\varepsilon
\longrightarrow
\int_S\varphi f.
$$
\end{proof}

We use the extension mixed-variation seminorm $V_I^{\mathrm{ext}}(f;S)$ from Definition~\ref{def:main_extension_mixed_variation}.

\begin{proposition}[Basic properties and boundary-extension cost]
\label{prop:appendix_extension_mixed_variation_properties}
The map $V_I^{\mathrm{ext}}(\cdot;S)$ is an extended seminorm. Moreover:

\begin{enumerate}
\item[\emph{(i)}] If the internal derivative $D_If$ is a finite measure on
$\operatorname{int}S$, then
\begin{equation}
\label{eq:appendix_internal_mass_lower_bound}
|D_If|(\operatorname{int}S)
\le
V_I^{\mathrm{ext}}(f;S).
\end{equation}
For smooth $f$,
$$
\int_S|\partial_If(\omega)|\,d\omega
\le
V_I^{\mathrm{ext}}(f;S).
$$

\item[\emph{(ii)}] If $f\in C_c^k(\operatorname{int}S)$, then
\begin{equation}
\label{eq:appendix_compact_support_extension_equality}
V_I^{\mathrm{ext}}(f;S)
=
\int_S|\partial_If(\omega)|\,d\omega.
\end{equation}

\item[\emph{(iii)}] For $J=[a,b]$ and $f\equiv1$ on $J$,
\begin{equation}
\label{eq:appendix_constant_extension_variation}
V_{\{1\}}^{\mathrm{ext}}(1;J)=2,
\end{equation}
although the internal variation is zero.
\end{enumerate}
\end{proposition}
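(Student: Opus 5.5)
The plan is to prove the seminorm property first, then (i) by restriction, (ii) from (i) plus the zero extension, and (iii) by an explicit 1D computation.

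\emph{Seminorm property.} Let $\bar f,\bar g$ be admissible extensions of $f,g$. Then $\bar f+\bar g$ is admissible for $f+g$, and $c\bar f$ is admissible for $cf$. Since $D_I$ is linear on distributions, $|D_I(\bar f+\bar g)|\le|D_I\bar f|+|D_I\bar g|$ as measures. Taking infima gives subadditivity, and homogeneity follows the same way; for $c=0$ the zero extension costs nothing. The conventions $\inf\emptyset=+\infty$ and $+\infty$ absorbing make this an extended seminorm.

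\emph{Part (i).} Take any admissible $\bar f$. For $\zeta\in C_c^\infty(\operatorname{int}S)$, regarded as a test function on $\mathbb R^d$ via zero extension, we have
\[
\langle D_I f,\zeta\rangle=(-1)^k\int_S f\,\partial_I\zeta=(-1)^k\int_{\mathbb R^d}\bar f\,\partial_I\zeta=\langle D_I\bar f,\zeta\rangle .
\]
So $D_If$ is the restriction of the measure $D_I\bar f$ to $\operatorname{int}S$. By duality with $\|\zeta\|_\infty\le1$, this gives $|D_If|(\operatorname{int}S)=|D_I\bar f|(\operatorname{int}S)\le|D_I\bar f|(\mathbb R^d)$. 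Taking the infimum over $\bar f$ proves (i). The smooth case is immediate since $D_If=\partial_If\,d\omega$.

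\emph{Part (ii).} Let $\bar f$ be the zero extension of $f$. Because $\operatorname{supp}f$ is compact in $\operatorname{int}S$, $\bar f\in C_c^k(\mathbb R^d)$. Classical integration by parts then gives $D_I\bar f=\partial_I\bar f\,dx$, supported in $\operatorname{supp}f$, so $|D_I\bar f|(\mathbb R^d)=\int_S|\partial_If|$. Hence $V_I^{\mathrm{ext}}\le\int_S|\partial_If|$, and (i) supplies the reverse inequality.

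\emph{Part (iii).} This is the step needing a genuine argument, namely the lower bound. The upper bound comes from $\bar f=\mathbf 1_{[a,b]}$, which satisfies $D\bar f=\delta_a-\delta_b$ with mass $2$.

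For the lower bound, let $\bar f\in L^1(\mathbb R)$ with $D\bar f$ a finite measure and $\bar f=1$ a.e. on $[a,b]$. Then $\bar f$ has a BV representative $\tilde f(x)=c+D\bar f((-\infty,x))$. Since $\bar f\in L^1$, $\tilde f(x)\to0$ as $x\to\pm\infty$; the limits exist by monotone convergence of the measure, and they must vanish by integrability. Pick points $x_1<a$ far out, $y\in(a,b)$ a Lebesgue point where $\tilde f(y)=1$, and $x_2>b$ far out. Then
\[
|D\bar f|(\mathbb R)\ge|D\bar f((x_1,y))|+|D\bar f((y,x_2))|\approx|1-0|+|0-1|=2 .
\]
Letting $x_1\to-\infty$ and $x_2\to\infty$ gives $|D\bar f|(\mathbb R)\ge2$, so $V^{\mathrm{ext}}_{\{1\}}(1;J)=2$.

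The internal variation is zero because $Df=0$ on $(a,b)$ for a constant $f$. The only care needed in (iii) is justifying the vanishing limits at $\pm\infty$ from $L^1$ integrability, which is the standard one-dimensional good-representative fact.
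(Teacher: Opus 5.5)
Your proposal is correct and follows the paper's proof essentially step for step: adding and scaling extensions gives the seminorm property, restricting $D_I\bar f$ to $\operatorname{int}S$ gives (i), and the zero extension combined with (i) gives (ii). In (iii) you use the same indicator upper bound and the same rise-from-zero-and-return lower bound; your left-continuous good representative and Lebesgue point $y\in(a,b)$, which guarantees that $D\bar f$ has no atom at $y$, simply make explicit what the paper states in one line.
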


\begin{proof}
Homogeneity follows by scaling extensions. Subadditivity follows by adding
extensions and using the triangle inequality for total variation. If $\bar f$
extends $f$, then the restriction of $D_I\bar f$ to
$\operatorname{int}S$ is $D_If$, which proves
\eqref{eq:appendix_internal_mass_lower_bound}.

If $f\in C_c^k(\operatorname{int}S)$, its zero extension belongs to
$C_c^k(\mathbb R^d)$ and satisfies
$$
D_I\bar f=\partial_If(x)\,dx.
$$
This gives the upper bound in
\eqref{eq:appendix_compact_support_extension_equality}, while part \emph{(i)}
gives the reverse inequality.

For the constant function, the zero extension has derivative
$\delta_a-\delta_b$, so the extension cost is at most two. Conversely, every
$L^1(\mathbb R)$ extension with finite derivative belongs to
$BV(\mathbb R)\cap L^1(\mathbb R)$ and has limits zero at both infinities. It
must spend at least one unit of variation to rise from zero to one and at least
one unit to return to zero. Thus the cost is at least two.
\end{proof}

\begin{proposition}[A finite extension cost for product $BV$ densities]
\label{prop:appendix_product_bv_extension_cost}
Let
$$
f(\omega)=\prod_{j=1}^df_j(\omega_j),
$$
where every $f_j\in BV([a_j,b_j])$ is a nonnegative probability density. Let
$f_j(a_j+)$ and $f_j(b_j-)$ denote the one-sided traces. Then
\begin{equation}
\label{eq:appendix_product_bv_extension_cost}
V_I^{\mathrm{ext}}(f;S)
\le
\prod_{i\in I}
\left(
|Df_i|(J_i)+f_i(a_i+)+f_i(b_i-)
\right).
\end{equation}
If
$$
f_i(a_i+)=f_i(b_i-)=0
\qquad
\text{for every }i\in I,
$$
then
\begin{equation}
\label{eq:appendix_product_bv_extension_equality}
V_I^{\mathrm{ext}}(f;S)
=
\prod_{i\in I}|Df_i|(J_i).
\end{equation}
\end{proposition}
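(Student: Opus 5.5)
The plan is to exhibit one explicit admissible extension, namely the tensor product of zero extensions, and compute its mixed-derivative mass with the product formula of Proposition~\ref{prop:appendix_product_distributional_derivatives}. For each $j$, let $\bar f_j:=f_j\mathbf 1_{J_j}$ on $\mathbb R$, and set $\bar f(\omega):=\prod_{j}\bar f_j(\omega_j)$. Then $\bar f\in L^1(\mathbb R^d)$, since its norm is the product of the marginal norms, which equal one, and $\bar f=f$ a.e. on $S$. A function $f_j\in BV([a_j,b_j])$ with one-sided traces has zero extension in $BV(\mathbb R)$ with
\[
D\bar f_j=Df_j\llcorner(a_j,b_j)+f_j(a_j+)\,\delta_{a_j}-f_j(b_j-)\,\delta_{b_j}.
\]
I would verify this by one-dimensional $BV$ integration by parts against $C_c^\infty(\mathbb R)$ test functions, which produces exactly the boundary trace terms. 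Because $f_j\ge0$, the total variation is
\[
|D\bar f_j|(\mathbb R)=|Df_j|(J_j)+f_j(a_j+)+f_j(b_j-).
\]

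Next, repeat the Fubini argument of Proposition~\ref{prop:appendix_product_distributional_derivatives} on $\mathbb R^d$ in place of $S$. For $\zeta\in C_c^\infty(\mathbb R^d)$, integrate by parts successively in the coordinates of $I$, using the one-dimensional identity above on each slice. This gives
\[
D_I\bar f=\Bigl(\bigotimes_{i\in I}D\bar f_i\Bigr)\otimes\Bigl(\bigotimes_{j\notin I}\bar f_j\,dt_j\Bigr),
\]
which is a finite signed Radon measure. Since the total variation of a product measure is the product of the total variations, and each $\bar f_j\,dt_j$ for $j\notin I$ is a probability measure, we get
\[
|D_I\bar f|(\mathbb R^d)=\prod_{i\in I}|D\bar f_i|(\mathbb R).
\]
Taking the infimum in Definition~\ref{def:main_extension_mixed_variation} then yields \eqref{eq:appendix_product_bv_extension_cost}.

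For the equality case, vanishing traces make the upper bound equal to $\prod_{i\in I}|Df_i|(J_i)$. For the reverse inequality I would use part (i) of Proposition~\ref{prop:appendix_extension_mixed_variation_properties}. By the internal product formula \eqref{eq:appendix_product_mixed_derivative_mass}, the internal derivative $D_If$ is a finite measure on $\operatorname{int}S$ with mass $\prod_{i\in I}|Df_i|(J_i)$, and this mass is a lower bound for $V_I^{\mathrm{ext}}$.

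The main obstacle is the precise justification of the tensor-product identity on the whole space, particularly the trace term in the one-dimensional formula. One has to be careful that the relative variation $|Df_j|(J_j)$ counts no mass at the endpoints, so that the boundary atoms of $D\bar f_j$ are exactly the traces. I would handle this by using the good representative of $f_j$ on $(a_j,b_j)$, whose limits at $a_j$ and $b_j$ exist because $f_j\in BV$. The identity $|\mu_1\otimes\mu_2|=|\mu_1|\otimes|\mu_2|$ is taken as in the earlier proof.
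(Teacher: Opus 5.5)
Your proposal is correct and follows the paper's proof essentially step for step. The paper also uses the tensor product of zero extensions, the one-dimensional trace formula for $D\bar f_j$ (relative derivative plus two endpoint atoms on disjoint sets), and the whole-space product formula for $D_I\bar f$ with unit-mass marginals. In the vanishing-trace case it likewise obtains the reverse inequality from part (i) of Proposition~\ref{prop:appendix_extension_mixed_variation_properties} combined with Proposition~\ref{prop:appendix_product_distributional_derivatives}.
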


\begin{proof}
Let $\bar f_j$ be the zero extension of $f_j$ to $\mathbb R$. The standard
one-dimensional $BV$ extension formula \cite{AmbrosioFuscoPallara2000} gives
$$
D\bar f_j
=
\widetilde{Df_j}
+
f_j(a_j+)\delta_{a_j}
-
f_j(b_j-)\delta_{b_j},
$$
where $\widetilde{Df_j}$ denotes the extension by zero of the relative
derivative measure on $J_j$. Since the three measures are supported on
disjoint sets,
$$
|D\bar f_j|(\mathbb R)
=
|Df_j|(J_j)+f_j(a_j+)+f_j(b_j-).
$$
Define
$$
\bar f(x):=\prod_{j=1}^d\bar f_j(x_j).
$$
Then $\bar f\in L^1(\mathbb R^d)$, $\bar f=f$ a.e. on $S$, and the
whole-space product formula gives
$$
D_I\bar f
=
\left(\bigotimes_{i\in I}D\bar f_i\right)
\otimes
\left(\bigotimes_{j\notin I}\bar f_j(x_j)\,dx_j\right).
$$
Because every marginal has $L^1$ mass one,
$$
|D_I\bar f|(\mathbb R^d)
=
\prod_{i\in I}|D\bar f_i|(\mathbb R),
$$
which proves \eqref{eq:appendix_product_bv_extension_cost}. If the selected
traces vanish, this upper bound is
$\prod_{i\in I}|Df_i|(J_i)$. Proposition~\ref{prop:appendix_product_distributional_derivatives}
and the internal lower bound
\eqref{eq:appendix_internal_mass_lower_bound} give the reverse inequality.
\end{proof}

\begin{corollary}[Intrinsic nonsmooth mixed-derivative bound]
\label{cor:appendix_intrinsic_nonsmooth_mixed_bound}
If $f$ is a probability density, $\varphi\in L^\infty(S)$ is centered on $I$,
and $V_I^{\mathrm{ext}}(f;S)<\infty$, then
\begin{equation}
\label{eq:appendix_intrinsic_nonsmooth_mixed_bound}
|\mathbb E_f[\varphi]|
\le
2^{-k}\Lambda_{\mathrm{mix}}^I(\varphi)
V_I^{\mathrm{ext}}(f;S).
\end{equation}
\end{corollary}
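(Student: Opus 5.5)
The plan is to obtain the corollary by taking the infimum in Theorem~\ref{thm:appendix_distributional_extension_bound} over all admissible extensions. Finiteness of $V_I^{\mathrm{ext}}(f;S)$ means the admissible class in Definition~\ref{def:main_extension_mixed_variation} is nonempty. So for every $\eta>0$ I will fix an extension $\bar f\in L^1(\mathbb R^d)$ with $\bar f=f$ a.e. on $S$, $D_I\bar f\in\mathcal M(\mathbb R^d)$, and
$$
|D_I\bar f|(\mathbb R^d)\le V_I^{\mathrm{ext}}(f;S)+\eta .
$$

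Theorem~\ref{thm:appendix_distributional_extension_bound} applies to this $\bar f$, since $f$ is a probability density, $\varphi\in L^\infty(S)$ is centered on $I$, and $\bar f$ meets its extension hypotheses. It yields
$$
|\mathbb E_f[\varphi]|=\left|\int_S\varphi f\right|\le 2^{-k}\Lambda_{\mathrm{mix}}^I(\varphi)\bigl(V_I^{\mathrm{ext}}(f;S)+\eta\bigr).
$$
Here $\Lambda_{\mathrm{mix}}^I(\varphi)\le|S_I|\,\|\varphi\|_{L^\infty(S)}<\infty$, so the right-hand side is finite. Letting $\eta\downarrow0$ then gives \eqref{eq:appendix_intrinsic_nonsmooth_mixed_bound}.

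I do not expect a genuine obstacle, because all the analytic work (mollification, the contraction of $|D_I\bar f|$, and the fact that the smooth estimate does not need $\bar f_\varepsilon|_S$ to be normalized) is already contained in Theorem~\ref{thm:appendix_distributional_extension_bound}. The one point that needs care is that the infimum in the definition need not be attained. This is why I use near-optimal extensions and pass to the limit, rather than picking a minimizer.
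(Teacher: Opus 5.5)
Your proposal is correct and takes the same route as the paper: apply Theorem~\ref{thm:appendix_distributional_extension_bound} to an admissible extension and take the infimum over extensions. Your $\eta$-near-optimal extensions are only an explicit way of passing to that infimum, and are not strictly needed. Because the bound holds for every admissible extension, the fact that the infimum may not be attained causes no difficulty.
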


\begin{proof}
Apply Theorem~\ref{thm:appendix_distributional_extension_bound} to an
arbitrary admissible extension and take the infimum.
\end{proof}

\begin{remark}[Interpretation]
\label{rem:appendix_extension_seminorm_interpretation}
The quantity $V_I^{\mathrm{ext}}(f;S)$ is not classical nonsmooth Vitali
variation. It is an extension mixed-variation seminorm and includes the cost
of connecting the prescribed values on $S$ to an integrable whole-space
extension. If the density vanishes in a neighborhood of $\partial S$, the internal
quantity and the extension seminorm agree. If the density reaches the boundary,
the extension cost may be strictly larger.
\end{remark}

\subsection{Uniform nonsmooth residual form}
\label{subsec:appendix_uniform_nonsmooth_residual}

\begin{corollary}[Uniform nonsmooth residual form]
\label{cor:appendix_uniform_residual_nonsmooth}
Let $\{\varphi_x:x\in X\}\subset L^\infty(S)$ satisfy
$$
\sup_{x\in X}\Lambda_{\mathrm{mix}}^I(\mathcal P_I\varphi_x)
\le L_I,
\qquad
\sup_{x\in X}\|\mathcal R_I\varphi_x\|_{L^\infty(S)}
\le \delta_I.
$$
If $f$ is a probability density and
$V_I^{\mathrm{ext}}(f;S)<\infty$, then
\begin{equation}
\label{eq:appendix_uniform_residual_nonsmooth}
\sup_{x\in X}|\mathbb E_f[\varphi_x]|
\le
2^{-k}L_IV_I^{\mathrm{ext}}(f;S)+\delta_I.
\end{equation}
\end{corollary}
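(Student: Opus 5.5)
The plan is to argue pointwise in $x$ and then take the supremum, exactly as in the smooth uniform certificate of Corollary~\ref{cor:uniform_decision_quality_certificate}, but with the nonsmooth estimate in place of the smooth mixed-derivative one. Fix $x\in X$. Since $\varphi_x\in L^\infty(S)$, Theorem~\ref{thm:main_defect_adjusted_nonsmooth_mixed} applies directly with the fixed direction set $I$ and gives
\[
|\mathbb E_f[\varphi_x]|
\le
2^{-k}\Lambda_{\mathrm{mix}}^I(\mathcal P_I\varphi_x)V_I^{\mathrm{ext}}(f;S)+\|\mathcal R_I\varphi_x\|_{L^\infty(S)}.
\]
Alternatively one can unpack this: by Proposition~\ref{prop:coordinate_projector_algebra}, $\mathcal P_I\varphi_x\in L^\infty(S)$ is centered on $I$, so Corollary~\ref{cor:appendix_intrinsic_nonsmooth_mixed_bound} controls $\mathbb E_f[\mathcal P_I\varphi_x]$. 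The defect part satisfies $|\mathbb E_f[\mathcal R_I\varphi_x]|\le\|\mathcal R_I\varphi_x\|_{L^\infty(S)}$, because $f\ge0$ and $\int_S f=1$. The triangle inequality applied to $\varphi_x=\mathcal P_I\varphi_x+\mathcal R_I\varphi_x$ then finishes the pointwise bound.

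Next, substitute the uniform hypotheses $\Lambda_{\mathrm{mix}}^I(\mathcal P_I\varphi_x)\le L_I$ and $\|\mathcal R_I\varphi_x\|_{L^\infty(S)}\le\delta_I$. This uses only the nonnegativity of the factor $2^{-k}V_I^{\mathrm{ext}}(f;S)$, which is finite by assumption. The right-hand side is now independent of $x$, so taking the supremum over $X$ yields \eqref{eq:appendix_uniform_residual_nonsmooth}.

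There is no genuine obstacle here; the only points to check are bookkeeping. First, $k=|I|$ refers to the same fixed nonempty $I$ throughout. Second, the admissible extension used in $V_I^{\mathrm{ext}}$ depends only on $f$ and not on $x$, so the same infimum serves every residual. If $L_I$ or $\delta_I$ is infinite, the statement is vacuous, so no separate case is needed.
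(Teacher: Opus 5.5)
Your proposal is correct and matches the paper's proof. Like the paper, you apply Theorem~\ref{thm:main_defect_adjusted_nonsmooth_mixed} for each fixed $x$, bound the two $x$-dependent quantities by $L_I$ and $\delta_I$ (valid because $V_I^{\mathrm{ext}}(f;S)\ge0$), and take the supremum over $X$; your unpacked version via Corollary~\ref{cor:appendix_intrinsic_nonsmooth_mixed_bound} and the triangle inequality is just the proof of that theorem.
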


\begin{proof}
Apply Theorem~\ref{thm:main_defect_adjusted_nonsmooth_mixed} pointwise in
$x$ and take the supremum.
\end{proof}

\subsection{Localization of full-support laws to a box}
\label{subsec:appendix_full_support_localization}

Let $p$ be a probability density on $\mathbb R^d$, let
$$
Z_S:=\int_Sp(\omega)\,d\omega>0,
$$
and define the conditional density
$$
f_S(\omega):=\frac{p(\omega)}{Z_S},
\qquad
\omega\in S.
$$
Write $\mathcal P_I^S$ and $\mathcal R_I^S$ for the box projectors applied to
the restriction of a residual to $S$.

\begin{theorem}[Full-support localization]
\label{thm:appendix_full_support_localization}
Let $\varphi:\mathbb R^d\to\mathbb R$ be measurable and assume
$$
\int_{\mathbb R^d\setminus S}|\varphi(\omega)|p(\omega)\,d\omega<\infty.
$$
Then:

\begin{enumerate}
\item[\emph{(i)}] If $p|_S\in BV(S)$ and
$\varphi|_S\in L^\infty(S)$, then
\begin{equation}
\label{eq:appendix_full_support_tv_localization}
\begin{aligned}
|\mathbb E_p[\varphi]|
\le{}&
\frac1{2k}
\Lambda_{\mathrm{sl}}^I(\mathcal P_I^S\varphi)
\TV_I(p;S)
\\
&+
Z_S\|\mathcal R_I^S\varphi\|_{L^\infty(S)}
+
\int_{\mathbb R^d\setminus S}|\varphi|p.
\end{aligned}
\end{equation}

\item[\emph{(ii)}] If $p|_S\in C^k(\overline S)$ and
$\varphi|_S\in L^\infty(S)$, then
\begin{equation}
\label{eq:appendix_full_support_mixed_localization}
\begin{aligned}
|\mathbb E_p[\varphi]|
\le{}&
2^{-k}
\Lambda_{\mathrm{mix}}^I(\mathcal P_I^S\varphi)
\int_S|\partial_Ip(\omega)|\,d\omega
\\
&+
Z_S\|\mathcal R_I^S\varphi\|_{L^\infty(S)}
+
\int_{\mathbb R^d\setminus S}|\varphi|p.
\end{aligned}
\end{equation}

\item[\emph{(iii)}] If $V_I^{\mathrm{ext}}(p|_S;S)<\infty$ and
$\varphi|_S\in L^\infty(S)$, then
\begin{equation}
\label{eq:appendix_full_support_nonsmooth_localization}
\begin{aligned}
|\mathbb E_p[\varphi]|
\le{}&
2^{-k}
\Lambda_{\mathrm{mix}}^I(\mathcal P_I^S\varphi)
V_I^{\mathrm{ext}}(p|_S;S)
\\
&+
Z_S\|\mathcal R_I^S\varphi\|_{L^\infty(S)}
+
\int_{\mathbb R^d\setminus S}|\varphi|p.
\end{aligned}
\end{equation}
If $D_Ip$ is a finite measure on $\mathbb R^d$, then
$$
V_I^{\mathrm{ext}}(p|_S;S)
\le
|D_Ip|(\mathbb R^d).
$$
\end{enumerate}
\end{theorem}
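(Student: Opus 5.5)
The plan is to split the expectation into its box part and its tail. Then we rewrite the box part through the conditional density $f_S=p/Z_S$ and invoke the box theorems already proved. First,
$$
\mathbb E_p[\varphi]=\int_S\varphi p\,d\omega+\int_{\mathbb R^d\setminus S}\varphi p\,d\omega .
$$
The tail integral is well defined by the integrability assumption, and its absolute value is at most $\int_{\mathbb R^d\setminus S}|\varphi|p$. On $S$ we write $\int_S\varphi p=Z_S\,\mathbb E_{f_S}[\varphi|_S]$. Here $f_S$ is a probability density on $S$, and $\varphi|_S\in L^\infty(S)$, so the box results apply to the pair $(\varphi|_S,f_S)$.

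For (i), note that $f_S=p|_S/Z_S\in BV(S)$, with $\TV_I(f_S;S)=\TV_I(p;S)/Z_S$ by linearity of the distributional derivative. Theorem~\ref{thm:defect_adjusted_projector_bounds}(i) then gives
$$
Z_S|\mathbb E_{f_S}[\varphi|_S]|\le \tfrac1{2k}\Lambda_{\mathrm{sl}}^I(\mathcal P_I^S\varphi)\TV_I(p;S)+Z_S\|\mathcal R_I^S\varphi\|_{L^\infty(S)} .
$$
Multiplying by $Z_S$ cancels the normalization in the variation term but not in the defect term, and adding the tail estimate yields \eqref{eq:appendix_full_support_tv_localization}. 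For (ii) the argument is the same with Theorem~\ref{thm:defect_adjusted_projector_bounds}(ii). It applies because $f_S\in C^k(\overline S)$ and $\partial_If_S=\partial_Ip/Z_S$. Alternatively, one can avoid renormalization altogether. By Remark~\ref{rem:analytic_scope_mixed_derivative}, the centered estimate of Theorem~\ref{thm:k_direction_mixed_derivative_bound} applies directly to $p|_S$. The defect term is handled by $|\int_S(\mathcal R_I^S\varphi)p|\le Z_S\|\mathcal R_I^S\varphi\|_\infty$, using $p\ge0$.

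For (iii), the seminorm is positively homogeneous: scaling an admissible extension $\bar f$ of $p|_S$ by $1/Z_S$ gives an admissible extension of $f_S$, and conversely. Hence $V_I^{\mathrm{ext}}(f_S;S)=V_I^{\mathrm{ext}}(p|_S;S)/Z_S<\infty$. Theorem~\ref{thm:main_defect_adjusted_nonsmooth_mixed}, multiplied by $Z_S$, gives the box part, and the tail is added as before. The final claim is immediate: if $D_Ip\in\mathcal M(\mathbb R^d)$, then $p$ itself is an admissible extension of $p|_S$. It lies in $L^1(\mathbb R^d)$, agrees with $p|_S$ on $S$, and has finite mixed-derivative measure, so the infimum is at most $|D_Ip|(\mathbb R^d)$.

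The only step needing care is the bookkeeping of the normalization $Z_S$. We must check that each variation quantity is $1$-homogeneous in the density, so that the factor $Z_S$ cancels against $1/Z_S$ in the variation term. The defect coefficient, by contrast, keeps the factor $Z_S$, because $f_S$ has unit mass while $p|_S$ has mass $Z_S$. For the extension seminorm this homogeneity is an infimum-scaling argument rather than a derivative identity, but it is still routine. Everything else is a direct invocation of earlier results.
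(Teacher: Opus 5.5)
Your proposal is correct and matches the paper's proof: split off the tail, write the box part as $Z_S\,\mathbb E_{f_S}[\varphi]$ with $f_S=p/Z_S$, and use that $\TV_I$, $\partial_I$, and $V_I^{\mathrm{ext}}$ scale by $1/Z_S$, so the normalization cancels in the variation term while the defect term keeps its factor $Z_S$; then observe that $p$ itself is an admissible extension of $p|_S$ when $D_Ip$ is a finite measure. Your optional route for (ii) via Remark~\ref{rem:analytic_scope_mixed_derivative} is also valid, and the phrase ``multiplying by $Z_S$'' after your displayed inequality in (i) is only redundant wording, since that display already contains the factor $Z_S$.
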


\begin{proof}
Decompose
$$
\mathbb E_p[\varphi]
=
\int_S\varphi p
+
\int_{\mathbb R^d\setminus S}\varphi p.
$$
The tail is bounded by its absolute integral, while
$$
\int_S\varphi p
=
Z_S\mathbb E_{f_S}[\varphi].
$$
For the anisotropic formulation,
$$
\TV_I(f_S;S)=\frac1{Z_S}\TV_I(p;S).
$$
For the smooth mixed formulation,
$$
\partial_If_S=\frac1{Z_S}\partial_Ip.
$$
For the nonsmooth formulation,
$$
V_I^{\mathrm{ext}}(f_S;S)
=
\frac1{Z_S}V_I^{\mathrm{ext}}(p|_S;S).
$$
Applying the corresponding defect-adjusted box bound and multiplying by $Z_S$
proves the three statements. If $D_Ip$ is finite globally, then $p$ itself is
an admissible extension of $p|_S$.
\end{proof}

\begin{corollary}[Uniform tail control]
\label{cor:appendix_uniform_tail_control}
If a residual family satisfies $|\varphi_x(\omega)|\le H$ for every $x$ and
almost every $\omega$, then
$$
\sup_x\int_{\mathbb R^d\setminus S}|\varphi_x|p
\le
H(1-Z_S).
$$
More generally, if $|\varphi_x|\le G$ for every $x$ with $G\in L^1(p)$, then
$$
\sup_x\int_{\mathbb R^d\setminus S}|\varphi_x|p
\le
\int_{\mathbb R^d\setminus S}Gp,
$$
and the right-hand side converges to zero along boxes exhausting
$\mathbb R^d$.
\end{corollary}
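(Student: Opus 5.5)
Both bounds come from pointwise domination followed by integration over the tail region $\mathbb R^d\setminus S$, with the uniformity in $x$ built into the dominating functions. For the first claim, I fix $x\in X$ and use $|\varphi_x(\omega)|\le H$ for almost every $\omega$. Then
$$
\int_{\mathbb R^d\setminus S}|\varphi_x|p\le H\int_{\mathbb R^d\setminus S}p=H\Bigl(1-\int_Sp\Bigr)=H(1-Z_S),
$$
where the middle step uses only that $p$ is a probability density on $\mathbb R^d$. The right-hand side does not depend on $x$, so taking the supremum over $x\in X$ gives the first claim. Measurability of each integrand is part of the hypothesis that the $\varphi_x$ are measurable residuals.

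For the general envelope, I argue the same way with $|\varphi_x|\le G$ pointwise. For every $x$ this gives
$$
\int_{\mathbb R^d\setminus S}|\varphi_x|p\le\int_{\mathbb R^d\setminus S}Gp,
$$
and this bound is again uniform in $x$. The right-hand side is finite because $G\in L^1(p)$.

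It remains to show that $\int_{\mathbb R^d\setminus S_n}Gp\to0$ along boxes exhausting $\mathbb R^d$. I interpret an exhausting sequence as boxes $S_n$ with $S_n\subseteq S_{n+1}$ and $\bigcup_nS_n=\mathbb R^d$; for a non-nested family it suffices that every point eventually lies in $S_n$. Under either reading, $\mathbf 1_{\mathbb R^d\setminus S_n}Gp\to0$ pointwise, and the sequence is dominated by the integrable function $Gp$. Dominated convergence therefore yields the limit. In the nested case, monotone convergence applied to $\mathbf 1_{S_n}Gp$ gives the same conclusion.

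The only delicate point is this convergence step. It uses precisely the integrability $G\in L^1(p)$ together with the exhaustion property, and it needs no structure of the residuals beyond the uniform envelope.
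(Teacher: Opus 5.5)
Your proof is correct and follows the paper's route; the paper's proof is the one line ``both statements follow directly from domination and integrability.'' You fill in the details it leaves implicit, namely $\int_{\mathbb R^d\setminus S}p=1-Z_S$ and dominated convergence of the tail integrals $\int_{\mathbb R^d\setminus S_n}Gp$ along an exhausting sequence of boxes.
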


\begin{proof}
Both statements follow directly from domination and integrability.
\end{proof}

\begin{remark}[Localization trade-off]
\label{rem:appendix_localization_tradeoff}
Increasing $S$ decreases the tail, but changes the slice projectors, residual
envelopes, and variation of the restricted density. Thus the box should be
chosen by balancing tail mass against the residual geometry and the
variation certificate inside the box.
\end{remark}

\subsection{Affine images of boxes}
\label{subsec:appendix_affine_images_boxes}

Let
$$
T(y):=My+c,
$$
where $M$ is invertible, and set $\Omega:=T(S)$. Let
$$
v_i:=Me_i,
\qquad
\widehat\varphi:=\varphi\circ T,
\qquad
\widehat f:=|\det M|f\circ T.
$$
Then
$$
\int_\Omega\varphi(x)f(x)\,dx
=
\int_S\widehat\varphi(y)\widehat f(y)\,dy.
$$
Transport the projectors by
$$
(\Pi_i^T\varphi)\circ T
:=
\Pi_i(\varphi\circ T),
$$
$$
\mathcal P_I^T:=\prod_{i\in I}(\Id-\Pi_i^T),
\qquad
\mathcal R_I^T:=\Id-\mathcal P_I^T,
$$
and define
$$
\Lambda_{\mathrm{sl},T}^I(\varphi)
:=
\Lambda_{\mathrm{sl}}^I(\varphi\circ T),
\qquad
\Lambda_{\mathrm{mix},T}^I(\varphi)
:=
\Lambda_{\mathrm{mix}}^I(\varphi\circ T).
$$
For $f\in BV(\Omega)$ and $v\in\mathbb R^d$, write
$$
D_vf:=v\cdot Df.
$$
If $\mu$ is a finite signed measure on $\Omega$, its pushforward under
$T^{-1}$ is defined by
$$
((T^{-1})_\#\mu)(B):=\mu(T(B))
$$
for Borel sets $B\subset S$.

\begin{proposition}[Derivative transformation under an affine map]
\label{prop:appendix_affine_derivative_transformation}
If $f\in BV(\Omega)$, then
\begin{equation}
\label{eq:appendix_affine_first_derivative_pushforward}
D_i\widehat f=(T^{-1})_\#(D_{v_i}f),
\end{equation}
so
\begin{equation}
\label{eq:appendix_affine_first_derivative_mass}
|D_i\widehat f|(S)=|D_{v_i}f|(\Omega).
\end{equation}
If the iterated directional derivative
$D_{v_{i_1}}\cdots D_{v_{i_k}}f$ is a finite measure, then
\begin{equation}
\label{eq:appendix_affine_mixed_derivative_pushforward}
D_I\widehat f
=
(T^{-1})_\#
\bigl(D_{v_{i_1}}\cdots D_{v_{i_k}}f\bigr).
\end{equation}
For $f\in C^k(\overline\Omega)$,
\begin{equation}
\label{eq:appendix_affine_mixed_derivative_pointwise}
\partial_I\widehat f(y)
=
|\det M|
(v_{i_1}\cdot\nabla)\cdots(v_{i_k}\cdot\nabla)f(Ty),
\end{equation}
and
\begin{equation}
\label{eq:appendix_affine_mixed_derivative_mass}
\int_S|\partial_I\widehat f(y)|\,dy
=
\int_\Omega
\left|
(v_{i_1}\cdot\nabla)\cdots(v_{i_k}\cdot\nabla)f(x)
\right|dx.
\end{equation}
\end{proposition}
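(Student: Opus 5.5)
The plan is to prove everything by a duality (test-function) computation, transporting test functions from $S$ to $\Omega$ via $T$ and using the chain rule for the affine map. Throughout I will use that $\widehat f=|\det M|\,f\circ T$ and that for any $\zeta\in C_c^\infty(\operatorname{int}S)$ the function $\xi:=\zeta\circ T^{-1}$ lies in $C_c^\infty(\operatorname{int}\Omega)$, with $\operatorname{int}\Omega=T(\operatorname{int}S)$. The basic identity is the chain rule
$$
\partial_{y_i}(\xi\circ T)(y)=(v_i\cdot\nabla\xi)(Ty),
\qquad v_i=Me_i,
$$
so that $\partial_i\zeta=(v_i\cdot\nabla\xi)\circ T$. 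Iterating in the distinct coordinates $i_1,\ldots,i_k$, and using that the constant-coefficient operators $v_{i_r}\cdot\nabla$ commute, gives
$$
\partial_I\zeta=\bigl((v_{i_1}\cdot\nabla)\cdots(v_{i_k}\cdot\nabla)\xi\bigr)\circ T .
$$

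For the first-order identity, I compute
$$
\int_S\widehat f\,\partial_i\zeta\,dy=\int_S|\det M|\,f(Ty)\,(v_i\cdot\nabla\xi)(Ty)\,dy=\int_\Omega f\,(v_i\cdot\nabla\xi)\,dx
$$
by the change of variables $x=Ty$, $dx=|\det M|\,dy$. Since $f\in BV(\Omega)$, the right side equals $-\int_\Omega\xi\,d(v_i\cdot Df)=-\int_\Omega\xi\,d(D_{v_i}f)$. Writing $\xi=\zeta\circ T^{-1}$, this is $-\int_S\zeta\,d\bigl((T^{-1})_\#D_{v_i}f\bigr)$ by the definition of pushforward. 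This identifies $D_i\widehat f$ with $(T^{-1})_\#(D_{v_i}f)$ as distributions on $\operatorname{int}S$. The pushforward is a finite measure, so equality holds as measures. Because $T$ is a bijection of Borel sets, the pushforward preserves total variation; this gives \eqref{eq:appendix_affine_first_derivative_mass}.

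The mixed identity follows by the same computation with $\partial_I\zeta$ in place of $\partial_i\zeta$. I also need the sign convention $(-1)^k$ from Definition~\ref{def:distributional_mixed_derivative_main}, which I will match with the iterated directional distributional derivative by the same $k$-fold integration by parts, interpreted distributionally on $\operatorname{int}\Omega$. The hypothesis that $D_{v_{i_1}}\cdots D_{v_{i_k}}f$ is a finite measure then lets the pairing be written against that measure and pushed forward as before.

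For $f\in C^k(\overline\Omega)$, the pointwise formula \eqref{eq:appendix_affine_mixed_derivative_pointwise} is just the classical chain rule applied $k$ times to $|\det M|\,f\circ T$. The mass identity \eqref{eq:appendix_affine_mixed_derivative_mass} then follows from the change of variables $\int_S|\det M|\,|g(Ty)|\,dy=\int_\Omega|g|\,dx$.

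The main obstacle is bookkeeping rather than analysis. First, I must check that the interior and boundary conventions match: derivatives on $S$ are relative to $\operatorname{int}S$, and $T$ maps $\operatorname{int}S$ onto $\operatorname{int}\Omega$, so no boundary mass is created or lost. Second, I must check that the $|\det M|$ factor in $\widehat f$ exactly cancels the Jacobian in the change of variables, which is why no determinant appears in the measure identities.
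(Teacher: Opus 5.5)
Your proposal is correct and follows essentially the same route as the paper. You transport test functions by $\zeta=\xi\circ T$, use $\partial_i\zeta=(v_i\cdot\nabla\xi)\circ T$ together with the change of variables in which $|\det M|$ cancels the Jacobian, identify the pairing with the pushforward measure, and note that pushforward under a Borel bijection preserves total variation. Your direct $k$-th order duality computation with the $(-1)^k$ sign convention is an explicit version of the paper's brief ``iteration'' step for the mixed identity.
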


\begin{proof}
For $\psi\in C_c^1(\operatorname{int}S)$, set
$\zeta(x)=\psi(T^{-1}x)$. Since $v_i=Me_i$,
$$
v_i\cdot\nabla\zeta(x)
=
\partial_i\psi(T^{-1}x).
$$
Change of variables and the definition of $D_{v_i}f$ give
$$
\int_S\widehat f\,\partial_i\psi
=
\int_\Omega f\,v_i\cdot\nabla\zeta
=
-\int_\Omega\zeta\,d(D_{v_i}f),
$$
which proves \eqref{eq:appendix_affine_first_derivative_pushforward}.
Pushforward under a measurable bijection preserves the total mass of the total
variation measure, giving
\eqref{eq:appendix_affine_first_derivative_mass}. Iteration proves the mixed
distributional identity. The smooth identities follow from the chain rule and
change of variables.
\end{proof}

\begin{theorem}[Residual bounds on a parallelotope]
\label{thm:appendix_parallelotope_bounds}
Let $f$ be a probability density on $\Omega=T(S)$.

\begin{enumerate}
\item[\emph{(i)}] If $f\in BV(\Omega)$ and
$\varphi\circ T\in L^\infty(S)$, then
\begin{equation}
\label{eq:appendix_parallelotope_tv_defect_bound}
\begin{aligned}
\left|\int_\Omega\varphi f\right|
\le{}&
\frac1{2k}
\Lambda_{\mathrm{sl},T}^I(\mathcal P_I^T\varphi)
\sum_{i\in I}|D_{v_i}f|(\Omega)
\\
&+
\|\mathcal R_I^T\varphi\|_{L^\infty(\Omega)}.
\end{aligned}
\end{equation}

\item[\emph{(ii)}] If $f\in C^k(\overline\Omega)$ and
$\varphi\circ T\in L^\infty(S)$, then
\begin{equation}
\label{eq:appendix_parallelotope_mixed_defect_bound}
\begin{aligned}
\left|\int_\Omega\varphi f\right|
\le{}&
2^{-k}
\Lambda_{\mathrm{mix},T}^I(\mathcal P_I^T\varphi)
\\
&\times
\int_\Omega
\left|
(v_{i_1}\cdot\nabla)\cdots(v_{i_k}\cdot\nabla)f(x)
\right|dx
\\
&+
\|\mathcal R_I^T\varphi\|_{L^\infty(\Omega)}.
\end{aligned}
\end{equation}
\end{enumerate}
\end{theorem}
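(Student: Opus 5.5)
The plan is to pull everything back to the box $S$ by the change of variables $y=T^{-1}x$ and apply the box results verbatim. With $\widehat\varphi=\varphi\circ T$ and $\widehat f=|\det M|\,f\circ T$, we have
$$
\int_\Omega\varphi f=\int_S\widehat\varphi\,\widehat f,
$$
and $\widehat f\ge0$ integrates to one on $S$. Hence $\widehat f$ is a probability density on $S$. For (i), $\widehat f\in BV(S)$ because $f\in BV(\Omega)$ and $T$ is affine; for (ii), $\widehat f\in C^k(\overline S)$. By hypothesis, $\widehat\varphi\in L^\infty(S)$.

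Next we check that the transported projectors commute with the pullback. By definition $(\Pi_i^T\varphi)\circ T=\Pi_i\widehat\varphi$, and since composition with $T$ is linear, products and sums pass through. This gives
$$
(\mathcal P_I^T\varphi)\circ T=\mathcal P_I\widehat\varphi,\qquad (\mathcal R_I^T\varphi)\circ T=\mathcal R_I\widehat\varphi.
$$
Consequently
$$
\Lambda^I_{\mathrm{sl},T}(\mathcal P_I^T\varphi)=\Lambda^I_{\mathrm{sl}}(\mathcal P_I\widehat\varphi),\qquad \Lambda^I_{\mathrm{mix},T}(\mathcal P_I^T\varphi)=\Lambda^I_{\mathrm{mix}}(\mathcal P_I\widehat\varphi).
$$
Moreover, $\|\mathcal R_I^T\varphi\|_{L^\infty(\Omega)}=\|\mathcal R_I\widehat\varphi\|_{L^\infty(S)}$, because $T$ is a bi-Lipschitz bijection that maps null sets to null sets. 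Well-definedness of $\Pi_i^T$ as an operator on functions on $\Omega$ is immediate, since $T$ is invertible.

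Then we apply Theorem~\ref{thm:defect_adjusted_projector_bounds} to the pair $(\widehat\varphi,\widehat f)$ on $S$. Part (i) of that theorem gives the bound with $\TV_I(\widehat f;S)=\sum_{i\in I}|D_i\widehat f|(S)$. By \eqref{eq:appendix_affine_first_derivative_mass} of Proposition~\ref{prop:appendix_affine_derivative_transformation}, this equals $\sum_{i\in I}|D_{v_i}f|(\Omega)$, which yields \eqref{eq:appendix_parallelotope_tv_defect_bound}. Part (ii) gives the bound with $\int_S|\partial_I\widehat f|$, and \eqref{eq:appendix_affine_mixed_derivative_mass} converts this into the iterated directional-derivative integral over $\Omega$, yielding \eqref{eq:appendix_parallelotope_mixed_defect_bound}.

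The only step requiring care is the first-order mass identity, specifically that the interior derivative $D_i\widehat f$ on $\operatorname{int}S$ corresponds exactly to $D_{v_i}f$ on $\operatorname{int}\Omega$ with no boundary contributions. This is already recorded in Proposition~\ref{prop:appendix_affine_derivative_transformation}, using test functions $\psi\in C_c^1(\operatorname{int}S)$ and the identity $T(\operatorname{int}S)=\operatorname{int}\Omega$. I would also note in passing that the $|\det M|$ factors cancel in the mixed-derivative identity: the factor $|\det M|$ in $\partial_I\widehat f$ is absorbed by the Jacobian of the change of variables. No further estimate is needed.
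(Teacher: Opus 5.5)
Your proposal is correct and follows the paper's proof: pull back to $(\widehat\varphi,\widehat f)$ on $S$, apply Theorem~\ref{thm:defect_adjusted_projector_bounds}, and convert the density-variation terms with Proposition~\ref{prop:appendix_affine_derivative_transformation}. You additionally spell out the details that the paper's one-line argument leaves implicit: conjugation of the projectors by the pullback, null-set invariance for the $L^\infty$ defect, and cancellation of $|\det M|$ against the Jacobian.
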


\begin{proof}
Apply the corresponding defect-adjusted box theorem to
$(\widehat\varphi,\widehat f)$ and use
Proposition~\ref{prop:appendix_affine_derivative_transformation}. The
transported projectors and envelopes were defined precisely so that all
residual-side quantities agree under pullback.
\end{proof}

\begin{remark}[Frame dependence]
\label{rem:appendix_affine_vitali_interpretation}
For $I=[d]$, the mixed quantity on $\Omega$ is the top-order directional
derivative in the affine frame $v_1,\ldots,v_d$. It is the pullback of
classical Vitali variation on $S$, but it is not classical axis-parallel
Vitali variation on $\Omega$ unless $M$ preserves the coordinate axes up to
permutation and scaling.
\end{remark}

\subsection{Connected Lipschitz domains and what is lost beyond boxes}
\label{subsec:appendix_lipschitz_domains}

Let $\Omega\subset\mathbb R^d$ be a bounded connected Lipschitz domain and let
$r>d$. Set
$$
L_0^r(\Omega)
:=
\left\{g\in L^r(\Omega):\int_\Omega g=0\right\}.
$$
By the Bogovskii right-inverse theorem, there exists a bounded linear map
\cite{AcostaDuranMuschietti2006}
$$
\mathfrak B_{\Omega,r}:L_0^r(\Omega)
\longrightarrow W_0^{1,r}(\Omega;\mathbb R^d)
$$
such that
$$
\operatorname{div}\mathfrak B_{\Omega,r}g=g.
$$
Combining its $W^{1,r}$ bound with Sobolev embedding gives
\begin{equation}
\label{eq:appendix_bogovskii_sup_bound}
\|\mathfrak B_{\Omega,r}g\|_{L^\infty(\Omega)}
\le
C_{\Omega,r}\|g\|_{L^r(\Omega)}.
\end{equation}
For $f\in BV(\Omega)$, define the coordinatewise anisotropic variation by
$$
\TV_\infty(f;\Omega)
:=
\sum_{i=1}^d|D_if|(\Omega).
$$

\begin{theorem}[A domain-dependent $BV$ residual bound]
\label{thm:appendix_lipschitz_domain_bv_bound}
Let $f\in BV(\Omega)$ be a probability density and let
$\varphi\in L^r(\Omega)$. Define
$$
m_\Omega(\varphi)
:=
\frac1{|\Omega|}\int_\Omega\varphi(x)\,dx.
$$
Then
\begin{equation}
\label{eq:appendix_lipschitz_domain_bv_bound}
|\mathbb E_f[\varphi]|
\le
C_{\Omega,r}
\|\varphi-m_\Omega(\varphi)\|_{L^r(\Omega)}
|Df|(\Omega)
+
|m_\Omega(\varphi)|.
\end{equation}
Consequently,
\begin{equation}
\label{eq:appendix_lipschitz_domain_anisotropic_bound}
|\mathbb E_f[\varphi]|
\le
C_{\Omega,r}
\|\varphi-m_\Omega(\varphi)\|_{L^r(\Omega)}
\TV_\infty(f;\Omega)
+
|m_\Omega(\varphi)|.
\end{equation}
\end{theorem}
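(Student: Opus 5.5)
The plan mirrors the box proof of Theorem~\ref{thm:k_direction_tv_bound}. The two ingredients are a splitting of $\varphi$ and a Bogovskii vector field in place of the explicit slice primitive. Write $\varphi=\varphi_0+m_\Omega(\varphi)$ with $\varphi_0:=\varphi-m_\Omega(\varphi)\in L_0^r(\Omega)$. Since $f$ is a probability density on $\Omega$, the constant part contributes exactly $m_\Omega(\varphi)$, so
\[
|\mathbb E_f[\varphi]|\le\Bigl|\int_\Omega f\varphi_0\Bigr|+|m_\Omega(\varphi)|.
\]
The same subtraction trick as before applies: the uniform density on $\Omega$ has zero interior derivative. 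Because $\int_\Omega\varphi_0=0$, we may replace $f$ by $g:=f-|\Omega|^{-1}$ without changing the pairing or $Df$, although this step is not even strictly needed.

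Next, set $\Psi:=\mathfrak B_{\Omega,r}\varphi_0\in W_0^{1,r}(\Omega;\mathbb R^d)$. Then $\operatorname{div}\Psi=\varphi_0$, and by \eqref{eq:appendix_bogovskii_sup_bound}, $\|\Psi\|_{L^\infty}\le C_{\Omega,r}\|\varphi_0\|_{L^r}$. Since $r>d$, Morrey's embedding makes $\Psi$ continuous on $\overline\Omega$ with zero boundary values.

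The main step is the integration by parts
\[
\int_\Omega f\operatorname{div}\Psi\,dx=-\int_\Omega\Psi\cdot d(Df)
\]
with no boundary term. Lemma~\ref{lem:gg_bv_box} is stated only for Lipschitz fields on a box, and $\Psi$ is merely $W_0^{1,r}\cap C_0$, so this requires an approximation argument; I expect it to be the main obstacle. I would approximate $\Psi$ by $\Psi_n\in C_c^\infty(\Omega;\mathbb R^d)$ with $\Psi_n\to\Psi$ in $W^{1,r}$. Morrey's inequality then gives uniform convergence. For each $n$ the identity holds by the definition of $Df$. On the left, $\operatorname{div}\Psi_n\to\varphi_0$ in $L^r$, and we need $f\in L^{r'}$. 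This holds because $BV(\Omega)\subset L^{d/(d-1)}(\Omega)$ on a bounded Lipschitz domain and $r'<d/(d-1)$ when $r>d$; for $d=1$, $BV\subset L^\infty$. On the right, uniform convergence against the finite measure $Df$ passes to the limit.

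Taking absolute values then gives
\[
\Bigl|\int_\Omega f\varphi_0\Bigr|\le\|\Psi\|_{L^\infty(\Omega;\ell_2)}\,|Df|(\Omega)\le C_{\Omega,r}\|\varphi-m_\Omega(\varphi)\|_{L^r}|Df|(\Omega),
\]
where the norm in \eqref{eq:appendix_bogovskii_sup_bound} is interpreted as the Euclidean pointwise norm, adjusting the constant if necessary. This proves \eqref{eq:appendix_lipschitz_domain_bv_bound}. Finally, \eqref{eq:appendix_lipschitz_domain_anisotropic_bound} follows from $|Df|(\Omega)\le\TV_\infty(f;\Omega)$. That inequality holds because the Euclidean norm of the polar density of $Df$ is at most its $\ell^1$ norm, and $|D_if|=|\sigma_i|\,|Df|$ componentwise.
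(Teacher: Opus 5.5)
Your proposal is correct and follows essentially the same route as the paper. Both arguments split off the mean, apply the Bogovskii field to $\varphi-m_\Omega(\varphi)$, approximate it by $C_c^\infty$ fields in $W^{1,r}$ with uniform convergence from $r>d$, and use $BV(\Omega)\subset L^{d/(d-1)}(\Omega)\subset L^{r'}(\Omega)$ to pass to the limit on the left-hand side. Your extra remarks are harmless refinements of points the paper leaves implicit: the optional subtraction of the uniform density, and reading the sup bound for $\Psi$ in the Euclidean pointwise norm so that the pairing with $Df$ is controlled by $|Df|(\Omega)$.
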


\begin{proof}
Set $g=\varphi-m_\Omega(\varphi)$ and
$\Psi=\mathfrak B_{\Omega,r}g$. Then
$$
\operatorname{div}\Psi=g,
\qquad
\|\Psi\|_\infty\le C_{\Omega,r}\|g\|_r.
$$
Choose $\Psi_n\in C_c^\infty(\Omega;\mathbb R^d)$ converging to $\Psi$ in
$W^{1,r}$. Since $r>d$, Sobolev embedding implies uniform convergence. Also,
$BV(\Omega)\subset L^{d/(d-1)}(\Omega)\subset L^{r'}(\Omega)$ for $d\ge2$;
in dimension one, $BV(\Omega)\subset L^\infty(\Omega)$. Hence
$$
\int_\Omega f\,\operatorname{div}\Psi_n
\longrightarrow
\int_\Omega f\,\operatorname{div}\Psi.
$$
For each $n$,
$$
\int_\Omega f\,\operatorname{div}\Psi_n
=
-\int_\Omega\Psi_n\cdot d(Df).
$$
Uniform convergence gives
$$
\int_\Omega fg
=
-\int_\Omega\Psi\cdot d(Df),
$$
so
$$
\left|\int_\Omega fg\right|
\le
C_{\Omega,r}\|g\|_r|Df|(\Omega).
$$
Since $\int_\Omega f=1$,
$$
\mathbb E_f[\varphi]
=
\int_\Omega fg+m_\Omega(\varphi).
$$
This proves \eqref{eq:appendix_lipschitz_domain_bv_bound}; the anisotropic
version follows from $|Df|\le \TV_\infty(f;\Omega)$.
\end{proof}

\begin{remark}[Disconnected domains]
\label{rem:appendix_disconnected_domains}
On a disconnected domain, the mean must be removed separately on each
connected component, because a zero-trace divergence field has zero divergence
integral on every component. The natural defect produced by this construction
is therefore a collection of componentwise means.
\end{remark}

\begin{remark}[What is lost beyond boxes]
\label{rem:appendix_what_is_lost_beyond_boxes}
The general-domain result is not the box theorem with the geometry suppressed.
The explicit constant $1/(2k)$ is replaced by $C_{\Omega,r}$. The coordinate
projector, slice envelope, and slice-average convex-realizability obstruction
have no canonical general-domain analogues. The natural defect produced by the
divergence construction is the domain mean, or the componentwise means on a
disconnected domain.

The present argument also does not yield a direct intrinsic general-domain
counterpart of the pure Vitali estimate with coefficient $2^{-k}$. Classical
Vitali variation is tied to axis-parallel rectangles and alternating vertex
sums. Under a nonlinear change of variables, derivatives of the Jacobian and
of the variable coordinate frame produce lower-order terms. The clean
top-order identity is therefore special to boxes and their invertible affine
images.
\end{remark}\begin{remark}[Scope of the extension]
\label{rem:appendix_scope_nonclaims}
The precise conclusions are as follows. Tensor-product residuals and densities
admit exact projector and variation formulas. The constants $1/(2k)$ and
$2^{-k}$ are sharp in the stated classes. The nonsmooth mixed-derivative formulation
uses $V_I^{\mathrm{ext}}$, not an unqualified nonsmooth classical Vitali
variation. Full-support laws require an explicit tail term. Parallelotopes are
handled exactly in the pulled-back affine frame. General Lipschitz domains
retain a domain-dependent first-order $BV$ estimate, while the present method
does not preserve the explicit box constants or produce a pure classical
Vitali counterpart.
\end{remark}

\section{Finite-dimensional fitting and cut-generation details}
\label{app:cut_generation}

This appendix records the finite-dimensional facts used in
Section~\ref{sec:framework_design_revised}. The point is modest: the fitted
models $C_0$ and $C_I$ are ordinary finite linear programs, and the
cut-generation loop is only a way of enforcing the same supporting-plane
system. It does not define a different approximation model.

We use the notation of Section~\ref{sec:framework_design_revised}. The training
grid is $B^h=\{b^\alpha:\alpha\in\mathcal N\}\subset S$, the recourse labels are
$y_\alpha=v^{\INT}(b^\alpha)$, and a fitted pair $(u,g)$ defines the max-affine
extension
$
\widetilde v_{u,g}(b)=
\max_{\alpha\in\mathcal N}\{u_\alpha+g_\alpha^\top(b-b^\alpha)\}.
$
The fitted heights and slopes are required to satisfy the supporting-plane
inequalities
$
u_\beta\ge u_\alpha+g_\alpha^\top(b^\beta-b^\alpha)
$
for all $\alpha,\beta\in\mathcal N$.

\begin{proposition}[Finite-dimensional validity of the fitted programs]
\label{prop:finite_dimensional_validity_fitted_lps}
Fix the fitting parameters $\theta_{\fit}\in[0,1]$ and
$\lambda_{\mathrm{grad}}\ge0$. For a projector-regularized model $C_I$, also
fix a nonempty set $I\subseteq[d]$ and penalties $\mu_i\ge0$ for $i\in I$ and
$\mu_I\ge0$. Then $C_0$ and $C_I$ are feasible finite-dimensional linear
programs. If the objective has finite optimal value, an optimal solution
exists. Every feasible solution $(u,g)$ defines a convex max-affine function
$\widetilde v_{u,g}$ on $S$ satisfying
$\widetilde v_{u,g}(b^\alpha)=u_\alpha$ for every $\alpha\in\mathcal N$.
\end{proposition}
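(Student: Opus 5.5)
The proof splits into three parts: linearity of the programs, feasibility, and existence of an optimal solution. The convexity and interpolation claim then follows directly from Proposition~\ref{prop:supporting_plane_construction_framework}.

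\emph{Linearity.} Once the parameters $\theta_{\fit},\lambda_{\mathrm{grad}},\mu_i,\mu_I$ are fixed, the objective $J_0$ and the penalty terms are linear in $(u,g,e,\eta,t,m,d,z)$, since the weights $\kappa_\alpha^{\mathrm{tr}}$ and $\varpi^i$ are constants. The supporting-plane inequalities \eqref{eq:framework_convexity_cuts_new} are linear because the grid points $b^\alpha$ are data. The epigraph constraints for $e,t,\eta,d,z$ are linear. The slice-mean equalities $m^{(i)}=A_i^hR(u)$ and the defect array $\mathcal R_I^hR(u)$ are linear in $u$, because $R(u)=u-y$ is affine and $A_i^h,\Pi_i^h,\mathcal P_I^h,\mathcal R_I^h$ are linear (Proposition~\ref{prop:discrete_projector_algebra_framework}). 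There are finitely many variables and constraints because $\mathcal N$ is finite.

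\emph{Feasibility.} Take $u_\alpha:=0$ and $g_\alpha:=0$ for all $\alpha$. Then every supporting-plane inequality reads $0\ge0$. Next set $e_\alpha:=|y_\alpha|$, $t:=\max_\alpha e_\alpha$, $\eta:=0$, and $m^{(i)}:=A_i^hR(0)$. Finally take $d^{(i)}:=|m^{(i)}|$ entrywise and $z_I:=\|\mathcal R_I^hR(0)\|_\infty$. This point satisfies every constraint of $C_0$ and of $C_I$.

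\emph{Existence of an optimum.} This is the only step needing a cited result. Every objective coefficient ($\theta_{\fit}$, $1-\theta_{\fit}$, $\lambda_{\mathrm{grad}}\kappa$, $\mu_i\varpi$-products, $\mu_I$) is nonnegative, and it multiplies a variable constrained to be nonnegative. The objective is therefore bounded below by $0$ on the feasible set, so the optimal value is finite (which also covers the hypothesis as stated). The fundamental theorem of linear programming then gives the conclusion: a feasible LP whose objective is bounded below attains its infimum, since the minimum of a linear function over a nonempty polyhedron is attained whenever it is finite. The main subtlety is that the feasible region is unbounded, so a compactness argument is unavailable and this polyhedral attainment result must be invoked instead.

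\emph{Convexity and interpolation.} For any feasible solution, the pair $(u,g)$ satisfies \eqref{eq:framework_convexity_cuts_new}. Proposition~\ref{prop:supporting_plane_construction_framework} then shows that $\widetilde v_{u,g}$ is convex, being a finite maximum of affine functions, and that $\widetilde v_{u,g}(b^\alpha)=u_\alpha$ for every $\alpha\in\mathcal N$. The same holds on $S$ by restriction.
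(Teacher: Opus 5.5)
Your proof is correct and follows the paper's argument essentially step for step. The shared steps are linearity via epigraph variables and the linear discrete projectors, feasibility from a constant $u$ with $g=0$ (you take the constant to be $0$), attainment of a finite infimum of a linear objective over a nonempty polyhedron, and the supporting-plane interpolation (which you cite from Proposition~\ref{prop:supporting_plane_construction_framework} rather than re-deriving). Your one addition is a small improvement over the paper: every objective coefficient is nonnegative and multiplies a nonnegative variable, so the objective is bounded below by $0$, the finiteness hypothesis in the statement holds automatically, and an optimal solution always exists.
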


\begin{proof}
The variables in $C_0$ and $C_I$ are finite-dimensional. The
supporting-plane inequalities are linear in $(u,g)$. The absolute-value terms
in the residual fit are represented by the usual epigraph inequalities
$e_\alpha\ge u_\alpha-y_\alpha$ and $e_\alpha\ge y_\alpha-u_\alpha$, and the
worst-case residual term is represented by $t\ge e_\alpha$. The slope penalty
is linearized in the same way, by variables satisfying
$\eta_{\alpha j}\ge g_{\alpha j}$ and
$\eta_{\alpha j}\ge -g_{\alpha j}$.

For $C_I$, the residual array $R(u)_\alpha=u_\alpha-y_\alpha$ is affine in
$u$. The discrete averaging operators, the discrete projectors
$\mathcal P_I^h$ and $\mathcal R_I^h$, and the corresponding slice-mean
quantities are linear maps of $R(u)$. Their absolute values are again handled
by epigraph variables. Hence all constraints and all objective terms in
$C_0$ and $C_I$ are linear.

Feasibility is immediate: choose any constant $c$, set $u_\alpha=c$ and
$g_\alpha=0$ for all $\alpha\in\mathcal N$, and choose the epigraph variables
large enough. The feasible region is therefore a nonempty closed polyhedron.
A linear objective with finite infimum over a nonempty polyhedron attains its
infimum, so an optimal solution exists whenever the optimal value is finite.

It remains only to justify the interpolation statement. If $(u,g)$ satisfies
the supporting-plane inequalities, then for every grid point $b^\beta$,
$
u_\alpha+g_\alpha^\top(b^\beta-b^\alpha)\le u_\beta
$
for all $\alpha\in\mathcal N$. Taking the maximum over $\alpha$ gives
$\widetilde v_{u,g}(b^\beta)\le u_\beta$. The reverse inequality follows by
choosing $\alpha=\beta$. Thus $\widetilde v_{u,g}(b^\beta)=u_\beta$. Since
$\widetilde v_{u,g}$ is the pointwise maximum of affine functions, it is convex
on $S$.
\end{proof}

The full supporting-plane system contains one inequality for every ordered pair
of training points. If $B^h=\{b^1,\ldots,b^N\}$, this system is
$u_j\ge u_i+g_i^\top(b^j-b^i)$ for $i,j=1,\ldots,N$. The diagonal inequalities
are redundant but harmless. In the implementation, a restricted LP is solved
first, violated supporting-plane inequalities are identified by checking the
slacks $s_{ij}=u_j-u_i-g_i^\top(b^j-b^i)$, and the violated inequalities are
added.

\begin{proposition}[Correctness of cut generation]
\label{prop:cut_generation_correctness}
Suppose the cut-generation loop is applied to either $C_0$ or $C_I$. If the
loop terminates with no violated supporting-plane inequality, then the returned
solution is feasible for the full supporting-plane LP. With exact separation,
it is optimal for the full LP. If separation is performed with tolerance $\tau>0$, termination guarantees
that every supporting-plane inequality is satisfied up to $\tau$. No
objective-gap estimate follows from this feasibility tolerance alone.
\end{proposition}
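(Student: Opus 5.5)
The plan is the standard outer-approximation argument, organized around the fact that the loop only ever adds constraints of the full program and never removes any. Let $\mathcal F$ denote the full LP, meaning $C_0$ or $C_I$ with all $N^2$ supporting-plane inequalities $u_j\ge u_i+g_i^\top(b^j-b^i)$. Let $\mathcal F_k$ be the restricted LP at iteration $k$, obtained by keeping every other constraint (epigraph, slice-mean, defect) and only a subset $\mathcal C_k$ of the supporting-plane inequalities. Since $\mathcal C_k$ is contained in the full pair set, the feasible region of $\mathcal F$ lies inside that of $\mathcal F_k$. The objective is the same in both, so $\operatorname{val}(\mathcal F_k)\le\operatorname{val}(\mathcal F)$.

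First, I would prove the feasibility claim. Termination means the separation check over all ordered pairs finds every slack $s_{ij}=u_j-u_i-g_i^\top(b^j-b^i)$ nonnegative. The remaining constraints already hold, because the returned point is feasible for $\mathcal F_k$. Hence the point satisfies every constraint of $\mathcal F$. By Proposition~\ref{prop:finite_dimensional_validity_fitted_lps}, the associated $\widetilde v_{u,g}$ is then convex and interpolates the fitted heights.

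Second, I would prove optimality under exact separation. The returned point is optimal for $\mathcal F_k$, so its objective value equals $\operatorname{val}(\mathcal F_k)$, which is at most $\operatorname{val}(\mathcal F)$. It is also feasible for $\mathcal F$, so its value is at least $\operatorname{val}(\mathcal F)$. These two inequalities force equality, so the point is optimal for the full LP. If one also wants to note that termination occurs, this follows because the pair set is finite and each non-terminating round adds at least one new inequality, so at most $N^2$ rounds are needed. The statement itself is conditional on termination, however, so this is only a remark.

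Third, I would handle the tolerance case. Termination with tolerance $\tau$ means $s_{ij}\ge-\tau$ for all pairs, which is exactly the claimed $\tau$-feasibility. For the final sentence, I would argue that the returned point need not be feasible for $\mathcal F$. In that case only $\operatorname{val}(\mathcal F_k)\le\operatorname{val}(\mathcal F)$ survives, and this gives no upper estimate on the gap without an LP sensitivity or dual-bound argument, which is not claimed.

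The main obstacle is interpretive rather than technical: stating precisely that the restricted LP keeps every non-convexity constraint of $C_I$, including the linear slice-mean and projector-defect constraints. With that in place, the relaxation inequality and the feasibility argument are immediate.
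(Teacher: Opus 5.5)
Your proposal is correct and matches the paper's argument: the restricted LP is a relaxation of the full LP with the same objective, so a restricted optimum with no violated supporting-plane inequality is feasible for the full LP and therefore optimal, while termination with tolerance $\tau$ only gives $s_{ij}\ge-\tau$ and hence no gap estimate. Your explicit observation that the restricted LP keeps every other constraint of $C_I$ (slice-mean and projector-defect constraints included), and your finite-termination remark, are consistent additions that the paper leaves implicit.
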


\begin{proof}
There are finitely many supporting-plane inequalities. At each iteration, the
restricted LP contains a subset of them and has the same objective as the full
LP. If the separation check finds a negative slack $s_{ij}$, the corresponding
missing inequality is added. If no violation remains, the current restricted
solution satisfies every supporting-plane inequality and is therefore feasible
for the full LP.

The restricted LP is a relaxation of the full LP, so its optimal value is no
larger than the full optimal value. Once the restricted optimum is feasible for
the full LP, its objective value is also attainable in the full LP. Hence the
two optimal values coincide, and the returned solution is optimal for the full
LP. If separation uses a tolerance $\tau>0$, termination guarantees only
$\tau$-feasibility of the supporting-plane system. Without an additional
error-bound argument, this feasibility tolerance does not imply a bound on
the objective gap relative to the full LP.
\end{proof}

This proves that the reported fitted approximations are solutions of the
finite-dimensional max-affine fitting models stated in the main text. The
cut-generation loop is only a computational device for reaching those models
without loading all pairwise supporting-plane inequalities at once.

\section{Details for the TU ceiling obstruction}
\label{app:tu_ceiling_obstruction}

This appendix gives the full verification of the obstruction stated in
Proposition~\ref{prop:2d_tu_no_exact_smz_main}. The example is deliberately
minimal: the recourse matrix is the identity, hence totally unimodular, and
the value function is a separable ceiling function. The obstruction does not
come from complicated recourse geometry. It comes from the incompatibility
between exact slice-centering and convex representability of the induced
slice-average profile.
\begin{proposition}[A separable two-dimensional TU ceiling obstruction]
\label{prop:2d_tu_no_exact_smz_app}
Let $S=[0,2]^2$ and define
$$
v^{\INT}(b)
:=
\min_{y\in\mathbb Z_{\ge0}^2}
\{\mathbf 1^\top y:y\ge b\},
\qquad
b=(b_1,b_2)\in S.
$$
Then
$$
v^{\INT}(b_1,b_2)=\lceil b_1\rceil+\lceil b_2\rceil
\qquad
\text{a.e. on }S.
$$
There is no finite-valued convex function $\widetilde v$ on any open
neighborhood of $S$ such that the residual
$\varphi:=\widetilde v-v^{\INT}$ satisfies
$$
\bar\Pi_1\varphi=0
\qquad
\text{a.e. on }[0,2].
$$
Consequently, exact all-axes slice-centering is impossible for this instance.
\end{proposition}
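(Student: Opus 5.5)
The plan has three steps: compute the recourse value explicitly, compute its coordinate-$1$ slice-average profile, and then invoke Proposition~\ref{prop:one_direction_centered_realizability} with $i=1$ to rule out a convex representative.

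First, the value function. The constraint $y\ge b$ with $y\in\mathbb Z_{\ge0}^2$ decouples coordinatewise. Since $b_j\ge0$, the smallest admissible integer is $y_j=\lceil b_j\rceil$, so the minimum of $\mathbf 1^\top y$ is $\lceil b_1\rceil+\lceil b_2\rceil$. This actually holds at every point of $S$, not merely a.e.

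Next, the slice average. For fixed $b_2$,
$$
\bar\Pi_1 v^{\INT}(b_2)=\tfrac12\int_0^2\lceil t\rceil\,dt+\lceil b_2\rceil=\tfrac12(1+2)+\lceil b_2\rceil=\tfrac32+\lceil b_2\rceil .
$$
On $(0,1)$ this equals $5/2$, and on $(1,2)$ it equals $7/2$.

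Finally, suppose such a $\widetilde v$ exists on an open neighborhood of $S$. Shrinking the neighborhood to an open box $U\supset S$, condition (i) of Proposition~\ref{prop:one_direction_centered_realizability} holds. Hence there is a finite convex $m_1$ on $U_{-1}$, an open interval containing $[0,2]$, with $m_1=\tfrac32+\lceil b_2\rceil$ a.e. on $[0,2]$.

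The main point to handle carefully is the passage from an a.e. identity to a contradiction. Finite convex functions on an open interval are continuous. So $m_1\equiv 5/2$ on $(0,1)$, since a continuous function equal a.e. to a constant on an interval equals it everywhere there. Likewise $m_1\equiv7/2$ on $(1,2)$. Continuity at $b_2=1$ then forces $5/2=7/2$, which is a contradiction.

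This is exactly the final clause of Proposition~\ref{prop:one_direction_centered_realizability}: the profile has no representative continuous on $\operatorname{ri}([0,2])=(0,2)$. Indeed, any representative agreeing a.e. must have different essential limits from the two sides at $1$.

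Centering in direction $1$ is therefore impossible. Since all-axes centering includes centering in direction $1$, exact all-axes slice-centering is also impossible.
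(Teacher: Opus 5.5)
Your proof is correct and follows the paper's argument. The appendix proof inlines Proposition~\ref{prop:one_direction_centered_realizability} by setting $F:=\bar\Pi_1\widetilde v$ directly, using continuity of $F$ (inherited from $\widetilde v$) against $F=\tfrac32+\lceil b_2\rceil$ a.e. to reach the same contradiction at $b_2=1$, while the main-text sketch cites that proposition exactly as you do. Your two additions are harmless refinements: that the ceiling formula holds at every point of $S$, and that you first shrink the neighborhood to an open box.
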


\begin{proof}
The second-stage constraint matrix is the identity and is therefore totally
unimodular. The problem is separable:
$$
v^{\INT}(b_1,b_2)
=
\min_{y_1\in\mathbb Z_{\ge0}}\{y_1:y_1\ge b_1\}
+
\min_{y_2\in\mathbb Z_{\ge0}}\{y_2:y_2\ge b_2\}.
$$
Hence $v^{\INT}(b_1,b_2)=\lceil b_1\rceil+\lceil b_2\rceil$ a.e. on $S$.

The obstruction comes from the slice-average profile. Averaging
$v^{\INT}(t,b_2)=\lceil t\rceil+\lceil b_2\rceil$ over $t\in[0,2]$ produces a
step function of $b_2$ with a jump at $b_2=1$, while a slice average of a
finite-valued convex function must be continuous.
Suppose, to the contrary, that such a finite-valued convex $\widetilde v$
exists on an open neighborhood of $S$. Define
$$
F(b_2):=(\bar\Pi_1\widetilde v)(b_2)
=
\frac12\int_0^2\widetilde v(t,b_2)\,dt
$$
and
$$
G(b_2):=(\bar\Pi_1v^{\INT})(b_2)
=
\frac12\int_0^2v^{\INT}(t,b_2)\,dt.
$$
The centering condition gives $F=G$ a.e. on $[0,2]$. Since
$\widetilde v$ is finite-valued and convex on an open neighborhood of $S$, it
is continuous there. Therefore $F$ is continuous on $[0,2]$.

For a.e. $b_2\in[0,2]$,
$$
G(b_2)
=
\frac12\int_0^2\lceil t\rceil\,dt+\lceil b_2\rceil
=
\frac32+\lceil b_2\rceil .
$$
Thus $G(b_2)=5/2$ for a.e. $b_2\in(0,1)$ and
$G(b_2)=7/2$ for a.e. $b_2\in(1,2)$. Since $F$ is continuous and agrees with
$G$ a.e., it follows that
$$
F(b_2)=\frac52
\qquad
\text{for all } b_2\in(0,1),
$$
and
$$
F(b_2)=\frac72
\qquad
\text{for all } b_2\in(1,2).
$$
This contradicts continuity of $F$ at $b_2=1$. Hence no such
$\widetilde v$ exists.

All-axes centering implies one-direction centering in each coordinate. Since
direction-$1$ centering is already impossible, all-axes centering is impossible
a fortiori.
\end{proof}
\begin{remark}[Why the example is enough]
\label{rem:tu_obstruction_interpretation}
The example is not meant to show that every mixed-integer recourse function
prevents exact centering. It shows that exact centering is not a generic
normalization available inside convex approximation classes. Even for an
identity-matrix, separable, totally unimodular ceiling value, exact
coordinate-wise centering would require a discontinuous slice-average profile
to be represented by a finite-valued convex function. This is impossible.
Thus the defect term in the main paper is structurally necessary rather than
only technically convenient.
\end{remark}

\section{Periodic cancellation and finite-box LP-slope calibration}
\label{app:periodic_finite_box_shifted_lp}

This appendix separates two objects that are easily conflated. The first is
the classical shifted-LP mechanism: a periodic residual with zero cell mean is
integrated against a density and controlled through a one-dimensional $BV$
norm. The second is the comparator $v^\Gamma$ used in the numerical study. It
has the same fixed-slope form, but its intercepts are calibrated on the finite
training box. Thus $v^\Gamma$ is a finite-box fixed-slope comparator, not an
implementation of the classical full-cell shifted-LP constants.

The distinction matters because the theory in the main text is box-based. It
uses coordinate slice-centering and the decomposition
$$
\varphi=\mathcal P_I\varphi+\mathcal R_I\varphi.
$$
Classical shifted-LP arguments instead use periodic cancellation. These two
mechanisms coincide in the scalar unit-lattice ceiling example, but they are
not the same object on arbitrary bounded boxes or in higher dimensions.

\subsection{Periodic cancellation on the line } 
\label{subsec:periodic_comparison_appendix}

We first record the elementary one-dimensional periodic estimate behind the
comparison.

\begin{definition}[Periodic primitive radius]
\label{def:periodic_primitive_radius_appendix}
Let $\psi\in L^\infty_{\mathrm{loc}}(\mathbb R)$ be $1$-periodic and satisfy
$$
\int_0^1\psi(t)\,dt=0.
$$
Define
$$
P(u):=\int_0^u\psi(t)\,dt,
\qquad u\in[0,1],
$$
and extend $P$ periodically to $\mathbb R$. The recentered periodic primitive
radius is
$$
\rho_{\mathrm{per}}(\psi)
:=
\inf_{c\in\mathbb R}\|P-c\|_{L^\infty(\mathbb R)}.
$$
Equivalently,
$$
\rho_{\mathrm{per}}(\psi)
=
\frac12\operatorname{osc}_{[0,1]}P.
$$
\end{definition}

\begin{theorem}[Recentered periodic $BV$ bound]
\label{thm:periodic_TV_bound_appendix}
Let $\psi\in L^\infty_{\mathrm{loc}}(\mathbb R)$ be $1$-periodic and satisfy
$$
\int_0^1\psi(t)\,dt=0.
$$
Then, for every probability density $g\in BV(\mathbb R)\cap L^1(\mathbb R)$,
\begin{equation}
\label{eq:periodic_TV_bound_appendix}
\left|
\int_{\mathbb R}\psi(t)g(t)\,dt
\right|
\le
\rho_{\mathrm{per}}(\psi)\,\TV_\infty(g;\mathbb R).
\end{equation}
In one dimension, $\TV_\infty(g;\mathbb R)=|Dg|(\mathbb R)$.
\end{theorem}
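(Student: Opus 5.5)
The plan is to reproduce the slice-primitive argument of Corollary~\ref{cor:one_direction_primitive_bound} on the whole line, using the periodic primitive in place of the box primitive. Periodicity and zero cell mean together remove boundary terms. Let $P$ be the primitive of Definition~\ref{def:periodic_primitive_radius_appendix}. Since $\int_0^1\psi=0$, we have $P(1)=P(0)=0$. Hence the periodic extension of $P$ coincides with $u\mapsto\int_0^u\psi$ on all of $\mathbb R$. It is therefore a bounded Lipschitz function on $\mathbb R$ with $P'=\psi$ a.e. Fix any constant $c\in\mathbb R$ and set $P_c:=P-c$. This function is still bounded, Lipschitz, and satisfies $P_c'=\psi$.

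The core identity to establish is
\[
\int_{\mathbb R}\psi g\,dt=-\int_{\mathbb R}P_c\,d(Dg).
\]
I would first prove it for $g$ smooth with compact support. In that case it is ordinary integration by parts, and no boundary terms arise. Note also that the constant $c$ drops out, because $\int_{\mathbb R}g'\,dt=0$.

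For general $g\in BV(\mathbb R)\cap L^1(\mathbb R)$ I would approximate. Take $g_n=(g\,\chi_n)*\eta_{1/n}$, with $\chi_n$ smooth cutoffs equal to $1$ on $[-n,n]$ and with $|\chi_n'|\le 2/n$. Then $g_n\to g$ in $L^1$. Moreover $Dg_n$ converges weakly-$*$ to $Dg$, with $|Dg_n|(\mathbb R)\to|Dg|(\mathbb R)$. This uses the standard strict convergence of mollification, together with the fact that $g\chi_n'$ has $L^1$ norm at most $(2/n)\|g\|_{L^1}\to0$.

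The left side converges because $\psi\in L^\infty$ and $g_n\to g$ in $L^1$. The right side converges because $P_c$ is bounded and uniformly continuous, so strict convergence of the measures suffices. Alternatively, and more simply, one can estimate directly:
\[
\Bigl|\int\psi g_n\Bigr|\le\|P_c\|_\infty\int|g_n'|\le\|P_c\|_\infty\bigl(|Dg|(\mathbb R)+o(1)\bigr),
\]
and then let $n\to\infty$.

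Either way we obtain
\[
\Bigl|\int_{\mathbb R}\psi g\Bigr|\le\|P-c\|_{L^\infty(\mathbb R)}\,|Dg|(\mathbb R)
\]
for every $c$. Taking the infimum over $c$ gives $\rho_{\mathrm{per}}(\psi)\,|Dg|(\mathbb R)$. The final sentence of the theorem, $\TV_\infty(g;\mathbb R)=|Dg|(\mathbb R)$, is simply the definition of $\TV_\infty$ when $d=1$. The equivalent formula $\rho_{\mathrm{per}}=\tfrac12\operatorname{osc}P$ follows because the best constant approximation of a bounded function in sup norm is its mid-range. This is the same step already used in Lemma~\ref{lem:half_oscillation}.

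The main obstacle is the approximation step on the unbounded domain. We need control of $\int|g_n'|$ by $|Dg|(\mathbb R)$ plus a vanishing error. The cutoff derivative term is where $g\in L^1$ is genuinely used, and mollification does not increase variation; this is the one-dimensional case of Theorem~\ref{thm:appendix_mollification_mixed_measure}. Note that normalization of $g$ is not needed: the constant $c$ cancels automatically because $\int g'=0$ for compactly supported smooth $g$.
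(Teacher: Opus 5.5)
Your argument is correct, but it organizes the limit passage differently from the paper. The paper uses only a cutoff. It applies $BV$ integration by parts to the compactly supported function $\chi_Rg$ against the Lipschitz primitive $P$ and lets $R\to\infty$. This gives the exact identity $\int_{\mathbb R}\psi g\,dt=-\int_{\mathbb R}P\,dDg$ for every $g\in BV(\mathbb R)\cap L^1(\mathbb R)$. Only then does it recenter, using the separate fact that $g$ has vanishing one-sided limits at $\pm\infty$, so $Dg(\mathbb R)=0$.

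You instead recenter on the smooth compactly supported approximants $g_n$, where $\int_{\mathbb R}g_n'\,dt=0$ is automatic and classical integration by parts suffices. You then pass to the limit only in the inequality, via $\int_{\mathbb R}|g_n'|\,dt\le|Dg|(\mathbb R)+\|g\chi_n'\|_{L^1(\mathbb R)}$. Your direct-estimate variant thus avoids both the $BV$ integration-by-parts formula against a Lipschitz function and any discussion of $BV$ functions at infinity, at the price of the mollifier contraction. It yields only the inequality, whereas the paper's route gives the exact representation of $\int\psi g$ through the unshifted primitive without any smoothing.

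When you write it up, lead with the direct estimate. Take $0\le\chi_n\le1$ with compact support; you need this both for $\int\chi_n\,d|Dg|\le|Dg|(\mathbb R)$ and for $g_n$ to be compactly supported. The strict-convergence variant is also valid, but testing against the bounded, non-decaying function $P_c$ needs an extra argument, for instance moving the mollifier onto $P_c$ and using its uniform continuity.
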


\begin{proof}
Let $P(u)=\int_0^u\psi(t)\,dt$ on $[0,1]$ and extend it periodically. Since
$\psi$ has zero mean over one period, this extension is well defined,
$P\in W^{1,\infty}_{\mathrm{loc}}(\mathbb R)$, and $P'=\psi$ a.e. on
$\mathbb R$.

A one-dimensional $BV$ function has one-sided limits at $\pm\infty$. Since
$g\in BV(\mathbb R)\cap L^1(\mathbb R)$, these limits are both zero, and hence
$Dg(\mathbb R)=0$.

We first justify integration by parts on $\mathbb R$. Choose
$\chi\in C_c^\infty(\mathbb R)$ with $\chi=1$ on $[-1,1]$ and $\chi=0$ outside
$[-2,2]$, and set $\chi_R(t):=\chi(t/R)$. Since $\chi_Rg$ has compact support
and bounded variation,
$$
\int_{\mathbb R}P'(t)\chi_R(t)g(t)\,dt
=
-\int_{\mathbb R}P(t)\,dD(\chi_Rg)(t).
$$
Using $D(\chi_Rg)=\chi_R\,Dg+g\chi_R'(t)\,dt$, we get
$$
\int_{\mathbb R}\psi(t)\chi_R(t)g(t)\,dt
=
-\int_{\mathbb R}P(t)\chi_R(t)\,dDg(t)
-
\int_{\mathbb R}P(t)g(t)\chi_R'(t)\,dt.
$$
The left-hand side converges to $\int_{\mathbb R}\psi g$ by dominated
convergence, and the first term on the right converges to
$-\int_{\mathbb R}P\,dDg$. For the cutoff term,
$$
\left|
\int_{\mathbb R}P(t)g(t)\chi_R'(t)\,dt
\right|
\le
\frac{\|P\|_{L^\infty}\|\chi'\|_{L^\infty}}{R}
\int_{\{R\le |t|\le 2R\}} |g(t)|\,dt
\to0.
$$
Thus
\begin{equation}
\label{eq:periodic_ibp_identity}
\int_{\mathbb R}\psi(t)g(t)\,dt
=
-\int_{\mathbb R}P(t)\,dDg(t).
\end{equation}

Since $Dg(\mathbb R)=0$, constants may be subtracted from the primitive:
$$
\int_{\mathbb R}P(t)\,dDg(t)
=
\int_{\mathbb R}(P(t)-c)\,dDg(t),
\qquad c\in\mathbb R.
$$
Therefore
$$
\left|
\int_{\mathbb R}\psi(t)g(t)\,dt
\right|
\le
\|P-c\|_{L^\infty(\mathbb R)}\,|Dg|(\mathbb R).
$$
Taking the infimum over $c$ gives the result.
\end{proof}

\begin{corollary}[Scalar shifted-LP ceiling residual]
\label{cor:scalar_shifted_lp_ceiling_residual}
Let $q>0$ and consider the scalar unit-lattice ceiling value
$$
v^{\INT}(s)=q\lceil s\rceil,
\qquad s\in\mathbb R.
$$
Its LP relaxation is $v^{\LP}(s)=qs$, and the half-shifted LP approximation is
$$
v^{\SH}(s)=q\left(s+\frac12\right).
$$
Define $\varphi^{\SH}:=v^{\SH}-v^{\INT}$. Then, for every integer $\ell$,
$$
\varphi^{\SH}(s)
=
q\left(s-\ell-\frac12\right)
\qquad
\text{for a.e. }s\in[\ell,\ell+1].
$$
Thus $\varphi^{\SH}$ is the centered unit-periodic sawtooth. Its one-period
primitive is
$$
P(u)
=
\int_0^u q\left(t-\frac12\right)\,dt
=
q\left(\frac{u^2}{2}-\frac u2\right),
\qquad u\in[0,1],
$$
so
$$
\operatorname{osc}_{[0,1]}P=\frac q8,
\qquad
\rho_{\mathrm{per}}(\varphi^{\SH})=\frac q{16}.
$$
The factor $q/16$ is the recentered primitive radius; using the unrecentered
primitive $P$ would give the weaker constant $q/8$.
Consequently, for every probability density
$g\in BV(\mathbb R)\cap L^1(\mathbb R)$,
\begin{equation}
\label{eq:scalar_shifted_lp_periodic_bound}
\left|
\int_{\mathbb R}\varphi^{\SH}(s)g(s)\,ds
\right|
\le
\frac q{16}\,\TV_\infty(g;\mathbb R).
\end{equation}
\end{corollary}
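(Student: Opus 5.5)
The plan is to verify the pointwise formula for the residual, compute the periodic primitive and its recentered radius, and then invoke Theorem~\ref{thm:periodic_TV_bound_appendix}.

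\emph{Residual formula.} For $s\in(\ell,\ell+1)$ with $\ell\in\mathbb Z$, one has $\lceil s\rceil=\ell+1$. Hence
\[
\varphi^{\SH}(s)=q\bigl(s+\tfrac12\bigr)-q(\ell+1)=q\bigl(s-\ell-\tfrac12\bigr).
\]
The integer points form a null set, so the formula holds a.e. on each $[\ell,\ell+1]$. It follows that $\varphi^{\SH}$ agrees a.e. with the $1$-periodic function $\psi(t)=q(\{t\}-\tfrac12)$. This function lies in $L^\infty_{\mathrm{loc}}$ and satisfies $\int_0^1\psi=0$, so the hypotheses of Theorem~\ref{thm:periodic_TV_bound_appendix} hold.

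\emph{Primitive and radius.} Direct integration gives
\[
P(u)=q\Bigl(\frac{u^2}{2}-\frac u2\Bigr),\qquad u\in[0,1].
\]
This quadratic vanishes at $u=0$ and $u=1$ and attains its minimum $-q/8$ at $u=1/2$. Therefore $\operatorname{osc}_{[0,1]}P=q/8$. Because $P(0)=P(1)$, the periodic extension has the same oscillation on all of $\mathbb R$. Definition~\ref{def:periodic_primitive_radius_appendix} then gives $\rho_{\mathrm{per}}=q/16$, attained by the centering constant $c=-q/16$.

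For the remark about the unrecentered primitive, observe that $\|P\|_{L^\infty}=q/8$, which yields only the weaker constant $q/8$.

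\emph{Conclusion.} Substituting $\rho_{\mathrm{per}}(\varphi^{\SH})=q/16$ into \eqref{eq:periodic_TV_bound_appendix} gives \eqref{eq:scalar_shifted_lp_periodic_bound}, because changing $\varphi^{\SH}$ on a null set does not affect the integral.

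There is no genuine obstacle in this argument. The only step needing care is confirming that the a.e. identification with a periodic $L^\infty_{\mathrm{loc}}$ function is enough for the theorem to apply.
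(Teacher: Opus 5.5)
Your proposal is correct and follows essentially the same route as the paper: the ceiling identity on each unit interval, the explicit quadratic primitive with $\operatorname{osc}_{[0,1]}P=q/8$ and recentered radius $q/16$, and then an application of Theorem~\ref{thm:periodic_TV_bound_appendix}. The a.e.\ identification you single out as the step needing care is not actually needed: $\lceil s+1\rceil=\lceil s\rceil+1$ makes $\varphi^{\SH}$ exactly $1$-periodic and locally bounded, so it satisfies the theorem's hypotheses directly.
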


\begin{proof}
For $s\in(\ell,\ell+1]$, one has $\lceil s\rceil=\ell+1$. Hence
$$
\varphi^{\SH}(s)
=
q\left(s+\frac12\right)-q(\ell+1)
=
q\left(s-\ell-\frac12\right).
$$
Endpoint values are immaterial. The primitive computation is displayed above,
and the bound follows from Theorem~\ref{thm:periodic_TV_bound_appendix}.
\end{proof}

\begin{remark}[Why this is only a comparison]
\label{rem:periodic_only_comparison}
The scalar ceiling example is special. Its shifted-LP residual is both
periodic with zero cell mean and centered on every unit interval. The main
paper does not assume such periodic structure. On bounded boxes in dimension
$d\ge2$, periodic cell means and coordinate slice means are different objects.
The residual theory in the main text is therefore box-based, not periodic.
\end{remark}

\subsection{Finite-box LP-slope calibration}
\label{subsec:finite_box_shifted_lp_calibration}

We now record the finite-box fixed-slope comparator used in
Section~\ref{sec:framework_design_revised}. Suppose that the LP relaxation has
the finite max-affine representation
$$
v^{\LP}(b)
=
\max_{k\in[K]}\{a_k+s_k^\top b\},
\qquad
[K]:=\{1,\ldots,K\}.
$$
The LP-slope calibration comparator keeps the LP slopes and fits only the vertical
corrections:
$$
b\mapsto
\max_{k\in[K]}\{a_k+s_k^\top b+\gamma_k\}.
$$
For each training point $b^\alpha$, choose an LP-active piece
$$
k(\alpha)
\in
\arg\max_{\ell\in[K]}
\{a_\ell+s_\ell^\top b^\alpha\},
$$
using deterministic tie-breaking, and define
$
\mathcal N_k:=\{\alpha\in\mathcal N:k(\alpha)=k\}.
$
Let $\kappa_\alpha^{\mathrm{tr}}$ be normalized training weights and set
$$
\bar g
:=
\sum_{\alpha\in\mathcal N}
\kappa_\alpha^{\mathrm{tr}}
\bigl(y_\alpha-v^{\LP}(b^\alpha)\bigr).
$$
For $\tau>0$, the finite-box intercept is
\begin{equation}
\label{eq:finite_box_gamma_formula_appendix}
\gamma_k
=
\frac{
\sum_{\alpha\in \mathcal N_k}
\kappa_\alpha^{\mathrm{tr}}
\bigl(y_\alpha-a_k-s_k^\top b^\alpha\bigr)
+
\tau\bar g
}{
\sum_{\alpha\in \mathcal N_k}\kappa_\alpha^{\mathrm{tr}}+\tau
}.
\end{equation}
If $\mathcal N_k=\emptyset$, then $\gamma_k=\bar g$. The corresponding
comparator is
$$
v^\Gamma(b)
=
\max_{k\in[K]}
\{a_k+s_k^\top b+\gamma_k\}.
$$

The constants in \eqref{eq:finite_box_gamma_formula_appendix} depend on the
finite box, the training weights, and the LP-active assignment. They are not
classical full-cell constants, which are derived from periodic residuals
averaged over full lattice or basis cells. The next example shows that the
finite-box and full-cell constants can differ even in one dimension.

\begin{proposition}[Finite-box and full-cell shifted constants can differ]
\label{prop:finite_box_vs_full_cell_shift}
Consider
$$
v(s)
=
\min
\left\{
y_1+2y_2+2y_3:
y_1+y_2-y_3=s,\ 
y_1\in\mathbb Z_+,\ 
y_2,y_3\in\mathbb R_+
\right\}.
$$
Then $v^{\LP}(s)=\max\{s,-2s\}$. On $[0,1]$, one has
$v^{\LP}(s)=s$ and
$$
v(s)
=
\begin{cases}
2s, & 0\le s\le 3/4,\\[1mm]
3-2s, & 3/4\le s\le 1.
\end{cases}
$$
Hence the LP gap $\psi:=v-v^{\LP}$ satisfies
$$
\psi(s)
=
\begin{cases}
s, & 0\le s\le 3/4,\\[1mm]
3-3s, & 3/4\le s\le 1.
\end{cases}
$$
The full-cell shifted constant is
$$
\Gamma^W
=
\int_0^1\psi(s)\,ds
=
\frac38.
$$
By contrast, if the same fixed-slope shift is calibrated uniformly only on
$D=[0,1/2]$, then the mean-square optimal finite-domain shift is
$$
\gamma_D
=
\frac1{|D|}\int_D\psi(s)\,ds
=
\frac14.
$$
Thus $\gamma_D\neq\Gamma^W$.
\end{proposition}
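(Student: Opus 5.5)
The plan is a direct computation in four steps: the LP value via duality, the mixed-integer value by enumerating the integer variable, the gap $\psi$, and the two averages.

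\emph{Step 1: the LP relaxation.} Relaxing $y_1\in\mathbb Z_+$ to $y_1\ge0$ gives an LP with one equality constraint. Its dual is $\max\{\pi s\}$ over $\pi\le1$, $\pi\le2$ and $-\pi\le2$, that is, over $\pi\in[-2,1]$. The primal is feasible for every $s$, since one can take $y_2=\max\{s,0\}$ and $y_3=\max\{-s,0\}$. Strong duality then gives $v^{\LP}(s)=\max\{s,-2s\}$, which equals $s$ on $[0,1]$.

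\emph{Step 2: the mixed-integer value on $[0,1]$.} I will enumerate $y_1\in\mathbb Z_+$. For fixed $y_1$, the continuous part is $\min\{2y_2+2y_3:y_2-y_3=s-y_1\}$, whose value is $2|s-y_1|$. So the cost is $y_1+2|s-y_1|$.
\begin{itemize}
\item[] For $y_1=0$ the cost is $2s$.
\item[] For $y_1=1$ the cost is $1+2(1-s)=3-2s$.
\item[] For $y_1\ge2$ the cost is $y_1+2(y_1-s)\ge 6-2s>3-2s$, so these choices never attain the minimum.
\end{itemize}
Hence $v(s)=\min\{2s,3-2s\}$. The two branches cross at $2s=3-2s$, i.e. $s=3/4$, which yields the displayed piecewise formula.

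\emph{Step 3: the gap and the full-cell constant.} Subtracting $v^{\LP}(s)=s$ gives $\psi(s)=s$ on $[0,3/4]$ and $\psi(s)=3-3s$ on $[3/4,1]$. The two pieces give
\[
\int_0^{3/4}s\,ds=\frac{9}{32},
\qquad
\int_{3/4}^{1}(3-3s)\,ds=\frac34-\frac{3}{2}\Bigl(1-\frac{9}{16}\Bigr)=\frac{3}{32},
\]
so $\Gamma^W=\frac{12}{32}=\frac38$.

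\emph{Step 4: the finite-domain shift.} The uniform mean-square problem $\min_\gamma\int_D(\gamma-\psi)^2$ is a strictly convex quadratic in $\gamma$, so its minimizer is the mean of $\psi$ over $D$. This is the continuous analogue of Proposition~\ref{prop:framework_gamma_optimizer} with $\tau=0$. On $D=[0,1/2]\subset[0,3/4]$ we have $\psi(s)=s$, so $\gamma_D=2\int_0^{1/2}s\,ds=\frac14\neq\frac38$.

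\emph{Main obstacle.} Step 2 is the only step that needs care: one must rule out $y_1\ge2$ and correctly identify the breakpoint $s=3/4$. Everything else is elementary integration.
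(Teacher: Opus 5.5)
Your proposal is correct and follows essentially the same route as the paper. Both reduce the continuous part to $2|s-y_1|$, compare the branches $y_1=0$ and $y_1=1$ to locate the breakpoint $s=3/4$, integrate the two pieces, and identify the mean-square shift with the average of $\psi$ over $D$. The differences are minor: you obtain $v^{\LP}$ from the one-constraint dual over $\pi\in[-2,1]$ rather than by minimizing $y_1+2|s-y_1|$ over continuous $y_1\ge0$, and you explicitly exclude $y_1\ge2$ via $3y_1-2s\ge 6-2s>3-2s$, a step the paper only asserts.
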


\begin{proof}
For fixed $s$ and fixed integer $y_1$, the continuous variables satisfy
$$
y_2-y_3=s-y_1,
\qquad
y_2,y_3\ge0.
$$
The minimum of $2y_2+2y_3$ under this constraint is $2|s-y_1|$. Hence
$$
v(s)
=
\min_{y_1\in\mathbb Z_+}
\{y_1+2|s-y_1|\}.
$$

If $y_1$ is relaxed to be continuous and nonnegative, then for $s\ge0$ the
minimizer is $y_1=s$ and the value is $s$. For $s<0$, the constraint
$y_1\ge0$ forces $y_1=0$, giving value $-2s$. Thus
$
v^{\LP}(s)=\max\{s,-2s\}.
$\\
On $[0,1]$, only $y_1=0$ and $y_1=1$ can be optimal. These choices give
$
y_1=0:\quad 2s$, and $
y_1=1:\quad 1+2(1-s)=3-2s
$.
Therefore $v(s)=\min\{2s,3-2s\}$, with breakpoint $s=3/4$. Since
$v^{\LP}(s)=s$ on $[0,1]$, the displayed expression for $\psi$ follows.
The full-cell average is
$$
\Gamma^W
=
\int_0^{3/4}s\,ds
+
\int_{3/4}^{1}(3-3s)\,ds
=
\frac{9}{32}+\frac{3}{32}
=
\frac38.
$$
On $D=[0,1/2]$, the gap is $\psi(s)=s$. For a fixed-slope approximation
$s\mapsto s+\gamma$, the residual is $\gamma-\psi(s)$, so the mean-square
optimal shift is the average of $\psi$ over $D$:
$$
\gamma_D
=
\frac1{|D|}\int_D\psi(s)\,ds
=
2\int_0^{1/2}s\,ds
=
\frac14.
$$
This proves the claim.
\end{proof}
\begin{remark}[Interpretation of $v^\Gamma$]
\label{rem:finite_box_shifted_lp_interpretation}
The comparator $v^\Gamma$ keeps the LP affine geometry and asks how much of the
finite-box residual can be removed by vertical corrections. This is the
relevant fixed-slope comparison for the direct max-affine fits, which are
trained and audited on the same finite domain. It should not be read as a
classical full-cell shifted-LP approximation.
\end{remark}

\begin{remark}[When the constants coincide]
\label{rem:finite_box_full_cell_coincidence}
Finite-box and full-cell constants may coincide in special cases, for example
under uniform weighting over an integer number of complete cells inside a
single LP-active regime. Such coincidence should not be expected on restricted
boxes, under nonuniform training weights, or in higher-dimensional designs that
cross LP-active regions unevenly.
\end{remark}

\bibliographystyle{amsplain}
\bibliography{main}

@article{VanderVlerk2004,
  author  = {M. H. van der Vlerk},
  title   = {Convex Approximations for Complete Integer Recourse Models},
  journal = {Mathematical Programming},
  volume  = {99},
  number  = {2},
  pages   = {297--310},
  year    = {2004}
}

@article{vanBeestenRomeijnders2022,
  author  = {{van Beesten, E. Ruben and Romeijnders, Ward}},
  title   = {Parametric Error Bounds for Convex Approximations
             of Two-Stage Mixed-Integer Recourse Models with a
             Random Second-Stage Cost Vector},
  journal = {Operations Research Letters},
  volume  = {50},
  number  = {5},
  pages   = {541--547},
  year    = {2022},
  doi     = {10.1016/j.orl.2022.07.012}
}

@article{vanDerLaanEtAl2018HigherOrder,
  author  = {van der Laan, Niels and Romeijnders, Ward
             and van der Vlerk, Maarten H.},
  title   = {Higher-Order Total Variation Bounds for Expectations
             of Periodic Functions and Simple Integer Recourse
             Approximations},
  journal = {Computational Management Science},
  volume  = {15},
  number  = {3--4},
  pages   = {325--349},
  year    = {2018},
  doi     = {10.1007/s10287-018-0315-z}
}

@article{vanBeestenRomeijnders2020,
  author  = {van Beesten, E. Ruben and Romeijnders, Ward},
  title   = {Convex Approximations for Two-Stage Mixed-Integer
             Mean-Risk Recourse Models with
             {Conditional Value-at-Risk}},
  journal = {Mathematical Programming},
  volume  = {181},
  number  = {2},
  pages   = {473--507},
  year    = {2020},
  doi     = {10.1007/s10107-019-01428-6}
}

@article{vanDerLaanEtAl2021,
  author  = {van der Laan, Niels and Romeijnders, Ward},
  title   = {A Loose Benders Decomposition Algorithm for Approximating Two-Stage Mixed-Integer Recourse Models},
  journal = {Mathematical Programming},
  volume  = {190},
  number  = {1--2},
  pages   = {761--794},
  year    = {2021},
  doi     = {10.1007/s10107-020-01559-1}
}

@book{shapiro,
  author    = {Shapiro, Alexander and Dentcheva, Darinka and Ruszczy{\'n}ski, Andrzej},
  title     = {Lectures on Stochastic Programming: Modeling and Theory},
  series    = {MOS-SIAM Series on Optimization},
  publisher = {Society for Industrial and Applied Mathematics},
  address   = {Philadelphia, PA},
  edition   = {3},
  year      = {2021},
  doi       = {10.1137/1.9781611976595}
}

@phdthesis{vlerkthesis,
  author    = {van der Vlerk, Maarten H.},
  title     = {Stochastic Programming with Integer Recourse},
  school    = {University of Groningen, SOM Research School},
  year      = {1995}
}

@book{wardthesis,
  author    = {{Romeijnders, Ward}},
  title     = {Total Variation Error Bounds for Convex Approximations of Two-Stage Mixed-Integer Recourse Models},
  publisher = {University of Groningen, SOM Research School},
  year      = {2015}
}

@article{vlerk1,
  author    = {Louveaux, F. V. and van der Vlerk, Maarten H.},
  title     = {Stochastic Programming with Simple Integer Recourse},
  journal   = {Mathematical Programming},
  volume    = {61},
  pages     = {301--325},
  year      = {1993}
}

@article{schultz2,
  author    = {Schultz, R{\"u}diger},
  title     = {Stochastic Programming with Integer Variables},
  journal   = {Mathematical Programming},
  volume    = {97},
  pages     = {285--309},
  year      = {2003}
}

@article{ward2015,
  author    = {Romeijnders, Ward and van der Vlerk, Maarten H. and Klein Haneveld, Willem K.},
  title     = {Convex Approximations for Totally Unimodular Integer Recourse Models: A Uniform Error Bound},
  journal   = {SIAM Journal on Optimization},
  volume    = {25},
  number    = {1},
  pages     = {130--158},
  year      = {2015},
  doi       = {10.1137/130945703}
}

@article{total1,
  author  = {{Romeijnders, Ward and van der Vlerk, Maarten H.
             and Klein Haneveld, Willem K.}},
  title   = {Total Variation Bounds on the Expectation of Periodic
             Functions with Applications to Recourse Approximations},
  journal = {Mathematical Programming},
  volume  = {157},
  number  = {1},
  pages   = {3--46},
  year    = {2016},
  doi     = {10.1007/s10107-014-0829-2}
}

@article{ward2015bb,
  author    = {Romeijnders, Ward and Schultz, R{\"u}diger and van der Vlerk, Maarten H. and Klein Haneveld, Willem K.},
  title     = {A Convex Approximation for Two-Stage Mixed-Integer Recourse Models with a Uniform Error Bound},
  journal   = {SIAM Journal on Optimization},
  volume    = {26},
  number    = {1},
  pages     = {426--447},
  year      = {2016},
  doi       = {10.1137/140986244}
}

@article{ward_assessing,
  author    = {Romeijnders, Ward and Morton, David P. and van der Vlerk, Maarten H.},
  title     = {Assessing the Quality of Convex Approximations for Two-Stage Totally Unimodular Integer Recourse Models},
  journal   = {INFORMS Journal on Computing},
  volume    = {29},
  number    = {2},
  pages     = {211--231},
  year      = {2017},
  doi       = {10.1287/ijoc.2016.0725}
}

@article{alban1,
  author    = {Kryeziu, Alban and Romeijnders, Ward and Ursavas, Evrim},
  title     = {Vitali Variation Error Bounds for Expected Value Functions},
  journal   = {Operations Research Letters},
  volume    = {56},
  pages     = {107157},
  year      = {2024},
  doi       = {10.1016/j.orl.2024.107157}
}

@article{MagnaniBoyd2009ConvexPWL,
  author    = {Magnani, Alessandro and Boyd, Stephen P.},
  title     = {Convex Piecewise-Linear Fitting},
  journal   = {Optimization and Engineering},
  volume    = {10},
  number    = {1},
  pages     = {1--17},
  year      = {2009},
  doi       = {10.1007/s11081-008-9045-3}
}

@article{seijo_sen,
  author    = {Seijo, Emilio and Sen, Bodhisattva},
  title     = {Nonparametric Least Squares Estimation of a Multivariate Convex Regression Function},
  journal   = {The Annals of Statistics},
  volume    = {39},
  number    = {3},
  pages     = {1633--1657},
  year      = {2011},
  doi       = {10.1214/10-AOS852}
}

@inproceedings{input_convex_neural_networks,
  author    = {Amos, Brandon and Xu, Lei and Kolter, J. Zico},
  title     = {Input Convex Neural Networks},
  booktitle = {Proceedings of the 34th International Conference on Machine Learning},
  series    = {Proceedings of Machine Learning Research},
  volume    = {70},
  pages     = {146--155},
  year      = {2017}
}

@article{KAZDA2024493,
  author    = {Kazda, Kody and Li, Xiang},
  title     = {A Linear Programming Approach to Difference-of-Convex Piecewise Linear Approximation},
  journal   = {European Journal of Operational Research},
  volume    = {312},
  number    = {2},
  pages     = {493--511},
  year      = {2024},
  doi       = {10.1016/j.ejor.2023.07.026}
}

@book{giovanni_leoni_sobolev,
  author    = {Leoni, Giovanni},
  title     = {A First Course in Sobolev Spaces},
  publisher = {American Mathematical Society},
  year      = {2017},
  edition   = {2}
}

@book{AmbrosioFuscoPallara2000,
  author    = {Ambrosio, Luigi and Fusco, Nicola and Pallara, Diego},
  title     = {Functions of Bounded Variation and Free Discontinuity Problems},
  series    = {Oxford Mathematical Monographs},
  publisher = {Clarendon Press},
  address   = {Oxford},
  year      = {2000}
}

@book{evans,
  author    = {Evans, Lawrence C.},
  title     = {Partial Differential Equations},
  publisher = {American Mathematical Society},
  address   = {Providence, RI},
  year      = {2010},
  edition   = {2}
}

@article{ChenLuedtke2022LagrangianCuts,
  author  = {Chen, Rui and Luedtke, James},
  title   = {On Generating Lagrangian Cuts for Two-Stage Stochastic Integer Programs},
  journal = {INFORMS Journal on Computing},
  volume  = {34},
  number  = {4},
  pages   = {2332--2349},
  year    = {2022},
  doi     = {10.1287/ijoc.2022.1185}
}

@article{AcostaDuranMuschietti2006,
  author  = {Acosta, Gabriel and Dur{\'a}n, Ricardo G. and Muschietti, Mar{\'i}a A.},
  title   = {Solutions of the Divergence Operator on John Domains},
  journal = {Advances in Mathematics},
  volume  = {206},
  number  = {2},
  pages   = {373--401},
  year    = {2006},
  doi     = {10.1016/j.aim.2005.09.004}
}

\end{document}